\documentclass[11pt, reqno]{article}

\usepackage{amsmath,amssymb,amsfonts,amsthm}
\usepackage{color}

\usepackage[colorlinks=true,linkcolor=blue,citecolor=blue,urlcolor=blue,pdfborder={0 0 0}]{hyperref}
\usepackage{cleveref}

\usepackage{CJKutf8}

\usepackage{caption}
\usepackage{thmtools}

\usepackage{mathrsfs}

\usepackage{centernot}

\usepackage{crossreftools}

\crefname{theorem}{Theorem}{Theorems}
\crefname{thm}{Theorem}{Theorems}
\crefname{lemma}{Lemma}{Lemmas}
\crefname{lem}{Lemma}{Lemmas}
\crefname{remark}{Remark}{Remarks}
\crefname{prop}{Proposition}{Propositions}
\crefname{defn}{Definition}{Definitions}
\crefname{corollary}{Corollary}{Corollaries}
\crefname{conjecture}{Conjecture}{Conjectures}
\crefname{question}{Question}{Questions}
\crefname{chapter}{Chapter}{Chapters}
\crefname{section}{Section}{Sections}
\crefname{figure}{Figure}{Figures}
\crefname{example}{Example}{Examples}

\theoremstyle{plain}
\newtheorem{thm}{Theorem}[section]

\newtheorem{lemma}[thm]{Lemma}
\newtheorem{theorem}[thm]{Theorem}

\newtheorem{lem}[thm]{Lemma}

\newtheorem{corollary}[thm]{Corollary}

\newtheorem{prop}[thm]{Proposition}

\newtheorem{observation}[thm]{Observation}

\theoremstyle{definition}

\theoremstyle{remark}
\newtheorem{remark}[thm]{Remark}

\numberwithin{equation}{section}

\renewcommand{\P}{\mathbb P}
\newcommand{\proba}{\mathbb{P}}
\newcommand{\E}{\mathbb E}

\newcommand{\R}{\mathbb R}
\newcommand{\Z}{\mathbb Z}
\newcommand{\N}{\mathbb N}

\newcommand{\cvd} {\overset{d}{\longrightarrow}}

\newcommand{\offx}[1]{\, \hbox{{\rm off}}_{#1}\,}

\newcommand{\cB}{\mathcal B}

\newcommand{\cH}{\mathcal H}

\newcommand{\cN}{\mathcal N}

\newcommand{\cS}{\mathcal S}
\newcommand{\cT}{\mathcal T}

\newcommand{\sE}{\mathscr E}
\newcommand{\sF}{\mathscr F}
\newcommand{\sG}{\mathscr G}

\newcommand{\sT}{\mathscr T}
\newcommand{\sU}{\mathscr U}

\newcommand{\sW}{\mathscr W}

\DeclareMathOperator*{\slr}{\leftrightarrow}

\usepackage[margin=0.9in]{geometry}
\usepackage{tikz}
\usepackage{pgfplots}
\usepackage{extarrows}
\usepackage{titling}
\usepackage{commath}
\usepackage{wasysym}
\usepackage{mathtools}
\usepackage{titlesec}
\newcommand{\eps}{\varepsilon}
\usepackage{bbm}
\usepackage{setspace}
\usepackage{enumitem}
\usepackage{booktabs}

\usepackage{cite}

\def\P{\mathbb{P}}

\newcommand{\1}{{\rm 1\hspace*{-0.4ex}%
\rule{0.1ex}{1.52ex}\hspace*{0.2ex}}}

\newcommand{\sS}{\mathscr{S}}

\DeclareMathSymbol{\leqslant}{\mathalpha}{AMSa}{"36} 
\DeclareMathSymbol{\geqslant}{\mathalpha}{AMSa}{"3E} 
\DeclareMathSymbol{\eset}{\mathalpha}{AMSb}{"3F}     

\renewcommand{\epsilon}{\varepsilon}

\makeatletter
\pgfdeclareshape{slash underlined}
{
  \inheritsavedanchors[from=rectangle] 
  \inheritanchorborder[from=rectangle]
  \inheritanchor[from=rectangle]{north}
  \inheritanchor[from=rectangle]{north west}
  \inheritanchor[from=rectangle]{north east}
  \inheritanchor[from=rectangle]{center}
  \inheritanchor[from=rectangle]{west}
  \inheritanchor[from=rectangle]{east}
  \inheritanchor[from=rectangle]{mid}
  \inheritanchor[from=rectangle]{mid west}
  \inheritanchor[from=rectangle]{mid east}
  \inheritanchor[from=rectangle]{base}
  \inheritanchor[from=rectangle]{base west}
  \inheritanchor[from=rectangle]{base east}
  \inheritanchor[from=rectangle]{south}
  \inheritanchor[from=rectangle]{south west}
  \inheritanchor[from=rectangle]{south east}
  \inheritanchorborder[from=rectangle]
  \foregroundpath{
    \southwest \pgf@xa=\pgf@x \pgf@ya=\pgf@y
    \northeast \pgf@xb=\pgf@x \pgf@yb=\pgf@y
    \pgf@xc=\pgf@xa
    \advance\pgf@xc by .5\pgf@xb
    \pgf@yc=\pgf@ya
    \advance\pgf@xc by -1.3pt
    \advance\pgf@yc by -1.8pt
    \pgfpathmoveto{\pgfqpoint{\pgf@xc}{\pgf@yc}}
    \advance\pgf@xc by  2.6pt
    \advance\pgf@yc by  3.6pt
    \pgfpathlineto{\pgfqpoint{\pgf@xc}{\pgf@yc}}
    \pgfpathmoveto{\pgfqpoint{\pgf@xa}{\pgf@ya}}
    \pgfpathlineto{\pgfqpoint{\pgf@xb}{\pgf@ya}}
 }
}
\tikzset{nomorepostaction/.code=\let\tikz@postactions\pgfutil@empty}

\makeatother

\makeatletter
\DeclareFontFamily{OMX}{MnSymbolE}{}
\DeclareSymbolFont{MnLargeSymbols}{OMX}{MnSymbolE}{m}{n}
\SetSymbolFont{MnLargeSymbols}{bold}{OMX}{MnSymbolE}{b}{n}
\DeclareFontShape{OMX}{MnSymbolE}{m}{n}{
    <-6>  MnSymbolE5
   <6-7>  MnSymbolE6
   <7-8>  MnSymbolE7
   <8-9>  MnSymbolE8
   <9-10> MnSymbolE9
  <10-12> MnSymbolE10
  <12->   MnSymbolE12
}{}
\DeclareFontShape{OMX}{MnSymbolE}{b}{n}{
    <-6>  MnSymbolE-Bold5
   <6-7>  MnSymbolE-Bold6
   <7-8>  MnSymbolE-Bold7
   <8-9>  MnSymbolE-Bold8
   <9-10> MnSymbolE-Bold9
  <10-12> MnSymbolE-Bold10
  <12->   MnSymbolE-Bold12
}{}

\let\llangle\@undefined
\let\rrangle\@undefined
\DeclareMathDelimiter{\llangle}{\mathopen}%
                     {MnLargeSymbols}{'164}{MnLargeSymbols}{'164}
\DeclareMathDelimiter{\rrangle}{\mathclose}%
                     {MnLargeSymbols}{'171}{MnLargeSymbols}{'171}
\makeatother

\pretitle{\begin{flushleft}\Large}
\posttitle{\par\end{flushleft}\vskip 0.5em
}
\preauthor{\begin{flushleft}}
\postauthor{
\\
\vspace{0.5em}
\footnotesize{$*$ Laboratoire de Math\'ematiques Rapha\"el Salem, Universit\'e de Rouen Normandie\\
Email: \href{mailto:ablancrenaudiepro@gmail.com}{ablancrenaudiepro@gmail.com}
\\
$\dagger$ The Division of Physics, Mathematics and Astronomy, California Institute of Technology\\ Email: 
\href{mailto:t.hutchcroft@caltech.edu}{t.hutchcroft@caltech.edu}
}
\end{flushleft}}
\predate{\begin{flushleft}}
\postdate{\par\end{flushleft}}
\title{\bf Super-Brownian limits and the $k$-point function for high-dimensional percolation} 

\renewenvironment{abstract}
 {\par\noindent\textbf{\abstractname.}\ \ignorespaces}
 {\par\medskip}

\titleformat{\section}
  {\normalfont\large \bf}{\thesection}{0.4em}{}

\author{{\bf Arthur Blanc-Renaudie$^*$ and Tom Hutchcroft$^\dagger$}}

\begin{document}

\date{\small{\today}}

\maketitle

\setstretch{1.1}

\begin{abstract}
We prove that there exist positive, lattice-dependent constants $A$ and $V$ such that the high-dimensional critical percolation $k$-point function is given by
\[
T_{p_c}(x_1,x_2,\ldots,x_{k}) \sim  V^{k-2} A^{2k-3} \sum_{\mathcal{T}\in \mathsf{Tr}(k)} \sum_{\substack{\Phi:V(\mathcal{T})\to \Z^d \\ \Phi(i)=x_i \forall 1\leq i \leq k}} \prod_{\substack{u,v\in V(\mathcal{T})\\u\sim v}}G(\Phi(u),\Phi(v))
\]
as $\min_{i\neq j}\|x_i-x_j\|\to \infty$, 
where $\mathsf{Tr}(k)$ is a set of isomorphism class representatives of trees with $k$ labelled leaves $\{1,\ldots,k\}$ and unlabelled internal vertices all of which have degree $3$ and $G$ is the lattice Green's function.
This verifies a conjecture of Aizenman and Newman (1984) subject to the usual perturbative conditions needed for convergence of the lace expansion (i.e., that the dimension $d$ is either very large or $d>6$ and one works with a spread-out lattice).
It follows from this theorem that the law of the cluster of the origin, when considered as the counting measure on its range, converges under rescaling to the canonical measure of the integrated super-Brownian excursion. By computing the asymptotics of various more complicated variations on the $k$-point function that involve counting the number of appearances of a ``pattern'' along the geodesic between two points, we also prove the stronger result that the cluster converges \emph{as an embedded metric-measure space} to the continuum random tree equipped with its Brownian embedding into $\R^d$. This convergence holds simultaneously with respect to the chemical distance, pivotal distance, and resistance distance on the cluster, which we prove are asymptotic to constant multiples of each other. This resolves conjectures of Hara and Slade (1998) and van der Hofstad and Slade (2003). As a corollary of our results we prove that there exists a positive constant $C$ such that
$\P_{p_c}(0\leftrightarrow \Z^d \setminus [-r,r]^d)\sim C r^{-2}$, answering a question of Heydenreich and van der Hofstad (2017).
\end{abstract}

\vfill 

\noindent {\tiny{Mathematics Subject Classification 2020: 60K35, 82B43.}}

\newpage

\tableofcontents

\newpage

\section{Introduction}
\label{sec:intro}

Many models of statistical mechanics are expected to have an \textbf{upper-critical dimension} $d_c$, above which their large-scale critical behaviour becomes identical to an appropriately chosen ``non-interacting'' or ``Gaussian'' model; this is often known as \textbf{mean-field} critical behaviour by analogy with spin systems such as the Ising model. 
For Bernoulli bond percolation, the appropriate comparison to make is between percolation and \emph{branching random walk}.
Indeed, to see that percolation can be thought of as a self-interacting variant of branching random walk, suppose that  we sample the cluster of the origin using a breadth-first search. As we run this search, there is an evolving cloud of ``active'' vertices revealed at the previous step of the algorithm. In branching random walk, each vertex of this evolving cloud of vertices would choose to have a random number of offspring located at adjacent vertices independently of everything else that has happened in the process, while in percolation the different ``offspring'' of each active vertex (i.e., the vertices of the cluster we explore at the next stage of the search) are forbidden from crossing an edge that has already been explored. The self interaction that distinguishes percolation from branching random walk is conjectured to become irrelevant at the critical point in dimensions $d>d_c=6$: Percolation is conjectured to have  the same critical exponents as branching random walk when $d>6$, different critical exponents than branching random walk when $d<6$, and to have logarithmic corrections to mean-field critical behaviour when $d=6$. See \cite{hutchcroft2025dimension} for a recent survey including a heuristic explanation of why percolation has upper critical dimension $d_c=6$ and \cite{MR2239599,heydenreich2015progress} for comprehensive introductions. 


\emph{At the level of critical exponents}, high-dimensional percolation has been well-understood since the milestone 1990 work of Hara and Slade \cite{MR1043524}, who
 proved
 that high-dimensional percolation satisfies the \textbf{triangle condition} $\nabla_{p_c}:=\sum_{x,y}\P_{p_c}(0\leftrightarrow x)\P_{p_c}(x\leftrightarrow y)\P_{p_c}(y\leftrightarrow 0)<\infty$, a sufficient condition for various exponents to take their mean-field values introduced by Aizenman and Newman \cite{MR762034} (see also \cite{MR1127713}).
This implies in particular that 
\[
\P_{p_c}(|K|\geq n) \asymp n^{-1/2} \qquad \text{ and } \qquad \E_{p_c-\eps}|K| \asymp \eps^{-1}
\]
as $n\to\infty$ or $\eps \to 0$ as appropriate, where $K$ denotes the cluster of the origin and $\asymp$ denotes up-to-constants equivalence; see \cite{HutchcroftTriangle} for a modern account of how these estimates follow from the triangle condition.
 Subsequent work has established critical exponents describing various other aspects of high-dimensional percolation, including the Euclidean radius \cite{MR2748397,asselah2025capacity,van2025one,panis2026reversing}, the intrinsic volume growth and random walk~\cite{MR2551766,chatterjee2021subcritical}, clusters in half-spaces \cite{chatterjee2020restricted,panis2025sharp}, clusters in the slightly subcritical regime \cite{chatterjee2021subcritical,hutchcroft2023high,hara1990mean,liu2026crossover}, and finite-size effects \cite{hutchcroft2023high,MR1431856,chatterjee2021subcritical,MR2165583,MR2155704,MR2276449,van2014cycle}.

Hara and Slade proved their result using 
the \emph{lace expansion}, which we introduce in detail in \cref{sec:lace_expansion} and which can be thought of as a systematic method for proving that self interactions localize in sufficiently high dimensions. The lace expansion was originally introduced by Brydges and Spencer \cite{MR782962} to study \emph{weakly self-avoiding walk} and, in addition to percolation, has now been applied to a variety of models including (strictly) self-avoiding walk \cite{MR1174248}, lattice trees and animals \cite{derbez1997lattice}, and the Ising model \cite{sakai2007lace}. One drawback of the lace expansion is that it requires a ``small parameter'' to work, so that these results are restricted either to nearest-neighbor models in very high dimensions or ``spread-out'' models (i.e., the models in which $x$ and $y$ are considered adjacent if $\|x-y\|_1\leq L$ for a large constant $L$) in dimensions $d>d_c$. The record for applying the lace expansion to percolation on the standard hypercubic lattice is currently held by Fitzner and van der Hofstad \cite{fitzner2015nearest}, who treat $\Z^d$ for $d\geq 11$.  (Very recently Duminil-Copin and Panis \cite{duminil2024alternative,duminil2024alternativeSAW} and Duminil-Copin, Markar, Panis, and Slade \cite{duminil2026random} have developed new approaches to high-dimensional critical phenomena that do not use the lace expansion but have similar drawbacks.)


Although high-dimensional percolation has now been well-understood at the level of exponents for several decades, its analysis at the level of \emph{scaling limits} has remained open. Of course, the natural conjecture is that high-dimensional critical percolation clusters have the same scaling limit as critical branching random walk: \emph{super-Brownian excursion} \cite{slade2002scaling,le1999spatial}. Various precise conjectures have been made in this direction, with the oldest (formulated in terms of the first-order asymptotics of the $k$-point function) going back to the 1984 work of Aizenman and Newman \cite{MR762034} and with stronger formulations encompassing both the spatial structure of clusters and their internal geometry given by Hara and Slade
\cite{hara1998incipient} and van der Hofstad and Slade \cite{van2003convergence}.


In this paper we verify these conjectures for models satisfying the perturbative assumptions needed to implement the lace expansion. Precise statements of our results are given over the course of the introduction: We begin by explaining our results concerning the $k$-point function (\cref{thm:k_point}) in \cref{subsec:the_k_point_function_intro} (which are known to imply convergence of critical clusters to integrated super-Brownian excursion when viewed as measures) before discussing our results concerning the full scaling limit (\cref{thm:scaling_limit_intro}) in \cref{subsec:super_brownian_scaling_limits_intro},  where we consider the cluster not just as a measure on its range but as a metric measure space equipped with an embedding into $\R^d$. 
These scaling limit theorems are proven in a sufficiently strong topology (namely the rooted Gromov-Hausdorff-Prokhorov-function topology, see \cref{thm:scaling_limit_GHPf}) that we can deduce e.g.\ first-order asymptotic estimates on one-arm probabilities (\cref{cor:first_order_arm}), answering a further question of Heydenreich and van der Hofstad \cite{heydenreich2015progress}.
In \cref{subsec:intro_axiomatic} we explain the precise inputs we need from the lace expansion, namely the first-order asymptotics of the two-point function \eqref{eq:two_point_assumption} and the ``two-blob'' mixing estimate \eqref{eq:two_blob_assumption} stating that the microscopic geometry of the cluster around two points $x$ and $y$ are asymptotically independent when these two-points are distant and conditioned to be connected. In the same section we also state alternative, non-perturbative versions of our theorems that take these inputs \eqref{eq:two_point_assumption} and \eqref{eq:two_blob_assumption} as hypotheses but do not require any ``small parameter'' assumptions (\cref{thm:k_point_A_and_B}).

\begin{remark}[Long-range percolation]
Analogues of several of our main theorems have recently been established for \emph{long-range percolation} in the parallel works of the second-named author \cite{LRPpaper1,LRPpaper3} (see also the forthcoming work \cite{HutchcroftReeves}), which  also treat logarithmic corrections to super-L\'evy scaling at the upper critical dimension for effectively long-range models. These papers rely on a completely different method from the present paper that does not apply to finite-range models. (The methods of the present paper should also apply to long-range percolation under the perturbative conditions needed to implement the lace expansion, but we do not pursue this here.)
\end{remark}

\begin{remark}[Scaling limits on the torus]
Our results complement the recent work of the first-named author and Nachmias \cite{blanc2025critical}, in which it is proven that critical percolation on high-dimensional tori has the same scaling limit as the critical Erd\H{o}s-R\'enyi random graph \cite{addario2012,MR1434128}. (See also \cite{blanc2024scaling} for similar results for the hypercube.) Although sharing some similarities, both the results and proof methods of these works are rather different to ours since large clusters in the torus do not have any Euclidean structure, while the fine study of this structure is central to this work. (Indeed, a central difficulty in \cite{blanc2025critical} is to prove that large clusters on the torus are equidistributed over the torus in an appropriate sense.)
\end{remark}

\begin{remark}[Independent parallel works]
Several of our results have also been obtained in the independent parallel series of works of Chatterjee, Chinmay, Hanson, and Sosoe \cite{chatterjee2025robust,chatterjee2025limiting,chatterjee2026convergence}. 
Roughly speaking, \cite{chatterjee2026convergence} establishes the special case of \cref{thm:k_point} in which the distances between the points $x_1,\ldots,x_k$ are all of the same order, while \cite{chatterjee2025limiting} establishes the scaling limit of the intrinsic distance between $0$ and a distant vertex conditioned on these two points being connected (a fact that follows from our \cref{thm:geodesic_bb_pointwise} and \cref{thm:CompareDistance}). Meanwhile, the foundational work \cite{chatterjee2025robust} develops a new approach to the high-dimensional IIC that does not rely directly on the lace expansion and can be used to prove mixing estimates for large clusters playing a role analogous to our two-blob estimate (\cref{thm:two_blob}).
Indeed, using the results of \cite{chatterjee2025robust}, it should be possible to prove that our condition \eqref{eq:two_point_assumption} implies our condition \eqref{eq:two_blob_assumption} when $d>6$, so that the hypothesis \eqref{eq:two_blob_assumption} could be omitted from \cref{thm:k_point_A_and_B} and elsewhere; we do not pursue this in this paper to maintain the independence of our two works.

There is also a smaller overlap between our work and the independent work of Cabezas, Fribergh, Heydenreich, and J\'arai \cite{cabezas2025bi}, who used the lace expansion to construct what we call the monochromatic two-arm IIC measure. Indeed, as part of our proof we construct $k$-arm IIC measures for arbitrary $k\geq 2$ in two different senses, where we either require $k$ distinct large clusters (\cref{thm:scheme_function,remark:karmIIC}) or $k$ edge-disjoint connections to distant points (\cref{thm:scheme_function_variantW}); we refer to the latter as the monochromatic $k$-arm IIC measure by analogy with the planar case. Our proof of the existence of these $k$-arm IIC measures follows a different approach to that of \cite{cabezas2025bi}, which relies on the lace expansion much more directly than this work.
\end{remark}

\subsection{The $k$-point function}
\label{subsec:the_k_point_function_intro}
We write $\|\cdot\|$ for the Euclidean norm on $\R^d$, write $\|\cdot\|_1$ for the $\ell^1$ norm, and write $\langle x \rangle = \max\{1,\|x\|\}$ to avoid division by zero.
For $L\geq 1$, let $\Z^d_L$ be the graph with vertex set $\Z^d$ in which distinct vertices $x$ and $y$ are adjacent if $\|x-y\|_1\leq L$. 
Using the lace expansion, it is proven in \cite{MR1959796,MR2393990} that if $d$ is sufficiently large or $d>6$ and we work on the spread-out lattice $\Z^d_L$ with $L$ sufficiently large then the critical two-point function is given asymptotically by
\begin{equation}
\label{eq:two_point_assumption}
\tag{A}
T_{p_c}(x,y):=\P_{p_c}(x\leftrightarrow y) \sim A G(x,y) \sim A\frac{d\Gamma(d/2)}{(d-2) \pi^{d/2}} \langle x-y\rangle^{-d+2} \qquad \text{as $x-y\to\infty$,}
\end{equation}
where $A$ is a positive, lattice-dependent constant and $G$ is the Green's function for simple random walk on $\Z^d$. (Note in particular that $G$ always denotes the Green's function for simple random walk on $\Z^d$, even when we work with percolation on the spread-out lattice $\Z^d_L$.) We sometimes refer to the constant $A$ as the \textbf{amplitude}. 
The \textbf{tree-graph inequalities} of Aizenman and Newman \cite{MR762034} bound the critical \textbf{$k$-point function} by
\begin{equation}
T_{p_c}(x_1,x_2,\ldots,x_k):=\proba_{p_c}(x_1\slr x_2\slr \dots\slr x_k) \leq \sum_{\cT\in \mathsf{Tr}(k)} \sum_{\substack{\Phi:V(\cT)\to \Z^d \\ \Phi(i)=x_i \forall 1\leq i \leq k}} \prod_{u\sim v}T_{p_c}(\Phi(u),\Phi(v)),
\label{eq:tree_graph_intro}
\end{equation}
where $\mathsf{Tr}(k)$ is a set of isomorphism class representatives of trees with $k$ labelled leaves $\{1,\ldots,k\}$ and unlabelled internal vertices all of which have degree $3$. Diagrammatically, these bounds can be represented as
\begin{align*}
  T_{p_c}(x,y,z) &\leq \begin{array}{l}\includegraphics{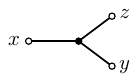}\end{array},\\
  T_{p_c}(x,y,z,w) &\leq \begin{array}{l}\includegraphics{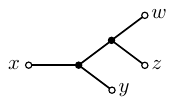}\end{array}+
  \begin{array}{l}\includegraphics{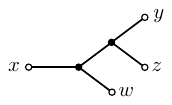}\end{array}
  +
  \begin{array}{l}\includegraphics{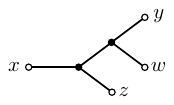}\end{array},
\end{align*}
and so on, 
where each line represents a copy of the two-point function $T_{p_c}$ and black vertices are summed over. (We stress that these diagrammatic expressions represent quantities, not events.  See \cref{sec:lace_expansion} for more details on our use of diagrammatic notation.) The tree-graph inequalities are valid in every dimension but are expected to be of the correct order only in high dimensions. 
Substituting in the asymptotic estimate \eqref{eq:two_point_assumption} and estimating sums by integrals, we obtain in high dimensions that
\[
 T_{p_c}(x_1,x_2,\ldots,x_k) \leq (1+o(1)) A^{2k-3} G(x_1,\ldots,x_k)
\]
as $\min_{i\neq j}\|x_i-x_j\|\to \infty$ and $k\geq 2$ is fixed, where we define the $k$-point Green's function 
 $G(x_1,\ldots,x_k)$ by
\[
G(x_1,\ldots,x_k) := \sum_{\cT\in \mathsf{Tr}(k)} 
\sum_{\substack{\Phi:V(\cT)\to \Z^d \\ \Phi(i)=x_i \forall 1\leq i \leq k}}
 \prod_{u\sim v} G(\Phi(u),\Phi(v)).
\]
In the same 1984 paper mentioned above, Aizenman and Newman \cite[Eq.\ (4.9)]{MR762034} also conjectured that there exists a positive constant $V$, known as the \textbf{vertex factor}, such that the high-dimensional critical percolation $k$-point function satisfies
 \begin{align}
 T_{p_c}(x_1,x_2,\ldots,x_k) &\sim V^{k-2} \sum_{\cT\in \mathsf{Tr}(k)} \sum_{\substack{\Phi:V(\cT)\to \Z^d \\ \Phi(i)=x_i \forall 1\leq i \leq k}} \prod_{u\sim v} T_{p_c}(\Phi(u),\Phi(v))
 \nonumber\\
 \label{eq:k_point_conjecture}
 &\sim V^{k-2} A^{2k-3} G(x_1,\ldots,x_k)
 \end{align}
as $\min_{i\neq j}\|x_i-x_j\|\to \infty$ for each fixed $k\geq 2$. Diagrammatically, this means that 
\begin{align*}
  T_{p_c}(x,y,z) &\sim \begin{array}{l}\includegraphics{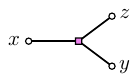}\end{array},\\
  T_{p_c}(x,y,z,w) &\sim \begin{array}{l}\includegraphics{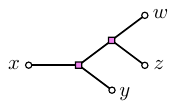}\end{array}+
  \begin{array}{l}\includegraphics{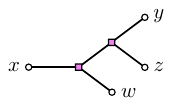}\end{array}
  +
  \begin{array}{l}\includegraphics{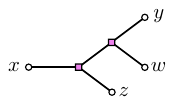}\end{array},
\end{align*}
and so on, where the violet square at each internal vertex indicates that we should both sum over the choice of that vertex and multiply by a copy of
the vertex factor $V$.
As with the constant $A$, the vertex factor $V$ should depend on the microscopic structure of the lattice and is \emph{not} expected to be universal; it encodes the precise interaction between the three different parts of a large cluster that meet at a point where the paths between three macroscopically separated vertices branch off from each other. Note that a similar asymptotic formula is true for critical branching random walks with finite offspring variance, where the vertex factor $V$ is precisely the variance of the offspring distribution and the constant $A$ encodes the diffusivity of the underlying random walk \cite[Proposition 3.1]{angel2021tail}. As we explain in detail below (and in \cref{sec:integrated_super_brownian_motion}), the formula \eqref{eq:k_point_conjecture} easily implies that the critical cluster has integrated super-Brownian excursion as its scaling limit when considered as a random measure.


Substantial partial progress was made on the conjecture \eqref{eq:k_point_conjecture} by Hara and Slade \cite{MR1773141,MR1757958}, who proved among other things that the Fourier transform of the $3$-point function has an asymptotic form consistent with the conjecture \eqref{eq:k_point_conjecture}. Analogues of \eqref{eq:k_point_conjecture} have also been established for the Fourier transform of the critical $k$-point function in \emph{oriented} percolation \cite{van2003convergence}, the \emph{contact process} \cite{van2010convergence}, and \emph{lattice trees} \cite{holmes2008convergence} in high dimensions, but the conjecture has remained open for unoriented finite-range percolation (even at the level of Fourier transforms). 


The first main result of this paper proves the conjectured $k$-point asymptotics \eqref{eq:k_point_conjecture} under the usual perturbative assumptions of the lace expansion. 

\begin{theorem}[The $k$-point function for high-dimensional percolation]
\label{thm:k_point}
 Consider \emph{critical} range-$L$ Bernoulli bond percolation on $\Z^d$. There exist constants $d_0>6$ and $L_0<\infty$ such that if either $d\geq d_0$ or $d>6$ and $L \geq L_0$ then there exist positive constants $A$ and $V$ such that
 \[
 T_{p_c}(x_1,x_2,\ldots,x_k) \sim V^{k-2} A^{2k-3} G(x_1,x_2,\ldots,x_k) 
 \]
 as $\min_{i\neq j}\|x_i-x_j\|\to \infty$ for each fixed $k\geq 2$.
\end{theorem}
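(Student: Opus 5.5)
The plan is to deduce \cref{thm:k_point} from its axiomatic counterpart: I will prove that the two inputs \eqref{eq:two_point_assumption} and \eqref{eq:two_blob_assumption} imply the $k$-point asymptotics. The input \eqref{eq:two_point_assumption} is supplied by \cite{MR1959796,MR2393990}. The two-blob mixing estimate will be derived from the lace expansion under the same perturbative hypotheses. The proof of the $k$-point asymptotics is by induction on $k$. The case $k=2$ is exactly \eqref{eq:two_point_assumption}, since $G(x_1,x_2)$ is the single-edge tree. The tree-graph inequality \eqref{eq:tree_graph_intro} already gives the upper bound up to the constant $V^{k-2}$. What remains is to identify the branching structure and extract a sharp constant.

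\textbf{Step 1 (localization of branch points).} On the event $x_1\leftrightarrow\cdots\leftrightarrow x_k$, consider the minimal subgraph spanning the $x_i$. Split it at the pivotal edges, i.e. the edges separating the terminals. Tree-graph-type bounds show the following: with probability $1-o(1)$ relative to $A^{2k-3}G(x_1,\ldots,x_k)$, the configuration consists of $k-2$ ``branch regions'' of bounded diameter $R$. These are joined by $2k-3$ long arms, which are chains of pivotal-separated blobs, and the combinatorial pattern matches some $\cT\in\mathsf{Tr}(k)$. The excluded configurations are of two kinds: two branch points at intermediate distance, or a branch region of large diameter. Both are controlled by the error diagrams arising from summing over extra vertices. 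These are finite and small in $d>6$ thanks to the triangle condition and the decay of the tail $\sum_{\|u\|>R}$.

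\textbf{Step 2 (factorization and the vertex factor).} I will fix a tree $\cT$ and branch locations $\Phi(u)$ for the internal vertices. I then decompose the event at the first pivotal edge on each arm leaving a branch region. Conditionally on the local configuration near a branch point, the three emanating arms should behave like independent two-point connections to far-away points. The two-blob estimate \eqref{eq:two_blob_assumption} is exactly what asserts that the microscopic cluster geometry near the two endpoints of a long connection decouples. Iterating it along each arm, and using \eqref{eq:two_point_assumption} for each long arm, yields a contribution of $A^{2k-3}\prod_{u\sim v}G(\Phi(u),\Phi(v))$ times a product over internal vertices of a local quantity. That local quantity is the probability that the origin is a ``trifurcation'' of a 3-arm incipient cluster, normalized by the arm amplitudes. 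I define $V$ as the limit of this local quantity as the three arm directions go to infinity. Existence of the limit, and its independence of the directions, should come from the existence of the 3-arm IIC measure, using the mixing estimate and monotone convergence in $R$. Positivity of $V$ follows from the matching lower bound for $k=3$, which a BK/FKG second-moment argument provides.

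\textbf{Step 3 (summation).} Summing over the $\Phi(u)$ with the branch regions localized, and replacing sums by integrals, gives $V^{k-2}A^{2k-3}G(x_1,\ldots,x_k)(1+o(1))$. The regime where some $\Phi(u)$ lies near a terminal $x_i$ is negligible because $\min\|x_i-x_j\|\to\infty$.

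I expect the main obstacle to be Step 2. Making the decoupling at branch points quantitative requires conditioning on a local configuration and still controlling the global connection, despite the percolation self-interaction, which is exactly where the vertex factor arises. My first attempt would be to use the pivotal decomposition together with the BK inequality to express the deviation from factorization as a sum of diagrams with an extra loop near some branch point. I would then bound these via the two-blob estimate applied to each adjacent arm, showing they vanish as $R\to\infty$ uniformly in the distances.
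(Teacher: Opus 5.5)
Your outline has the right architecture (localize the branch points, extract a local vertex factor, sum over branch locations), but Step 2, where the real difficulty lies, has no working mechanism. The two-blob estimate \eqref{eq:two_blob_assumption} concerns a \emph{single} connection $x\leftrightarrow y$. Applied to each arm separately, it describes the local picture at the two ends of that arm. It says nothing about the joint law of three arms leaving a common region, which are forced to be separated (edge-disjoint, or in distinct clusters once the branch region is removed) and which interact through the self-avoidance of percolation.

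BK and diagrammatic bounds only bound error terms from above. They give an asymptotic \emph{equality} only once you have an exact representation in which \eqref{eq:two_blob_assumption} is applied to independent objects. Nor can you appeal to the existence of a 3-arm IIC: it is not a direct consequence of \eqref{eq:two_point_assumption} and \eqref{eq:two_blob_assumption}, it is essentially what has to be proved, and there is no evident monotonicity in $R$ to pass to the limit.

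The missing idea is the paper's coupling. Sample one \emph{independent} critical cluster $\tilde K_n$ per required connection, and build the true clusters by sequential exploration, with $K_n$ equal to $\tilde K_n$ off the previously explored clusters and the closed edge sets; this gives the exact identity \eqref{eq:InductiveAizenmanOff}. One then shows (\cref{prop:BlobOffApprox}, via \cref{lem:2Arm} and open-triangle-type diagrams, which is where $d>6$ enters) that, given every $\tilde K_n$ makes its connection, the exclusion acts only inside bounded balls around the marked points. Hence \eqref{eq:two_blob_assumption}, applied separately to each independent $\tilde K_n$, yields the joint local limit and the factorization of \cref{thm:scheme_function}. The constants then exist by a Cauchy argument: the ratio of the scheme function to $A^{|E(S)|}G_S$ does not depend on the localization radii. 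No monotonicity or a priori IIC is needed.

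The same coupling is also what gives a lower bound of the right order for three separated arms. Your ``BK/FKG second moment'' does not. FKG bounds the probability of the three connections from below, but for distant endpoints that probability is dominated by configurations in which two arms lie in a single cluster branching far away. So nothing useful survives subtracting them.

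Second, you never explain how summing over branch locations $\Phi(u)$ avoids overcounting. Typically several points near a branch location have three edge-disjoint long arms, so ``the probability that the origin is a trifurcation'' counts each configuration several times unless you fix a canonical branch point or reweight. The paper writes $T_{p_c}(x_1,\ldots,x_k)=\sum_{\cT}\sum_{x_{k+1},\ldots,x_{2k-2}}\E[\mathbbm{1}(\Psi_\cT)/N]$ exactly. It proves (\cref{pro:CountBranchT}) that on $\Psi_\cT$ the tree is usually unique and $N=\prod_{i>k}|\cS^\Psi_{R_1,R_2}(x_i)|$ is a product of local counts. $V$ then arises as the limit of the local constants $C(R_1,R_2)$ supplied by the edge-disjoint tree asymptotics of \cref{thm:scheme_function_variant}, which are themselves derived from \cref{thm:scheme_function}. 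Positivity comes from \cref{lem:three_point_lower}.

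Your pivotal decomposition could replace the $1/N$ weighting. Take each branch region to be the 2-edge-connected block at which the bridge tree of the terminals branches, rooted canonically. The event that a given bounded block sits at $w$ with given boundary bridges is then, up to explicit local factors, a scheme event with distinct clusters, as in the proof of \cref{lem:two_intrinsic_distances_truncated}. But that route still rests entirely on the multi-arm factorization you have not supplied.
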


\begin{remark}[$k$-point functions at and below the critical dimension]
For $d=d_c=6$ one expects a similar asymptotic estimate to hold but where the vertex factor $V$ is now a slowly decaying function of the separation between points. For example,  extrapolating from the non-rigorous results of \cite{essam1978percolation} leads to the conjecture
\[
T_{p_c}(x,y,z) \sim V( d_{\min}(x,y,z) )\sum_w T_{p_c}(x,w)T_{p_c}(w,y)T_{p_c}(w,z) 
\]
as $d_{\min}(x,y,z)\to \infty$, where $d_{\min}(x,y,z)=\min\{\|x-y\|,\|y-z\|,\|z-x\|\}$ and $V(R) \sim \text{const.}\, (\log R)^{-4/7}$. While related results have recently been established for \emph{long-range} percolation at the critical dimension \cite[Theorem III.1.8]{LRPpaper3}, there are not yet any rigorous results concerning finite-range critical percolation in six dimensions.
 For $d<6$ the relationship between the $2$-point and $3$-point functions is conjectured to have the rather different form \[T_{p_c}(x,y,z)\sim C \sqrt{T_{p_c}(x,y)T_{p_c}(y,z)T_{p_c}(z,x)},\] where $C$ is a universal (dimension-dependent) constant, as has recently been proven for site percolation on the triangular lattice \cite{ang2021integrability,camia2024conformal}. It has also recently been proven that the upper bound $T_{p}(x,y,z)\leq \sqrt{8T_{p}(x,y)T_{p}(y,z)T_{p}(z,x)}$ holds on every graph \cite{gladkov2024percolation}, an inequality that is expected to be sharp for critical percolation on $\Z^d$ if and only if $d<6$. See \cite{LRPpaper2} for related results for long-range percolation in low effective dimension and extensions of the inequality of \cite{gladkov2024percolation} to higher $k$-point functions.
\end{remark}

\subsection{Super-Brownian scaling limits}
\label{subsec:super_brownian_scaling_limits_intro}

 We now state our main theorem concerning the full scaling limit of high-dimensional percolation clusters. These scaling limit theorems will be stated in terms of \textbf{super-Brownian excursion}, a universal limit object describing the scaling limits of critical branching random walks. 

Following Le Gall's \emph{Brownian snake} construction \cite{le1999spatial}, we view super-Brownian excursion as a \emph{continuum random tree} \cite{CRT1,aldous1993continuum,aldous1991continuum} (\cref{fig:CRT_background}) that is embedded into $\R^d$ using Brownian motion; this perspective will be particularly important to us as we will be interested in taking scaling limits that keep track of the intrinsic geometry of clusters in addition to their spatial location: in the branching random walk, the continuum random tree describes the scaling limit of the genealogical structure of the particles while the Brownian embedding describes the limit of how these particles are located in space.

Although the convergence of (unoriented, finite-range) high-dimensional critical percolation to super-Brownian excursion has been a long-standing open problem, analogous convergence theorems have been established for a large number of other statistical mechanics models in high dimensions including branching random walk \cite{watanabe1968limit,borgs1999mean}, lattice trees \cite{holmes2008convergence,derbez1998scaling,derbez1997lattice}, the contact process \cite{durrett1999rescaled}, the voter model \cite{cox1999rescaled}, and oriented percolation \cite{van2003convergence,van2006infinite}; see e.g.\ \cite{perkins2002part,dynkin1994introduction,le1999spatial} for detailed introductions to the  theory of superprocesses and
 \cite{slade2002scaling} for a survey of their applications to high-dimensional statistical mechanics.


 In addition to verifying that critical high-dimensional percolation has super-Brownian excursion as its scaling limit, the following theorem also shows that several natural metrics on critical clusters are equivalent up to constant factors in the scaling limit (see also \cref{lem:uniform_distance_comparison}). The precise metrics considered are the \textbf{chemical distance} $d_\mathrm{chem}$, meaning the graph distance on the cluster, the \textbf{pivotal distance}
  $d_\mathrm{piv}$, where the (semimetric) distance between two points is the number of open pivotals between them, and the \textbf{resistance distance}, where the distance between two points is the effective resistance between them in the cluster (see e.g.\ \cite{LP:book} for background).
 In the statement of this theorem we also write 
$\mathbb{N}$ to denote the canonical measure of the continuum random tree $(\mathscr{T},d,\nu,o)$ equipped with its Brownian embedding $\Phi$ into $\R^d$, which is introduced in more detail below and in \cref{sec:scaling_limit}.

\begin{theorem}[The scaling limit of high-dimensional percolation clusters]
\label{thm:scaling_limit_intro}
 Consider \emph{critical} range-$L$ Bernoulli bond percolation on $\Z^d$. Let $K$ be the cluster of the origin, let
 $\mu_K$ denote the counting measure on $K$, and let $\Phi_K$ denote the inclusion map from $K$ to $\R^d$.
  There exist constants $d_0>6$ and $L_0<\infty$ such that if either $d\geq d_0$ or $d>6$ and $L \geq L_0$ then there exist positive constants $C_\text{\emph{prob}}$, $C_\text{\emph{chem}}$, $C_\text{\emph{piv}}$, $C_\text{\emph{res}}$, and $C_\text{\emph{vol}}$ such that
 \[
   C_\mathrm{prob} r^2 \cdot \P_{p_c}\bigl(|K| \geq \lambda C_\mathrm{vol} r^4\bigr) \sim  \lambda^{-1/2} 
 \]
 as $r\to\infty$ for each $\lambda>0$ and
\begin{multline*}
\P_{p_c}\left( \left(K,\frac{d_\text{\emph{chem}}}{C_\text{\emph{chem}} r^2} , \frac{d_\text{\emph{piv}}}{C_\text{\emph{piv}} r^2}, \frac{d_\text{\emph{res}}}{C_\text{\emph{res}} r^2}, \frac{\mu_K}{C_\text{\emph{vol}}r^4}, \frac{1}{r}\Phi_K,0 \right)\in \cdot \;\Bigg|\; |K| \geq \lambda C_\mathrm{vol} r^4 \right) 
\\\longrightarrow \mathbb{N}\Bigl((\mathscr{T},d,d,d,\nu,\Phi,o) \in \cdot \mid \nu(\sT) \geq \lambda\Bigr)
\end{multline*}
weakly in the weak array topology as $r\to \infty$ for each $\lambda>0$.
 Moreover, the constants $C_\mathrm{prob}$ and $C_\mathrm{vol}$ are given by $C_\mathrm{prob}=AV/\N(\nu(\sT)\geq 1)$ and $C_\mathrm{vol}=A^2V$.
\end{theorem}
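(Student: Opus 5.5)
The proof splits into four parts: a tail estimate for $|K|$, convergence of the spatial measure, convergence of the tree metrics, and comparison of the three metrics. Throughout, \cref{thm:k_point} is the main input.

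\textbf{Step 1: volume tail and measure convergence.} For the volume tail, sum \cref{thm:k_point} over $x_2,\ldots,x_k$ at scale $r$ to get the asymptotics of the rescaled moments
\[
\E_{p_c}\Bigl[\textstyle\int f\,d\bigl(r^{-4}\mu_K\circ (r^{-1}\Phi_K)^{-1}\bigr)^{\otimes (k-1)}\Bigr],
\]
restricted to clusters of radius of order $r$. These are exactly the moments of the integrated super-Brownian excursion measure under $\N$, multiplied by $C_\mathrm{prob}r^{-2}$ after the normalisation $C_\mathrm{vol}=A^2V$. The constants arise as follows. Each of the $2k-3$ edges gives a factor $A$ times a Green's function, which becomes a Brownian transition density integrated over time. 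Each of the $k-2$ internal vertices gives a factor $V$.

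The tree-graph bound \eqref{eq:tree_graph_intro}, together with the contribution of configurations in which some $x_i$ are close, shows that the points within distance $o(r)$ of each other contribute negligibly. Since $k$ is arbitrary, a moment-determinacy argument gives convergence of the rescaled cluster measure under $r^2\P_{p_c}$, on the event of a nontrivial mass, to its counterpart under $\N$. The canonical measure has growth of order $k!\,C^k$ on these events, so the moments determine it. Evaluating on $\{\nu\ge\lambda\}$, using that $\N(\nu(\sT)\geq\lambda)=\lambda^{-1/2}\N(\nu(\sT)\geq 1)$ by scaling, yields the tail formula. This needs a small truncation argument to pass from moments to probabilities of mass events, because $\{\nu\ge\lambda\}$ is a continuity set.

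\textbf{Step 2: metric structure.} To upgrade to the weak array topology, I would establish joint convergence of finite-dimensional marginals. Sample $k$ points from $\mu_K$ and record their pairwise distances in each metric together with their rescaled positions. This requires refined $k$-point functions that also weight by $\exp(-\sum_e t_e\,\ell_e)$, where $\ell_e$ is the number of pivotals, chemical length, or resistance along each branch of the geodesic tree. I would prove that along a long connection $x\leftrightarrow y$, the pivotal count, chemical length and resistance each grow linearly in a common ``renewal'' parameter with constants $C_\mathrm{piv},C_\mathrm{chem},C_\mathrm{res}$.

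The mechanism is that the cluster between consecutive pivotals consists of i.i.d.-like sausages, by the two-blob mixing estimate \eqref{eq:two_blob_assumption}, so that a law of large numbers holds along each branch. Combined with the Brownian-bridge structure in the Green's-function integrals, this gives the continuum tree with Brownian embedding, with the three metrics all proportional to $d$.

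\textbf{Main obstacle.} The hardest step is the pattern-counting variant of the $k$-point asymptotics: showing that the number of pivotals (or the sausage lengths) along each branch concentrates around a constant times the branch's Green-function time, uniformly enough to be summed into the tree formula. I would first do the case $k=2$, proving that the pivotal count between $0$ and $x$ given $0\leftrightarrow x$, divided by $\|x\|^2$, converges in law to a multiple of the Brownian-bridge time variable. This would use the second-moment method on pattern counts, with the two-blob estimate decorrelating distant patterns. I would then extend to general $k$ by the same tree decomposition used for \cref{thm:k_point}.

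Finally, tightness in the weak array topology follows from the moment bounds, so no additional compactness argument should be needed beyond uniform integrability supplied by the tree-graph inequalities.
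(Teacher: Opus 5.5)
Step 1 of your outline matches the paper. The rest of the proposal has two genuine gaps, and the first sits in the step you yourself flag as hardest.

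\textbf{The law of the distances cannot come from a second-moment argument.} Conditional on $0\leftrightarrow x$, the rescaled pivotal count does \emph{not} concentrate. By \cref{thm:CompareDistance}, its unconditioned first and second moments are asymptotic to $G_\mathrm{piv}A^2G^{*2}(0,x)$ and $2G_\mathrm{piv}^2A^3G^{*3}(0,x)$. The conditional second moment over the squared conditional mean is then $2G^{*3}G/(G^{*2})^2\to(d-4)/(d-6)>1$. The limit law has density proportional to $t\mapsto p_t(x)$ and only finitely many finite moments. So a second-moment argument on pattern counts cannot identify it. ``Concentration around the branch's Green-function time'' also has no rigorous counterpart: no such variable exists in the discrete model. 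Nor are branches canonically defined, since branch points are not unique (this is the overcount-factor issue).

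What is needed, and what the paper proves, are pattern-counting asymptotics of \emph{every} order. \cref{thm:geodesic_bb_pointwise} gives the probability that $x_1,\ldots,x_k$ are joined along a tree whose edges are subdivided by arbitrarily many prescribed open pivotals $x_{k+i}+e_i$. The proof has three ingredients:
\begin{itemize}
\item each marked pivotal is treated as an extra degree-two vertex carrying the local cylinder event $\mathscr{P}_r$;
\item one applies \cref{thm:scheme_function_variantW}, the overcount localization and \cref{lem:DegenerateSum};
\item the local constant is identified as $\frac{2p_c}{1-p_c}\mathbf{W}(\emptyset,0,e^+)$ via the derivative asymptotics.
\end{itemize}
Pairwise pivotal distances then expand directly into sums over marked edges, giving all mixed moments jointly with positions. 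Full-space moments $\E[d_\mathrm{piv}(0,x)^m\mathbbm{1}(0\leftrightarrow x)]$ are infinite once $2(m+1)>d$. So this must be done for the truncated distances $d^{(\phi,r)}_\mathrm{piv}$ under $\phi$-biased measures, together with the factorial continuum bound (\cref{lem:bb_moment_asymptotics,lem:CRT_SBM_Carleman}). Your Laplace weights $e^{-\sum t_e\ell_e}$ avoid this divergence only formally: you give no way to evaluate them except by expanding, which brings it back.

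\textbf{Where the second-moment method does apply, and tightness.} The second-moment method is the right tool for a different step: comparing $d_\mathrm{chem}$ and $d_\mathrm{res}$ with $d_\mathrm{piv}$ in $L^2$ (\cref{thm:CompareDistance}). Even there, the two-blob estimate alone is not enough. You need factorization of scheme functions on paths of three and four vertices, whose middle blobs encode an entire bounded sausage. You also need a separate argument that sausages of large diameter contribute negligibly (\cref{lem:TriangleCountCycle,lem:sausage_functional_truncation_removal}, via triangle-type diagrams and a BK annulus-crossing bound); your LLN sketch omits this.

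Your tightness claim is false. The total mass has infinite mean under $\N$ and under the $\phi$-biased measures. The expected rescaled mass in an annulus at radius $\lambda$ grows like $\lambda^2$. So no moment bound or uniform integrability from tree-graph inequalities can stop mass escaping to infinity. The paper uses the Kozma--Nachmias one-arm bound, through \cref{lem:two_point_conditioned_on_arm}, for three things:
\begin{itemize}
\item tightness of the biased measures;
\item replacing $\mu_{K,r}[\phi_M]$ by $|K|$ in the volume-tail asymptotics;
\item removing the $\phi$-truncation of the distances.
\end{itemize}
Your ``restriction to clusters of radius of order $r$'' in Step 1 silently relies on exactly this. (Also, the prefactor in Step 1 should be $(AVr^2)^{-1}$, not $C_\mathrm{prob}r^{-2}$.)
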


Here, \textbf{weak convergence in the weak array topology} means that if, after sampling $K$, we set $X_0=0$ and let $X_1,X_2,\ldots$ be independent uniform random elements of $K$ then
   for each fixed $k\geq 1$ the conditional law given $|K| \geq \lambda C_\mathrm{vol} r^4$ of the finite-dimensional array of random variables
\[
  \left(\frac{|K|}{C_\mathrm{vol}r^4}, \left(\frac{d_\#(X_{i},X_{j})}{C_\# r^2}:\#\in \{\mathrm{chem},\mathrm{piv},\mathrm{res}\}, 0\leq i \leq j \leq k\right), \left(\frac{X_i}{r}\right)_{0\leq i \leq k} \right)
\]
converges weakly to the distribution under the measure $\N( \cdot \mid \nu(\sT) \geq \lambda )$ of the finite-dimensional array
\[
  \left(\nu(\sT), \left(d(Y_{i},Y_{j}):\#\in \{\mathrm{chem},\mathrm{piv},\mathrm{res}\}, 0\leq i \leq j \leq k\right), \left(\Phi(Y_i)\right)_{0\leq i \leq k} \right),
\]
where, conditional on $(\sT,\nu,\Phi,o)$, we set $Y_0=o$ and let $Y_1,Y_2,\ldots$ be independent random samples from the normalized measure $\nu/\nu(\sT)$.
(In the context of measure spaces with a single metric and no additional data this is usually known as the \textbf{Gromov-weak topology}. The fact that $d(Y_i,Y_j)$ does not depend on $\#$ in the second array is not a typo.) This theorem implies in particular that the law of the random measure $\mu_K/C_\mathrm{vol} r^4$ under the conditional measure 
$\P_{p_c}(\;\cdot\; \mid |K|\geq \lambda C_{\mathrm{vol}} r^4)$ 
converges weakly to the law of the appropriate integrated super-Brownian analogue, namely the law of the pushforward $\mu=\Phi_*\nu$ of $\nu$ by $\Phi$ under the conditional measure $\N( \;\cdot\;\mid \nu(\sT) \geq \lambda)$. (Note that the canonical measure $\N$ is not a finite measure, but $\N(\nu(\sT)\geq \lambda)$ is finite for every $\lambda>0$ so that the conditional measure $\N( \;\cdot\;\mid \nu(\sT) \geq \lambda)$ is well-defined as a probability measure.) Versions of the theorem stated with respect to stronger topologies and their consequences are discussed below.



\begin{figure}
\centering
\includegraphics[clip, trim = 1cm 1cm 0.3cm 1.17cm, width=0.53\textwidth]{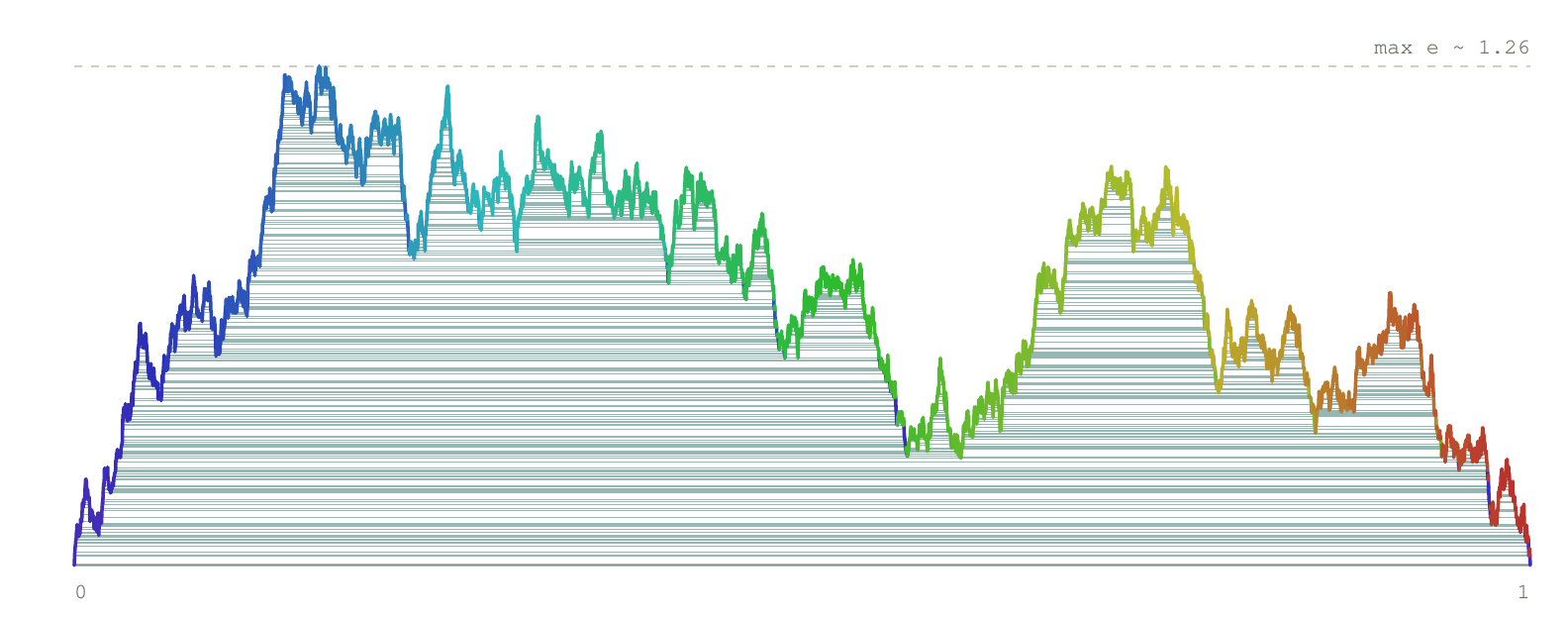} \;
\includegraphics[clip, trim = 0cm 4cm 1cm 3.5cm, width=0.41\textwidth]{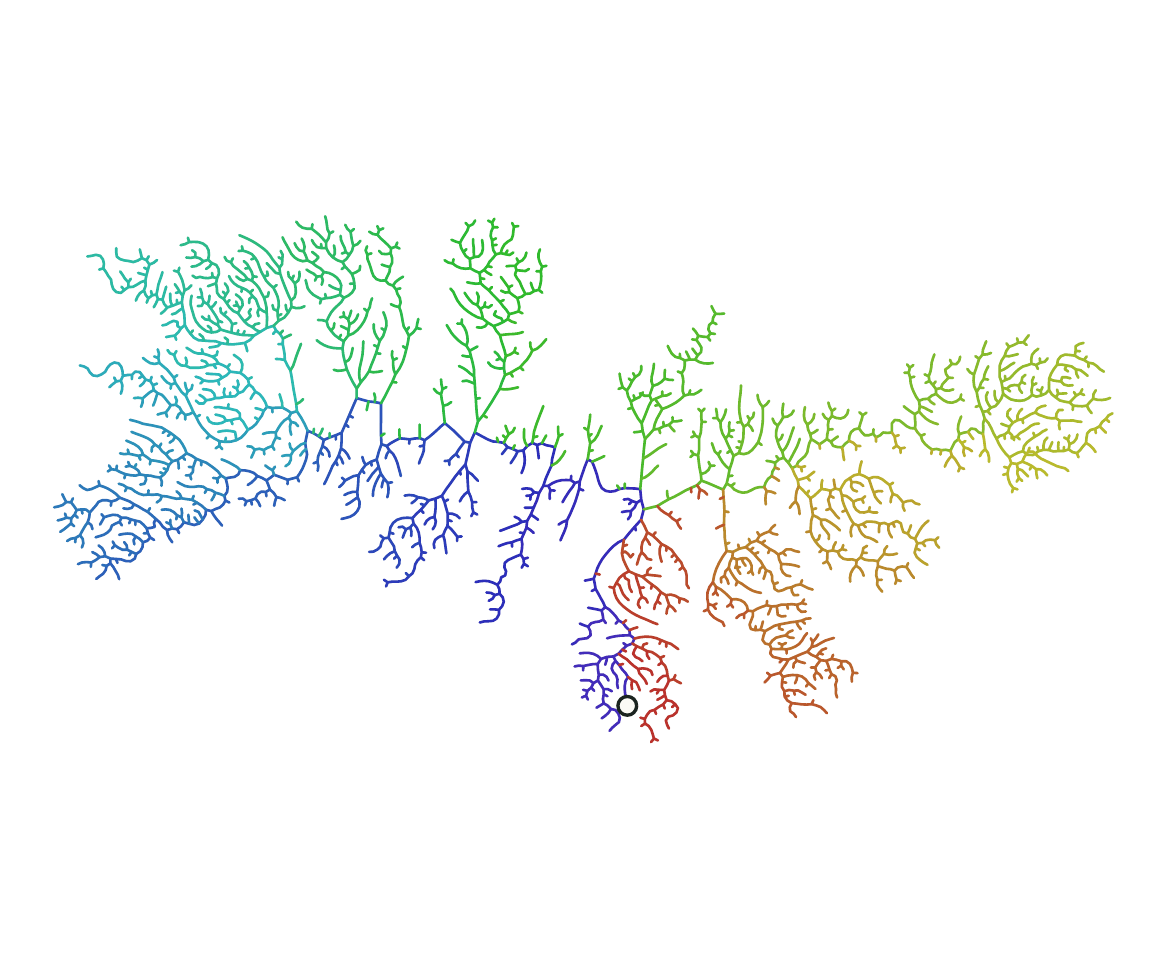}
\caption{Aldous' construction of the continuum random tree \cite{aldous1991continuum}. At the discrete level, there is a bijection between finite plane trees and \emph{Dyck paths} (paths in the natural numbers starting and ending at zero and moving by $\pm 1$ at each step): following the contour around  a plane tree and keeping track of the distance to the root yields a Dyck path, and conversely we can construct a plane tree from a Dyck path $(x_i)_{i=0}^{2n}$ by identifying two points $i$ and $j$ in the path if $x_i=x_j$ and $x_k \geq x_i$ for every $k$ between $i$ and $j$. The continuum random tree (CRT) of mass $1$ can be constructed by applying an analogous quotient operation to
the \emph{Brownian excursion}, i.e., the scaling limit of a uniform random Dyck path of fixed length.
  In our context it is natural to consider trees that have random rather than fixed mass, and more specifically to consider the \emph{canonical measure} on the continuum random tree, which is scale-invariant and arises as the pushforward under the same quotient map of \emph{It\^{o}'s excursion measure}.
   See e.g.\ \cite{le1999spatial,le2005random} for further background.
 }
 \label{fig:CRT_background}
\end{figure}

For our purposes, both the canonical measure\footnote{We warn the reader that several different conventions appear in the literature regarding the precise definition of the canonical measures for the continuum random tree and super-Brownian excursion. Essentially, one can re-scale volumes, intrinsic distances in the tree, and Euclidean distances by different constants to obtain a measure that differs from ours unless the three scaling factors are of the form $(\lambda^4,\lambda^2,\lambda)$. Different conventions for these scaling factors may be more or less natural in different settings. We use the convention that results in the vertex factor being $1$, lengths of ``branches'' in the tree being ``distributed'' as the Lebesgue measure normalized to have $\mathrm{Leb}([0,1])=1$, and the spatial motion being the Brownian motion with covariance matrix $\frac{1}{d}I$, which has independent coordinates and $\E\|B_t\|^2_2=t$, leading to the particular form of the 
moment formula \eqref{eq:canonical_measure_diagrams}. This specific choice of normalization is not important, but it is important to be consistent. In the notation of Le Gall \cite{le1999spatial}, our measure $\N$ is the canonical measure of the $(\frac{1}{2d}\Delta,\psi)$-superprocess with branching mechanism $\psi(u)=\frac{1}{2}u^2$, this choice of $\psi$ making the vertex factor $V=\psi''(0+)$ equal to $1$.} 
 $\N$ and the convergence theorem \cref{thm:scaling_limit_intro} are best understood in terms of (asymptotic) moment formulas extending the $k$-point function asymptotics of \cref{thm:k_point}. Let us now explain how this works when considering the convergence of the normalized counting measure on $K$ to integrated super-Brownian excursion (i.e., when we ignore intrinsic distances),  deferring the corresponding discussion for the convergence as an embedded metric space to \cref{sec:scaling_limit}.  The canonical measure of integrated super-Brownian excursion (i.e., the ``law'' of $\mu=\Phi_* \nu$ under the canonical measure $\mathbb{N}$) has the property that
\begin{equation}
\label{eq:canonical_measure_diagrams}
  \N\left[\prod_{i=1}^n \int f_i(x) \dif \mu(x) \right]
  =
  \idotsint G_\mathrm{SBM}(0,x_1,\ldots,x_n) \prod_{i=1}^n f_i(x_i)\dif x_1 \cdots \dif x_n
\end{equation}
for every collection of continuous, compactly supported functions $f_1,\ldots,f_n:\R^d\to \R$, where
\begin{align*}
  G_\mathrm{SBM}(x_1,\ldots,x_k) &= \lim_{\lambda\to \infty} \lambda^{(d-2)(2k-3)-d(k-2)} G([\lambda x_1],\ldots,[\lambda x_k])
\\
&= \sum_{\cT \in \mathsf{Tr}(k)} \idotsint \prod_{u\sim v} G_\mathrm{BM}(\Phi(u),\Phi(v))  \prod_{u\in V^\circ(\cT)} \dif \Phi(u)
\end{align*}
with $V^\circ(\cT)$ the set of unlabelled vertices of $\cT$, $\Phi(i)=x_i$ for each $1\leq i \leq k$, and where $G_\mathrm{BM}$ is the Green's function for the isotropic Brownian motion on $\R^d$ normalized to have $\E\|B_1\|_2^2=1$ and $[x]$ denotes the closest element of $\Z^d$ to the point $x\in \R^d$.
 When $d>4$ the formula \eqref{eq:canonical_measure_diagrams} uniquely characterises the canonical measure of integrated super-Brownian excursion as shown in \cite[Section I.5.4]{LRPpaper1},  and this characterisation can be taken as the definition for the purposes of this paper. A similar characterisation of the full tuple $(\mathscr{T},d,\nu,\Phi,o)$ is provided by \cref{lem:CRT_SBM_diagram_moment_formula,lem:CRT_SBM_Carleman}.
 It follows immediately from \cref{thm:k_point} and \eqref{eq:canonical_measure_diagrams} that
 \begin{equation}
 \label{eq:spatial_moment_asymptotics_intro}
    AV r^2 \cdot \E_{p_c}\left[  \prod_{i=1}^n \int f_i(x/r) \frac{\dif \mu_K (x)}{A^2V r^4}\right] 
 \to  \N\left[\prod_{i=1}^n \int f_i(x) \dif  \mu(x)
 \right] 
 \end{equation}
 as $r\to \infty$ for each collection of continuous, compactly supported functions $f_1,\ldots,f_n$. Since these moments uniquely characterise the canonical measure of super-Brownian excursion when $d>4$ as discussed above, \cref{thm:k_point} easily implies that if we define the random measure
$\mu_{K,r}=\frac{1}{A^2V r^4}\sum_{x\in K} \delta_{x/r}$ then 
  \[AVr^2 \cdot \P(\mu_{K,r} \in \,\cdot\, ) \to \N( \mu \in \,\cdot\, )\]
for an appropriate topology on an appropriate space of locally finite measures on the space of finite measures on $\R^d$. (We formulate \cref{thm:scaling_limit_intro} in the way we have so that we do not need to introduce such spaces and instead work only with convergence of probability measures.)

\begin{remark}[Other dimensions]
In the critical dimension $d=d_c=6$ it is expected that the super-Brownian scaling limit of \cref{thm:scaling_limit_intro} remains true once appropriate logarithmic corrections to scaling are taken into account. These logarithmic corrections to scaling are expected to be different for the various different intrinsic distances $d_\mathrm{piv}$, $d_\mathrm{chem}$, and $d_\mathrm{res}$; see \cite{essam1978percolation,stenull2003logarithmic} for relevant predictions from the physics literature. Related results have been proven for \emph{long-range} percolation at the critical dimension in \cite{LRPpaper1,LRPpaper3} and the forthcoming work \cite{HutchcroftReeves}. For scaling limits of critical percolation in \emph{two dimensions} we refer the reader to  classical works such as \cite{smirnov2001critical2,Schramm00,sheffield2006exploration} and to the recent work \cite{ambrosio2025tightness,djankovic2026scaling,miller2025existence,miller2025existence_BM} in which scaling limits of critical 2d percolation clusters are constructed \emph{as metric and resistance spaces}.
\end{remark}

\medskip

\noindent
\textbf{Stronger topologies of convergence and their consequences.}
In \cref{sec:tightness} we state and prove a version of \cref{thm:scaling_limit_intro} in which the weak array topology is replaced by a stronger topology, namely the \emph{rooted Gromov-Hausdorff-Prokhorov-function} (rGHPf) \emph{topology} (\cref{thm:scaling_limit_GHPf}). This strengthened theorem has the following corollary regarding \emph{one-arm probabilities}, which answers a question of Heydenreich and van der Hofstad \cite[Open Problem 11.2]{heydenreich2015progress}:

\begin{corollary}[First-order asymptotics of arm probabilities]
\label{cor:first_order_arm}
 Consider \emph{critical} range-$L$ Bernoulli bond percolation on $\Z^d$. There exist constants $d_0>6$ and $L_0<\infty$ such that if either $d\geq d_0$ or $d>6$ and $L \geq L_0$ then there exist positive constants $C_\mathrm{ext-arm}$ and $C_\mathrm{chem-arm}$ such that 
 \begin{align*}
 \P_{p_c}(0\leftrightarrow \Z^d\setminus  [-r,r]^d) \sim C_\mathrm{ext-arm} r^{-2} 
 \qquad\text{ and }\qquad
  \P_{p_c}(\partial B_\mathrm{chem}(0,r) \neq \emptyset) \sim C_\mathrm{chem-arm} r^{-1} 
 \end{align*}
  as $r\to \infty$. Moreover, the constants $C_\mathrm{ext-arm}$ and $C_\mathrm{chem-arm}$ are given by $C_\mathrm{ext-arm}=\N(\Phi(\sT) \cap \R^d\backslash  [-1,1]^d \neq \emptyset)/AV$ and $C_\mathrm{chem-arm} = C_\mathrm{chem}\N(\operatorname{rad}(\sT,o)\geq 1)/AV$ where $\operatorname{rad}(\sT,o)$ denotes the maximum distance of a point of $\sT$ from the root $o$.
\end{corollary}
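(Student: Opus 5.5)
The plan is to deduce both asymptotics from the strengthened scaling limit theorem in the rooted Gromov--Hausdorff--Prokhorov-function topology (\cref{thm:scaling_limit_GHPf}), together with a priori up-to-constants bounds on arm probabilities and on the volume of clusters that reach distance $r$. We treat the extrinsic arm first; the chemical arm is handled the same way.

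\textbf{Decomposition by volume.} Fix a small $\lambda>0$ and write
\[
\P_{p_c}(0\leftrightarrow \Z^d\setminus[-r,r]^d)=\P_{p_c}\bigl(0\leftrightarrow \Z^d\setminus[-r,r]^d,\ |K|\geq \lambda C_\mathrm{vol} r^4\bigr)+\P_{p_c}\bigl(0\leftrightarrow \Z^d\setminus[-r,r]^d,\ |K|< \lambda C_\mathrm{vol} r^4\bigr).
\]
For the first term we use $C_\mathrm{prob} r^2\,\P_{p_c}(|K|\geq \lambda C_\mathrm{vol} r^4)\to\lambda^{-1/2}$, which reduces it to the conditional probability that $r^{-1}\Phi_K(K)$ leaves $[-1,1]^d$.

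In the rGHPf topology, the map from a tree to the image of its embedding is continuous in the Hausdorff sense. It follows that $\{\Phi(\sT)\not\subseteq[-1,1]^d\}$ is a continuity set provided $\N\bigl(\sup_{x\in\sT}\|\Phi(x)\|_\infty=1\bigr)=0$. This atomlessness holds by Brownian scaling of $\N$: the law of the $\ell^\infty$-radius has a density, or at worst at most countably many atoms, and in the latter case one perturbs the radius $1$ slightly and uses monotonicity in $r$. We therefore get
\[
r^2\,\P_{p_c}\bigl(\text{arm},\ |K|\geq\lambda C_\mathrm{vol} r^4\bigr)\to \frac{1}{AV}\,\N\bigl(\Phi(\sT)\not\subseteq[-1,1]^d,\ \nu(\sT)\geq\lambda\bigr),
\]
using $C_\mathrm{prob}=AV/\N(\nu(\sT)\geq1)$ and the scaling identity $\N(\nu(\sT)\geq\lambda)=\lambda^{-1/2}\N(\nu(\sT)\geq 1)$. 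As $\lambda\downarrow0$ the right-hand side increases to $\N(\Phi(\sT)\not\subseteq[-1,1]^d)/AV$. This limit is finite because $\N(\text{range exits a ball})<\infty$ in $d>4$.

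\textbf{The small-volume term (main obstacle).} The crux is to show that
\[
\limsup_{\lambda\to0}\limsup_{r\to\infty}\, r^2\,\P_{p_c}\bigl(0\leftrightarrow\partial[-r,r]^d,\ |K|<\lambda r^4\bigr)=0,
\]
i.e.\ that clusters reaching distance $r$ typically have volume of order $r^4$. We would proceed in three steps:
\begin{enumerate}
\item Condition on the arm event. By Kozma--Nachmias type bounds (an a priori estimate $\P(\text{arm})\asymp r^{-2}$, from the two-point function and the lace expansion), the cluster must connect $0$ to distance $r/2$ through many scales.
\item Use a regeneration or second-moment argument: on the arm event, the portion of $K$ in the annulus between $r/4$ and $r/2$ has volume at least $c r^4$ with probability $1-\delta(c)$, where $\delta(c)\to0$ as $c\to0$.
\item Concretely, show that conditional on $0\leftrightarrow x$ with $\|x\|\approx r$, the number of points of $K$ within distance $r$ of the geodesic is at least $\lambda r^4$ with high probability. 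This could follow from the two-blob/IIC mixing estimates (\cref{thm:two_blob}) and the convergence of the two-point cluster to a Brownian-embedded CRT spine with attached trees, all of which carry positive mass.
\end{enumerate}
A simpler alternative would be a union bound over a dyadic decomposition combined with the tree-graph inequality. That, however, only gives order-$r^{-2}$ bounds rather than $o(r^{-2})$, so the spine-based argument is what we would attempt first.

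\textbf{Chemical arm.} The same scheme applies, using the convergence of $d_\mathrm{chem}/(C_\mathrm{chem}r^2)$ in \cref{thm:scaling_limit_GHPf}. Here the scaling is $r$ for chemical radius versus $r^2$ for volume, since the volume is of order $(\text{chemical radius})^2$; one substitutes $r\mapsto \sqrt{r/C_\mathrm{chem}}$. The continuity of the radius in the rooted GHP topology, the atomlessness of $\operatorname{rad}(\sT,o)$ under $\N$, and the analogous small-volume estimate (clusters of chemical radius $r$ have volume of order $r^2$, from the intrinsic arm estimates of Kozma--Nachmias) then give the constant $C_\mathrm{chem}\N(\operatorname{rad}\geq1)/AV$.
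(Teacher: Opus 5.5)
Your overall architecture matches the paper's proof. You split the arm event at $|K|\geq \lambda C_\mathrm{vol} r^4$, treat the large-volume part via rGHPf convergence, the portmanteau theorem and non-atomicity from the scale invariance of $\N$, and then send $\lambda\downarrow 0$. That part is correct, as is your bookkeeping of the constants for both arms.

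The gap is the small-volume estimate. You rightly call it the crux, but you do not prove it. Steps 1--2 of your plan only restate what is needed. Step 3 would not give it even if it were established. A bound on $\P_{p_c}(|K|<\lambda r^4 \mid 0\leftrightarrow x)$, summed over $x$ in an annulus $A$, only controls $\E_{p_c}[|K\cap A|\,\mathbbm{1}(|K|<\lambda r^4)]$. The clusters you must exclude are exactly the skinny ones that reach distance $r$ while having few vertices there. So the one-arm event cannot be reduced to two-point conditioning in this way. (The spine-plus-trees limit of the two-point cluster is also not established here.) Similarly, for the chemical arm, the up-to-constants bound $\P_{p_c}(\partial B_\mathrm{chem}(0,\ell)\neq\emptyset)\asymp \ell^{-1}$ does not by itself show that clusters of chemical radius $\ell$ have volume of order $\ell^2$ with high conditional probability.

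The missing ingredient is the exponential skinny-ball estimate
$\P_{p_c}(|B_\mathrm{chem}(0,\ell)|\leq M,\ \partial B_\mathrm{chem}(0,\ell)\neq\emptyset)\leq C\ell^{-1}e^{-c\ell^2/M}$.
This is a martingale exploration bound valid under the triangle condition, due to Nachmias--Peres and Hutchcroft. Taking $M=\delta\ell^2$ immediately gives the chemical small-volume bound $C\ell^{-1}e^{-c/\delta}$.

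The Euclidean case then reduces to the chemical one. On the arm event there are two possibilities:
\begin{itemize}
\item $\partial B_\mathrm{chem}(0,\eta r^2)=\emptyset$. Then some open path of length at most $\eta r^2$ reaches $\Z^d\setminus[-r,r]^d$. This path survives with probability bounded below under the monotone coupling with $p_c-(\eta r^2)^{-1}$ percolation. The slightly subcritical one-arm estimate then bounds this case by $Cr^{-2}e^{-c\eta^{-1/2}}$.
\item $\partial B_\mathrm{chem}(0,\eta r^2)\neq\emptyset$ and $|K|\leq \delta r^4$. The skinny-ball estimate bounds this case by $C(\eta r^2)^{-1}e^{-c\eta^2/\delta}$.
\end{itemize}
Taking $\eta$ small and then $\delta$ small makes the total at most $\eps r^{-2}$. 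With this lemma in place, your argument goes through as written.
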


Here we write $\partial B_\mathrm{chem}(0,r)$ for the set of points having chemical distance exactly $r$ from the origin.
This corollary strengthens theorems of Kozma and Nachmias \cite{MR2748397,MR2551766}, who proved that the high-dimensional one-arm probabilities satisfy the up-to-constants estimates $\P_{p_c}(0\leftrightarrow \partial [-r,r]^d) \asymp r^{-2}$ and $\P_{p_c}(\partial B_\mathrm{chem}(0,r) \neq \emptyset) \asymp r^{-1}$. (Note that these up-to-constants estimates are used in the proof of our scaling limit theorems. See also \cite{asselah2025capacity,van2025one,panis2026reversing} for recent alternative proofs of the up-to-constants Euclidean one-arm estimate.)
The analogous fact that $\P_{p_c}(|K|\geq n) \sim \mathrm{const.}\, n^{-1/2}$ as $n\to \infty$ (which is part of \cref{thm:scaling_limit_intro}) was already proven for nearest-neighbour percolation in very high dimensions by Hara and Slade \cite[Theorem 1.1]{hara1998incipient}, who in fact prove the stronger result that $\P_{p_c}(|K|= n) \sim \mathrm{const.}\, n^{-3/2}$ as $n\to \infty$ under these stronger assumptions.

\begin{remark}
First-order asymptotic estimates on one-arm probabilities play a prominent role in the work of Holmes and Perkins on scaling limits of the range of high-dimensional models \emph{as sets} \cite{holmes2020range}, a form of convergence that is implied by rooted Gromov-Hausdorff-Prokhorov-function convergence.
Their hypotheses include regular variation of the
survival probability in the intrinsic/time parameter and yield extrinsic
one-arm asymptotics; in our work the intrinsic and Euclidean one-arm
asymptotics are instead both deduced as consequences of our rGHPf scaling limit theorem. We also remark that one-arm asymptotics were deduced from certain $k$-point function asymptotics for a large class of high-dimensional models in the work of van der Hofstad and Holmes \cite{van2013survival} (see also \cite{holmes2007weak,van2017criterion}), but their results do not apply directly in our setting since (in our setting of unoriented percolation) they require asymptotic estimates on $k$-point functions \emph{with fixed chemical distance}, which are stronger than the estimates we prove in \cref{thm:k_point,lem:bb_moment_asymptotics}.
\end{remark}

\begin{remark}
It is a consequence of the work of Croydon \cite{croydon2008convergence,croydon2018scaling} that the effective resistance case of our rGHPf scaling limit theorem (\cref{thm:scaling_limit_GHPf}) implies that the simple random walk on a large critical cluster converges in an appropriate sense to Brownian motion on the continuum random tree. We do not pursue this further in this paper since we do not wish to introduce the relevant background material needed to state such a result.
\end{remark}

\subsection{A non-perturbative criterion for super-Brownian limits}
\label{subsec:intro_axiomatic}

Although the proofs of our main results rely on the lace expansion, they do so in a fairly mild way. Besides the two-point function asymptotics stated in \eqref{eq:two_point_assumption}, the only further input we will need from the lace expansion is provided by the following theorem, which can be thought of as a \emph{mixing} property of large critical clusters. Given a cylinder event $F$ and a point $x\in \Z^d$, we write $F_x=\{\omega:\omega-x \in F\}$ for the translation of $F$ by $x$.

\begin{theorem}[The two-blob estimate]
\label{thm:two_blob}
 Consider \emph{critical} range-$L$ Bernoulli bond percolation on $\Z^d$. There exist constants $d_0>6$ and $L_0<\infty$ such that if either $d\geq d_0$ or $d>6$ and $L \geq L_0$ then
for each pair of cylinder events $F^1$ and $F^2$ we have that
\[\P_{p_c}(\{x\leftrightarrow y\} \cap F^1_x \cap F^2_y) \sim \mathbb{P}_\mathrm{IIC}(F^1)\mathbb{P}_\mathrm{IIC}(F^2) \P_{p_c}(x\leftrightarrow y) 
\sim  A\mathbb{P}_\mathrm{IIC}(F^1)\mathbb{P}_\mathrm{IIC}(F^2) G(x,y) 
\]
as $\|x-y\|\to \infty$.
\end{theorem}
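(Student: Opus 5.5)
Since \eqref{eq:two_point_assumption} is already available, the second asymptotic equivalence is immediate. The work is in the first one: showing that the conditional law of the pair of local configurations at $x$ and $y$, given $x\leftrightarrow y$, converges to the product $\mathbb{P}_\mathrm{IIC}\otimes\mathbb{P}_\mathrm{IIC}$. I would prove this with the lace expansion, modifying the expansion for the two-point function in the way Hara, van der Hofstad and Slade constructed the IIC, where the limit $\lim_{y\to\infty}\P_{p_c}(F\mid 0\leftrightarrow y)$ is identified through an expansion of $\P_{p_c}(\{0\leftrightarrow y\}\cap F)$.

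\textbf{Step 1: reduction.} By inclusion--exclusion and linearity it suffices to treat cylinder events $F^1,F^2$ supported on a fixed box $B_R$. Since cylinder events are finite unions of configurations on finitely many edges, one may further assume each $F^i$ fixes the states of all edges in $B_R$.

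\textbf{Step 2: lace expansion with marked ends.} I would run the standard percolation lace expansion for $\P_{p_c}(\{x\leftrightarrow y\}\cap F^1_x\cap F^2_y)$. Cutting at the first and last pivotal edges of the backbone gives a decomposition of the form
\[
\P(\{x\leftrightarrow y\}\cap F^1_x\cap F^2_y)=\sum_{u,v}\Pi^{F^1}(x,u)\,\bigl(p_cD*T_{p_c}*\ldots\bigr)(u,v)\,\Pi^{F^2}(v,y)+\text{error terms}.
\]
Here $\Pi^{F^i}$ is a lace-expansion coefficient in which the event $F^i$ is imposed near the endpoint and the events interacting with it are expanded. The middle convolution factor is the usual two-point function $T_{p_c}$, built from the unmarked expansion coefficients $\Pi$.

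The key input is diagrammatic bounds showing $|\Pi^{F^i}(0,u)|\le C_{F}\,\langle u\rangle^{-(d+2)-\epsilon}$ or better. These are proved exactly as the standard $\Pi$ bounds via the triangle condition, since restricting to $F^i$ only costs a bounded factor and forces a local constraint. This summability means the endpoint factors are essentially local. Then
\[
\sum_{u,v}\Pi^{F^1}(0,u)\,T_{p_c}(u+x,v+y)\,\tilde\Pi^{F^2}(v,0)\sim \Bigl(\sum_u\Pi^{F^1}(0,u)\Bigr)\Bigl(\sum_v\tilde\Pi^{F^2}(v,0)\Bigr)\,T_{p_c}(x,y).
\]
This uses \eqref{eq:two_point_assumption} and the fact that $T_{p_c}(x+u,y+v)/T_{p_c}(x,y)\to1$ uniformly for $|u|,|v|\le\|x-y\|^{1/2}$, while the tails beyond that range are controlled by the decay bounds.

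\textbf{Step 3: identifying the constants.} Setting $F^2=$ the sure event gives $\P(\{x\leftrightarrow y\}\cap F^1_x)\sim c(F^1)\,T_{p_c}(x,y)$ with $c(F^1)=\sum_u\Pi^{F^1}(0,u)$ normalized by the corresponding quantity for the trivial event. By the definition of $\mathbb{P}_\mathrm{IIC}$ as $\lim_{y\to\infty}\P(\cdot\mid 0\leftrightarrow y)$, which exists by the same expansion, this constant equals $\mathbb{P}_\mathrm{IIC}(F^1)$. The same holds at the other endpoint, using translation and reflection invariance, and the product form gives the claim.

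\textbf{Main obstacle.} The hardest step is the interaction error terms in which the lace connects the neighbourhood of $x$ directly to the neighbourhood of $y$: the clusters near the two ends overlap via loops of length comparable to $\|x-y\|$. I would bound these by diagrams containing two long lines between $B_R(x)$ and $B_R(y)$ plus triangle-type factors. Such a diagram is of order $\|x-y\|^{-2(d-2)+d}=o(\|x-y\|^{-(d-2)})$ when $d>6$ (indeed already when $d>4$), using the convolution estimates for $G$ together with the triangle condition in the perturbative regime of \cite{MR1959796,MR2393990}.
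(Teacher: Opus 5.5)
There is a genuine gap. Your outline has the right overall shape: expand, show the end coefficients are summable, convolve against $T_{p_c}$, and identify the constants through the IIC. But the step that carries the theorem is asserted rather than proved, and the one estimate you give for it fails.

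Since $-2(d-2)+d=-(d-4)$, your bound $\|x-y\|^{-2(d-2)+d}$ exceeds $\|x-y\|^{-(d-2)}$ by a factor $\|x-y\|^{2}$, in every dimension. So it does not make the interaction terms negligible. A free vertex summed over a region of volume $\|x-y\|^d$ with no compensating decay is exactly what you cannot afford. What controls the interaction is decay of the expansion coefficients themselves. With $x=0$, take a coefficient bounded by $\langle z\rangle^{-2d+6}$ and convolve it with $T_{p_c}(z,y)$ over $z$ within distance $\|y\|/2$ of $y$. This contributes $\asymp\|y\|^{-2d+8}$, which is $o(\|y\|^{-d+2})$ only because $d>6$, not already for $d>4$.

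More fundamentally, you never say how the event at $y$ enters the expansion. Nor do you say why the coefficients at the $x$-end converge to their unconditioned values. Those coefficients are expectations over the whole configuration, so they do depend on what is imposed at $y$. Your symmetric form, with a ``usual'' $T_{p_c}$ in the middle and decoupled $\Pi^{F^1}$, $\Pi^{F^2}$ at the ends, is simply presupposed. Relatedly, $F^i$ is not increasing. So the claim that imposing it ``only costs a bounded factor'' in BK-type diagrammatic bounds is not automatic; you need a formulation in which the event is expressed through increasing connection events.

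The missing idea, which is how the paper proceeds, is to make the expansion one-sided and put the event at $y$ into the graph.
\begin{itemize}
\item Reduce to $F^i=\{W^i\text{ closed}\}$ and write $\P(\,\cdot\,,W^1\text{ closed})=(1-p)^{|W^1|}\bigl[\P(\,\cdot\,)-\P(\,\cdot\text{ only via }W^1)\bigr]$. Then only the ``only via'' two-point function needs expanding, and BK applies to it.
\item Condition on $y+W^2$ being closed by deleting those edges, i.e.\ do percolation on $\Z^d_L\setminus(y+W^2)$. The Hara--Slade expansion holds on any graph, giving $\mathbf T_{y,W^1}=\mathbf I_{W^1}+\mathbf\Pi_{y,W^1}+(\mathbf I_{W^1}+\mathbf\Pi_{y,W^1})P_yT_y$.
\item The diagrammatic majorants are increasing in the edge probabilities, so deleting edges only decreases them. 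Hence $\overline{\mathbf\Pi}_{y,W^1}(0,z)\le C_{W^1}\|z\|^{-2d+6}$ uniformly in $y$, which handles $\|z\|>R$.
\item For fixed $z$, each summand of $\mathbf\Pi_{y,W^1}(0,z)$ converges to the corresponding summand of $\mathbf\Pi_{W^1}(0,z)$ as $y\to\infty$. Dominated convergence, with the same uniform majorants, gives $\mathbf\Pi_{y,W^1}(0,z)\to\mathbf\Pi_{W^1}(0,z)$.
\item The $y$-end needs no second expansion. It is supplied by the already-established IIC asymptotic $T_y(z,y)\sim\frac{\P_\mathrm{IIC}(W^2\text{ closed})}{\P(W^2\text{ closed})}T_{p_c}(z,y)$.
\end{itemize}
This uniform-domination-plus-pointwise-convergence argument is what has to replace your ``interaction error terms''.
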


Here we recall that the \textbf{incipient infinite cluster} measure $\P_\mathrm{IIC}$ is defined as the weak limit of the conditional measures $\P_{p_c}(\;\, \cdot\; \mid 0\leftrightarrow x)$ as $x\to \infty$ and can be thought of as the law of critical percolation ``conditional on the cluster of the origin being infinite''.
The fact that this measure is well-defined under the hypotheses of \cref{thm:two_blob} was proven using the lace expansion by van der Hofstad and J\'arai \cite{van2004incipient}; see also \cite{heydenreich2014high} for proofs that these measures agree with those derived from various other limiting operations and \cite{Kesten86,Jar03} for analogous results in two dimensions. As we explain in \cref{sec:lace_expansion} (in which we also give a brief introduction to the lace expansion more broadly), \cref{thm:two_blob} can be derived from the lace expansion in a similar way to the original proof of \cite{van2004incipient}, and indeed we will see that the existence of the IIC measure can be extracted directly from the formulation of the lace expansion given in the original work of Hara and Slade \cite{MR1043524}.

\cref{thm:two_blob} can be thought of in terms of the \emph{decorrelation of macroscopic and microscopic information}: conditional on $x$ being connected to the distant vertex $y$, the cluster looks locally like a pair of \emph{independent} IICs around each of the two points $x$ and $y$. We name this property \eqref{eq:two_blob_assumption}, which we define to hold if
\begin{equation}
\label{eq:two_blob_assumption}
\tag{B}
\P_{p_c}(\{x\leftrightarrow y\} \cap  F^1_x \cap F^2_y) \sim \mathbb{P}_\mathrm{IIC}(F^1)\mathbb{P}_\mathrm{IIC}(F^2) \P_{p_c}(x\leftrightarrow y) \qquad \text{as $x-y\to\infty$}
\end{equation}
for each fixed pair of cylinder events $F^1,F^2$.
We will see in \cref{thm:scheme_function} that \eqref{eq:two_point_assumption} and \eqref{eq:two_blob_assumption} together imply many further decorrelation estimates of the same form, in which we specify the topology of connections between some finite set of distant points and the local geometry of clusters around each of these points and find that the small-scale and large-scale geometries decorrelate in an appropriate sense. This decorrelation theorem (\cref{thm:scheme_function}) is the workhorse underlying all our other theorems. In particular, all of our main results hold for high-dimensional models satisfying the two assumptions \eqref{eq:two_point_assumption} and \eqref{eq:two_blob_assumption}, with no further need for ``small parameter'' assumptions as in the derivation of these estimates from the lace expansion.

\begin{theorem}[Two-point and two-blob estimates suffice for super-Brownian scaling]
\label{thm:k_point_A_and_B}
Consider \emph{critical} range-$L$ Bernoulli bond percolation on $\Z^d$. If $d>6$ and \eqref{eq:two_point_assumption} and \eqref{eq:two_blob_assumption} both hold then the conclusions of \cref{thm:k_point}, \cref{thm:scaling_limit_intro}, and \cref{cor:first_order_arm} all hold.
\end{theorem}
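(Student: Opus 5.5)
This is a plan for the whole machinery, since \cref{thm:k_point_A_and_B} asserts that the main theorems follow from \eqref{eq:two_point_assumption} and \eqref{eq:two_blob_assumption} alone. The strategy is to prove everything only from these two inputs, together with the unconditional tools valid under the triangle condition: the tree-graph inequalities \eqref{eq:tree_graph_intro}, the BK inequality, and the Kozma--Nachmias one-arm bounds. Note that $d>6$ with \eqref{eq:two_point_assumption} gives $T_{p_c}(x,y)\asymp\langle x-y\rangle^{2-d}$, hence the triangle condition, so these tools are available.

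\textbf{Step 1: a multi-point decorrelation theorem.} For points $x_1,\ldots,x_k$ with large mutual separations, I would decompose the event $\{x_1\leftrightarrow\cdots\leftrightarrow x_k\}$ according to the tree of geodesic branch points. I would show that, with high probability, the cluster's connection topology is given by a trivalent tree whose $k-2$ branch points $b_j$ are macroscopically separated from each other and from the $x_i$. The complementary configurations (branch points close together, or higher-degree branchings) are bounded by tree-graph diagrams with a collapsed edge. Since $d>6$, such diagrams are of smaller order by a direct power-counting estimate on sums of Green's functions.

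\textbf{Step 2: local factorization at branch points.} Conditioning on the local picture around each branch point, the contribution near $b$ should factorize as a product of long two-point connections times a local "three-arm" weight. This weight is defined as the limit of a ratio of the form
\[
\P_{p_c}\bigl(\text{three disjoint arms from near } b \text{ to } y_1,y_2,y_3\bigr)\big/\prod_i T_{p_c}(b,y_i).
\]
The existence and direction-independence of this limit is where \eqref{eq:two_blob_assumption} enters. Each long arm, conditioned on its endpoints, looks locally like an IIC at both ends, independently. Iterating this gives a $k$-arm IIC measure, and the local weight $V$ is its total mass after the appropriate normalization. I would prove the limit exists via a monotone/Cauchy argument. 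The two-blob estimate applied along each arm, combined with a BK-type inclusion–exclusion for the disjointness of the three arms, shows that the ratios converge as the $y_i$ move to infinity in arbitrary directions.

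\textbf{Step 3: assembling the theorems.} Summing over positions of branch points and using \eqref{eq:two_point_assumption} then yields $T_{p_c}(x_1,\ldots,x_k)\sim V^{k-2}A^{2k-3}G(x_1,\ldots,x_k)$, which is \cref{thm:k_point}. Moment convergence \eqref{eq:spatial_moment_asymptotics_intro} follows, and together with the Carleman-type uniqueness of the moment characterization this gives convergence of the measure component in \cref{thm:scaling_limit_intro}.

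For the metric components, I would compute moments of "pattern counts" along geodesics between sampled points. Intrinsic distances of each type are asymptotically additive sums of local contributions along the backbone, and the two-blob mixing makes these local contributions have an ergodic average. This produces the constants $C_\mathrm{chem},C_\mathrm{piv},C_\mathrm{res}$ via a law of large numbers, established by a second-moment computation using the decorrelation theorem.

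Tightness in the rGHPf topology would come from the Kozma--Nachmias estimates plus tree-graph bounds, and \cref{cor:first_order_arm} would follow by continuity of the relevant functionals at $\N$-a.e. configuration, with a separate argument ruling out thin long arms.

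\textbf{Main obstacle.} I expect the hardest step to be the existence of the vertex factor limit in Step 2. Controlling the interaction among three arms meeting at one point from only the two-arm mixing \eqref{eq:two_blob_assumption} requires a careful inductive decorrelation scheme. Without it, one has only the order of magnitude of these quantities rather than their precise limits.
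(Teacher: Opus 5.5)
Your outline follows the paper's architecture: a decorrelation theorem, then $k$-point asymptotics, then moments of pattern counts along geodesics, metric comparison through local contributions between pivotals, tightness, and arm asymptotics. However, Step~2, which carries all the new content, is not actually supplied, and the tool you propose for it cannot work. BK only gives the one-sided bound $\P(A\circ B\circ C)\leq \P(A)\P(B)\P(C)$. No inclusion--exclusion upgrades this to an asymptotic equality, and the deficit between $\prod_i T_{p_c}(b,y_i)$ and the probability of disjoint arms is of the same order as the main term; that deficit is exactly what the nontrivial local constant encodes. Knowing from \eqref{eq:two_blob_assumption} that each arm \emph{separately} looks like two independent IICs at its ends says nothing about how several arms forced to be disjoint interact.

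The missing idea is \emph{localization of interactions}. The paper couples the configuration with independent clusters, one per required connection, built sequentially so that each is forbidden to use the earlier ones (identity \eqref{eq:InductiveAizenmanOff}). It then shows (\cref{prop:BlobOffApprox}) that, up to negligible error, a required connection fails only through an obstruction inside a bounded ball around a marked point. Any macroscopic interaction forces either a second annulus crossing (\cref{lem:2Arm}) or an extra bubble/triangle-type diagram factor, which is small because $d>6$. Only then does \eqref{eq:two_blob_assumption} give convergence of the local picture. The limiting constants exist by a Cauchy argument in the localization radii $R_1,R_2$, since the approximated quantity does not depend on them; no monotonicity is available. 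Branch points also need \emph{edge-disjoint} arms in one cluster, not distinct clusters. The paper bridges this with \cref{lem:rerouting_spare_arm} and the ``no unforced connections / no extra arms'' estimates of \cref{lem:PsiArms}. These rewrite $\Psi_\cT$, up to negligible error, as a sum over local boundary witnesses of distinct-cluster scheme events with local constraints, to which \cref{thm:scheme_function} applies.

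Second, your description of $V$ ignores the non-uniqueness of branch points. The set $\Upsilon(x_1,x_2,x_3)$ of vertices joined to the three points by edge-disjoint open paths is typically not a singleton: a short open cycle containing the first edge of one arm already creates a second branch point. So if the arms in your ratio emanate from the vertex $b$ itself, the ratio converges to the constant $\mathbf{C}(3)$ of \cref{thm:scheme_function_variant}. Summing it over $b$ then computes the expected number of branch points, not $T_{p_c}(x_1,x_2,x_3)$. The paper instead writes the $k$-point function as $\sum_{\cT}\sum\E[\mathbbm{1}(\Psi_\cT)/N]$, with $N$ the number of branching trees. It proves (\cref{pro:CountBranchT}) that, up to negligible error, every branching tree has the same combinatorial type and $N=\prod_i|\cS^\Psi_{R_1,R_2}(x_i)|$ factorizes over branch points. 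Thus $V$ is the limit of the per-branch-point factor of $\E[\mathbbm{1}(\Psi_\cT)/\prod_i|\cS^\Psi_{R_1,R_2}(x_i)|]$, i.e.\ $\mathbf{C}(3)$ corrected by the local multiplicity. Its positivity needs a separate lower bound (\cref{lem:three_point_lower}). A canonical choice of branch point would also work, but only after the same localization.

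Finally, in Step~3, raw moments such as $\E[d_\mathrm{piv}(0,x)^m\mathbbm{1}(0\leftrightarrow x)]$ are governed by $G^{*(m+1)}(0,x)$ and are infinite once $2(m+1)>d$. Carleman's criterion therefore cannot be applied to them directly. The paper counts only pivotals weighted by a compactly supported $\phi(e^-/r)$ and removes this cutoff afterwards using one-arm bounds.
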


We remark that \cref{thm:k_point_A_and_B} has the following immediate corollary for the $k$-point function of the IIC, defined by
\[
  T_\mathrm{IIC}(x_1,x_2,\ldots,x_k) 
=\lim_{z\to\infty}\P_{p_c}(x_1,\ldots,x_k \text{ all connected} \mid x_1 \leftrightarrow z)
  := \lim_{z\to \infty} \frac{T_{p_c}(x_1,x_2,\ldots,x_k,z)}{T_{p_c}(x_1,z)}.
\]
Note that this limit is invariant under permutations of $x_1,\ldots,x_k$ when \eqref{eq:two_point_assumption} holds.
When $x_1=0$, the IIC $k$-point function $T_\mathrm{IIC}(x_1,x_2,\ldots,x_k)$ is equal to the probability that the points $x_2,\ldots,x_k$ all belong to the cluster of the origin in the IIC measure. 

\begin{corollary}[The $k$-point function of the high-dimensional IIC]
\label{thm:k_point_IIC}
 Consider \emph{critical} range-$L$ Bernoulli bond percolation on $\Z^d$. If $d>6$ 
and \eqref{eq:two_point_assumption} and \eqref{eq:two_blob_assumption} both hold then
 \[
 T_{\text{\emph{IIC}}}(x_1,x_2,\ldots,x_k)  \sim V^{k-1} A^{2k-2} G_\mathrm{IIC}(x_1,x_2,\ldots,x_k) 
 \]
 as $\min_{i\neq j}\|x_i-x_j\|\to \infty$, where $V$ and $A$ are the same positive constants appearing in \cref{thm:k_point_A_and_B} and we define
 \[
   G_\mathrm{IIC}(x_1,x_2,\ldots,x_k) = \lim_{z\to \infty}\frac{G(x_1,x_2,\ldots,x_k,z)}{G(x_1,z)}.
 \]
\end{corollary}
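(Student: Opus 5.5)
The plan is to deduce the corollary directly from the $(k+1)$-point asymptotics of \cref{thm:k_point} (valid under \eqref{eq:two_point_assumption} and \eqref{eq:two_blob_assumption} by \cref{thm:k_point_A_and_B}), together with \eqref{eq:two_point_assumption}. For fixed $x_1,\ldots,x_k$ with large pairwise separation, \cref{thm:k_point} applied with the $k+1$ points $x_1,\ldots,x_k,z$ gives
\[
T_{p_c}(x_1,\ldots,x_k,z) = (1+o(1))\,V^{k-1}A^{2k-1}G(x_1,\ldots,x_k,z),
\]
where the $o(1)$ is uniform over all configurations with $\min\|x_i-x_j\|\geq R$ and $\min_i\|x_i-z\|\geq R$, in the limit $R\to\infty$. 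Meanwhile $T_{p_c}(x_1,z)=(1+o(1))AG(x_1,z)$ as $z\to\infty$ by \eqref{eq:two_point_assumption}. Dividing these gives
\[
\frac{T_{p_c}(x_1,\ldots,x_k,z)}{T_{p_c}(x_1,z)} = (1+\epsilon(R))\,V^{k-1}A^{2k-2}\,\frac{G(x_1,\ldots,x_k,z)}{G(x_1,z)}
\]
with $\epsilon(R)\to 0$, provided $z$ is taken large enough relative to the $x_i$.

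The next step is to send $z\to\infty$. The limit defining $T_\mathrm{IIC}$ exists, since it is the IIC probability $\P_\mathrm{IIC}$ of a cylinder-type event, translated to $x_1$. The limit defining $G_\mathrm{IIC}$ must also be shown to exist, and to be positive and finite. To do this, expand $G(x_1,\ldots,x_k,z)$ as a sum over trees $\cT\in\mathsf{Tr}(k+1)$. In each tree, the leaf $z$ is attached to an internal vertex $w$, and $G(w,z)/G(x_1,z)\to 1$ for $w$ in any bounded region as $z\to\infty$.

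To control large $w$, dominate the ratio by a function independent of $z$. Use the bound $G(w,z)/G(x_1,z)\leq C(1+\|z-x_1\|^{d-2}\|w-z\|^{2-d})$ and split according to whether $\|w-z\|\geq\|z-x_1\|/2$.
\begin{itemize}
\item On the near-$z$ region $\|w-z\|<\|z-x_1\|/2$, the other edges of the tree at $w$ are all long, of length at least of order $\|z\|$. A direct power count using $d>6$ shows this region contributes $o(1)$ relative to $G(x_1,z)$.
\item On the complementary region, dominated convergence applies to the remaining tree sum, which is a finite $k$-leaf diagram with one extra summed vertex. Its finiteness again uses $d>6$ (in fact $d>4$ suffices).
\end{itemize}
This yields the explicit formula
\[
G_\mathrm{IIC}(x_1,\ldots,x_k)=\sum_{\cT\in\mathsf{Tr}(k+1)}\sum_\Phi\prod_{\text{edges not at }z}G,
\]
which is positive.

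Since the error $\epsilon(R)$ is uniform in $z$ once $z$ is far away, passing to the limit in $z$ gives
\[
T_\mathrm{IIC}(x_1,\ldots,x_k)=(1+O(\epsilon(R)))\,V^{k-1}A^{2k-2}G_\mathrm{IIC}(x_1,\ldots,x_k)
\]
whenever $\min_{i\neq j}\|x_i-x_j\|\geq R$. This is the claim.

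The main obstacle is the uniformity in \cref{thm:k_point}: the error must be $o(1)$ as the \emph{minimum} separation tends to infinity, uniformly even when $z$ is much farther away than the $x_i$. Multi-scale configurations of this kind are exactly what the statement "$\min_{i\neq j}\|x_i-x_j\|\to\infty$" covers, so I would check that the theorem is indeed proven uniformly over such configurations. If it were only available at comparable scales, I would instead derive the IIC statement from the decorrelation theorem (\cref{thm:scheme_function}) directly.
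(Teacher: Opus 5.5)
Your proposal is correct and matches the paper's intended argument: the corollary is presented as an immediate consequence of the $(k+1)$-point asymptotics of \cref{thm:k_point} divided by the two-point asymptotics \eqref{eq:two_point_assumption}, and that theorem is indeed uniform over multi-scale configurations; the paper treats $G_\mathrm{IIC}$ as well-defined for $d>4$, consistent with your dominated-convergence sketch. One small correction: $\{x_1,\ldots,x_k \text{ all connected}\}$ is not a cylinder event, so weak convergence to $\P_\mathrm{IIC}$ gives only a $\liminf$ lower bound and does not by itself show the limit defining $T_\mathrm{IIC}$ exists; your argument does not need this, however, since it controls both $\limsup_{z\to\infty}$ and $\liminf_{z\to\infty}$ of $T_{p_c}(x_1,\ldots,x_k,z)/T_{p_c}(x_1,z)$.
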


\begin{remark}
The fact that $G_\mathrm{IIC}(x_1,x_2,\ldots,x_k)$ is well-defined, finite, and invariant under permutations when $d>4$ is an easy consequence of the estimate $G(x,y) \sim \mathrm{const.} \|x-y\|_2^{-d+2}$ as $\|x-y\|\to \infty$.
\end{remark}

 As before, the asymptotics of \cref{thm:k_point_IIC} can be interpreted in terms of diagrammatic formulae, where the diagrams for the IIC involve trees in which we have a single internal vertex of degree $2$ and all other vertices having degree $3$; this vertex of degree $2$ represents the point where the path to infinity splits off from the other paths between the vertices of interest. When there are $k$ leaves these trees have $2k-2$ edges and $k-1$ internal vertices, leading to the powers of $A$ and $V$ appearing here.

\subsection{Further scaling limits}
\label{subsec:IIC_intro}

The methods we develop in this paper are extremely robust and can be used to prove many further scaling limit theorems for critical high-dimensional percolation conditioned on various tail events. Important examples include scaling limits of the IIC and of clusters conditioned to contain a specific distant point, but one can also consider more degenerate limits. (For example, if $x_1,\ldots,x_k \in \Z^d$ is a collection of well-separated points and we condition the clusters $K_{x_1}$, $\ldots$, $K_{x_k}$ to be distinct and large, they converge to a collection of independent super-Brownian excursions.) 
Indeed, many of the intermediate technical theorems we prove (such as \cref{thm:scheme_function,thm:scheme_function_variantW,pro:CountBranchT}) are stated at a sufficiently high level of generality that these theorems can be deduced from them by exactly the same argument in the proof of \cref{thm:k_point_A_and_B}. This may be relevant to the program of Carpenter and Werner \cite{carpenter2025loops} on the relation between high-dimensional critical percolation and the Brownian loop soup.

To keep the paper at a reasonable length we do not investigate these alternative scaling limit theorems in detail. We do however make note of the following asymptotic estimate for the \emph{derivative} of the two-point function, which is an immediate corollary of our main macroscopic-microscopic decorrelation theorem (\cref{thm:scheme_function}).

\begin{theorem}[Derivative asymptotics]
\label{thm:derivative}
Consider \emph{critical} range-$L$ Bernoulli bond percolation on $\Z^d$. If $d>6$ and \eqref{eq:two_point_assumption} and \eqref{eq:two_blob_assumption} both hold then there exists a constant $G_\mathrm{piv}$, which we call the \textbf{geodesic factor for the pivotal metric}, such that
\[
T_{p_c}'(x,y) \sim p_c^{-1}G_\mathrm{piv} A^2 G^{*2}(x,y)
\]
as $x-y\to \infty$, where $G^{*2}=G*G$ denotes the convolutional square of the lattice Green's function. 
\end{theorem}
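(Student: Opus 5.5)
We plan to start from Russo's formula, which gives
\[
T'_{p}(x,y)=\sum_{e}\P_p(e \text{ is closed and pivotal for } x\leftrightarrow y)=\frac{1-p}{p}\sum_e \P_p(e \text{ is open and pivotal for } x\leftrightarrow y).
\]
At $p=p_c$ the right-hand side equals $p_c^{-1}(1-p_c)\,\E_{p_c}[d_\mathrm{piv}(x,y)\mathbbm{1}(x\leftrightarrow y)]$, since the open pivotals for $x\leftrightarrow y$ are exactly what $d_\mathrm{piv}$ counts. So the claim is equivalent to an asymptotic for the expected number of open pivotals on the connection. Rather than work with $(1-p_c)$ factors, we absorb such local constants into $G_\mathrm{piv}$. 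Concretely, we write each summand as a ``pattern'' event localized around a point $z\in\Z^d$: the edge $e=\{z,z+u\}$, with $u$ a lattice direction, is closed, $x\leftrightarrow z$ and $z+u\leftrightarrow y$ occur, and these two connections are in disjoint clusters. Because $e$ is closed, these two connections are forced to lie in distinct clusters.

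We then apply the macroscopic–microscopic decorrelation theorem (\cref{thm:scheme_function}) to this scheme. It involves two distinct large clusters, one through $x$ and $z$ and one through $z+u$ and $y$, with a local constraint at $z$. The theorem should say that for $\|x-z\|$ and $\|z-y\|$ both large, the probability is asymptotic to
\[
c(u)\,T_{p_c}(x,z)\,T_{p_c}(z,y)\sim c(u)A^2G(x,z)G(z,y).
\]
Here $c(u)$ is a local constant: the two-cluster ``two-arm IIC'' probability that the two large clusters at $z$ and $z+u$ are distinct when $e$ is closed (compare \cref{remark:karmIIC}). Setting $G_\mathrm{piv}:=(1-p_c)\sum_u c(u)$ and summing over $z$ will give the main term $p_c^{-1}G_\mathrm{piv}A^2\sum_z G(x,z)G(z,y)=p_c^{-1}G_\mathrm{piv}A^2G^{*2}(x,y)$. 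We will also need $G_\mathrm{piv}$ to be finite and positive; finiteness is immediate, and positivity should follow from the theorem's IIC description.

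The remaining work, and the main obstacle, is uniformity: the decorrelation holds only as both distances tend to infinity. We choose a large constant $M$ and split the sum over $z$ into the bulk, where $\|z-x\|,\|z-y\|\ge M$, and two end regions. In the end regions we bound each term by the BK inequality:
\[
\P(\text{$e$ pivotal})\le T(x,z)\,T(z+u,y).
\]
This makes the end regions contribute at most $O(M^{d})\langle x-y\rangle^{2-d}$. That is negligible compared with $G^{*2}(x,y)\asymp\langle x-y\rangle^{4-d}$ (valid for $d>4$), so the end regions can be discarded once $M$ is fixed.

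In the bulk, the relative error from \cref{thm:scheme_function} must be small uniformly in $z$. To get this, we would like to invoke that theorem in its uniform form, in which the error is $o(1)$ as the minimum separation tends to infinity. Failing that, we use the BK bound together with dominated convergence to control the region where $z$ is far from the segment between $x$ and $y$. After rescaling, the sum $\sum_z G(x,z)G(z,y)$ becomes a Riemann sum for an integrable kernel, so the parts of the sum where the asymptotics are not yet accurate carry vanishing weight. Letting $M\to\infty$ then finishes the proof.
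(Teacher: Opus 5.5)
Your proposal is essentially the paper's proof: Russo's formula with closed pivotals, the three-vertex path scheme $S_u(x,z,y)$ with middle blob $(\emptyset,0,u)$ handled by \cref{thm:scheme_function}, and the BK bound $T_{p_c}(x,z)T_{p_c}(z+u,y)$ for $z$ within bounded distance of $\{x,y\}$. Two small remarks: the asymptotic in \cref{thm:scheme_function} as the minimal separation tends to infinity is already uniform, so your fallback is unnecessary; and it is the distinct-cluster requirement that forces $e$ to be closed, not the converse.

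The one real slip is in Russo's formula. It reads $T_p'(x,y)=\frac{1}{1-p}\sum_e\P_p(e\text{ closed and pivotal})=\frac{1}{p}\sum_e\P_p(e\text{ open and pivotal})$, so $T'_{p_c}(x,y)=p_c^{-1}\E_{p_c}[d_\mathrm{piv}(x,y)\mathbbm{1}(x\leftrightarrow y)]$. The constant should therefore be $G_\mathrm{piv}=\frac{p_c}{1-p_c}\sum_u \mathbf{W}(\emptyset,0,u)$, not $(1-p_c)\sum_u c(u)$. This does not affect the existence claim, but it matters for the later normalization in which $G_\mathrm{piv}$ counts open pivotals.
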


The constant $G_\mathrm{piv}$ is related to the constant $C_\mathrm{piv}$ from \cref{thm:scaling_limit_intro} by $C_\mathrm{piv}=AG_\mathrm{piv}$ as shown in the proof of \cref{thm:scaling_limit_intro}.


\subsection{Organization}

The rest of the paper is organized as follows.
\begin{itemize}
  \item
 In \cref{sec:lace_expansion} we prove the two-blob estimate (\cref{thm:two_blob}). We extract the two-blob estimate together with the existence of the IIC measure from a single lace expansion of the same form used in the original work of Hara and Slade \cite{MR1043524}, which we give an exposition of intended for readers not already familiar with the lace expansion. This is the only place where the lace expansion is used, with the rest of the paper relying only on the assumptions \eqref{eq:two_point_assumption} and \eqref{eq:two_blob_assumption}. 
 \item In \cref{sec:scheme_function} we introduce the formalism of \emph{scheme functions} and prove our main decorrelation theorem, which states that scheme functions asymptotically factorize at large scales (\cref{thm:scheme_function}).
This theorem shows very generally that the microscopic geometry of clusters decorrelates from macroscopic connection events, even when we condition on a large number of connection events possibly involving multiple arms emanating from the same microscopic regions. The results of this section imply in particular that ``$k$-arm IIC measures'' involving $k$ large \emph{distinct} clusters are well-defined for every $k\geq 1$, complementing related results of \cite{cabezas2025bi,blanc2025critical}.
  \item In \cref{sec:k_point_proof} we apply the results of \cref{sec:scheme_function} to prove our main result on $k$-point function asymptotics (\cref{thm:k_point}), the main technical step being the localization of the overcount factors relating the tree-graph inequality to the true $k$-point function.
As part of this section we also prove ``monochromatic'' analogues of our scheme-function factorization theorem involving edge-disjoint connections that need not belong to different clusters and deduce that ``monochromatic $k$-arm IIC measures'' are well-defined for all $k\geq 1$ (\cref{thm:scheme_function_variantW}).
   \item In \cref{sec:scaling_limit} we prove our scaling limit results in the weak array topology (\cref{thm:scaling_limit_intro}). We begin by explaining in detail how the pointwise $k$-point function asymptotics of \cref{thm:k_point} imply the convergence of the counting measure on the cluster of the origin to integrated super-Brownian excursion via the method of moments (\cref{thm:integrated_SBM}). We then prove an analogue of our pointwise $k$-point function theorems in which some points are required to be a pivotal edge on the path between two given points (\cref{thm:geodesic_bb_pointwise}), and use this to deduce weak array convergence for the pivotal metric (together with the embedding into space) via a further application of the method of moments (\cref{thm:scaling_limit_bb}).
We then prove \cref{thm:scaling_limit_intro} by showing that the three different metrics we consider are asymptotically equivalent (in the weak array sense) once appropriate constant scaling factors are taken into account, these factors arising from the microscopic geometry of the cluster around a typical pivotal between two distant points (\cref{thm:CompareDistance,cor:CompareDistance}).
    \item In \cref{sec:tightness} we strengthen the convergence in our scaling limit theorem to the rooted Gromov--Hausdorff--Prokhorov-function topology and deduce the first-order one-arm asymptotics (\cref{cor:first_order_arm}); here we also prove a strong form of the equivalence of different metrics (\cref{lem:uniform_distance_comparison}). The proofs in this section are adapted from the work of Archer, Nachmias, and Shalev \cite{archer2024ghp} on the GHP tightness of the uniform spanning tree of high-dimensional tori.

     \item Finally, in \Cref{sec:positivity_of_local_factors} we collect the finite-energy arguments used to characterize when the various limiting constants appearing in our theorems are positive (i.e., which cylinder events have positive probability for the various $k$-arm IIC measures we construct). These proofs, which are topological in nature, are collected in this appendix to avoid breaking the flow of the main argument.
 \end{itemize}

\section{Lace expansion analysis of the two-blob function}
\label{sec:lace_expansion}

 In this section we explain how the lace expansion can be used to prove \cref{thm:two_blob}. A secondary goal of the section is to show how the existence of the high-dimensional IIC measure, originally established using a slightly different form of the lace expansion in \cite{van2004incipient}, can also be extracted directly from the original Hara--Slade approach to the lace expansion \cite{MR1043524}. That is, one can carry out a single lace expansion analysis that yields both the two-point function estimate \eqref{eq:two_point_assumption} \emph{and} the existence of the IIC measure. Since all delicate quantitative aspects of the lace expansion have already been handled in the classical works \cite{MR1043524,MR1959796}, we will be able to focus on formal aspects, making our analysis fairly simple. (In particular, since we can assume that the two-point function estimate \eqref{eq:two_point_assumption} holds, we do not need to engage with the ``bootstrap analysis'' of the lace expansion or associated deconvolution problems.)

\subsection{A crash course on the lace expansion}


\medskip

To formulate the lace expansion in a way that will enable us to read off both the two-point function asymptotics \eqref{eq:two_point_assumption} and the existence of the IIC measure from a single expansion, we will use slightly different notation than is typical. In particular, we will elevate the probability
\[
\mathbf{T}_W(x,y):=\P(x \leftrightarrow y \text{ only via } W) =  \P(x \leftrightarrow y) - \P(x \leftrightarrow y \text{ off } W)
\]
to a position of central importance, where $W \subseteq V\cup E$ is a set of edges and vertices, ``$x \leftrightarrow y$ off  $W$'' means that $x$ is connected to $y$ by an open path that does not use any edges or vertices of $W$, and ``$x \leftrightarrow y$ only via $W$'' means that $x$ is connected to $y$ but not off $W$, so that any open path from $x$ to $y$ must use an edge or vertex of $W$. For sets of \emph{vertices} these ``only via'' probabilities already occur prominently in the Hara--Slade approach to the lace expansion (where they are referred to as connections \emph{through} $W$). Allowing sets of edges also has the following advantage: If we can prove that for each non-empty finite set $W \subseteq V \cup E$ there exists a constant $A_W$ such that
\begin{equation}
\label{eq:T_W_asymptotics}
\mathbf{T}_W(0,x) \sim A_W G(0,x)
\end{equation}
then we deduce both the asymptotic estimate for the two-point function \eqref{eq:two_point_assumption}  \emph{and the existence of the IIC measure}. The first claim about the estimate \eqref{eq:two_point_assumption} follows immediately by taking $W=\{0\}$ (or any other set containing $0$). For the existence of the IIC measure, note that if $W$ is a finite set of edges then we have by \eqref{eq:T_W_asymptotics} that
\begin{multline*}
\P(0 \leftrightarrow x, W \text{ closed}) = (1-p)^{|W|}\P(0 \leftrightarrow x \text{ off $W$}) = (1-p)^{|W|}(\P(0 \leftrightarrow x)-\P(0 \leftrightarrow x \text{ only via $W$})) \\=  (1-p)^{|W|} (A - A_W\pm o(1)) G(0,x),
\end{multline*}
so that
\[
\P(W \text{ closed}\mid 0 \leftrightarrow x) \to \frac{A - A_W}{A} \cdot (1-p)^{|W|} \qquad \text{ as $x\to\infty$}.
\]
This shows that the IIC measure is well-defined for all cylinder events of the form $\{W$ closed$\}$, and it follows by inclusion-exclusion that the IIC measure is defined for all cylinder events. Note moreover that the asymptotic formula
\[
\P(0 \leftrightarrow x, W \text{ closed}) \sim \mathbb{P}_\mathrm{IIC}(W \text{ closed})\P(0\leftrightarrow x)
\]
holds even when $\mathbb{P}_\mathrm{IIC}(W \text{ closed})=0$: Indeed,  \cref{lem:IIC_positivity} states that $\mathbb{P}_\mathrm{IIC}(W \text{ closed})=0$ if and only if $W$ contains a cut set separating $0$ from infinity, in which case $\P(0 \leftrightarrow x, W \text{ closed})=0$ for all but finitely many $x$.


\medskip

In order to facilitate our later application to the two-blob function, it will be important to work with percolation on a general graph. (Specifically, we will later apply the lace expansion to the graph formed by deleting some finite set of edges from $\Z^d$.)

\medskip

\noindent
\textbf{A linear-algebraic formulation of the lace expansion.} Let $G=(V,E)$ be a simple, countable graph and let $P:E\to [0,1]$ be a function assigning an inclusion probability to each edge. (Here a graph is said to be simple if it does not include any loops or multiple edges. This restriction is for notational purposes only.) 
Let $\mathscr{M}=\R^{V\times V}$ denote the set of real matrices indexed by $V\times V$, and let $\mathscr{O}=\mathscr{M}^{\mathscr{P}(V \cup E)}$ denote\footnote{This symbol is \texttt{\string\mathscr\{O\}}.} the set of families of matrices of the form $\mathbf{S}=(\mathbf{S}_W)_{W\subseteq V \cup E}$, where each $\mathbf{S}_W$ belongs to $\mathscr{M}$ and $\mathscr{P}(V\cup E)$ denotes the set of all sets of edges and vertices. The set $\mathscr{O}$ is naturally a vector space when equipped with componentwise addition and scalar multiplication.
Important elements of $\mathscr{O}$ for our purposes include 
\begin{align*}\mathbf{I}_W(x,y)&:=\mathbbm{1}(x=y \in W) \qquad \text{ and } \qquad
\mathbf{T}_W(x,y) := \P(x\leftrightarrow y \text{ only via $W$}).
\end{align*}
We will also think of $P$ as an element of $\mathscr{M}$ defined by $P(x,y)=0$ if $x$ and $y$ are not adjacent and $P(x,y)=P(e)$ for the unique edge connecting $x$ and $y$ otherwise. 

Given $x,y$ and a set $W$ such that the event $\{x\leftrightarrow y$ only via $W\}$ occurs, we define the \textbf{cutting bond} to be the first open pivotal $e$ on the path from $x$ to $y$ such that $x$ is connected to $e^-$ only via~$W$. (See \cref{fig:cutting_bond}.) If no such edge exists we say that ``$x\leftrightarrow y$ only via $W$ with no cutting bond''. A nice feature of this definition is that there are no constraints on the manner in which the part of the cluster between $e$ and $y$ intersects the set $W$. We define an element $\mathbf{\Pi}_0$ of $\mathscr{O}$ by
\[
\mathbf{\Pi}_{0,W}(x,y):=\mathbbm{1}(x\neq y)\P(x\leftrightarrow y \text{ only via $W$ with no cutting bond}).
\]
The basic idea of the Hara--Slade percolation lace expansion is to expand all ``only via'' connection probabilities in terms of the location of the cutting bond. This expansion is encapsulated by the following lemma. For each edge $e$ and vertex $x$, we write $K_e(x)$ for the set of vertices that are connected to $x$ by an open path that does not include the edge $e$.

\begin{figure}[t]
\centering
\includegraphics{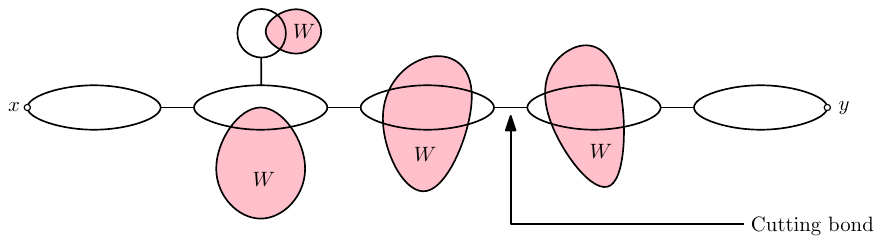}
\caption{Example of a cutting bond for the event $\{x \leftrightarrow y$ only via $W\}$. The elliptical regions represent 2-connected components of the cluster, while straight lines represent bridges. The pink shaded regions represent the set $W$. The cutting bond is unique when it exists.}
\label{fig:cutting_bond}
\end{figure}

\begin{lemma}[One step of lace expansion {\cite[Lemma 10.1]{MR2239599}}] Let $G=(V,E)$ be a countable graph and let $P:E\to[0,1]$ be an assignment of edge inclusion probabilities. 
For each pair of distinct points $x,y\in V$ and set $W \subseteq V \cup E$ we have that
\begin{align*}
&\mathbf{T}_W(x,y) = \mathbf{I}_W(x,y)+\mathbf{\Pi}_{0,W}(x,y)+ \sum_e (\mathbf{I}_W(x,e^-)+\mathbf{\Pi}_{0,W}(x,e^-)) P(e)T(e^+, y)\\
&\hspace{4cm}-\sum_{e\in E^\rightarrow} \E \left[\mathbbm{1}(x \leftrightarrow e^- \text{ \emph{only via $W$ with no cutting bond}}) P(e) \mathbf{T}_{K_e(x)}(e^+,y)\right].
\end{align*}
In particular, all relevant sums and expectations converge absolutely. 
\end{lemma}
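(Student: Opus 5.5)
The plan is to decompose the event $\{x\leftrightarrow y \text{ only via } W\}$ (with $x\neq y$, so that $\mathbf{I}_W(x,y)=0$) according to whether a cutting bond exists, and if so, which directed edge $e$ it is. The case with no cutting bond contributes exactly $\mathbf{\Pi}_{0,W}(x,y)$ by definition. The events $E_e=\{x\leftrightarrow y \text{ only via } W, \text{ cutting bond } =e\}$, indexed by directed edges $e\in E^\rightarrow$ oriented away from $x$, are disjoint. Hence
\[
\mathbf{T}_W(x,y)=\mathbf{\Pi}_{0,W}(x,y)+\sum_{e} \P(E_e),
\]
and since the $E_e$ are disjoint and the remaining term is nonnegative, $\sum_e \P(E_e)\le \mathbf{T}_W(x,y)\le 1$ converges absolutely.

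The key step is to rewrite $E_e$ in a form that conditions on $K_e(x)$. I will check that $E_e$ is the intersection of three events:
\begin{itemize}
\item[(i)] the event $\{x\leftrightarrow e^- \text{ only via $W$ with no cutting bond}\}$, or $\{x=e^-\in W\}$, evaluated in the configuration with $e$ removed;
\item[(ii)] the event that $e$ is open;
\item[(iii)] the event $\{e^+\leftrightarrow y \text{ off } K_e(x)\}$.
\end{itemize}
The point is that, given (ii) and (iii), $e$ is pivotal for $x\leftrightarrow y$ and $y\notin K_e(x)$. The open pivotals of $x\leftrightarrow y$ preceding $e$ are then exactly the open pivotals of $x\leftrightarrow e^-$ inside $K_e(x)$. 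So ``$e$ is the first pivotal $f$ with $x\leftrightarrow f^-$ only via $W$'' translates into (i): no pivotal of $x\leftrightarrow e^-$ is a cutting bond, and $x\leftrightarrow e^-$ only via $W$. The degenerate case $x=e^-$ is exactly the $\mathbf{I}_W(x,e^-)$ contribution.

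Event (i) is measurable with respect to the edges with at least one endpoint in $K_e(x)$, excluding $e$ itself. I will therefore condition on $K_e(x)=A$. Given this, the states of $e$ and of the edges with no endpoint in $A$ are independent of (i). This gives
\[
\P(E_e)=\E\bigl[\mathbbm 1(\text{(i)})\,P(e)\,\P(e^+\leftrightarrow y \text{ off } A)\big|_{A=K_e(x)}\bigr].
\]
Next I substitute $\P(e^+\leftrightarrow y \text{ off } A)=T(e^+,y)-\mathbf{T}_A(e^+,y)$. Taking the expectation of the first part yields $(\mathbf{I}_W+\mathbf{\Pi}_{0,W})(x,e^-)P(e)T(e^+,y)$, and the second part yields the subtracted term. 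Summing over $e$ gives the claimed identity.

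The main obstacle is justifying splitting the sum after this subtraction, i.e.\ absolute convergence of $\sum_e(\mathbf{I}_W+\mathbf{\Pi}_{0,W})(x,e^-)P(e)T(e^+,y)$ on a general graph. I would first try to argue that the subtracted sum is dominated termwise by this one, because $0\le \mathbf{T}_A\le T$. It then suffices to bound the unsubtracted sum, for instance by the BK-type bound $\sum_e T(x,e^-)P(e)T(e^+,y)$, which is finite in the settings we apply it to (subgraphs of $\Z^d_L$ with a triangle-type condition). If needed, I would add this finiteness as a harmless standing assumption.
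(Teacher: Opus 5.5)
Your route (decompose by the cutting bond, factor out $e$ and the configuration off $K_e(x)$, then write $\P(e^+\leftrightarrow y \text{ off } A)=T(e^+,y)-\mathbf{T}_A(e^+,y)$) is the standard Hara--Slade cutting-bond argument that the paper takes from its cited source. However, one step fails as written.

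\textbf{The indicator mismatch.} For independence you rightly evaluate event (i) in the configuration $\omega_e$ obtained by closing $e$. You then identify $\E[\mathbbm{1}(\text{(i)})]$ with $(\mathbf{I}_W+\mathbf{\Pi}_{0,W})(x,e^-)$, and your subtracted term with the one in the statement. Both of those involve the event in the full configuration $\omega$, and the two differ in general. Take the path graph with edges $\{x,v\}$ and $e=\{v,u\}$, oriented so that $e^-=u$ and $e^+=v$, and let $W=\{u\}$. Then event (i) with $e$ closed is impossible, whereas $\mathbf{\Pi}_{0,W}(x,u)=P(\{x,v\})P(e)>0$, since neither pivotal is a cutting bond.

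\textbf{The missing observation.} The two indicators can differ only when $e^+\in K_e(x)$. If $e^+\notin K_e(x)$, opening $e$ at most attaches a part disjoint from $K_e(x)$ through the bridge $e$. So the simple open paths between vertices of $K_e(x)$ are unchanged, and hence so are the connection $x\leftrightarrow e^-$, its open pivotals, and the ``only via $W$'' properties of the points $f^-\in K_e(x)$. On the other hand, $T(e^+,y)-\mathbf{T}_{A}(e^+,y)=\P(e^+\leftrightarrow y\text{ off }A)=0$ whenever $e^+\in A$. Therefore you may replace $\mathbbm{1}(\text{(i) in }\omega_e)$ by $\mathbbm{1}(\text{(i) in }\omega)$ inside $\E\bigl[\,\cdot\,\bigl(T(e^+,y)-\mathbf{T}_{K_e(x)}(e^+,y)\bigr)\bigr]$. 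Only after this swap does splitting the difference give exactly the two terms in the statement. Splitting without the swap gives a different valid identity, in which $\mathbf{\Pi}_{0,W}(x,e^-)$ is replaced by the analogous probability on $G$ with $e$ deleted.

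\textbf{Convergence.} Your caveat here is not just prudent but necessary: the final clause of the statement fails on a general countable graph. Take $P\equiv 1$ on $\Z^2$ and $W=\{x\}$. Every connection from $x$ is then ``only via $W$'', so every open pivotal is a cutting bond, and $\mathbf{\Pi}_{0,W}(x,z)$ is the probability of a connection with no open pivotal. This equals $1$ for all $z\neq x$, while $T\equiv 1$ and $K_e(x)=\Z^2$. Both sums over $e$ are then $+\infty$, even though each $\P(E_e)=1-1=0$. Your domination $0\leq\mathbf{T}_{K_e(x)}\leq T$ (applied after the swap, so the same indicator appears in both terms) shows that the split is valid exactly when $\sum_e(\mathbf{I}_W+\mathbf{\Pi}_{0,W})(x,e^-)P(e)T(e^+,y)<\infty$. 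This holds in the paper's applications: subgraphs of $\Z^d_L$ at $p_c$, where $T(u,v)=O(\langle u-v\rangle^{-d+2})$ by \eqref{eq:two_point_assumption} and $d>4$.

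\textbf{Conditioning on infinite clusters.} Literally conditioning on $K_e(x)=A$ is fine when clusters are a.s.\ finite, as in those applications. For possibly infinite clusters, use instead that, given the explored cluster $K_e(x)$ with its open edges, the edge $e$ and the edges not touching $K_e(x)$ remain independent with their original laws.
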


In this lemma, the first two terms represent the probability that $x$ is connected to $y$ only via $W$ with no cutting bond. In the third term, we have approximated the probability that $x$ is connected to $y$ only via $W$ and $e$ is the first cutting bond on the open path by the product of the probabilities that $x$ is connected to $e^-$ only via $W$ with no cutting bond, that $e$ is open, and that $e^+$ is connected to $y$. Finally, the fourth term gives a convenient expression for the error in this approximation as a kind of iterated expectation. We should expect this expansion to be useful if this remainder term is much smaller than the probability it approximates, which roughly means that the parts of the cluster on either side of $e$ are approximately independent of one another; it is reasonable to expect this to be the case for critical percolation in high dimensions but not in low dimensions.

\medskip

This equation can be written more elegantly by introducing some further notation. Note that we can multiply elements of $\mathscr{O}$ by elements of $\mathscr{M}$, defining $\mathbf{S}U$ for $\mathbf{S}\in \mathscr{O}$ and $U\in \mathscr{M}$ by
\[
(\mathbf{S}U)_W(x,y) = \sum_{z\in V} S_W(x,z)U(z,y)
\]
whenever all relevant sums converge absolutely.
We define the \textbf{remainder operator} $\mathsf{R}$ to be the linear operator mapping its domain in $\mathscr{O}$ to $\mathscr{O}$ via
\[
\mathsf{R}[\mathbf{S}]_A(x,y) =  \mathbbm{1}(x\neq y)\sum_e \E \left[\mathbbm{1}(x \leftrightarrow e^- \text{ only via $A$, no cutting bond})P(e) \mathbf{S}_{K_e(x)}(e^+,y)\right]
\]
whenever the relevant sums and expectations converge absolutely.
With this notation in place, the above lemma may be written simply as
\[\mathbf{T}=\mathbf{I}+\mathbf{\Pi}_0+(\mathbf{I}+\mathbf{\Pi}_0)PT - \mathsf{R}[\mathbf{T}],\]
where $T(x,y)=\P(x\leftrightarrow y)$ denotes the usual (unrestricted) two-point function. From this perspective, the ``lace expansion'' is nothing other than the solution to this equation using the geometric series expansion  $(I+\mathsf{R})^{-1}=\sum_{i=0}^\infty (-1)^{i} \mathsf{R}^{i}$. Of course, most of the difficulty in implementing the lace expansion lies in proving that this series converges (in some appropriate sense) and using this to deduce asymptotic properties of the two-point function $T$.

\medskip

While infinite series expansions such as $(I+\mathsf{R})^{-1}=\sum_{i=0}^\infty (-1)^{i} \mathsf{R}^{i}$ require careful justification, we can always use finite-step versions of these expansions and seek to bound the error terms that arise. The remainder operator $\mathsf{R}$ has the helpful property that if $\mathbf{S}\in \mathscr{O}$ and $U\in \mathscr{M}$ then
\[
\mathsf{R}[\mathbf{S} U] =  \mathsf{R}[\mathbf{S}]U 
\]
whenever all relevant sums converge absolutely (or all terms in the sum are non-negative). Using this together with the linearity of $\mathsf{R}$, we can write 
\begin{align*}
\mathbf{T}&=\mathbf{I}+\mathbf{\Pi}_0+(\mathbf{I}+\mathbf{\Pi}_0)PT - \mathsf{R}[\mathbf{T}]\\
&=\mathbf{I}+\mathbf{\Pi}_0+(\mathbf{I}+\mathbf{\Pi}_0)PT - \mathsf{R}[\mathbf{I}+\mathbf{\Pi}_0+(\mathbf{I}+\mathbf{\Pi}_0)PT - \mathsf{R}[\mathbf{T}]]\\
&=\mathbf{I}+\mathbf{\Pi}_1+(\mathbf{I}+\mathbf{\Pi}_1)PT + \mathsf{R}^2[\mathbf{T}],
\end{align*}
where $\mathbf{\Pi}_1 = \mathbf{\Pi}_0-\mathsf{R}[\mathbf{I}+\mathbf{\Pi}_0]$.
Iteratively, if we define $\mathbf{\Pi}^{(0)}=\mathbf{\Pi}_0$ and define $\mathbf{\Pi}_{i}$ and $\mathbf{\Pi}^{(i)}$ for $i\geq 1$ by
\[
\mathbf{\Pi}^{(i)} = \mathsf{R}^i[\mathbf{I}+\mathbf{\Pi}_0] \qquad \text{ and } \qquad \mathbf{\Pi}_i = \mathbf{\Pi}_{i-1} + (-1)^{i} \mathbf{\Pi}^{(i)}
 = \mathbf{\Pi}_0+\sum_{j=1}^i (-1)^j \mathsf{R}^j[\mathbf{I}+\mathbf{\Pi}_0]
\]
then
\begin{equation}
\label{eq:finite_step_lace_expansion}
\mathbf{T}=\mathbf{I}+\mathbf{\Pi}_i + (\mathbf{I}+\mathbf{\Pi}_i)PT + (-1)^{i+1} \mathsf{R}^{i+1}[\mathbf{T}] =: \mathbf{I}+\mathbf{\Pi}_i + (\mathbf{I}+\mathbf{\Pi}_i)PT + (-1)^{i+1} \mathbf{R}_{i}
\end{equation}
for every $i\geq 0$, where we write $\mathbf{R}_{i}:=\mathsf{R}^{i+1}[\mathbf{T}]$. 
(Note that each of the operators $\mathbf{R}_i$ and $\mathbf{\Pi}^{(i)}$ is written as a sum of non-negative terms, so that these operators are always well-defined if one allows infinite entries. Similarly their products with non-negative matrices are well-defined if one allows infinite entries.)
To proceed, we would like to establish conditions under which (a) the remainder $\mathbf{R}_i$ converges to zero in some appropriate sense as $i\to\infty$, and (b) the limiting operator $\mathbf{\Pi}=\lim_i \mathbf{\Pi}_i$ is well-behaved (i.e., decays rapidly at infinity).

\medskip

\textbf{Expressing the lace expansion operators as iterated expectations.}
To bound the operators $\mathbf{R}_i$ and $\mathbf{\Pi}^{(i)}$, it is convenient to go back to their standard expressions in terms of iterated expectations. We will use the physicist's notation for expectations $\langle X \rangle :=\E[X]$ and use the shorthand
\[
\mathbbm{1}_\text{ncb}(x,y;W) = \mathbbm{1}(x\leftrightarrow y \text{ only via $W$ with no cutting bond}).
\]
We will write $\langle \,\cdot\,\rangle_{(0)},\langle\,\cdot\, \rangle_{(1)},\ldots,\langle \,\cdot\,\rangle_{(N)}$ for expectations taken with respect to $N+1$ independent percolation configurations $\omega_{(0)},\omega_{(1)},\ldots,\omega_{(N)}$, but where we will be interested in taking $\langle\,\cdot\, \rangle_{(i)}$-expectations with respect to random variables defined in terms of the subsequent $N-i$ configurations. Given an edge $e$ and a vertex $u$, we write $K_e^{(i)}(u)$ for the cluster of $u$ in $\omega_{(i)}\setminus\{e\}$, and given vertices $x,y$ and a sequence of oriented edges $e_1,\ldots,e_N$ we define
\[
\mathbbm{1}_\text{ncb}^i := \mathbbm{1}(e_i^+\leftrightarrow e_{i+1}^- \text{ in $\omega_{(i)}$ only via $K_{e_i}^{(i-1)}(e_{i-1}^+)$ with no cutting bond})
\]
for each $1\leq i \leq N$, where we write $e_0^+=x$ and $e_{N+1}^-=y$.
The operator $\mathbf{\Pi}^{(N)}$ can be written as
\begin{multline*}
\mathbf{\Pi}_{W}^{(N)}(x,y)= \sum_{e_1,\ldots,e_N \in E^\rightarrow}\langle \mathbbm{1}_\text{ncb}(x,e_1^-;W) P(e_1) \langle \mathbbm{1}_\text{ncb}^1 P(e_2) \langle \mathbbm{1}_\text{ncb}^2 P(e_3) 
\\\cdots \langle \mathbbm{1}_\text{ncb}^{N-1} P(e_{N}) \langle \mathbbm{1}_\text{ncb}^N  \rangle_{(N)} \rangle_{(N-1)} \cdots \rangle_{(2)} \rangle_{(1)} \rangle_{(0)}.
\end{multline*}
This is just an alternative interpretation of the same algebraic manipulations we carried out by taking powers of $\mathsf{R}$ above, and indeed is equivalent\footnote{NB: In the original Hara--Slade paper \cite{MR1043524} they use the letter $g$ for expressions of this form (without the distinguished set $W$) and use $\Pi$ to denote the analogue of $\mathbf{\Pi}P$ in our notation. Our notation is modelled after that of \cite{MR2239599}.} to the expressions given in the original paper \cite{MR1043524}. The remainder operator $\mathbf{R}_N$ can also be written in a similar fashion as
\begin{multline*}
\mathbf{R}_{N,W}(x,y):= \sum_{e_1,\ldots,e_N,e_{N+1} \in E^\rightarrow}\langle \mathbbm{1}_\text{ncb}(x,e_1^-;W) P(e_1) \langle \mathbbm{1}_\text{ncb}^1 P(e_2) \langle \mathbbm{1}_\text{ncb}^2 P(e_3) 
\\\cdots \langle \mathbbm{1}_\text{ncb}^{N-1} P(e_{N}) \langle \mathbbm{1}_\text{ncb}^N P(e_{N+1}) \mathbf{T}_{K_{e_{N+1}}^{(N)}(e_N^+)}(e_{N+1}^+,y)  \rangle_{(N)} \rangle_{(N-1)} \cdots \rangle_{(2)} \rangle_{(1)} \rangle_{(0)},
\end{multline*}
and bounding $\mathbf{T}_{K_{e_{N+1}}^{(N)}} \leq T$ yields that
\[
\mathbf{R}_{N,W}=|\mathbf{R}_{N,W}| \leq \mathbf{\Pi}_{W}^{(N)}P T.
\]
Thus, bounds on $\mathbf{\Pi}_{W}^{(N)}$ can be used both to prove that the expansion converges and to show that the limiting operator $\mathbf{\Pi}=\lim_{i\to\infty}\mathbf{\Pi}_i$ decays rapidly at infinity. 


\medskip

\noindent \textbf{Diagrammatic notation and estimates.} We now explain how the BK inequality can be used to bound $\mathbf{\Pi}_{W}^{(N)}$ in terms of appropriate \emph{diagrammatic sums}. These diagrammatic sums will be expressed in Feynman diagram notation, where each line\footnote{NB: In the original Hara--Slade paper \cite{MR1043524}, wavy lines are used for this purpose.} (or curve; the distinction is only used to keep the pictures tidy) represents a copy of the two-point matrix $T(\cdot,\cdot)$, black (unlabelled) vertices represent points that are summed over, and white (unlabelled) vertices represent vertices that are not summed over, but rather appear as variables in the definition of the diagram. When there are only two labelled vertices, we will often omit the labels and implicitly label the left-most white vertex $x$ and the right-most labelled vertex $y$. Similarly, when there is only one labelled vertex we will often omit the label and implicitly label this vertex $x$. For example,
\[
\P(x \leftrightarrow y) =  \begin{array}{l}\includegraphics{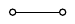}\end{array} \qquad \text{ and } \qquad \sum_{y} \P(x\leftrightarrow y) = \begin{array}{l}\includegraphics{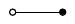}\end{array}.
\]
(Since $T$ is symmetric, we do not need to orient the lines in our diagrams.)
If two vertices are connected not by a line but by a broken line cut by a small perpendicular, this line represents a copy of $PT$ rather than $T$:
\[
\begin{array}{l}\includegraphics{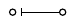}\end{array} := PT(x,y) = \sum_{z\in V} P(x,z)T(z,y).
\]
Since $PT$ need not be symmetric, the orientation is now important\footnote{In fact for percolation on the full lattice $\Z^d$ the two operators $P$ and $T$ commute, and the orientation is not really important. As mentioned previously we will want to also work with subgraphs of $\Z^d$ where the order is potentially meaningful.}: the line break is next to the vertex that appears as an argument of the $P$ matrix in the relevant sum.
For the lace expansion, we want to consider quantities defined with respect to a distinguished set $W \subseteq V \cup E$, and for this it is convenient to introduce the following extension of the standard diagrammatic notation.
Given a set $W\subseteq V \cup E$, we write $V(W)$ for the set of vertices that either belong to $W$ or are an endpoint of an edge in $W$. When a vertex in our diagram is represented by a small red box, this means that this vertex should be summed \emph{only over $V(W)$}, rather than over the entire vertex set as would be the case for a black vertex. Thus, for example, we may write
\begin{align*}
\begin{array}{l}\includegraphics{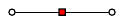}\end{array} :=\sum_{w\in V(W)} T(x,w)T(w,y).
\end{align*}
(This convention is new to this paper.)
There is one more important convention regarding the use of diagrammatic notation in the lace expansion: When a collection of curves encloses a polygon, we shade the polygon to denote that we allow all vertices in the sum to be equal, and leave the polygon unshaded to denote that we sum only over those configurations of vertices in which at least two of the vertices in the boundary of the polygon are distinct.
 The distinction between shaded and unshaded loops is important in the proof of convergence of the lace expansion, since e.g.\
\[\begin{array}{l}\includegraphics{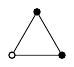}\end{array}:=\sum_{y,z \in V} \mathbbm{1}(x,y,z \text{ not all equal}) T(x,y)T(y,z)T(z,x)\] is small for critical percolation in very high dimensions whereas
\[\begin{array}{l}\includegraphics{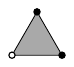}\end{array}:=\sum_{y,z \in V}  T(x,y)T(y,z)T(z,x) = 1+\begin{array}{l}\includegraphics{Example_Diagram_triangle_unshaded.pdf}\end{array} \]
 is always at least $1$. 

This diagrammatic notation is often used in conjunction with the BK inequality (and the union bound) to bound quantities of interest. For example:
\begin{align*}
\P(x \leftrightarrow y, \text{ no open pivotals}) &\leq\begin{array}{l}\includegraphics{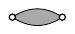}\end{array}  &&:= T(x,y)^2\\
\E\#\{\text{open pivotals for $x \leftrightarrow y$}\} &\leq   \begin{array}{l}\includegraphics{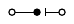}\end{array} &&:=  \sum_{z_1\sim z_2\in V} T(x,z_1)P(z_1,z_2) T(z_2,y)\\
\P(x\leftrightarrow y, \text{ at least one open pivotal}) &\leq  \begin{array}{l}\includegraphics{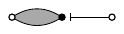}\end{array} &&:= \sum_{z_1 \sim z_2\in V} T(x,z_1)^2P(z_1,z_2)T(z_2,y)\\
\P(x \leftrightarrow y \text{ only via $W$}) &\leq \begin{array}{l}\includegraphics{Example_Diagram_only_via_W.pdf}\end{array} &&:=\sum_{w\in V(W)} T(x,w)T(w,y).
\end{align*}
(Verifying each of these estimates is a worthwhile exercise for readers unfamiliar with diagrammatic notation and the BK inequality.)
%
%
%
%
The use of diagrammatic estimates in the lace expansion will be based primarily on the following simple lemma, which again is a consequence of the BK inequality.

\begin{lemma}[\!\!{\cite[Lemma 2.5]{MR1043524}}]
\label{lem:basic_diagrammatic_bound}
 Let $G=(V,E)$ be a countable graph and let $P:E\to[0,1]$ be an assignment of edge inclusion probabilities.  The diagrammatic bounds
\begin{align}
\P(x \leftrightarrow y \text{ only via $W$ with no cutting bond}) &\leq \begin{array}{l}\includegraphics{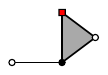}\end{array}\label{eq:cutting_bond_diagram}
\\
\P(\{x \leftrightarrow y \text{ only via $W$ with no cutting bond}\} \cap \{x\leftrightarrow v \}) &\leq \begin{array}{l}\includegraphics{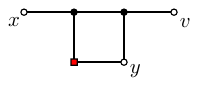}\end{array}+\begin{array}{l}\includegraphics{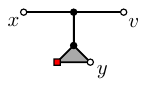}\end{array}\label{eq:ladder_diagram_pieces}
\end{align}
hold for every $x,y,v\in V$ and $W \subseteq V \cup E$.
\end{lemma}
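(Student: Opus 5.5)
The plan is to derive both bounds from a deterministic description of the event in terms of the sequence of open pivotals, followed by the BK inequality and a union bound. Write $e_1,\ldots,e_k$ for the open pivotal edges for $\{x\leftrightarrow y\}$, in order from $x$ to $y$. Set $u_0=x$ and $u_j=e_j^+$, so that the cluster between consecutive pivotals consists of $2$-connected ``sausages'' running from $u_{j}$ to $e_{j+1}^-$, with the last one running from $u_k$ to $y$.

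\textbf{Step 1 (structural lemma).} If $x\leftrightarrow y$ only via $W$ with no cutting bond, then $x\leftrightarrow e_j^-$ off $W$ for every $j$; otherwise the first pivotal violating this would be a cutting bond. In particular the prefix from $x$ to $e_k^-$ avoids $W$. Since every open $x$--$y$ path nonetheless meets $W$, the obstruction must lie in the final piece: the last edge $e_k$ or its endpoint belongs to $W$, or the final sausage from $u_k$ to $y$ is forced through $V(W)$. In the sausage case, Menger's theorem yields a point $w\in V(W)$ and a branch point $z$ such that the events $\{u\leftrightarrow w\}$, $\{w\leftrightarrow z\}$, $\{u\leftrightarrow z\}$ and $\{z\leftrightarrow y\}$ (or the degenerate version with $w$ on one of two disjoint $u$--$y$ paths) occur disjointly. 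Here $u=u_k$ (or $u=x$ if $k=0$), and $u$ is joined to $x$ by the additional disjoint connection supplied by the prefix. The degenerate cases $u=x$ and $z=y$ are the reason the diagram in \eqref{eq:cutting_bond_diagram} carries shaded polygons, so that coincident vertices are allowed.

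\textbf{Step 2 (BK).} Take a union bound over $u,z$ and over $w\in V(W)$, then apply the BK inequality to the disjoint occurrence. Each disjoint connection becomes a factor of $T$, and the vertex $w$ is restricted to $V(W)$, which is exactly the red-box vertex. When the obstructing edge is the pivotal $e_k$ itself, the same argument applies with the segment $u\leftrightarrow w$ degenerating to the single edge. This gives \eqref{eq:cutting_bond_diagram}.

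\textbf{Step 3 (the extra connection to $v$).} On the event in \eqref{eq:ladder_diagram_pieces}, choose an open path from $x$ to $v$ and let $t$ be the last vertex at which it leaves the union of disjoint witness paths from Step 1. Then $\{t\leftrightarrow v\}$ occurs disjointly from a witness configuration in which $t$ has been inserted as an extra vertex on one of the paths. Splitting according to whether $t$ lies on the prefix/backbone part or on the part containing the $W$-loop produces the two diagrams on the right-hand side. The BK inequality is applied exactly as in Step 2, with $t$ summed over.

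I expect the main obstacle to be Step 1. The work is in checking carefully that the definition of the cutting bond, which may itself be an edge of $W$ and imposes no constraint beyond $e$, forces all $W$-dependence into the last sausage. It must also be checked that the resulting witness structure really consists of disjoint paths, including every degenerate case, and matches the intended diagram. Here I would follow \cite[Lemma 2.5]{MR1043524}, where the vertex-set version of this decomposition is carried out, replacing ``through $A$'' by ``only via $W$''. The only point needing attention is edges of $W$: an edge $e\in W$ is handled by declaring one of its endpoints to be the red-box vertex, which is why the sum runs over $V(W)$.
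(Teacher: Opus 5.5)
Your reduction to the last sausage is right. With no cutting bond, $x\leftrightarrow e_j^-$ off $W$ for every open pivotal $e_j$, so the prefix $x\to e_k^-$ avoids $W$, and all $W$-dependence sits in $e_k$, in $u=e_k^+$, or in the final sausage. (The paper itself gives no proof; it cites \cite{MR1043524} and notes that the lemma follows from BK. Your overall strategy of pivotal decomposition, BK and a union bound is the standard one.)

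\textbf{The gap in Step 1.} It lies in what you extract from the final sausage.

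Since $e_k$ is the \emph{last} pivotal, $u$ and $y$ are joined by two edge-disjoint open paths $\gamma_1,\gamma_2$ (edge form of Menger). Each $\gamma_i$ meets $V(W)$: preceded by the $W$-avoiding prefix and $e_k$, it is an open $x$--$y$ path. If $e_k\in W$ or $u\in W$, take $w=u$, which lies on both paths. The correct witness is therefore
\[
\{x\leftrightarrow u\}\circ\{u\leftrightarrow w\}\circ\{w\leftrightarrow y\}\circ\{u\leftrightarrow y\}
\]
with $w\in V(W)$. This gives
\[
\sum_{u}\sum_{w\in V(W)} T(x,u)\,T(u,w)\,T(w,y)\,T(u,y),
\]
a line into a shaded triangle whose corners are $u$, the red vertex, and $y$ itself.

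Your structure is a triangle on $u,w,z$ followed by a single connection $z\leftrightarrow y$, and it throws away the double connection at $y$. The resulting diagram contains the correct one as its $z=y$ term, so it is strictly larger. For fixed $x$ it decays only like $\langle y\rangle^{-d+2}$: take $u=w=z$ a fixed point of $V(W)$. The diagram in \eqref{eq:cutting_bond_diagram} is $\overline{\mathbf{\Pi}}^{(0)}_W\le\overline{\mathbf{\Pi}}_W$, so it must be compatible with $\overline{\mathbf{\Pi}}_W(0,y)\le C_W\langle y\rangle^{-2d+6}$ in \cref{thm:lace_expansion_pi_bar}, and $\langle y\rangle^{-d+2}$ is not when $d>4$. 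So what you prove is a weaker inequality than the lemma. There is no ``generic'' branch point $z\neq y$ to handle: your ``degenerate'' case is the only one needed.

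\textbf{The same problem in Step 3.} With the correct witness, the last branch point $t$ of the path to $v$ lies either on the line $x\to u$ or on the loop $\gamma_1\cup\gamma_2$. In the loop case, use the fact that \emph{both} $\gamma_1$ and $\gamma_2$ meet $V(W)$ to place $w$ on the path not containing $t$. That is what reduces everything to exactly two diagrams: branching off the line, or branching off the loop opposite the red vertex.

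In your structure the split ``prefix/backbone versus $W$-loop'' does not produce the two diagrams of \eqref{eq:ladder_diagram_pieces}. There is an extra case of $t$ on the tail $z\to y$. Branching on the triangle $(u,w,z)$ on either side of the corner $w$ gives further distinct shapes.
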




The diagrammatic expressions in \cref{lem:basic_diagrammatic_bound} can be composed in a natural way to get bounds on the iterated expectations appearing in the definition of the operators $\mathbf{\Pi}^{(i)}$. For example, it follows from these bounds that
\[
\mathbf{\Pi}_W^{(0)} \leq \overline{\mathbf{\Pi}}_W^{(0)}:= \begin{array}{l}\includegraphics{Cutting_Bond_Diagram.pdf}\end{array} \qquad \text{ and } \qquad
\mathbf{\Pi}_W^{(1)} \leq \overline{\mathbf{\Pi}}_W^{(1)}:= \begin{array}{l}\includegraphics{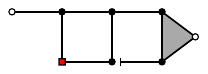}\end{array}+\begin{array}{l}\includegraphics{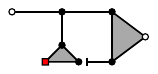}\end{array},
\]
where the two broken lines account for the appearance of the $P(e_1)$ term in the definition of $\mathbf{\Pi}^{(1)}_W$. In each case, we use overlines to denote the operator defined by a diagrammatic sum that bounds the operator of interest. (When composing these diagrams, note that the presence of a broken edge forces the vertices around the polygon to be distinct, so that a region formed this way is always unshaded.)
Similarly, iterating a second time yields the diagrammatic bound
\begin{multline*}
\mathbf{\Pi}_W^{(2)} \leq \overline{\mathbf{\Pi}}_W^{(2)} := \begin{array}{l}\includegraphics{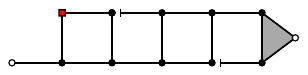}\end{array}+\begin{array}{l}\includegraphics{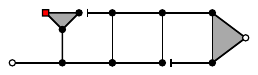}\end{array}
\\+\begin{array}{l}\includegraphics{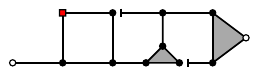}\end{array}+\begin{array}{l}\includegraphics{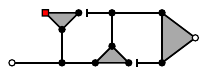}\end{array}.
\end{multline*}
(To think of these diagrams as being formed by composing those in the expression for $\overline{\mathbf{\Pi}}^{(1)}$ in the left with the two diagrams in \eqref{eq:ladder_diagram_pieces}, one should first mentally reflect these two diagrams through the horizontal axis.)
At each stage of the iteration, the free vertex $v$ in the two diagrams in \eqref{eq:ladder_diagram_pieces} gets paired with the red vertex in $\overline{\mathbf{\Pi}}_W^{(i)}$ and a line is broken to account for the appearance of a $P$ term; see \cite{MR1043524} for further details. 
 In particular, $\overline{\mathbf{\Pi}}_W^{(i)}$ is a sum of $2^i$ diagrammatic expressions for each $i\geq 0$.
Note that these diagrammatic expressions are extremely lengthy to write out using standard notation! 
The operators $\overline{\mathbf{\Pi}}^{(i)}$ are defined by iterating this pattern inductively, and satisfy the inequalities
\[
\mathbf{\Pi}^{(i)}=|\mathbf{\Pi}^{(i)}| \leq \overline{\mathbf{\Pi}}^{(i)} \qquad \text{ and } \qquad |\mathbf{\Pi}_N| \leq \overline{\mathbf{\Pi}}_N := \sum_{i=0}^N \overline{\mathbf{\Pi}}^{(i)}
\]
for every $i,N\geq 0$. We also define $\overline{\mathbf{R}}_{N,W}:=\overline{\mathbf{\Pi}}{}^{(N)}_WPT$, so that $|\mathbf{R}_{N,W}|\leq \overline{\mathbf{R}}_{N,W}$. To establish convergence and rapid decay properties of $\mathbf{\Pi}=\sum_{i=0}^\infty \mathbf{\Pi}^{(i)}$ it suffices to establish analogous bounds on $\overline{\mathbf{\Pi}}^{(i)}$ and $\overline{\mathbf{\Pi}}$.

\medskip

\noindent
\textbf{The main bounds.}
Now that we have established all the notation needed to understand the lace expansion, we are ready to state the main technical results of \cite{MR1043524,MR1959796} as they apply in our framework.
 The following theorem, essentially due to Hara, van der Hofstad, and Slade \cite{MR1959796}, encapsulates one of the most important aspects of the lace expansion:

\begin{theorem}
\label{thm:lace_expansion_pi_bar}
 Consider \emph{critical} range-$L$ Bernoulli bond percolation on $\Z^d$. There exists $d_0,L_0<\infty$ such that if either $d\geq d_0$ or $d>6$ and $L \geq L_0$ then for each finite non-empty set of edges and vertices $W$ we have that $\overline{\mathbf{R}}_{N,W}(x,y)\to 0$ as $N\to\infty$ for every $x,y$ and that there exists a constant $C_W<\infty$ such that
\[
\overline{\mathbf{\Pi}}_W(0,x) \leq C_W  \langle x\rangle^{-2d+6}
\]
for every $x\in \Z^d$.
\end{theorem}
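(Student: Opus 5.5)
The plan is to reduce the statement to the classical diagrammatic estimates of Hara, van der Hofstad, and Slade \cite{MR1959796}. These control the same diagrams without the distinguished set $W$, i.e.\ the ordinary lace expansion coefficients $\overline{\Pi}^{(N)}(0,x)$. Concretely, under the stated hypotheses those works give two facts. First, the critical two-point function satisfies $T_{p_c}(x,y)\leq C\langle x-y\rangle^{-d+2}$. Second, the unrestricted bounding diagrams satisfy
\[
\overline{\Pi}^{(N)}(0,x)\leq C\beta^{N}\langle x\rangle^{-3(d-2)},
\]
where $\beta$ is a small parameter (of order $d^{-1}$ or $L^{-d}$), so $\sum_N\overline{\Pi}^{(N)}$ converges with a bound decaying faster than $\langle x\rangle^{-2d+6}$.

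\textbf{Step 1: compare the $W$-diagrams with the unrestricted ones.} The diagrams defining $\overline{\mathbf{\Pi}}^{(N)}_W$ have the same ladder structure as the classical ones, with one difference in the first rung. In the classical diagram the first ``only via'' constraint is a loop through the starting vertex $x=0$. Here it is replaced by a red vertex $w\in V(W)$, summed over $w$, attached to $0$ by a line $T(0,w)$. Since $W$ is finite, $V(W)$ is a finite set. So each diagram in $\overline{\mathbf{\Pi}}^{(N)}_W(0,x)$ is bounded by a finite sum over $w\in V(W)$ of $T(0,w)$ times a diagram of classical shape started from $w$ rather than $0$. That diagram is, up to one extra line, the classical $\overline{\Pi}^{(N)}$ diagram. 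Using $T(0,w)\leq 1$, the unshaded/shaded conventions, and the standard bubble and triangle bounds, I expect
\[
\overline{\mathbf{\Pi}}^{(N)}_W(0,x)\leq |V(W)|\,C\,(C\beta)^{\max(N-1,0)}\max_{w\in V(W)}\langle x-w\rangle^{-2d+6}.
\]
The exponent $-2d+6$ instead of $-3(d-2)$ reflects the worst case. When $W$ is far from $0$, the first diagram $\overline{\mathbf{\Pi}}^{(0)}_W(0,x)=\sum_{w}T(0,w)T(w,x)^2$-type only guarantees two lines reaching $x$, giving $\langle x-w\rangle^{-2(d-2)}$. The decay $\langle x\rangle^{-2d+4}$ is even better than required, so there is slack. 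Since $W$ is fixed, $\langle x-w\rangle\asymp\langle x\rangle$ with constants depending on $W$. Summing the geometric series in $N$ gives the bound $C_W\langle x\rangle^{-2d+6}$.

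\textbf{Step 2: the remainder.} We have $\overline{\mathbf{R}}_{N,W}=\overline{\mathbf{\Pi}}^{(N)}_WPT$. By Step 1 this is at most $C_W(C\beta)^{N-1}\sum_z\langle z\rangle^{-2d+6}\,PT(z,y)$. The sum is finite because $-2d+6+(-d+2)<-d$ when $d>6$; in fact it is bounded by $C\langle y\rangle^{-d+2}$ by the standard convolution lemma. Hence $\overline{\mathbf{R}}_{N,W}\to0$ for each fixed $x,y$.

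\textbf{Main obstacle.} The delicate point is the diagrammatic bookkeeping in Step 1. One must check that each of the $2^N$ diagrams in $\overline{\mathbf{\Pi}}^{(N)}_W$ really reduces, rung by rung, to the classical ladder diagrams. In particular, the unshaded polygons must still carry a small factor $\beta$ (from the triangle condition, $\nabla_{p_c}-1\leq C\beta$), even though the first polygon now passes through a red vertex rather than $0$. I would handle this by induction on $N$, mirroring the proof of the classical $x$-space bounds in \cite{MR1959796}. The key observations are that each added rung attaches only to the previous rung's free vertex, and that the first rung is bounded uniformly over the finitely many choices of $w$.
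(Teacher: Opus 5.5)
There is a genuine gap. Everything rests on your Step 1 bound for $\overline{\mathbf{\Pi}}^{(N)}_W$, which you only say you \emph{expect}, and the sketch offered for it does not hold up.

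First, the structural claim is off. The event $\{0\leftrightarrow e_1^- \text{ only via } W \text{ with no cutting bond}\}$ produces a connection from $0$ to a summed vertex $z$ (the far endpoint of the last open pivotal, or $z=0$). This is followed by a doubly connected piece from $z$ to $e_1^-$ that passes through some $w\in V(W)$. So in \eqref{eq:cutting_bond_diagram} the red vertex hangs off a triangle at $z$; it is not joined to $0$ by a line $T(0,w)$. There is therefore no factor $T(0,w)\leq 1$ to discard, and discarding $T(0,z)$ instead leaves a diagram that is not of classical shape.

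Second, the input you quote is not the percolation estimate. The exponent $\langle x\rangle^{-3(d-2)}$ is the self-avoiding-walk one. For percolation, already $\overline{\Pi}^{(0)}(0,x)=T(0,x)^2\asymp\langle x\rangle^{-2(d-2)}$; your own remark that only two lines must reach $x$ applies to the classical diagrams as well.

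Third, ``mirroring the $x$-space analysis of \cite{MR1959796}'' for all $2^N$ modified diagrams is exactly the work that would constitute the proof, and it is not carried out.

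The missing idea is to compare in the \emph{opposite} direction, which requires no new diagrammatic estimates. The classical diagram $\overline{\Pi}^{(i+1)}(x,y)$ is obtained by composing $\overline{\mathbf{\Pi}}^{(i)}$ on the left with the diagram for the initial double connection. In that composition the red vertex becomes a black vertex, summed over all of $\Z^d$ and attached to the initial piece. Since every term is non-negative, you may keep only configurations in which the initial shaded triangle collapses to the point $x$ and that vertex lies in $V(W)$. What remains is $\overline{\mathbf{\Pi}}^{(i)}_W(x,y)$ times a line $T(x,w)$ and a factor $P(e)$, so
\[
\overline{\Pi}^{(i+1)}(x,y)\geq \Bigl[\min_{w\in V(W)}T(x,w)\,\min_{e\in E}P(e)\Bigr]\,\overline{\mathbf{\Pi}}^{(i)}_W(x,y).
\]
The prefactor is positive because $W$ is finite. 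Summing over $i$ gives $\overline{\mathbf{\Pi}}_W(0,x)\leq C_{0,W}\,\overline{\Pi}(0,x)\leq C_W\langle x\rangle^{-2d+6}$ directly from the known bound on $\overline{\Pi}$.

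The remainder claim then follows without any geometric factor $(C\beta)^N$. Indeed $\sum_N\overline{\mathbf{R}}_{N,W}(0,y)=\sum_z\overline{\mathbf{\Pi}}_W(0,z)\,PT(z,y)<\infty$ when $d>6$, so $\overline{\mathbf{R}}_{N,W}(0,y)\to 0$. Your direction, dominating $W$-diagrams by classical diagrams rooted at $w$, is the hard one. It might be salvageable, but only by genuinely re-deriving the diagrammatic bounds.
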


\begin{proof}[Proof of \cref{thm:lace_expansion_pi_bar}]
This is essentially equivalent to the estimates proven in \cite{MR1959796}, except that their bound is only stated for (diagrammatic estimates on) the operator $\Pi$, which is similar to our $\mathbf{\Pi}$ but appears when expanding the two-point function $T$ rather than the ``only via'' two-point function $\mathbf{T}_W$. While \cref{thm:lace_expansion_pi_bar} could be proven by working back through the proofs of \cite{MR1959796}, we will instead explain how it can be deduced as a \emph{consequence} of the results of \cite{MR1959796}. In that paper, the authors work with the operators $\Pi^{(N)}$, $\Pi_N$, and $\Pi$ defined by
\begin{multline*}
\Pi^{(N)}(x,y)= \sum_{e_1,\ldots,e_N \in E^\rightarrow}\langle \mathbbm{1}(x \text{ is $2$-edge-connected to $e_1^-$}) P(e_1) \langle \mathbbm{1}_\text{ncb}^1 P(e_2) \langle \mathbbm{1}_\text{ncb}^2 P(e_3) 
\\\cdots \langle \mathbbm{1}_\text{ncb}^{N-1} P(e_{N}) \langle \mathbbm{1}_\text{ncb}(e_N^+,y;K_{e_N}^{(N-1)}(e_{N-1}^+))  \rangle_{(N)} \rangle_{(N-1)} \cdots \rangle_{(2)} \rangle_{(1)} \rangle_{(0)},
\end{multline*}
$\Pi_N=\sum_{i=0}^N (-1)^i \Pi^{(i)}$, and $\Pi=\sum_{i=0}^\infty (-1)^i \Pi^{(i)}$ (if the sum converges). As in our framework, there is also a remainder operator $R_N$ satisfying $R_N \leq \Pi^{(N)}PT$ such that
\[
T= I+\Pi_N + (I+\Pi_N)PT + R_N
\]
for every $N\geq 0$, where $\Pi_N=\sum_{i=0}^N (-1)^i \Pi^{(i)}$. 
The operators $\Pi^{(i)}$ admit the diagrammatic estimates
\begin{align*}
\Pi^{(0)} &\leq \overline{\Pi}^{(0)} := \begin{array}{l}\includegraphics{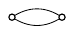}\end{array},\\
\Pi^{(1)} &\leq \overline{\Pi}^{(1)} := \begin{array}{l}\includegraphics{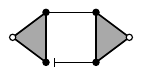}\end{array},\\
\Pi^{(2)} &\leq \overline{\Pi}^{(2)} := \begin{array}{l}\includegraphics{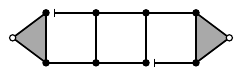}\end{array}+\begin{array}{l}\includegraphics{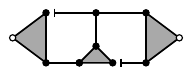}\end{array},\\
\Pi^{(3)} &\leq \overline{\Pi}^{(3)} := \begin{array}{l}\includegraphics{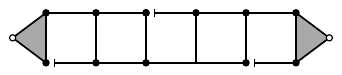}\end{array}+\begin{array}{l}\includegraphics{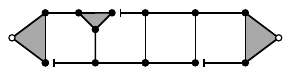}\end{array}\\
&\hspace{6cm}+\begin{array}{l}\includegraphics{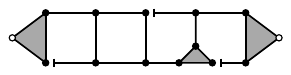}\end{array}+\begin{array}{l}\includegraphics{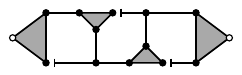}\end{array},
\end{align*}
and so on. It is proven in \cite[Theorem 1.2 and Prop 1.8(c)]{MR1959796} that, if $d$ is sufficiently large or $d>6$ and $L$ is sufficiently large and we consider critical percolation on $\mathbb{Z}^d_L$, then $\overline{\Pi}=\sum_{i=0}^\infty \overline{\Pi}^{(i)}$ converges and there exists a constant $C<\infty$ such that
\begin{equation}
\label{eq:original_Pi_bound}
\overline{\Pi}(x,y) \leq C \|x-y\|^{-2d+6}
\end{equation}
for every $x,y\in \Z^d$.
Now, for each $i\geq 1$ the operator $\overline{\Pi}^{(i)}$ can be obtained by composing our operator $\overline{\mathbf{\Pi}}^{(i-1)}$ on the left with a diagram of the form \includegraphics[height=0.44cm]{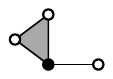}, matching the rightmost white vertex of this diagram with the red vertex in $\overline{\mathbf{\Pi}}^{(i-1)}$ and breaking a line to indicate a $P$ term as appropriate. In particular, considering the contribution to $\overline{\Pi}^{(i+1)}$ from configurations of vertices in which the lefthand shaded triangle collapses to a point and the second-left black vertex on the top row belongs to $V(W)$ yields that
\[
\overline{\Pi}^{(i+1)}(x,y) \geq \left[\min_{w\in V(W)}T(x,w) \min_{e\in E} P(e)\right] \overline{\mathbf{\Pi}}_W^{(i)}(x,y),
\]
so that for each $x\in \Z^d$ and each finite non-empty set $W \subseteq V\cup E$ there exists a constant $C_{x,W}<\infty$ such that
\[
\overline{\mathbf{\Pi}}_W^{(i)}(x,y) \leq C_{x,W} \overline{\Pi}^{(i+1)}(x,y)
\]
for every $y\in \Z^d$ and $i\geq 0$. Summing over $i\geq 0$ yields that $\overline{\mathbf{\Pi}}_W(x,y)\leq C_{x,W} \overline{\Pi}(x,y)$ for every $x,y\in \Z^d$ and every finite non-empty set $W\subseteq V\cup E$, so that the claim follows from \eqref{eq:original_Pi_bound}. (Of course it is also possible to bound our operators $\overline{\mathbf{\Pi}}^{(i)}_W$ directly using the methods of \cite{MR1959796}.)
\end{proof}

\begin{remark}
Note that there is nothing specific to $\Z^d$ in this proof: It shows in complete generality that bounds on $\overline{\Pi}$ imply bounds on $\overline{\mathbf{\Pi}}_W$ for $W$ finite whenever $\inf_{e\in E} P(e)>0$. To deduce the following corollary, which implies that the IIC is well-defined as the limit $\lim_{x\to\infty}\P(\,\cdot \mid 0\leftrightarrow x)$ as discussed above, one needs the following additional properties:
\begin{itemize}
\item The bounds must be strong enough to give that the sum $\sum_{z}[(I+\Pi)P](x,z)T(z,y)$ is dominated by terms with $z=O(1)$. This requires the dimension to be strictly larger than six.
\item The two-point function $T$ must satisfy $T(0,y)\sim T(x,y)$ as $y\to\infty$ and $x$ is fixed. In the case that $T$ is asymptotic to a constant multiple of the Green's function, this is equivalent to the base graph $G$ having the Liouville property (i.e., the random walk on $G$ being tail-trivial). Without this property, the limit $\lim_{x\to\infty}\P(\,\cdot \mid 0\leftrightarrow x)$ should depend on the ``direction'' in which $x$ goes to infinity, as is certainly the case on the $k$-regular tree for $k\geq 3$. (Note that one also expects directional dependence of the limit for \emph{subcritical} percolation on $\Z^d$ \cite{liu2026crossover,MR1905854}, which is related to the fact that the massive Green's function $G_\lambda$ does not satisfy $T(0,y)\sim T(x,y)$ as $y\to\infty$ and $x$ is fixed.)
\end{itemize}
\end{remark}

\begin{corollary}
\label{cor:IIC_point_to_blob}
 Consider \emph{critical} range-$L$ Bernoulli bond percolation on $\Z^d$. There exists $d_0,L_0<\infty$ such that if either $d\geq d_0$ or $d>6$ and $L \geq L_0$ then for each finite non-empty set of edges and vertices $W$ there exists a positive constant $A_W$ such that
 \[
\mathbf{T}_W(0,x) \sim A_W G(0,x)
 \]
 as $x\to \infty$.
\end{corollary}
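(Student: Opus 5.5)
We will pass to the limit $N\to\infty$ in the finite-step expansion \eqref{eq:finite_step_lace_expansion} and then read off the asymptotics of $\mathbf{T}_W(0,x)$ from the known asymptotics of $T$. Fix a finite non-empty $W$. By \cref{thm:lace_expansion_pi_bar}, the series $\mathbf{\Pi}_W=\sum_{i\ge0}(-1)^i\mathbf{\Pi}^{(i)}_W$ (with the convention used for $\mathbf{\Pi}_i$) converges absolutely, because it is dominated by $\overline{\mathbf{\Pi}}_W$. This gives the bound
\[
|\mathbf{\Pi}_W(0,z)|\le C_W\langle z\rangle^{-2d+6},
\]
and the remainder $\mathbf{R}_{N,W}(0,x)$ tends to $0$ pointwise. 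Letting $N\to\infty$ in \eqref{eq:finite_step_lace_expansion} at fixed $x$ therefore yields the exact identity
\[
\mathbf{T}_W(0,x)=\mathbf{I}_W(0,x)+\mathbf{\Pi}_W(0,x)+\sum_{z}\big[(\mathbf{I}_W+\mathbf{\Pi}_W)P\big](0,z)\,T(z,x).
\]
Dominated convergence justifies this step: the terms are bounded by $\overline{\mathbf{\Pi}}_W P T$, and that quantity is finite as shown in the next paragraph.

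For $x\neq 0$ and $x$ large, the first two terms are $0$ and $O(\langle x\rangle^{-2d+6})=o(\langle x\rangle^{-d+2})$, using $d>4$. Set $f(z):=[(\mathbf{I}_W+\mathbf{\Pi}_W)P](0,z)$. Since $P$ has finite range, $|f(z)|\le C\langle z\rangle^{-2d+6}$. Because $d>6$ we have $2d-6>d$, so $f$ is summable. We then define
\[
A_W:=A\sum_z f(z).
\]
It remains to show that $\sum_z f(z)T(z,x)\sim A_W G(0,x)$. We split the sum into $\|z\|\le\|x\|/2$ and the rest. On the first region, \eqref{eq:two_point_assumption} gives $T(z,x)=(1+o(1))AG(0,x)$ uniformly, and $\sum_{\|z\|\le \|x\|/2}f(z)\to\sum_z f(z)$. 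The second region contributes at most
\[
\sum_{\|z\|>\|x\|/2}\langle z\rangle^{-2d+6}T(z,x)\lesssim \langle x\rangle^{-2d+6}\sum_{\|z-x\|\le 2\|x\|}\langle z-x\rangle^{-d+2}+\sum_{\|z\|\ge 3\|x\|}\langle z\rangle^{-3d+8}\lesssim\langle x\rangle^{-d+8-d}+\langle x\rangle^{-2d+8}.
\]
This is $o(\langle x\rangle^{-d+2})$ precisely because $d>6$. Together these give $\mathbf{T}_W(0,x)=A_WG(0,x)+o(G(0,x))$.

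The main obstacle is \emph{positivity} of $A_W$, which the expansion does not give directly, since $\sum_z f(z)$ could a priori vanish. We would deduce it by a finite-energy comparison instead. Pick a vertex $w\in V(W)$ and a fixed finite open path $\gamma$ from $0$ through $w$ to a far vertex $u$. Consider the event that $u\leftrightarrow x$ off the finite set $B$, where $B$ is a ball containing $W\cup\gamma$, and that $\gamma$ is open while all other edges touching $B$ are closed. On this event, $0\leftrightarrow x$ only via $W$. Its probability is at least $c\,\P(u\leftrightarrow x\text{ off }B)$. We then need $\P(u\leftrightarrow x \text{ off } B)\ge cG(0,x)$ for large $x$. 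We would show this by the same argument applied in the complement: the probability $\mathbf{T}_B(u,x)$ of connecting only via $B$ is at most $\sum_{b\in V(B)}T(u,b)T(b,x)$. Taking $\operatorname{dist}(u,B)$ large makes this at most $\varepsilon G(0,x)$, while $T(u,x)\sim AG(0,x)$. Hence $\P(u\leftrightarrow x\text{ off }B)\ge (A-\varepsilon')G(0,x)$, and so $A_W>0$. (Alternatively one can invoke \cref{lem:IIC_positivity}.)
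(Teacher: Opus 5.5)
Your route is the paper's. You take $N\to\infty$ in \eqref{eq:finite_step_lace_expansion} via \cref{thm:lace_expansion_pi_bar}, set $A_W=A\sum_y[(\mathbf{I}_W+\mathbf{\Pi}_W)P](0,y)$, and show the convolution tail is negligible because $d>6$. Your $O(\langle x\rangle^{-2d+8})$ bound on $\|z\|>\|x\|/2$ is a fine substitute for the paper's fixed-$R$ split. You then get $A_W>0$ from a finite-energy bound $\mathbf{T}_W(0,x)\geq c_W T(0,x)$, which is \cref{lem:only_via_positivity}. A few slips need fixing, none serious.

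(i) The claim $T(z,x)=(1+o(1))AG(0,x)$ is not uniform on $\|z\|\leq\|x\|/2$: near $z=x/2$ the ratio $G(z,x)/G(0,x)$ is about $2^{d-2}$. Instead, use $T(z,x)/G(0,x)\to A$ for each fixed $z$, the bound $T(z,x)\leq CG(0,x)$ on that region, and dominated convergence against the summable $f$.

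(ii) As written, your positivity event is empty. If $B\supseteq\gamma\ni u$ and every non-$\gamma$ edge touching $B$ is closed, then the cluster of $0$ is just $\gamma$, and also $\operatorname{dist}(u,B)=0$. Instead, take $B\supseteq W\cup\{0\}$ and let $\gamma$ pass through $W$ inside $B$, exit $B$ once, and end at $u$ far from $B$. Independence of the edges touching $B$, together with Harris--FKG for the rest of $\gamma$ and the event $\{u\leftrightarrow x\text{ off }B\}$, then gives the factor $c\,\P(u\leftrightarrow x\text{ off }B)$. In addition, $\gamma$ must visit a vertex of $W$ or traverse an edge of $W$. Passing through an endpoint $w\in V(W)$ of an edge of $W$ does not force open paths from $0$ to use $W$.

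(iii) \cref{lem:IIC_positivity} is not an alternative route. It gives $\P_\mathrm{IIC}(W\text{ closed})>0$, which means $A_W<A$, not $A_W>0$. It also presupposes the IIC, whose existence is derived from this very corollary.
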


As explained at the beginning of this section, it is a consequence of this corollary that the IIC measure exists as originally proven in \cite{van2004incipient}. (In \cite{van2004incipient} it is also shown that the IIC measure can be defined equivalently via various other natural limiting procedures; see also \cite{heydenreich2014high}.)

\begin{remark}
This corollary is much easier to prove than the two-point function estimate \eqref{eq:two_point_assumption}, for the simple reason that we are allowed to assume \eqref{eq:two_point_assumption} and the associated bound of \cref{thm:lace_expansion_pi_bar} in its proof. This lets us avoid all of the bootstrapping analysis and deconvolution problems that appear when analyzing percolation from scratch using the lace expansion. 
\end{remark}

\begin{proof}[Proof of \cref{cor:IIC_point_to_blob}]
Let $W \subseteq V\cup E$ be finite and non-empty.
It follows from \cref{thm:lace_expansion_pi_bar} that we can take $i\to\infty$ in \eqref{eq:finite_step_lace_expansion} and write
\[\mathbf{T}_W(0,x)=\mathbf{I}_W(0,x)+\mathbf{\Pi}_W(0,x)+(\mathbf{I}_W+\mathbf{\Pi}_W)PT(0,x).\]
We also have by \eqref{eq:two_point_assumption} that $T(x,y)\sim AG(x,y)$ as $x-y\to \infty$. The rest of the proof is just elementary analysis of the resulting convolution. Before we carry out this analysis, let us note that an easy finite-energy argument (\cref{lem:only_via_positivity})
yields that there exist positive constants $c_W,R_W>0$ such that $\mathbf{T}_W(0,x)\geq c_W T(0,x)$ for every $x\in \Z^d$ with $\|x\|_2\geq R_W$, so that terms of order smaller than $G(0,x) \asymp \|x\|^{-d+2}$ can always be safely ignored when computing first-order asymptotics. 

\medskip

To proceed with the analysis, we let $R\geq 1$ be a large integer and write
\begin{multline*}
\mathbf{T}_W(0,x) = [\mathbf{I}_W+\mathbf{\Pi}_W](0,x) + \sum_{\|y\| \leq R}[(\mathbf{I}_W+\mathbf{\Pi}_W)P](0,y)T(0,x) \\+ \sum_{\|y\| \leq R}[(\mathbf{I}_W+\mathbf{\Pi}_W)P](0,y)(T(y,x)-T(0,x)) + \sum_{\|y\| > R}[(\mathbf{I}_W+\mathbf{\Pi}_W)P](0,y)T(y,x).
\end{multline*}
Applying \cref{thm:lace_expansion_pi_bar} again ensures that the first term is $o(G(0,x))$, while the fact that $T(y,x) \sim AG(y,x) \sim A G(0,x)$ as $x\to \infty$ for each fixed $y\in \Z^d$ ensures that the third term is $o(G(0,x))$, so that
\begin{equation*}
\mathbf{T}_W(0,x) \sim \sum_{\|y\| \leq R}[(\mathbf{I}_W+\mathbf{\Pi}_W)P](0,y)T(0,x)  + \sum_{\|y\| > R}[(\mathbf{I}_W+\mathbf{\Pi}_W)P](0,y)T(y,x)
\end{equation*}
for each fixed $R<\infty$.
Finally,  \cref{thm:lace_expansion_pi_bar} also implies that the sum
\[
A_W:= A \sum_{y\in \Z^d} [(\mathbf{I}_W+\mathbf{\Pi}_W)P](0,y)
\]
converges absolutely. 
Thus, for each $\eps>0$ there exists $R<\infty$ such that
\[
\left|A\sum_{\|y\|\leq R} [(\mathbf{I}_W+\mathbf{\Pi}_W)P](0,y) - A_W\right| \leq \frac{1}{2}\eps A \quad \text{ and } \quad \sum_{\|y\|>R} [(\mathbf{I}_W+\mathbf{\Pi}_W)P](0,y)T(y,x) \leq \frac{1}{2} \eps A G(0,x)
\]
for every $x\in \Z^d$, where the second estimate follows from the elementary bound
\[
\sup_{x\in \Z^d\setminus \{0\}} \|x\|^{d-2} \sum_{y\in \Z^d, \|y\|\geq R} \|y\|^{-2d+6} \|x-y\|^{-d+2} \to 0 \qquad \text{ as $R\to \infty$ when $d>6$.}
\]
Putting everything together, it follows that
\[
1-\eps \leq  \liminf_{x\to \infty} \frac{\mathbf{T}_W(0,x)}{A_W G(0,x)}\leq  \limsup_{x\to \infty} \frac{\mathbf{T}_W(0,x)}{A_W G(0,x)} \leq 1+\eps
\]
for every $\eps>0$, which is equivalent to the claim.
\end{proof}

\subsection{The two-blob function}

We are now ready to explain how the two-blob function asymptotics of \cref{thm:two_blob} follows from the theory developed in the previous subsection. We begin by noting that the diagrammatic sums we have just introduced have the following important monotonicity property:

\begin{observation}
\label{observation:monotonicity}
The operators $\overline{\mathbf{\Pi}}_W^{(i)}$ depend increasingly on the edge inclusion probabilities $(P(e))_{e\in E}$. In particular, they decrease when edges are deleted from the graph $G$. Consequently, the same is true of the operators $\overline{\mathbf{R}}_N$ and $\overline{\mathbf{\Pi}} = \sum_{i=0}^\infty \overline{\mathbf{\Pi}}^{(i)}$.
\end{observation}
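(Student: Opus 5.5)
The plan is to prove the observation by induction on $i$, directly from the inductive construction of the diagrammatic bounds $\overline{\mathbf{\Pi}}^{(i)}_W$. The central fact is that each $\overline{\mathbf{\Pi}}^{(i)}_W(x,y)$ is a finite sum of diagrams, and each diagram is a sum of nonnegative terms. Every term is a product of two kinds of factors: copies of the two-point function $T(u,v)=\P(u\leftrightarrow v)$ for percolation on $G$ with inclusion probabilities $P$, and copies of the matrix entries $P(u,v)$ coming from the broken lines. The sum runs over black vertices (ranging over $V$) and red vertices (ranging over $V(W)$), subject to the shaded/unshaded distinctness constraints. Crucially, the index sets and these constraints depend only on $V$ and $W$, not on $P$. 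So it suffices to show that each factor is nondecreasing in $P$, since a sum of products of nonnegative nondecreasing quantities is nondecreasing.

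For the $P$-factors this is immediate. For the $T$-factors, I would use the standard monotone coupling: sample i.i.d.\ uniforms $U_e$ and declare $e$ open if $U_e\le P(e)$. Under this coupling, $\{u\leftrightarrow v\}$ is an increasing event, so $T$ is nondecreasing in $P$. Deleting an edge $e$ from $G$ amounts to setting $P(e)=0$: the diagrams are indexed only by vertices, and $T$ and $P$ on the subgraph coincide with those on $G$ when $P(e)=0$. Hence all diagrams decrease under edge deletion. I would note explicitly that the vertex set is unchanged, so that the sums over $V$ and $V(W)$ are literally the same sums.

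The extension to $\overline{\mathbf{\Pi}}=\sum_i\overline{\mathbf{\Pi}}^{(i)}$ follows by summing nonnegative monotone terms, allowing the value $+\infty$. The extension to $\overline{\mathbf{R}}_{N,W}=\overline{\mathbf{\Pi}}^{(N)}_W P T$ follows because it is a product and sum of nonnegative monotone quantities.

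The only point requiring care, and the main obstacle though a mild one, is confirming that the inductive definition of $\overline{\mathbf{\Pi}}^{(i)}$ never subtracts. Concretely, I need that gluing the diagrams of \eqref{eq:ladder_diagram_pieces} onto $\overline{\mathbf{\Pi}}^{(i-1)}$ and breaking a line only introduces further nonnegative factors of $T$ and $P$. I also need that the unshaded constraint is a restriction of the index set, since it is an indicator function of the vertex labels alone. This holds by construction, since the diagrams are upper bounds obtained via BK and union bounds and involve no signs.
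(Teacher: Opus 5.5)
Your proposal is correct and is essentially the reasoning the paper leaves implicit in stating this as an observation without proof. Each $\overline{\mathbf{\Pi}}^{(i)}_W$ is a finite sum, over vertex configurations whose index sets and distinctness constraints do not depend on $P$, of products of nonnegative factors $T$ and $P$, each nondecreasing in $P$ by the monotone coupling; deleting an edge amounts to setting $P(e)=0$, and the extensions to $\overline{\mathbf{\Pi}}$ and $\overline{\mathbf{R}}_N=\overline{\mathbf{\Pi}}^{(N)}PT$ follow exactly as you describe.
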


This observation will allow us to use results about the lace expansion on $\Z^d$ ``for free'' when analyzing percolation on graphs formed by deleting some finite collection of edges from $\Z^d$ in our proof of the two-blob estimate.

\begin{proof}[Proof of \cref{thm:two_blob}]
It suffices to prove the claim in the case that the cylinder events $F^1$ and $F^2$ are both of the form $\{W^i \text{ closed}\}$ for some finite sets of edges $W^1$ and $W^2$, the general case following by inclusion-exclusion.
Let $y\in \Z^d$, and let $\Z^d_{L,y}$ be the graph formed by deleting the set of edges $y+W^2$ from $\Z^d_L$. Let $T_y$, $P_y$, $\mathbf{T}_y=(\mathbf{T}_{y,W})_W$, and $\mathbf{\Pi}_y=(\mathbf{\Pi}_{y,W})_W$ be the operators defined in the previous section, but applied to this graph, so that
\[
T_y(0,y) = \P(0\leftrightarrow y \mid y+W^2 \text{ closed}), \qquad \mathbf{T}_{y,W}(0,y) = \P(0\leftrightarrow y \text{ only via $W$}\mid y+W^2 \text{ closed})
\]
and 
\begin{equation}
\mathbf{T}_{y,W}  = \mathbf{I}_W + \mathbf{\Pi}_{y,W} + (\mathbf{I}_W + \mathbf{\Pi}_{y,W})P_y T_y
\label{eq:lace_expansion_TyW}
\end{equation}
for every set of edges and vertices $W$. The fact that the operators $\mathbf{\Pi}_{y,W}$ converge and satisfy this equality follows from \Cref{observation:monotonicity} together with the convergence of $\overline{\mathbf{R}}_N$ to zero established by \cref{thm:lace_expansion_pi_bar}. Moreover, \cref{thm:lace_expansion_pi_bar} and \Cref{observation:monotonicity} also yield that if $W$ is a finite non-empty set of edges and vertices then
\begin{equation}
\label{eq:Pi_bound_with_deleted_edges}
\overline{\mathbf{\Pi}}_{y,W}(0,z) \leq C_W \|z\|^{-2d+6}
\end{equation}
for some constant $C_W<\infty$ that does not depend on $y$ or the choice of $W^2$. Meanwhile, the existence of the IIC measure and the asymptotic estimate \eqref{eq:two_point_assumption} ensure that
\begin{multline}
\label{eq:two_point_asymptotic_y}
T_y(z,y) = \P(z\leftrightarrow y \mid y+W^2\text{ closed})\\ \sim  \frac{\P_\mathrm{IIC}(W^2 \text{ closed})}{\P(W^2 \text{ closed})} \P(z\leftrightarrow y) \sim A\frac{\P_\mathrm{IIC}(W^2 \text{ closed})}{\P(W^2 \text{ closed})}G(z,y)
\end{multline}
 as $z-y\to\infty$. (This estimate holds even when the right-hand side is identically zero, as explained at the beginning of this section.)
As in the proof of \cref{cor:IIC_point_to_blob}, we let $R\geq 1$ and use \eqref{eq:lace_expansion_TyW} to write
\begin{multline*}
\mathbf{T}_{y,W^1}(0,y) =
  \sum_{\|z\| \leq R}[(\mathbf{I}_{W^1}+\mathbf{\Pi}_{W^1})P](0,z)T_y(0,y) 
+ \sum_{\|z\| > R}[(\mathbf{I}_{W^1}+\mathbf{\Pi}_{y,W^1})P_y](0,z)T_y(z,y) 
\\+  [\mathbf{I}_{W^1}+\mathbf{\Pi}_{y,W^1}](0,y)
+\sum_{\|z\| \leq R}[(\mathbf{I}_{W^1}+\mathbf{\Pi}_{y,W^1})P_y](0,z)(T_y(z,y)-T_y(0,y)) \\
+\sum_{\|z\| \leq R}[(\mathbf{I}_{W^1}+\mathbf{\Pi}_{y,W^1})P_y-(\mathbf{I}_{W^1}+\mathbf{\Pi}_{W^1})P](0,z)T_y(0,y).
\end{multline*}
As before, \eqref{eq:Pi_bound_with_deleted_edges} and \eqref{eq:two_point_asymptotic_y} imply that the two terms on the second line are $o(\|y\|^{-d+2})$ when $R$, $W^1$, and $W^2$ are fixed and $y\to\infty$. If we can show that $[(\mathbf{I}_{W^1}+\mathbf{\Pi}_{W^1})P - (\mathbf{I}_{W^1}+\mathbf{\Pi}_{y,W^1})P_y](0,z)\to 0$ pointwise when $z$, $W^1$, and $W^2$ are fixed, it will follow that the same is true for the term on the third line. Once this is done, it will follow by the same reasoning as in the proof of \cref{cor:IIC_point_to_blob} that
 \begin{multline*}
\mathbf{T}_{y,W^1}(0,y) \sim A\frac{\P_\mathrm{IIC}(W^2 \text{ closed})}{\P(W^2 \text{ closed})}\left[ \sum_{z\in \Z^d} [(\mathbf{I}_{W^1}+\mathbf{\Pi}_{W^1})P](0,z) \right] G(0,y) \\= A_{W^1}\frac{\P_\mathrm{IIC}(W^2 \text{ closed})}{\P(W^2 \text{ closed})} G(0,y)
 \end{multline*}
when $W^1$ and $W^2$ are fixed and $y\to\infty$ and hence that
\begin{align*}
&\P(0 \leftrightarrow y, W^1\text{ and }y+W^2 \text{ closed}) = (1-p)^{|W^1|}\P(0 \leftrightarrow y \text{ off $W^1$, $y+W^2$ closed})
\\ &\hspace{3cm}= (1-p)^{|W^1|}(\P(0 \leftrightarrow y, y+W^2 \text{ closed})-\P(0 \leftrightarrow y \text{ only via $W^1$, $y+W^2$ closed})) \\ &\hspace{3cm}\sim \P_\mathrm{IIC}(W^2 \text{ closed}) (1-p)^{|W^1|}(A - A_{W^1}) G(0,y)
\\&\hspace{3cm}=A\P_\mathrm{IIC}(W^1 \text{ closed})\P_\mathrm{IIC}(W^2 \text{ closed}) G(0,y)
\end{align*}
when $W^1$ and $W^2$ are fixed and $y\to\infty$ as desired. 

It remains to prove that $[(\mathbf{I}_{W^1}+\mathbf{\Pi}_{W^1})P - (\mathbf{I}_{W^1}+\mathbf{\Pi}_{y,W^1})P_y](0,z)\to 0$ when $z$, $W^1$, and $W^2$ are fixed and $y\to\infty$. It suffices to prove that this holds for $\mathbf{\Pi}_{W^1}-\mathbf{\Pi}_{y,W^1}$, the dependence of the other terms on $y$ being very simple. Now, each term of the form
\begin{multline*}
\langle \mathbbm{1}_\text{ncb}(x,e_1^-;W^1) P(e_1) \langle \mathbbm{1}_\text{ncb}^1 P(e_2) \langle \mathbbm{1}_\text{ncb}^2 P(e_3) 
\\\cdots \langle \mathbbm{1}_\text{ncb}^{N-1} P(e_{N}) \langle \mathbbm{1}_\text{ncb}(e_N^+,z;K_{e_N}^{(N-1)}(e_{N-1}^+))  \rangle_{(N)} \rangle_{(N-1)} \cdots \rangle_{(2)} \rangle_{(1)} \rangle_{(0)}
\end{multline*}
 in the sums defining $\mathbf{\Pi}_{W^1}^{(N)}(0,z)$ and $\mathbf{\Pi}_{y,W^1}^{(N)}(0,z)$ can be thought of as the \emph{probability} of a certain event defined in terms of a collection of $N$ independent percolation configurations and $N$ independent coin flips for each of the edges $e_1,\ldots,e_N$, where $P$ should be replaced by $P_y$ for $\mathbf{\Pi}_{y,W^1}^{(N)}(0,z)$ and where each of the expectations taken depends implicitly on the choice of $y$ and $W^2$ when evaluating $\mathbf{\Pi}_{y,W^1}^{(N)}(0,z)$. Since the difference between these two expressions is equivalent to whether or not we condition on all the edges of $y+W^2$ being closed (in all of the independent configurations), it follows by bounded convergence that each such term used to evaluate $\mathbf{\Pi}_{y,W^1}^{(N)}(0,z)$ converges to the corresponding term used to evaluate $\mathbf{\Pi}_{W^1}^{(N)}(0,z)$ as $y\to \infty$ (i.e., when we fix the choice of $e_1,\ldots,e_N$, $W^1$, $W^2$, and $z$ before taking $y\to \infty$). 
For each fixed expansion depth and each fixed choice of oriented edges appearing in the iterated expectation, it follows that the associated summand for $\mathbf{\Pi}_{y,W^1}^{(N)}(0,z)$ converges to the corresponding summand for $\mathbf{\Pi}_{W^1}^{(N)}(0,z)$. Moreover, the absolute value of each such summand is bounded by the corresponding summand in $\overline{\Pi}$, uniformly in $y$. These diagrams  are summable uniformly in $y$ by \cref{thm:lace_expansion_pi_bar} and \cref{observation:monotonicity}. Dominated convergence first in the summation over oriented edges and then in the expansion depth gives
\[
[(\mathbf{I}_{W^1}+\mathbf{\Pi}_{W^1})P -
(\mathbf{I}_{W^1}+\mathbf{\Pi}_{y,W^1})P_y](0,z)\to 0
\]
 as $y\to\infty$ when $z$, $W^1$, and $W^2$ are fixed as claimed. \qedhere

\end{proof}

\section{Scheme function asymptotics}
\label{sec:scheme_function}

In this section we prove that the two-point and two-blob estimates \eqref{eq:two_point_assumption} and \eqref{eq:two_blob_assumption} together imply a far-reaching generalization of the two-blob estimate, \cref{thm:scheme_function}, which allows us to estimate the probability of connections via distinct clusters of arbitrary finite collections of points, with prescribed microscopic geometry of the configuration around each point. This theorem will be shown to imply the $k$-point asymptotics of \cref{thm:k_point} in \cref{sec:k_point_proof} and all our results about scaling limits in \cref{sec:scaling_limit}; it will also immediately imply our theorem on the asymptotics of the derivative as we explain below.

\medskip

We begin with some relevant definitions. Throughout the section, we will write $\P=\P_{p_c}$ for the law of critical percolation on the spread-out lattice $\Z^d_L$.
We define a \textbf{$k$-blob}  to be a tuple $B=(W,b_1,\ldots,b_k)$ consisting of a (possibly empty) finite set of edges $W$ and an ordered collection of $k$ distinct points $b_1,\ldots,b_k$. We define a $k$-\textbf{plan} to be a pair $(H,\ell)$ consisting of a finite graph $H$ with vertex set $\{1,\ldots,k\}$, which may have multiple edges but not self-loops or isolated vertices, together with a labelling function $\ell$ assigning the labels $\{1,\ldots,\deg(v)\}$ to the edges incident to each vertex $v$. Thus, we may think of the edges of $H$ as unordered pairs $\{(i,n),(j,m)\}$, where for each $1\leq i \leq k$ and $1\leq n \leq \deg(i)$ there is a unique pair $(j,m)$ with $1\leq j\neq i \leq k$ and $1\leq m \leq \deg(j)$ such that $\{(i,n),(j,m)\}$ is an edge of $H$. We define a $k$-\textbf{scheme} $S=(H,\ell,B)$ to be a $k$-plan $(H,\ell)$ together with a function $B$ assigning a $\deg(i)$-blob $B_i=(W_i,b_{i,1},\ldots,b_{i,\deg(i)})$ to each vertex $i$ of the plan. Given a scheme $S$, we write $H(S)$ for the plan associated to $S$, write $V(S)$ and $E(S)$ for the edge and vertex sets of this plan, and define the \textbf{scheme function}, also denoted $S$, by
\begin{align*}
S(x_1,\ldots,x_k) &= \P(x_i+b_{i,n} \leftrightarrow x_j+b_{j,m} \text{ if and only if $\{(i,n),(j,m)\}\in E(S)$} \\&\hspace{8cm}\text{ and } x_i+W_i \text{ closed for every $1\leq i \leq k$})\\
&=:\P(\mathscr{E}(S;x_1,\ldots,x_k)),
\end{align*}
where $\mathscr{E}(S;x_1,\ldots,x_k)$ denotes the event whose probability defines $S(x_1,\ldots,x_k)$.
Since plans are not permitted to contain self-loops, the event $\mathscr{E}(S;x_1,\ldots,x_k)$ includes the constraint that all of the marked points $\{x_i+b_{i,j} : 1\leq j \leq \deg(i)\}$ in the blob $B_i$ are in distinct clusters for each $1\leq i \leq k$. A simple example of a calculation with scheme functions is given in the proof of \cref{thm:derivative} below.

\begin{figure}
\centering
\includegraphics{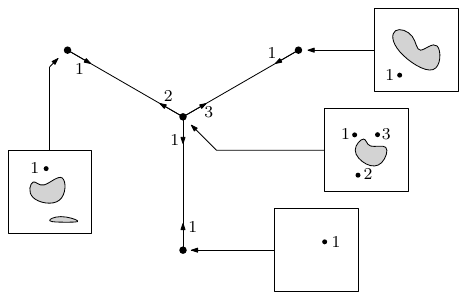}
\caption{Schematic illustration of a \textbf{scheme}: a \textbf{plan} together with assignments of \textbf{blobs} to each vertex. In this example, the scheme is a $3$-scheme whose associated plan has one vertex of degree $3$ and three vertices of degree $1$, so that the degree $3$ vertex is assigned a $3$-blob and each of the three degree $1$ vertices are assigned $1$-blobs.} 
\end{figure}

\medskip

The main result of this section is the following asymptotic estimate on scheme functions, which can be thought of as a far-reaching generalization of the two-blob estimate \eqref{eq:two_blob_assumption}. Given a scheme $S$, we write $E^{1/2}(S)$ for the set of oriented edges of the form $((i,n),(j,m))$ with $i<j$.

\begin{thm}[Asymptotic factorization of scheme functions]
\label{thm:scheme_function}
Let $d>6$ and $L\geq 1$ be such that critical percolation on $\mathbb{Z}^d_L$ satisfies the estimates \eqref{eq:two_point_assumption} and \eqref{eq:two_blob_assumption}. There exists a function $\mathbf{W}$ assigning non-negative real numbers to blobs, invariant under permuting the vertices of a blob, such that if $S$ is a $k$-scheme then
\[
S(x_1,\ldots,x_k) \sim \prod_{i=1}^k \mathbf{W}(B_i) \prod_{((i,n),(j,m)) \in E^{1/2}(S)} AG(x_i,x_j)
\]
as $\min_{i\neq j}\|x_i-x_j\|\to\infty$. Moreover, $\mathbf{W}(B)$ is positive for a blob $B=(W,b_1,\ldots,b_k)$ if and only if there exists a collection of vertex-disjoint paths connecting $b_1,\ldots,b_k$ to infinity in $\Z^d_L$ that are disjoint from $W$.
\end{thm}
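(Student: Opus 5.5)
The plan is to prove \cref{thm:scheme_function} by induction on the number of edges of the plan $H(S)$, using only \eqref{eq:two_point_assumption}, \eqref{eq:two_blob_assumption} and the BK inequality. The blob weights $\mathbf{W}(B)$ are to be defined as \emph{multi-arm IIC probabilities}. For a $k$-blob $B=(W,b_1,\ldots,b_k)$ and well-separated far points $z_1,\ldots,z_k$, set
\[
\mathbf{W}(B):=\lim \frac{\P(b_n\leftrightarrow z_n \;\forall n,\ \text{the clusters of } b_1,\ldots,b_k \text{ distinct},\ W\text{ closed})}{\prod_n A G(0,z_n)}.
\]
The existence of this limit and its independence of the way the $z_n\to\infty$ will itself be part of the induction: it is the special case of the theorem for the star scheme with centre $B$ and $k$ leaves carrying trivial $1$-blobs. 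For those leaf $1$-blobs $(\emptyset,0)$ we need $\mathbf{W}=1$, and more generally $\mathbf{W}((W,b))=\P_\mathrm{IIC}(W\text{ closed})$ (up to translation), which is exactly what \eqref{eq:two_blob_assumption} gives for a one-edge scheme. This settles the base case.

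For the inductive step, first reduce by inclusion–exclusion to showing asymptotics for the monotone-type events
\[
\{x_i+b_{i,n}\leftrightarrow x_j+b_{j,m}\ \text{for edges of } H,\ \text{distinct clusters otherwise},\ W_i\text{ closed}\}.
\]
Next, observe that on this event the clusters corresponding to different edges of $H$ are disjoint. Fix one edge $\epsilon=\{(i,n),(j,m)\}$ and condition on the cluster $C$ of $x_i+b_{i,n}$ (which contains $x_j+b_{j,m}$). Off $C$, the remaining configuration is percolation on $\Z^d_L\setminus C$ (with the edges $\partial C$ closed), and it must realise the scheme $S\setminus\epsilon$, in which the blobs at $i$ and $j$ lose one marked point and gain the constraint of avoiding $C$. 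The key decoupling claim is that, on the event that $C$ is macroscopically a thin ``arm'' from $x_i$ to $x_j$, the probability of the remaining event off $C$ is asymptotically equal to the unconditioned scheme probability with the blobs at $i,j$ replaced by modified blobs $B_i',B_j'$. Here $B_i'$ records the local shape of $C$ near $x_i$ as a closed set $W_i'$, with $C$ itself playing the role of closed edges near $x_i$ and $x_j$. Far from $x_i,x_j$, the cluster $C$ should be irrelevant: the other connections avoid it with probability $1-o(1)$ by BK/tree-graph bounds, since a typical arm and a connection between other far points intersect away from the blobs with small probability when $d>6$ (triangle-type diagram estimates). Summing over local shapes, weighted by the two-blob asymptotics of \eqref{eq:two_blob_assumption} applied to the local geometry at both ends of $C$, then produces the product formula. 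This sum also defines $\mathbf{W}(B_i)$ consistently as $\sum_{\text{shapes}}\P_\mathrm{IIC}(\text{shape})\,\mathbf{W}(B_i' )$, and this is where the invariance under permutations must be checked. Permutation invariance should follow from symmetry of the original limit definition once that limit is shown to exist.

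The main obstacle is the uniformity needed to exchange the limit with the sum over local shapes of $C$, i.e.\ a tightness statement. We must show that the contribution from configurations where $C$ has large pieces within distance $R$ of $x_i$ that interact with the other arms at $x_i$ is small uniformly as $R\to\infty$. The first thing to try is BK with the tree-graph inequality: an arm intersecting another near $x_i$ but at distance $\geq R$ forces an extra vertex $w$ with $\|w-x_i\|\geq R$, giving diagrams like $\sum_{\|w\|\geq R} T(0,w)^2T(w,\cdot)\ldots$. These are $o(1)$ relative to the main term as $R\to\infty$ exactly because $d>6$, by the same elementary convolution bound used in the proof of \cref{cor:IIC_point_to_blob}. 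Finally, positivity of $\mathbf{W}(B)$ iff vertex-disjoint paths to infinity avoiding $W$ exist is a finite-energy argument. One direction opens a finite portion of such paths, and then the $1$-arm IIC positivity from \cref{lem:only_via_positivity}/\cref{lem:IIC_positivity} applies; for the converse, the event forces such disjoint paths by Menger's theorem. This is deferred as in the appendix.
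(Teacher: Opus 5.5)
Your architecture differs from the paper's, and it can plausibly be made to work. You peel off one cluster $C$ and feed the rest into an induction hypothesis with randomly modified blobs on $\Z^d_L\setminus V(C)$. The paper instead couples all clusters at once with independent copies $\tilde K_n$, explored successively off the earlier ones and off $E_W$ (identity \eqref{eq:InductiveAizenmanOff}). It then localizes the interactions (\cref{prop:BlobOffApprox}) and applies \eqref{eq:two_blob_assumption} only to the independent clusters. Individual blob factors are extracted at the end from star schemes via a Cauchy argument in $(R_1,R_2)$, much as you propose.

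However, the step you call the key decoupling is where almost all the work lies, and your justification covers only half of it. The remaining event off $V(C)$ is not monotone; your inclusion--exclusion ``reduction'' merely restates the scheme event. Besides the required connections, it demands that the remaining marked clusters be distinct. Restoring the far part of $V(C)$ can therefore change the event in two ways:
\begin{itemize}
\item a required connection needed the far part of $C$ (a connecting path meeting a cluster), which open-triangle tails control when $d>6$, as you say;
\item two marked clusters that are distinct off $V(C)$ merge through the far part of $C$.
\end{itemize}
The second case cannot be handled by showing that the other clusters stay away from $C$. Near-contacts between two IIC-like clusters issuing from near $x_i$ have expected number of order $\rho^{8-d}$ at scale $\rho$. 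So the naive BK bound (a square diagram) diverges for $d\leq 8$, and such contacts genuinely occur when $d<8$. What works instead: along a minimal path joining the two clusters, the first and last vertices of $V(C)$ are adjacent to the respective clusters. This yields diagrams in which three clusters meet near a point, and these are summable for $d>6$.

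You also need two radii. The event $\{C\cap B_R(x_i)=\sigma\}$ is not a cylinder event. Replacing it by the component of $x_i+b_{i,n}$ inside a larger ball $B_{R_2}(x_i)$ requires excluding two disjoint crossings of the annulus (\cref{lem:2Arm}). The paper's coupling avoids the non-monotonicity entirely: clusters explored off earlier ones are distinct by construction, so only connection-type events ever need to be bounded.

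The positivity part has a genuine gap in the ``if'' direction. Opening initial segments of the escape paths and invoking the one-arm results (\cref{lem:IIC_positivity,lem:only_via_positivity}) says nothing about $k\geq 2$ clusters. These must reach distant points while staying distinct from each other and from the opened segments, with probability comparable to $\prod_i T_{p_c}(\gamma_i(n),\gamma_i(m))$. If the escape paths stay at bounded distance from one another, the lower bound you need is itself the positivity of a multi-point blob factor, so the argument is circular.

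The missing idea is separation. By \cref{lem:affine_escape_vertex}, which is where Menger's theorem is actually used, the paths can be taken eventually affine with distinct slopes, so the endpoints $\gamma_i(n)$ drift linearly apart. Three-point bounds and $d>6$ then show that the cluster of $\gamma_i(n)$ rarely meets $V(W)\cup\{\gamma_j(\ell):j\neq i,\ell\leq n\}$. The tree-graph inequality shows that independently explored clusters of the $\gamma_i(n)$ rarely meet one another, which gives \eqref{eq:affine_surgery}, and finite-energy surgery finishes.

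The ``only if'' direction needs no Menger at all. On the scheme event the clusters are distinct, hence vertex-disjoint, and each contains a path avoiding $W$ out to large distance; compactness then yields infinite paths.
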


We call $\mathbf{W}$ the \textbf{blob factor}, by analogy with the vertex factor.
As with the two-blob estimate~\eqref{eq:two_blob_assumption}, a key feature of this estimate is that the dependencies on the microscopic geometry of the blobs approximately factorize from each other and from the large scale connection probabilities between blobs. 
As usual, the asymptotic estimate of \cref{thm:scheme_function} is valid even when the right-hand side is zero, in which case it means that $S(x_1,\ldots,x_k)$ is identically zero whenever $\min_{i\neq j}\|x_i-x_j\|$ is sufficiently large. Note that the blob $(\emptyset,0)$ must have $\mathbf{W}((\emptyset,0))=1$ for \cref{thm:scheme_function} to be consistent with \eqref{eq:two_point_assumption}. More generally, if $W$ is any finite set of edges then $\mathbf{W}((W,0))$ is equal to the probability that $W$ is closed in the IIC measure.

\begin{remark}
\label{remark:karmIIC}
It is a corollary of \cref{thm:scheme_function} that if $b_1,\ldots,b_k$ are such that there exist vertex-disjoint paths $\gamma_1,\ldots,\gamma_k$ in $\Z^d_L$ with $\gamma_i$ connecting $b_i$ to infinity, then there is a well-defined \textbf{$k$-arm IIC measure} defined by conditioning $b_1,\ldots,b_k$ to be connected in distinct clusters to some distant points $x_1,\ldots,x_k$: The probability under this measure that a finite set of edges $W$ is closed is precisely $\mathbf{W}((W,b_1,\ldots,b_k))/\mathbf{W}((\emptyset,b_1,\ldots,b_k))$, with the probabilities of all other cylinder events determined by inclusion-exclusion. Related $k$-arm IIC measures were also constructed via a different limiting procedure (working with $p\uparrow p_c$ limits) in the work of the first author and Nachmias \cite{blanc2025critical}; it should be possible to prove that these two limiting constructions are equivalent as for the usual IIC \cite{van2004incipient} but we do not pursue this here. See also the results of \cite{cabezas2025bi} regarding a slightly different notion of $2$-arm IIC we call the \emph{monochromatic $2$-arm IIC} and study in \cref{Sec:TreeScheme}.
\end{remark}

\medskip

Before proceeding further, let us note how \cref{thm:scheme_function} immediately yields our theorem on the asymptotics of the derivative.

\begin{proof}[Proof of \cref{thm:derivative} given \cref{thm:scheme_function}]
For each neighbour $z$ of $0$ in $\Z^d_L$, let $B_z$ be the $2$-blob $(\emptyset,0,z)$, let $B_\emptyset$ denote the $1$-blob $(\emptyset,0)$, and let $S_z$ be the scheme whose plan is a path of length $2$ with middle vertex associated to the blob $B_z$ and with endpoint vertices both associated to $B_\emptyset$. By Russo's formula,
we have that
 \[T'_{p_c}(x,y)=\frac{1}{1-p_c}\sum_{a\in \Z^d} \sum_{b \sim a}\P_{p_c}(x \leftrightarrow a \nleftrightarrow b \leftrightarrow y)
 = \frac{1}{1-p_c}\sum_{a\in \Z^d} \sum_{z\sim 0} S_z(x,a,y).\]
It follows from \cref{thm:scheme_function} that there exists a positive constant $G_\mathrm{piv}= \frac{p_c}{1-p_c} \sum_{z\sim 0} \mathbf{W}(B_z)$ such that for each $\eps>0$ there exists $R(\eps)<\infty$ such that
\[
  \Bigl|\frac{1}{1-p_c}\sum_{z\sim 0} S_z(x,a,y) - \frac{1}{p_c}G_\mathrm{piv} A^2G(x,a)G(a,y)\Bigr| \leq \eps G(x,a)G(a,y)
\]
whenever $\min\{\|x-y\|,\|x-a\|,\|y-a\|\}\geq R$. (We use this normalization since we later use $G_\mathrm{piv}$ to count \emph{open} pivotals.) On the other hand, using the BK inequality 
to bound $S_z(x,a,y) \leq T_{p_c}(x,a)T_{p_c}(a+z,y)$ easily shows that the contribution to the sum from points $a$ that have distance at most $R$ from $\{x,y\}$ is bounded by
 $C(R)\|x-y\|^{-d+2}$ for some $R$-dependent constant $C(R)$, and it follows easily that $T'_{p_c}(x,y)\sim p_c^{-1} G_\mathrm{piv} A^2 G*G(x,y)$ as $\|x-y\|\to \infty$ as claimed.
\end{proof}


\subsection{Proof of \cref{thm:scheme_function}}

In this section we prove \cref{thm:scheme_function} (apart from the characterisation of which blobs have positive blob factors which is deferred to \cref{sec:positivity_of_local_factors}). The proof can be summarised as follows:

\begin{itemize}
  \item \textbf{Coupling and the basic identity.}
  We first couple our critical percolation configuration with a collection of \emph{independent} percolation clusters, with one independent cluster for each edge of the plan. To construct this coupling we first fix an orientation of each edge of the plan and an enumeration of the edges. We use this coupling to rewrite the scheme function in terms of this collection of independent clusters in
  \eqref{eq:InductiveAizenmanOff}, which reduces the problem to understanding the probability that each independent cluster connects the two points it is required to connect off of the sets $x_i+W_i$ and the clusters revealed at earlier stages of the exploration.

  \item \textbf{Localization of the interaction (\cref{prop:BlobOffApprox}).} To analyze the reformulation of the scheme function given in \eqref{eq:InductiveAizenmanOff}, we consider a modified version of the procedure used to turn our many independent clusters into the actual clusters in the single percolation configuration in which we only consider intersections that occur locally near our marked points $x_1,\ldots,x_k$, and only between the clusters that are  required to enter the vicinity of these points. We prove that the probability our single percolation configuration makes all the connections we require is equal, up to a negligible error, to the analogous quantity  for this modified collection of clusters with localized interactions.
   In other words, up to negligible errors, the only way the coupled clusters can fail to make their required connections is through some \emph{local} obstruction in the vicinity of one of the points $x_1,\ldots,x_k$; there are no ``unforced interactions'' between the clusters at macroscopic scales. As part of the proof, we also show that configurations are negligibly unlikely to include ``extra arms'' around the points $x_1,\ldots,x_k$ that are not forced to exist by the scheme event (\cref{lem:2Arm}), a fact we will use again in \cref{subsec:localization_of_overcount_factors}.

  \item \textbf{A non-factorized asymptotic formula (\cref{lem:weaker_scheme_function}).}
  Once the connection event has been localized, the two-blob estimate \eqref{eq:two_blob_assumption} implies that the events describing the local geometry around distinct points $x_i$ become asymptotically independent as $\min_{i\neq j}\|x_i-x_j\|\to\infty$ (\cref{lem:BlobOffApproxCV}). The conditional probability of interest is therefore well-approximated by an expression of the desired form where the constant prefactor is written as a product 
   of local factors, each depending on the blob, the enumeration chosen for the edges of the scheme (restricted to the edges incident to the relevant vertex of the plan), and the localization parameters.
  Taking a double limit in which we first send $\min_{i\neq j}\|x_i-x_j\|\to\infty$ and then send the localization parameters to infinity, we deduce (\cref{lem:weaker_scheme_function}) that the products of these localized blob factors must form a Cauchy sequence as the localization factors become large, and hence that for each scheme $S$ there exists a well-defined constant $\Pi(S)$ such that
  \[
    S(x_1,\ldots,x_k) \sim \Pi(S)
      (1-p)^{|W_1|+\cdots+|W_k|}\,A^{|E(S)|}\,G_S(x_1,\ldots,x_k)
  \]
as $\min_{i\neq j}\|x_i-x_j\|\to \infty$, where $G_S$ is the product of Green's functions appearing in \cref{thm:scheme_function}.
 (The proof gives a constant that may also depend on the enumeration and orientation of the edges of the plan, but since these choices do not appear on the left hand side of this estimate $\Pi(S)$ cannot really depend on them.)

  \item \textbf{Factorization of the constant.}
  It remains to identify the constant $\Pi(S)$ with a product of local blob factors. At this point we know that 
the limiting factors exist for any product of localized factors over all the blobs in a scheme and must upgrade this to existence for each individual factor.
   This is done by noting that the trivial blob $(\emptyset,0)$ has its blob factor determined by the two-point asymptotic estimate \eqref{eq:two_point_assumption} and that every other blob can appear in a scheme in which all other blobs are the trivial blob, so that we can deduce convergence of individual blob factors from the convergence of products we have already established. 
\end{itemize}

We now begin working towards the proof.
For the rest of this section we will fix $d>6$ and $L\geq 1$ such that critical percolation on $\Z^d_L$ satisfies \eqref{eq:two_point_assumption} and \eqref{eq:two_blob_assumption} and fix a scheme $S$. 
Recall that for each $x\in \Z^d$ we let $K_x$ denote the cluster of $x$ in critical percolation. We think of $K_x$ as a subgraph of $\Z^d_L$: its vertices are the vertices connected to $x$, and its edges are the open edges between them. For every subgraph $G$ of $\Z^d_L$, set of vertices $V$ in $\Z^d_L$, and vertex $x$ of $G$, let $G \offx x V$ be the subgraph of $G$ obtained as the connected component of $x$ in $G$ after closing all the edges with at least one endpoint in $V$. If we are also given a set of edges $E$ we write $G \offx x (V,E)$ for the connected component of $x$ in $G$ after closing all the edges that either belong to $E$ or have at least one endpoint in $V$.

\medskip

Let us also introduce some notation for balls and their boundaries.
For each $r\geq 0$ and $x\in\Z^d$, we write
  $B_r(x)=\{y\in\Z^d:\|y-x\|\leq r\}$
and define $\partial B_r(x)$ to be the inner vertex boundary of $B_r(x)$ in $\Z^d_L$, i.e., the set of vertices in $B_r(x)$ adjacent to at least one vertex outside $B_r(x)$. We also write $B_r^{\mathbf e}(x)$ for the set of edges with both endpoints in $B_r(x)$. If $A$ is a set of vertices, percolation on $\Z^d_L\setminus A$ means percolation on the subgraph induced by $\Z^d\setminus A$. We say that two sets of vertices  \textbf{touch} if they either intersect or are adjacent.

\medskip

Let $N=\# E^{1/2}(S)$ be the number of edges in the plan of $S$ and let $\prec$ be an arbitrary total order on $E^{1/2}(S)$; we will refer to $\prec$ as an \textbf{order on $S$}.
 We enumerate the edges of $E^{1/2}(S)$ as $e_1\prec e_2\prec \dots \prec e_N$ and for each $1\leq n \leq N$ write $e_n=(e_n^-,e_n^+)=((\alpha^-_n,\beta^-_n),(\alpha^+_n,\beta^+_n))$, 
 $y_n^-:=x_{\alpha^-_n}+b_{\alpha^-_n,\beta^-_n}$, and $y_n^+:=x_{\alpha^+_n}+b_{\alpha^+_n,\beta^+_n}$. 

\medskip

Consider the set of edges $E_W=E_W(x_1,\ldots,x_k):=\bigcup_{1\leq i \leq k} (x_i+W_i)$.
Let $(\tilde K_n)_{n=1}^N$ be \emph{independent} random subgraphs of $\Z^d_L$, where $\tilde K_n$ is distributed as the cluster of $y_n^-$ in critical percolation on $\Z^d_L$ for each $1\leq n \leq N$.
We now couple these independent clusters with a single instance of critical percolation on $\Z^d_L$.
  Define a sequence of subgraphs $(K_n)_{n=1}^N$ inductively by
\[ K_n:= \begin{cases}
\tilde K_n \offx {y_n^-} \left (\bigcup_{1\leq m<n} K_m, E_W \right )
& \text{ if $y_n^- \notin \bigcup_{1\leq m<n} K_m$}
\\
\varnothing & \text{otherwise},
\end{cases}\]
for each $1\leq n \leq N$, where $\varnothing$ denotes the empty subgraph.
We can easily verify by induction that, for each $1\leq n \leq N$, the union of the clusters $(K_1,\ldots,K_n)$ has the same distribution as the union of the clusters of $(y_1^-,\ldots,y_n^-)$ conditioned on the set $E_W$ being closed. In particular, the clusters $(K_1,\ldots,K_n)$ can be coupled with the critical percolation configuration conditioned on $E_W$ being closed so that $K_n$ is equal to the cluster of $y_n^-$ on the event that $y_n^-$ does not belong to $\bigcup_{m<n} K_m$. On the other hand, if $y_n^-$ does belong to $\bigcup_{m<n} K_m$ then $K_n$ is empty. These considerations are summarized by the identity
\begin{equation} S(x_1,x_2,\dots, x_k)=\proba(E_W \text{ closed})\proba(y_n^+\in K_n \text{ for every } 1\leq n \leq N),
\label{eq:InductiveAizenmanOff}
\end{equation}
which holds for every scheme $S$, order $\prec$, and $x_1,x_2,\dots, x_k$.

\medskip

In this section, we will often consider events $E(R_1,R_2,(x_i)_{1\leq i \leq k})$ depending on parameters $R_1$, $R_2$, and $(x_i)_{1\leq i \leq k}$, defined on the same probability space as $(\tilde K_n)_{1\leq n \leq N}$. We say that such an ``event'' (which is really a family of events) holds $(S,\prec)$\textbf{-usually} (or simply that the event is $(S,\prec)$-usual) if 
\[ \liminf_{R_1\to \infty} \liminf_{R_2\to\infty} \liminf_{\min_{1\leq i \neq j \leq k}\|x_i-x_j\|\to \infty} \proba \left (E(R_1,R_2,(x_i)_{1\leq i \leq k}) \; \middle | \; \forall n, y_n^+\in \tilde K_n \right )=1. \]
We say that such an event  holds \textbf{$(S,\prec)$-rarely} (or simply that the event is $(S,\prec)$-rare) if its complement holds $(S,\prec)$-usually.
Note that if $E$ and $E'$ both hold $(S,\prec)$-usually then the events $E\cup E'$ and $E\cap E'$ also hold $(S,\prec)$-usually, so we can use  standard logical tools to manipulate such events.

\medskip

For each $1\leq i\leq k$ and $1\leq j\leq \deg(i)$ let $n(i,j)$ denote the unique integer such that $(i,j)\in \{e_{n(i,j)}^-,e_{n(i,j)}^+\}$ 
and define $\prec_i$ to be the order on $\{1\leq j \leq \deg(i)\}$ such that $j\prec_i j'$ if and only if $n(i,j)<n(i,j')$.
We approximate the rightmost event in \eqref{eq:InductiveAizenmanOff} as follows: for every $R\geq 1$, $1\leq i \leq k$, and $1\leq j \leq \deg(i),$ let 
\[ \tilde K^{R}_{i,j}:= \tilde K_{n(i,j)} \offx {x_i+b_{i,j}} (\Z^d\backslash B_R(x_i)) \]
to be the part of $\tilde K_{n(i,j)}$ reachable from $x_i+b_{i,j}$ within $B_R(x_i)$. For each $1\leq i \leq k$ we define inductively, along the order $\prec_i$, for every $1\leq j\leq \deg(i)$:
\[ K^R_{i,j}:=\tilde K_{i,j}^R \offx {x_i+b_{i,j}} \left (\bigcup_{j':j'\prec_i j} K^R_{i,j'}, (x_i+W_i) \right ).\]
Thus, the definition of $K^R_{i,j}$ is the same as $K_{n(i,j)}$ except that we only take account of intersections between different clusters (and different parts of $W$) locally in the vicinity of $x_i$.
The main step of the proof is the following proposition, which localizes the interaction between different clusters to the vicinity of the points $x_1,\ldots,x_k$.

\begin{prop} \label{prop:BlobOffApprox} For every scheme $S$ and order $\prec$ on $S$, the symmetric difference
\[
 \bigl\{ y_n^+\in K_n \text{ for every $1\leq n \leq N$}\bigr\} \,\Delta\, \bigl\{K^{R_2}_{i,j} \cap \partial B_{R_1}(x_i) \neq \emptyset \text{ for every $1\leq i \leq k$ and $1\leq j \leq \deg(i)$} \bigr\}  \]
 holds $(S,\prec)$-rarely. 
\end{prop}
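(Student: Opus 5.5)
We will show that, conditional on $\{y_n^+\in\tilde K_n\ \forall n\}$, a small collection of ``bad'' events are each $(S,\prec)$-rare, and that off their union the two events in the symmetric difference coincide. Throughout we use the BK inequality and the tree-graph bounds together with \eqref{eq:two_point_assumption}. These give that the conditioning event has probability $\asymp \prod_n G(y_n^-,y_n^+)$. Any event forcing an additional macroscopic connection, or a near-intersection of two independent clusters away from the points $x_i$, therefore costs an extra factor tending to zero in the triple limit.

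\textbf{Rare events.} We take $R_2\gg R_1$ and identify three events to rule out.

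(i) Two distinct clusters $\tilde K_n,\tilde K_m$ touch at a vertex outside $\bigcup_i B_{R_1}(x_i)$. Also rare is that a cluster $\tilde K_n$ touches $B_{R_1}(x_i)$ when $x_i$ is not an endpoint of $e_n$. For the first, the BK/diagram bound is a sum over the touching point $z$ of products of Green's functions. Because $d>6$, the contribution from $z$ at distance $\ge R_1$ from all the $x_i$ is $o(1)$ relative to $\prod_n G(y_n^-,y_n^+)$ as $R_1\to\infty$; this is a triangle-type tail estimate, as in the proof of \cref{cor:IIC_point_to_blob}. The second is handled the same way: it costs an extra factor of order $R_1^{d}\|x_i-x_j\|^{-(d-2)}$, which vanishes first.

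(ii) The ``extra arm'' event (this is the content of \cref{lem:2Arm}). Here the cluster $\tilde K_{n(i,j)}$ restricted to $B_{R_2}(x_i)$ reaches $\partial B_{R_1}(x_i)$ through two essentially disjoint routes, or the local cluster $\tilde K^{R_2}_{i,j}$ differs from the global one inside $B_{R_1}(x_i)$. That is, the cluster exits $B_{R_2}(x_i)$ and returns to $B_{R_1}(x_i)$. A return from distance $R_2$ to the ball $B_{R_1}$ costs a factor of order $(R_1/R_2)^{d-4}$ times polynomial factors, by a BK diagram, and this tends to zero as $R_2\to\infty$ for fixed $R_1$. We also need that, conditioned on reaching $y_n^+$, the geodesic-like structure near $x_i$ has a single macroscopic arm. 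This uses the one-arm estimate $\P(0\leftrightarrow \partial B_r)\asymp r^{-2}$ of Kozma--Nachmias together with a two-arm bound.

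(iii) In each independent cluster, the distinguished point is connected to $\partial B_{R_1}$ locally but then fails to reach its target. This need not be ruled out as rare, since it is controlled by the conditioning.

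\textbf{Equality off the bad events.} Suppose none of (i)--(ii) occurs. Then the sequential construction of $K_n$ via $\mathrm{off}$ only removes parts of $\tilde K_n$ through intersections with earlier clusters or with $E_W$ inside the balls $B_{R_1}(x_i)$. By (ii), these removals agree with those in the local construction $K^{R_2}_{i,j}$. Let $n=n(i,j)=n(i',j')$. We then check two things. First, $y_n^+\in K_n$ holds iff the surviving part still crosses from $B_{R_1}(x_i)$ to $B_{R_1}(x_{i'})$. Second, by the single-arm property, this holds iff both local pieces $K^{R_2}_{i,j}$ and $K^{R_2}_{i',j'}$ reach $\partial B_{R_1}$. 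The inductive order $\prec_i$ matches the global order $\prec$ restricted to edges at $x_i$, so the local and global cut operations commute.

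\textbf{Main obstacle.} The delicate step is the converse direction of the second check. We must rule out that both local pieces reach $\partial B_{R_1}$ while the global path is severed, or that a local piece is cut but the cluster reroutes around the cut. Both scenarios require a second disjoint arm from $B_{R_1}(x_i)$ to scale $R_2$, or from scale $R_2$ to macroscopic distance. We plan to bound these by a BK diagram with an extra arm from near $x_i$, which the conditioned one-arm decay makes small. Getting uniformity in the conditioning is where we expect most of the work to lie.
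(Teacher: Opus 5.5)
There is a genuine gap in the first half of your event (i). It is not true that two distinct clusters $\tilde K_n,\tilde K_m$ touch outside $\bigcup_a B_{R_1}(x_a)$ only $(S,\prec)$-rarely, and the bound you invoke is not a triangle. Take two plan edges $n,m$ incident to the same vertex $i$. The tree-graph bound is sharp up to constants (\cref{lem:three_point_lower}), so $\P(z\in\tilde K_n\mid y_n^+\in\tilde K_n)\gtrsim\|z-x_i\|^{-(d-4)}$ for $R_1\le\|z-x_i\|\ll\min_{a\neq b}\|x_a-x_b\|$, and likewise for $\tilde K_m$. By independence, the expected number of common points at scale $R$ is then of order $R^{d}R^{-2(d-4)}=R^{8-d}$. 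Your BK bound is a ``square'' diagram, roughly $\sum_z (G*G)(x_i,z)^2$, which converges only when $d>8$. For $d<8$ one expects two independent, locally IIC-like, four-dimensional clusters to intersect far from $x_i$ with probability bounded below. So for $6<d\le 8$ you cannot establish your ``equality off the bad events'' reduction, namely that the sequential $\mathrm{off}$ construction removes parts of $\tilde K_n$ only inside the balls. Your converse direction goes with it: the global connection can be severed far from every $x_a$ by an earlier cluster. That is not a second-arm event at all, and it is exactly the case your ``main obstacle'' paragraph leaves open. (Minor: a cluster reaching a ball around a non-endpoint costs $\asymp\|x_a-x_i\|^{-(d-4)}$, a $G*G$ convolution, because the branch point may lie anywhere along the backbone. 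It is not $\|\cdot\|^{-(d-2)}$, though it still vanishes.)

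The missing idea is never to ask the clusters to be disjoint, only that \emph{one connecting path} avoid earlier clusters.

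For the forward direction, the paper introduces intermediate clusters $K'_{i,j}$ and shows (\cref{lem:BlobOffApproxA}) that any discrepancy inside $B_{R_1}(x_i)$ between $K'_{i,j}$ and $K^{R_2}_{i,j}$ forces a path to leave a large ball around $x_i$ and come back. That is two arms (\cref{lem:2Arm}), so far-away intersections among clusters attached to the same blob are harmless. Only clusters of edges not incident to $i$ entering $B_{R_2}(x_i)$ need to be excluded, and that is a $G*G\to0$ bound.

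For the converse, the paper works on the first-failure event $\mathscr{F}_{i,j}$. It takes arms $\gamma\subset K^{R_2}_{i,j}$ and $\gamma'\subset K^{R_2}_{i',j'}$ to $\partial B_{R_1}$, and a sub-path $\Gamma'$ of a path in $\tilde K_{n(i,j)}$ joining them, edge-disjoint from both. If $\Gamma'$ comes within distance $r$ of $x_i$ or $x_{i'}$, that is again two arms. Otherwise $\Gamma'$ must meet some earlier $K_m$, or $E_W$ near a third point. A vertex $v\in\Gamma'$ has disjoint connections to both ends, while $v\in\tilde K_m$ costs a tree-graph factor. After \eqref{eq:ConvenientSplitFrac} this gives open-triangle tails such as $\sum_u\sum_{\|v-x_i\|\ge r}G(x_a,u)G(u,v)G(v,x_i)$, which vanish as $r\to\infty$ precisely because $d>6$. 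The dimension count is backbone against cluster, $2+4<d$, instead of cluster against cluster, $4+4<d$. The $E_W$ case costs $G(x_i,x_a)G(x_a,x_{i'})/G(x_i,x_{i'})\to0$.
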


In other words, on the event that all the independent clusters $\tilde K_n$ make the desired connections (that is, $y_n^+ \in \tilde K_n$ for each $1\leq n \leq N$), the event that the clusters $K_n$ make the same connections is roughly the same as the event that none of the clusters $K_n$ are blocked from becoming large by some local obstruction near one of the points $y_n^-$ or $y_n^+$.

\medskip


The basic reason \cref{prop:BlobOffApprox} is useful is that the events $\{K^{R_2}_{i,j} \cap \partial B_{R_1}(x_i) \neq \emptyset$ for all $1\leq j \leq \deg(i)\}$ are easily seen to decorrelate when $\min_{i\neq j} \|x_i-x_j\|\to\infty$ with $R_1,R_2$ fixed as established in the following lemma.

\begin{lemma} \label{lem:BlobOffApproxCV} For every $R_1,R_2$, there exists a function $\widehat{\mathbf{W}}^{R_1,R_2}$ from pairs consisting of a blob $(W,b_1,\ldots,b_k)$ and a total order of $\{b_1,\ldots,b_k\}$ to non-negative real numbers such that for every scheme $S$ and order $\prec$, we have that
\[ \proba\left ( \forall i,j,  K^{R_2}_{i,j} \cap \partial B_{R_1}(x_i) \neq \emptyset \; \middle | \; \forall n, y_n^+\in \tilde K_n \right ) \longrightarrow \prod_{1\leq i \leq k} \widehat{\mathbf{W}}^{R_1,R_2}(B_i,\prec_i) \]
as $\min_{i\neq j} \|x_i-x_j\|\to \infty$.
\end{lemma}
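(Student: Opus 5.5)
The plan is to reduce the statement to the two-blob estimate \eqref{eq:two_blob_assumption} applied one edge of the plan at a time, using the independence of the clusters $(\tilde K_n)_{n=1}^N$. The key structural fact is that the event $\{\forall i,j,\ K^{R_2}_{i,j}\cap\partial B_{R_1}(x_i)\neq\emptyset\}$ depends only on the restrictions $\tilde K^{R_2}_{i,j}$ of the independent clusters to the balls $B_{R_2}(x_i)$. Indeed, $K^{R_2}_{i,j}$ is built from $\tilde K^{R_2}_{i,j'}$ for $j'\preceq_i j$ and from $x_i+W_i$ only. Once $\min_{i\neq j}\|x_i-x_j\|>2R_2+2L$, the balls are disjoint and the localized interactions at different $x_i$ do not see each other.

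Write $F_i$ for the local event at $x_i$. Each $\tilde K_n$ enters exactly two balls: the one around $x_{\alpha_n^-}$ (as $\tilde K^{R_2}_{\alpha_n^-,\beta_n^-}$) and the one around $x_{\alpha_n^+}$ (as $\tilde K^{R_2}_{\alpha_n^+,\beta_n^+}$). Since the $\tilde K_n$ are independent, both the joint law of the local pictures and the conditioning event $\{y_n^+\in\tilde K_n\}$ factor over $n$. For each $n$ we therefore need the joint conditional law of the pair of local pictures of $\tilde K_n$ near its two endpoints, given $y_n^-\leftrightarrow y_n^+$.

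By translation invariance, this is a statement about cylinder events around $0$ and around $y_n^+-y_n^-$ (shifted by the fixed offsets $b$). The two-blob estimate \eqref{eq:two_blob_assumption}, together with \eqref{eq:two_point_assumption} to control the denominator, says exactly that conditional on $y_n^-\leftrightarrow y_n^+$ these two local configurations converge jointly to independent IIC configurations rooted at $y_n^-$ and at $y_n^+$. Every event about $\tilde K^{R_2}_{i,j}$ is measurable with respect to the edges of $B^{\mathbf e}_{R_2+L}(x_i)$, so it is a cylinder event and the estimate applies directly, with inclusion–exclusion over the finitely many configurations in the ball.

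Combining, the conditioned vector $(\tilde K^{R_2}_{i,j})_{i,j}$ converges in law, over the finite state space, to a family of independent IIC clusters $\mathcal C_{i,j}$ rooted at $x_i+b_{i,j}$ and truncated to $B_{R_2}(x_i)$. Then the conditional probability in the statement converges to $\prod_i \P(F_i \text{ for the IIC family at } x_i)$. This lets us define $\widehat{\mathbf W}^{R_1,R_2}(B,\prec_B)$ as the probability that, for $\deg$ independent IICs rooted at $b_1,\ldots,b_{\deg}$ and restricted to $B_{R_2}(0)$, the clusters obtained by successively removing (in the order $\prec_B$) the earlier clusters and the edges $W$ each still reach $\partial B_{R_1}(0)$. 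This quantity depends only on the blob and the local order, as required.

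The main subtlety is bookkeeping rather than analysis. First, one must check that the conditioning $\{\forall n,\,y_n^+\in\tilde K_n\}$ genuinely factors into independent two-blob conditionings; this holds since different $n$ use independent clusters. Second, one must verify that the two-blob estimate yields joint convergence of the pair of local pictures rather than merely of marginals. It does, since \eqref{eq:two_blob_assumption} is stated for arbitrary pairs of cylinder events $F^1_x\cap F^2_y$. No uniformity in $R_1$ or $R_2$ is needed because they are fixed before taking the limit.
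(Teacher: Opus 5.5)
Your proposal is correct and follows essentially the same route as the paper: the independence of the $\tilde K_n$, combined with the two-blob estimate applied to the two endpoints of each plan edge, gives joint convergence of the truncated clusters $(\tilde K^{R_2}_{i,j}-x_i)_{i,j}$ to independent IIC clusters. Since $K^{R_2}_{i,j}$ is a deterministic function of $W_i$ and $(\tilde K^{R_2}_{i,j'})_{j'\preceq_i j}$, the limit factorizes over $i$, and $\widehat{\mathbf W}^{R_1,R_2}(B_i,\prec_i)$ is defined as the corresponding probability under the limiting IIC law, exactly as you describe.
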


After this lemma is proven, we will write $\mathbf{W}^{R_1,R_2}(W,b_1,\ldots,b_k)=(1-p)^{|W|} \widehat{\mathbf{W}}^{R_1,R_2}(W,b_1,\ldots,b_k)$, which is more relevant to the statement of \cref{thm:scheme_function} (since our scheme function includes the condition that the sets $x_i+W_i$ are closed).
We will see at the end of the section that $\mathbf{W}^{R_1,R_2}$ and $\widehat{\mathbf{W}}^{R_1,R_2}$ do not actually  depend on the order we place on the blob (and are invariant under permuting the order of the vertices $b_1,\ldots,b_k$ in the blob).

\begin{proof}[Proof of \cref{lem:BlobOffApproxCV}]  Since $\{ \tilde K_n\}_{1\leq n \leq N}$ are independent, it follows from the two-blob estimate \eqref{eq:two_blob_assumption} that for each fixed $R_2\geq 1$
we have the distributional convergence of conditional measures
\[ (\tilde K_{i,j}^{R_2}-x_i)_{i,j} \;\Big|\; \text{$y_n^+\in \tilde K_n$ for every $1\leq n\leq N$}  \qquad \cvd \qquad (K_{\mathrm{IIC}}^{R_2}(b_{i,j})+b_{i,j})_{i,j},\]
where $K_{\mathrm{IIC}}^{R_2}(b_{i,j})$ is distributed as the connected component of $0$ under the incipient infinite cluster measure after closing every edge adjacent to $\Z^d\backslash B_{R_2}$ and the different clusters $(K_{\mathrm{IIC}}^{R_2}(b_{i,j}))_{i,j}$ are taken to be independent. (This independence is immediate when $(i,j)$ and $(i',j')$ correspond to different edges of the plan and follows from the two-blob assumption when $(i,j)$ and $(i',j')$ correspond to different orientations of the same edge of the plan.)
Moreover, if $y_n^+\in \tilde K_n$ for every $1\leq n \leq N$, then for every $1\leq i\leq k$ we have by induction (using the order $\prec_i$ on $1\leq j \leq \deg(i)$) that 
$K^{R_2}_{i,j}$ is obtained as a function of $W_i$ and the truncated clusters
$(\tilde K^{R_2}_{i,j'})_{j'\preceq_i j}$.
As such, the collection of subgraphs $\{K^{R_2}_{i,j}-x_i \}_{1\leq i \leq k, 1\leq j \leq \deg(i)}$ converges  in distribution as $\min_{i\neq j} \|x_i-x_j\|\to \infty$ to a limiting collection of random subgraphs in which the subgraphs associated to different values of $1\leq i \leq k$ are independent and have laws determined by $(B_i,\prec_i)$. The claim follows with $\widehat{\mathbf{W}}^{R_1,R_2}(B_i,\prec_i)$ given by the relevant probability in the limit law associated to the pair $(B_i,\prec_i)$.
\end{proof} 

As a first step towards the proof of \cref{prop:BlobOffApprox}, 
let us deal with an event that will repeatedly appear in our argument. We say that the event $2$-Arms$(R_1,R_2)$ holds if for some $1\leq i \leq k$, $1\leq j \leq \deg(i)$ there exist two edge-disjoint paths from $\partial B_{R_1}(x_i)$ to $\partial B_{R_2}(x_i)$ in $\tilde K_{n(i,j)}$.

\begin{lemma} \label{lem:2Arm} The event $2\emph{-Arms}(R_1,R_2)$ holds $(S,\prec)$-rarely. 
\end{lemma}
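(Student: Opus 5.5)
By a union bound over the finitely many pairs $(i,j)$, it suffices to fix one $(i,j)$, write $n=n(i,j)$, and bound the probability that $\tilde K_n$ contains two edge-disjoint paths from $\partial B_{R_1}(x_i)$ to $\partial B_{R_2}(x_i)$, conditioned on $y_m^+\in\tilde K_m$ for all $m$. The clusters $\tilde K_m$ are independent, so the conditioning on $m\neq n$ is irrelevant. It remains to show
\[
\limsup_{R_1\to\infty}\limsup_{R_2\to\infty}\limsup_{\|x\|\to\infty}\frac{\P(0\leftrightarrow x,\ \text{two edge-disjoint arms from }\partial B_{R_1}(u)\text{ to }\partial B_{R_2}(u))}{\P(0\leftrightarrow x)}=0,
\]
where $u$ is one of the two endpoints $0$ or $x$ (after translation, $u=x_i$ and $x=y_n^+-y_n^-$ up to the bounded offsets $b$). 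By symmetry of the two-point connection, both cases have the same form, so I treat $u=0$.

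For fixed $R_2$, the event in question is a cylinder event in $B_{R_2}$ intersected with $\{0\leftrightarrow x\}$. I will not apply the IIC limit to the numerator in one step. Instead I split according to the structure of the connection $0\leftrightarrow x$ and bound with the BK inequality, using the asymptotic \eqref{eq:two_point_assumption}, $T(a,b)\asymp\langle a-b\rangle^{-d+2}$, to evaluate the diagrams. The two edge-disjoint arms, together with the connections from $0$ and to $x$, force a tree-graph-type configuration. There are points $a,b,c$ with $\|a\|\ge R_1$ or $\|b\|\ge R_1$ (the arms must leave $B_{R_1}$), and disjoint connections $0\leftrightarrow a$, $a\leftrightarrow b$ (twice, via the two arms meeting), and so on, reaching out to $\partial B_{R_2}$ and to $x$. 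Concretely, on this event there exist points $w\in\partial B_{R_1}$, $z\in\partial B_{R_2}$, and branch points such that the union of the two arms and the connection to $x$ contains a cycle or branching that reaches distance $R_1$. The BK inequality then yields disjoint occurrences bounded by sums like
\[
\sum_{v,w:\ \|w\|\ge R_1} T(0,v)T(v,w)^2\,T(w,x)\;+\;\sum_{v,w,z:\ \|w\|\ge R_1,\|z\|\ge R_2} T(0,v)T(v,w)T(w,x)T(v,z)T(w,z)\cdots,
\]
where the second kind of term is a ``bubble with an extra arm'' configuration. Dividing by $T(0,x)\asymp\|x\|^{-d+2}$ and sending $\|x\|\to\infty$ gives limits of the form $\sum_{\|w\|\ge R_1}\sum_v T(0,v)T(v,w)^2$ and similar. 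These tend to $0$ as $R_1\to\infty$ because in $d>6$ the open triangle/bubble sums are tails of convergent sums: $\sum_v\langle v\rangle^{2-d}\langle v-w\rangle^{4-2d}\asymp\langle w\rangle^{2-d}$, and $\sum_{\|w\|\ge R_1}\langle w\rangle^{2-d}$ diverges. So I must use a finer bound: the configuration with two arms crossing the annulus creates a loop (triangle) at scale at least $R_1$, or a point at distance $R_2$ off the geodesic to $x$.

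I expect this to be the main obstacle, and my plan for it is a two-case analysis. In the first case, the two edge-disjoint arms both continue to $x$: in the limit $x\to\infty$ they would both have to go to infinity. Then a $\ge R_1$-sized loop forms, and the diagram is an open triangle $\sum_{\|w\|\ge R_1}\sum_{v} T(0,v)T(v,w)T(w,\cdot)$ attached to the far line, which gives a decaying factor $\sum_{\|v-w\|\ge R_1}\langle v\rangle^{...}$. This vanishes as $R_1\to\infty$ by $d>6$ (a triangle-condition tail), after the $x\to\infty$ limit. In the second case, one arm is a dead end reaching $\partial B_{R_2}$ and not connected to $x$ except through its base. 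Then there is a branch point $v$ with $T(0,v)T(v,x)\P(v\leftrightarrow\partial B_{R_2})$, plus the second arm. Using the Kozma–Nachmias one-arm bound $\P(v\leftrightarrow\partial B_r(v))\lesssim r^{-2}$ and summing over $v$ with $\|v\|\le R_2/2$, and separately over $\|v\|>R_2/2$, gives after division by $T(0,x)$ a bound of order $\sum_{\|v\|\le R_2/2}\langle v\rangle^{2-d}R_2^{-2}+\sum_{\|v\|>R_2/2}\langle v\rangle^{2-d}\cdot\langle v\rangle^{-2}\cdots$. I would have to check that this tends to $0$ as $R_2\to\infty$ for fixed $R_1$, using the requirement that the second arm also crosses to $\partial B_{R_2}$ (which contributes a further factor) to beat the $R_2^2$ volume growth. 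If that counting fails, I would instead fall back on the IIC: for fixed $R_1,R_2$ the ratio converges by \eqref{eq:two_blob_assumption} to $\P_\mathrm{IIC}(\text{two edge-disjoint arms across }B_{R_2}\setminus B_{R_1})$, and I would show this vanishes using the known one-endedness and tree-like structure of the high-dimensional IIC, established via the same diagrammatic bounds.
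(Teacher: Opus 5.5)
Your reduction to a single pair $(i,j)$, and to conditioning only on the connection $y_n^-\leftrightarrow y_n^+$, is fine. After that, however, the proposal never produces a bound that tends to zero.

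\begin{itemize}
\item The diagrams you sketch are not pinned to the ball around $x_i$. A loop or branch point summed freely along the connection gives, for example, $\sum_{a,b}T(0,a)T(a,b)^2T(b,x)\asymp\|x\|^{4-d}\gg T(0,x)$.
\item Your Case 2 sum $\sum_{\|v\|\le R_2/2}\langle v\rangle^{2-d}R_2^{-2}$ is of order $1$, not $o(1)$, as you half-suspect.
\item Case 1 targets the wrong mechanism. The definition of $(S,\prec)$-rare sends $R_2\to\infty$ before $R_1\to\infty$, so the decay should come from $R_2\to\infty$ at fixed $R_1$. One should not expect a bound decaying in $R_1$ uniformly in $R_2$: a side branch crossing from $\partial B_{R_1}$ to $\partial B_{2R_1}$ is roughly scale-invariant.
\item The IIC fallback does not close the gap. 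The two-arm event inside $\tilde K_n$ is not a cylinder event, so \eqref{eq:two_blob_assumption} does not apply directly. Also, one-endedness does not rule out two edge-disjoint rays. The fact you would need (no two edge-disjoint infinite paths from $\partial B_{R_1}$ in the IIC) is essentially the lemma itself.
\end{itemize}

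The missing idea is a path surgery that places the extra arm at $\partial B_{R_1}(x_i)$, so there is no branch point to sum over. Take $R_1>\max\|b_{i,j}\|$, so that $u=x_i+b_{i,j}\in B_{R_1}(x_i)$, and let $v$ be the other endpoint. Let $\Gamma$ be an open path from $v$ to $u$ in $\tilde K_n$, and let $\gamma,\gamma'$ be the two edge-disjoint crossings. Stop $\Gamma$ at its first visit to $\gamma\cup\gamma'\cup\partial B_{R_1}(x_i)$.
\begin{itemize}
\item If this visit is to $\gamma$ (say), continue along $\gamma$ down to $\partial B_{R_1}(x_i)$.
\item If it first reaches $\partial B_{R_1}(x_i)$ off $\gamma\cup\gamma'$, simply stop there.
\end{itemize}
Either way, the resulting path from $v$ to $\partial B_{R_1}(x_i)$ is edge-disjoint from one of the crossings. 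Hence $\{\partial B_{R_1}(x_i)\leftrightarrow v\}\circ\{\partial B_{R_1}(x_i)\leftrightarrow\partial B_{R_2}(x_i)\}$ occurs, and the BK inequality bounds the conditional probability by
\[
\frac{\P(\partial B_{R_1}(x_i)\leftrightarrow v)}{\P(u\leftrightarrow v)}\,\P(\partial B_{R_1}(x_i)\leftrightarrow \partial B_{R_2}(x_i)) \leq C(R_1)\,\P(\partial B_{R_1}(0)\leftrightarrow \partial B_{R_2}(0)).
\]
The first factor is bounded by a union bound over the finitely many points of $\partial B_{R_1}(x_i)$ together with \eqref{eq:two_point_assumption}. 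The last probability tends to $0$ as $R_2\to\infty$ with $R_1$ fixed, simply because there is no infinite cluster at $p_c$. No one-arm exponents, triangle tails, or structural facts about the IIC are needed.
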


\begin{proof}[Proof of \cref{lem:2Arm}]
We may assume that $R_1$ is larger than the maximum of the finitely many numbers $\|b_{i,j}\|$.
It suffices to prove under this assumption that
\begin{multline*}
  \!\!\!\lim_{R_2 \to \infty} \limsup_{\min_{a\neq b} \|x_a-x_b\|\to \infty}\\\P\Bigl(\text{$\exists$ two edge-disjoint paths from $\partial B_{R_1}(x_i)$ to $\partial B_{R_2}(x_i)$ in $\tilde K_{n(i,j)}$}\mid y_{n(i,j)}^+ \in \tilde K_{n(i,j)}\Bigr)
  =0
\end{multline*}
for each fixed $1\leq i\leq k$, $1\leq j\leq \deg(i)$.
Fix one such choice of $i$, $j$, and $R_1$, let $i',j'$ be such that $\{(i,j),(i',j')\}\in E$ and let $n=n(i,j)$. Write
$u=x_i+b_{i,j}$ and  $v=x_{i'}+b_{i',j'}$
so that $\{u,v\}=\{y_n^-,y_n^+\}$. Since $\tilde K_n$ is the cluster of $y_n^-$, the event $y_n^+\in \tilde K_n$ is exactly the event that there is a path from $u$ to $v$ in $\tilde K_n$.
Suppose that this event holds and that $\tilde K_{n}$ contains two edge-disjoint paths $\gamma, \gamma'$ from $\partial B_{R_2}(x_i)$ to $\partial B_{R_1}(x_i)$. Let $\Gamma$ be a path from $v$ to $u$ in $\tilde K_n$.
 By stopping $\Gamma$ at the first time it intersects $\gamma\cup\gamma'$ or $\partial B_{R_1}(x_i)$ and then following the corresponding path if necessary, we find that there exists a pair of edge-disjoint paths connecting $\partial B_{R_1}(x_i)$ to $v$ and $\partial B_{R_1}(x_i)$ to $\partial B_{R_2}(x_i)$ respectively.
  Hence, by the BK inequality, the conditional probability in the display above is at most
\[  \frac{\proba(\partial B_{R_1}(x_i)\slr v)\proba(\partial B_{R_1}(x_i)\slr \partial B_{R_2}(x_i))}{\proba(u \leftrightarrow v)}.\]
It follows by a simple finite-energy argument (or by the two-point estimate \eqref{eq:two_point_assumption} and a union bound) that the ratio $\proba(\partial B_{R_1}(x_i)\slr v)/\proba(u \leftrightarrow v)$
is bounded by an $R_1$-dependent constant, and the claim follows since $\proba(\partial B_{R_1}(x_i)\slr \partial B_{R_2}(x_i))\to 0$ as $R_2\to \infty$ with $R_1$ fixed (since there are no infinite clusters at criticality).
\end{proof}

To proceed, we first define yet another intermediate collection of clusters. For each $1\leq i \leq k$ let $\cN_i:=\{n(i,j)\}_{1\leq j \leq \deg(i)}$. Inductively, for $1\leq i \leq k, 1\leq j \leq \deg(i)$ we define the collection of subgraphs
\[ K_{i,j}' := \tilde K_{n(i,j)} \offx {x_i+b_{i,j}} \left (\bigcup_{j'\prec_i j} K_{i,j'}' \cup \bigcup_{\substack{1\leq m<n(i,j)\\ m\notin \cN_i}} K_m, E_W \right ). \]
%
Note that the only difference between the definitions of $K_{n(i,j)}$ and $K_{i,j}'$ is that instead of considering the connected component of $y_{n(i,j)}^-$ we consider the component of $x_i+b_{i,j}\in \{y_{n(i,j)}^-, y_{n(i,j)}^+\}$. (In other words, $K_{i,j}'$ is what we would have computed for $K_{n(i,j)}$ if we had first modified the choice of $E^{1/2}(S)$ to take all the edges incident to $i$ to be oriented out of $i$.)


\begin{lemma} \label{lem:BlobOffApproxA} 
The event $\{ K_{i,j}' \cap B_{R_1}(x_i)=  K^{R_2}_{i,j} \cap B_{R_1}(x_i)$ for every $1\leq i \leq k$ and $1\leq j \leq \deg(i)\}$ holds $(S,\prec)$-usually.
\end{lemma}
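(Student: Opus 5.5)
The plan is to intersect two $(S,\prec)$-usual good events and then check, deterministically on their intersection, that $K_{i,j}'\cap B_{R_1}(x_i)=K^{R_2}_{i,j}\cap B_{R_1}(x_i)$. We do this by induction on $j$ along the order $\prec_i$, for each fixed $i$. Throughout, take $R_1$ larger than $\max_{i,j}\|b_{i,j}\|$ plus the diameters of the sets $W_i$, and $R_2>R_1$. Comparing the two definitions, $K_{i,j}'$ and $K^{R_2}_{i,j}$ differ in exactly two ways. First, $K_{i,j}'$ also avoids the clusters $K_m$ with $m\notin\cN_i$ and the sets $x_{i'}+W_{i'}$ with $i'\neq i$. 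Second, $K_{i,j}'$ uses the untruncated cluster $\tilde K_{n(i,j)}$, and the untruncated blockers $K'_{i,j'}$ for $j'\prec_i j$.

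\emph{Foreign clusters stay away.} Let $G_1$ be the event that no $\tilde K_m$ with $m\notin\cN_i$ touches $B_{R_2}(x_i)$; then $K_m\subseteq\tilde K_m$ does not either. Since the $\tilde K_m$ are independent, it suffices to bound, for each such $m$, the ratio
\[\P\bigl(y_m^-\leftrightarrow y_m^+,\ \tilde K_m\text{ touches } B_{R_2}(x_i)\bigr)\big/\P(y_m^-\leftrightarrow y_m^+).\]
By the tree-graph (BK) bound, the numerator is at most $\sum_{z}T(y_m^-,z)T(z,y_m^+)\sum_{w\in B_{R_2+L}(x_i)}T(z,w)$. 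By \eqref{eq:two_point_assumption} and $d>6$, the standard convolution estimates show that this is $O_{R_2}\bigl(G(y_m^-,y_m^+)\cdot(\|x_i-y_m^-\|\wedge\|x_i-y_m^+\|)^{-(d-4)}\bigr)$. Relative to $G(y_m^-,y_m^+)$ this tends to $0$ as $\min_{a\neq b}\|x_a-x_b\|\to\infty$ with $R_2$ fixed. Moreover, the sets $x_{i'}+W_{i'}$ with $i'\ne i$ are eventually far from $B_{R_2}(x_i)$. So on $G_1$, the extra blocking sets in the definition of $K_{i,j}'$ are invisible inside $B_{R_2}(x_i)$.

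\emph{Truncation via one-arm.} Let $G_2$ be the complement of $2$-Arms$(R_1,R_2)$, which is $(S,\prec)$-usual by \cref{lem:2Arm}. Suppose an open simple path in $\tilde K_{n(i,j)}$ starts at $x_i+b_{i,j}$, ends in $B_{R_1}(x_i)$, and leaves $B_{R_2}(x_i)$. Its outgoing and returning crossings of the annulus are edge-disjoint subpaths joining $\partial B_{R_1}$ to $\partial B_{R_2}$, which is forbidden on $G_2$. Hence on $G_1\cap G_2$ every vertex of $K'_{i,j}\cap B_{R_1}$ is reached by a path staying inside $B_{R_2}(x_i)$. The induction hypothesis should therefore be strengthened to this: the set of vertices of $K'_{i,j}$ reachable from $x_i+b_{i,j}$ by paths inside $B_{R_2}$ that avoid the local blockers agrees with $K^{R_2}_{i,j}$ on the region relevant for later blocking. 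With that in hand, comparing the off-operations step by step gives the equality.

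\emph{Main obstacle.} The delicate point is the blockers $K'_{i,j'}$ for $j'\prec_i j$. Inside the annulus $B_{R_2}\setminus B_{R_1}$ they may contain points reached only via excursions outside $B_{R_2}$, and such points are absent from $K^{R_2}_{i,j'}$. Such a point could block a path of $\tilde K_{n(i,j)}$ inside $B_{R_2}$ that would otherwise reach $B_{R_1}$. My first attempt is to absorb this by the same no-two-arm argument with an intermediate radius. I would use good events for annuli $R_1<R_1'<R_2$, applied both to the blocking cluster $\tilde K_{n(i,j')}$ and to the blocked cluster $\tilde K_{n(i,j)}$. The aim is to show that any discrepancy produces either two edge-disjoint crossings of one annulus by one cluster, or a second arm for $\tilde K_{n(i,j)}$. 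If necessary, I would instead adjust the order of limits, sending $R_2\to\infty$ before the intermediate radius. The remaining bookkeeping uses only that $(S,\prec)$-usual events are closed under finite intersections.
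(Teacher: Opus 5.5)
Your decomposition matches the paper's proof. The argument is an induction along $\prec_i$, and any discrepancy in $B_{R_1}(x_i)$ is traced to a simple path $\gamma$ in $\tilde K_{n(i,j)}$ from $x_i+b_{i,j}$ to a point of $B_{R_1}(x_i)$. Such a path either leaves $B_{R_2}(x_i)$, which is ruled out by \cref{lem:2Arm}. Or it meets some $x_{i'}+W_{i'}$ or some $K_m$ with $m\notin\cN_i$, which is ruled out by separation and the BK/convolution bound; this is your event $G_1$, and its three-point estimate is fine. Or it meets a point of $K'_{i,j'}\,\Delta\,K^{R_2}_{i,j'}$ for some $j'\prec_i j$.

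What is missing is an actual argument for this last case, which you leave as a tentative ``first attempt''. The strengthened induction hypothesis you propose is not needed. In the form ``agreement on the region relevant for later blocking'' it should not be expected to hold. As you observe yourself, points of $B_{R_2}(x_i)$ near its boundary that are reached by short excursions outside $B_{R_2}(x_i)$ lie in $K'_{i,j'}$ but not in $K^{R_2}_{i,j'}$.

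The paper closes the case as follows.
\begin{enumerate}
\item Take $R_1<R_{3/2}<R_2$. Use as induction hypothesis the lemma's own conclusion for all $j'\prec_i j$, with $R_1$ replaced by $R_{3/2}$. That is, $K'_{i,j'}\cap B_{R_{3/2}}(x_i)=K^{R_2}_{i,j'}\cap B_{R_{3/2}}(x_i)$ for all $j'\prec_i j$, with conditional probability tending to one as first $\min\|x_a-x_b\|\to\infty$, then $R_2\to\infty$, then $R_{3/2}\to\infty$.
\item On this event the offending point $u\in\gamma$ lies outside $B_{R_{3/2}}(x_i)$. The simple path $\gamma$ starts and ends in $B_{R_1}(x_i)$, so it contains two edge-disjoint crossings of $B_{R_{3/2}}(x_i)\setminus B_{R_1}(x_i)$ inside the single cluster $\tilde K_{n(i,j)}$.
\item This is rare by \cref{lem:2Arm} with $R_{3/2}$ in the role of $R_2$; that event does not depend on $R_2$.
\item The target event does not depend on $R_{3/2}$, so this limit can be inserted between those in $R_2$ and $R_1$. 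This completes the induction.
\end{enumerate}
No two-arm bound for the blocking clusters $\tilde K_{n(i,j')}$ has to be invoked separately. It is packaged in the induction hypothesis at radius $R_{3/2}$, whose own proof uses a further intermediate radius. This nesting is what handles chains of blockers of arbitrary depth.
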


\begin{proof}[Proof of \cref{lem:BlobOffApproxA}]
It suffices to prove that $\{ K_{i,j}' \cap B_{R_1}(x_i)=  K^{R_2}_{i,j} \cap B_{R_1}(x_i)\}$ holds $(S,\prec)$-usually for each $1\leq i \leq k$ and $1\leq j \leq \deg(i)$.
We prove this by induction on $j$ for each $i$. Fix $i,j$ and assume that the result holds for all $j'\prec_i j$. 
Let $R_{3/2}$ be an intermediate parameter between $R_1$ and $R_2$.
The induction hypotheses can be written as stating that
\begin{equation} 
\liminf_{R_{3/2}\to\infty} \liminf_{R_{2}\to\infty} \liminf_{\min\|x_a-x_b\|\to \infty}\proba\left (\forall j'\prec_i j, K_{i,j'}'\cap B_{R_{3/2}}(x_i)=  K_{i,j'}^{R_2}\cap B_{R_{3/2}}(x_i) \;\middle |\; \forall n, y_n^+\in \tilde K_n \right )=1. \label{eq:4LiminfBlobA}\end{equation}
By definition, both $K_{i,j}^{R_2}$ and $K_{i,j}'$ are connected components of $x_i+b_{i,j}$ in $\tilde K_{n(i,j)}$ after removing some vertices and edges. So, for a vertex $z$ to be in the symmetric difference $K_{i,j}'\Delta K_{i,j}^{R_2}$ there must exist a path $\gamma$ in $K_{n(i,j)}$ from $x_i+b_{i,j}$ to $z$ that uses a vertex or edge  that is permitted for one of $K_{i,j}'$ or $K_{i,j}^{R_2}$ but not for the other. Thus it is enough to prove that $(S,\prec)$-rarely there exists a simple path $\gamma$ in $\tilde K_{n(i,j)}$ from $x_i+b_{i,j}$ to a vertex $z\in B_{R_1}(x_i)$ and a vertex $u\in \gamma$ such that at least one of the following conditions hold: 
\begin{itemize}
\item[(a)] $u\in\Z^d\backslash B_{R_2}(x_i)$,
\item[(b)]  $u$ is the endpoint of an edge of $E_W\backslash(x_i+W_i)$,
\item[(c)]  
$u\in K_{i,j'}^{R_2} \Delta K_{i,j'}'$ for some $j'\prec_i j$,
\item[(d)]  $u\in K_m$ for some $1\leq m < n$ that does not belong to $\cN_i$.
\end{itemize}
We will say ``case (a)'' to mean the event that there exists such a pair $(\gamma,u)$ with $u\in \Z^d\setminus B_{R_2}(x_i)$, with similar terminology in use for other cases.
If case (a) holds then inside $\gamma$  we have two edge-disjoint paths from $\partial B_{R_1}(x_i)$ to $\partial B_{R_2}(x_i)$, which holds $(S,\prec)$-rarely by \cref{lem:2Arm}. If case (b) holds then case (a) also holds provided that $\min_{i\neq j} \|x_i-x_j\|$ is large enough as a function of $R_2$, so that case (b) holds $(S,\prec)$-rarely also. Similarly, for case (c), either the event appearing in \eqref{eq:4LiminfBlobA} does not hold (which is $(S,\prec)$-rare by our induction hypothesis) or, as in the proof of case (b), there exist two edge-disjoint paths from $\partial B_{R_{3/2}}(x_i)$ to $\partial B_{R_1}(x_i)$, and the fact that case (c) holds $(S,\prec)$ rarely follows by another application of \cref{lem:2Arm}.

\medskip

For case $(d)$, if $K_m\cap B_{R_2}(x_i)=\emptyset$ for every $1\leq m < n$ that does not belong to $\cN_i$ we are again in case (a). As such, it suffices to prove that the event that there exists $1\leq m <n$ with $m\notin \cN_i$ such that $K_m \cap B_{R_2}(x_i)\neq \emptyset$ holds $(S,\prec)$-rarely. Fix one such $m\notin \cN_i$. We have by definition that $K_m\subset \tilde K_m$ and hence by the BK inequality that
\[ \proba \left (K_m\cap B_{R_2}(x_i)\neq \emptyset \;\middle  |\; \forall n, y_n^+\in \tilde K_n \right ) \leq \sum_{v\in \Z^d} \frac{\proba (y_m^-\slr v )\proba (y_m^+\slr v )\proba  (v\slr B_{R_2}(x_i) )}{\proba  (y_m^- \slr y_m^+ )}. \]
It follows by a standard finite-energy argument (or by \eqref{eq:two_point_assumption} and a union bound) that there exists a constant  $C(R_2,S)$ 
 depending only on $R_2$ and $S$ such that
\begin{equation}
\label{eq:irrelevant_ball} \proba \left (K_m\cap B_{R_2}(x_i)\neq \emptyset \;\middle  |\; \forall n, y_n^+\in \tilde K_n \right ) \leq C(R_2,S) \sum_{v\in \Z^d} \frac{\proba (x_{\alpha^-_m}\slr v)\proba (x_{\alpha^+_m}\slr v )\proba \left (v\slr x_i\right )}{\proba (x_{\alpha^-_m} \slr x_{\alpha^+_m} )}. \end{equation}
Moreover, since $G(x,y)$ is of order $(1+\| x-y\|)^{-d+2}$, it follows from \eqref{eq:two_point_assumption} and the triangle inequality that there exists a constant $C$ such that
\begin{equation} \min(\proba(a\slr b),\proba(b\slr c))\leq C\proba(a\slr c) \label{eq:ConvenientSplitFrac}\end{equation}
for every $a,b,c\in \Z^d$.
Substituting this estimate into \eqref{eq:irrelevant_ball} yields that there exists a constant $C'(R_2,S)$ such that
\[ \proba \left (K_m\cap B_{R_2}(x_i)\neq \emptyset \;\middle  |\; \forall n, y_n^+\in \tilde K_n \right ) \leq C'(R_2,S) \sum_{v \in \Z^d} (\proba (x_{\alpha^-_m}\slr v)+\proba (x_{\alpha^+_m}\slr v ))\proba \left (v\slr x_i\right ). \]
Since $d>6 > 4$ and $\P(x\leftrightarrow y)$ is of order $\|x-y\|^{-d+2}$, the convolution of the two-point function with itself is of order $\|x-y\|^{-d+4}$ and therefore converges to zero as $\|x-y\|\to \infty$.
It follows that the final sum converges to $0$ as $\min_{1\leq a\neq b\leq k} \|x_a-x_b\|\to \infty$ and hence that case (d) holds $(S,\prec)$-rarely as claimed.  \qedhere
\end{proof}

\begin{proof}[Proof of \cref{prop:BlobOffApprox}] Recall that the only difference in the definitions of $ K_{n(i,j)}$ and $K_{i,j}'$ is that instead of considering the connected component of $y_n^-$ we consider the component of $x_i+b_{i,j}\in \{y_{n(i,j)}^-, y_{n(i,j)}^+\}$. As such, on the event that $y_n^+ \in K_n$ for every $1\leq n\leq N$ we also have that $K_{i,j}'=K_{n(i,j)}$ for every $1\leq i\leq k$ and $1\leq j \leq \deg(i)$.
Applying \cref{lem:BlobOffApproxA}, it follows that the implications 
\begin{equation}  \{ \forall n, y_n^+\in K_n\} \Longrightarrow  \{\forall i,j, K_{n(i,j)}' \cap \partial B_{R_1}(x_i) \neq \emptyset \}\Longrightarrow \{\forall i,j,  K^{R_2}_{i,j} \cap \partial B_{R_1}(x_i) \neq \emptyset \} \label{eq:prop:BlobOffApprox=>}\end{equation}
hold $(S,\prec)$-usually in the sense that the event that the left-hand event holds but the right-hand event does not is $(S,\prec)$-rare.

\medskip

To conclude, we need to show that the reverse implication holds $(S,\prec)$-usually in the same sense. 
Consider the event
\[ \sG(R_1,R_2)= 
\left\{
 y_{n(i,j)}^+\in \tilde K_{n(i,j)},\, K^{R_2}_{i,j} \cap B_{R_1}(x_i)= K_{i,j}' \cap B_{R_1}(x_i), \text{ and }  K^{R_2}_{i,j} \cap \partial B_{R_1}(x_i)\neq \emptyset   \quad \forall i,j
 \right\}
 \]
and for each $1\leq i \leq k$ and $1\leq j\leq \deg(i)$ consider the event 
\[ \sF_{i,j}=\left \{ y_{n(i,j)}^+\notin K_{n(i,j)} \, \text{ and } \, \forall m <n(i,j),\,y_{m}^+\in K_{m} \right \}.\]
To show the converse of \eqref{eq:prop:BlobOffApprox=>} it is enough to show that for every $i,j$ the event $\sG(R_1,R_2)\cap \sF_{i,j}$ holds $(S,\prec)$-rarely. Fix $(i,j)$, let $(i',j')$ be such that $\{(i,j),(i',j')\}\in E$, and suppose that $\sG(R_1,R_2)\cap \sF_{i,j}$ holds. 
Since $\sG(R_1,R_2)$ holds, there exists a simple path $\gamma$ from $x_i+b_{i,j}$ to $\partial B_{R_1}(x_i)$ in $K_{i,j}^{R_2}$ and a simple path $\gamma'$ from $x_{i'}+b_{i',j'}$ to $\partial B_{R_1}(x_{i'})$ inside $K_{i',j'}^{R_2}$, and a simple path $\Gamma$ from $x_i+b_{i,j}$ to $x_{i'}+b_{i',j'}$ in $\tilde K_{n(i,j)}$.
By taking the portion of $\Gamma$ between the last place it intersects $\gamma$ and the first place it intersects $\gamma'$, it follows that there exists a simple path $\Gamma'$ in $\tilde K_{n(i,j)}$ that is edge-disjoint from $\gamma$ and $\gamma'$, starts from a vertex $z\in \gamma$ and ends at a vertex $z'\in \gamma'$. 

Fix some measurable way of choosing the paths $\gamma$, $\gamma'$, and $\Gamma'$ as a function of the various configurations under consideration, the precise choice of this assignment being unimportant, and let $M$ be the minimal distance the path $\Gamma'$ attains from the points $\{x_i,x_{i'}\}$.
Note that for each constant $r\geq 1$, if $R_1$ is sufficiently large and 
$\Gamma'$ comes within distance $r$ of $x_i$ then the paths $\Gamma'$ and $\gamma$, which are disjoint and both contained in $\tilde K_{n(i,j)}$, must each 
contain paths from $\partial B_r(x_i)$ to $\partial B_{R_1}(x_i)$. Using this observation together with the analogous observation around $x_{i'}$ and
applying \cref{lem:2Arm}, it follows that
\begin{equation} \limsup_{R_{1}\to\infty} \limsup_{R_{2}\to\infty} \limsup_{\min\|x_a-x_b\|\to \infty}\proba\left (\sG(R_1,R_2)\cap \sF_{i,j}\cap \{ M\leq r\} \; \middle | \; \forall n, y_n^+\in \tilde K_n \right )= 0.\label{eq:BlobOffStopCloseA} \end{equation} 
for each $r\geq 1$.
It remains to prove an analogous estimate in the case that $M$ is large. Take $r\geq 1$ sufficiently large that $\Gamma'$ is necessarily disjoint from $x_i+W_i$ and $x_{i'}+W_{i'}$ when $M\geq r$.
When $\sF_{i,j}$ holds, we have by the same induction argument used at the start of the proof that
  $K_{a,b}'=K_{n(a,b)}$ for every $a,b$ such that $n(a,b)< n(i,j)$. Thus, since $\sG(R_1,R_2)$ and $\sF_{i,j}$ both hold, the two paths $\gamma$, $\gamma'$ (which are contained in $B_{R_1}(x_i)$ and $B_{R_1}(x_{i'})$ and open in $K_{i,j}^{R_2}$ and $K_{i',j'}^{R_2}$ respectively) must also be open in $K_{n(i,j)}$. Since $y_{n(i,j)}^+$ does \emph{not} belong to $K_{n(i,j)}$ on the event $\sF_{i,j}$, the path $\Gamma'$ must \emph{not} be open in $K_{n(i,j)}$, which means that either
\begin{enumerate}
\item[(a)] There exists $v\in \Gamma'$ such that $v\in K_m$ for some $m<n(i,j)$, or
\item[(b)] $\Gamma'$ visits the set $E_W$.
\end{enumerate}
We bound from above the conditional probability  that (a) occurs given $y_n^+\in \tilde K_n$ for every $1\leq n\leq N$  using the tree-graph inequality:
\[ \sum_{1\leq a,a'\leq k} \sum_{u\in \Z^d} \sum_{\substack{v\in \Z^d\\\|v-x_i\|\geq r\\\|v-x_{i'}\|\geq r}} \frac{\proba(x_a\slr u)\proba(u\slr x_{a'})}{\proba(x_a\slr x_{a'})}\proba(u\slr v)\frac{\proba(x_i \slr v)\proba(v\slr x_{i'})}{\proba(x_i \slr x_{i'})}.\]
 Using the universal bound \eqref{eq:ConvenientSplitFrac} we may bound this sum from above by 
\[ C\sum_{1\leq a,a'\leq k} \sum_{u\in \Z^d} \sum_{\substack{v\in \Z^d\\\|v-x_i\|\geq r\\\|v-x_{i'}\|\geq r}} (\proba(x_a\slr u)+\proba(u\slr x_{a'}))\proba(u\slr v)(\proba(x_i \slr v)+\proba(v\slr x_{i'})). \]
(Note that this sum includes terms with $x_a$ or $x_{a'}$ possibly coinciding with $x_i$ or $x_{i'}$, which is why we need to take $r$ large to make it small.)
By \eqref{eq:two_point_assumption} and the assumption that $d>6$, the convolutional cube of the two-point function is finite, which implies that this sum converges to zero uniformly in $(x_1,\ldots,x_k)$ as $r\to \infty$ and we deduce that
\begin{multline} \limsup_{r\to\infty}\limsup_{R_{1}\to\infty} \limsup_{R_{2}\to\infty} \limsup_{\min\|x_a-x_b\|\to \infty}\\\proba\left (\sG(R_1,R_2)\cap \sF_{i,j}\cap \{ M\geq r\}\cap \{\text{(a) occurs}\} \; \middle | \; \forall n, y_n^+\in \tilde K_n \right )= 0.\label{eq:BlobOffStopFarA} \end{multline} 
For the case (b), our above assumption on $r$ ensures that the vertex $v$ 
must belong to $x_a+W_a$ for some $a \notin \{i,i'\}$. 
Since the path $\Gamma'$ is contained in the path $\Gamma$ which connected $x_i+b_{i,j}$ to $x_{i'} + b_{i',j'}$ in $\tilde K_{n(i,j)}$, we can apply the BK inequality and a union bound 
to bound the conditional probability of this event given $y_n^+\in \tilde K_n$ for every $1\leq n \leq N$ by something of the form
\[ C \sum_{a\notin\{i,i'\}} \frac{\proba(x_i\slr x_a)\proba(x_{i'}\slr x_a)}{\proba(x_i\slr x_{i'})} \]
with the constant $C$ depending only on the scheme $S$. Since this goes to zero 
as $\min_{1\leq a\neq a'\leq k} \|x_a-x_{a'}\|\to\infty$, we deduce that
\begin{multline} \limsup_{R_{1}\to\infty} \limsup_{R_{2}\to\infty} \limsup_{\min\|x_a-x_b\|\to \infty}\\\proba\left (\sG(R_1,R_2)\cap \sF_{i,j}\cap \{ M\geq r\}\cap \{\text{(b) occurs}\} \; \middle | \; \forall n, y_n^+\in \tilde K_n \right )= 0.\label{eq:BlobOffStopFarB} \end{multline}
for each sufficiently large $r$. Putting together \eqref{eq:BlobOffStopCloseA}, \eqref{eq:BlobOffStopFarA}, and \eqref{eq:BlobOffStopFarB} completes the proof that the event $\sG(R_1,R_2) \cap \sF_{i,j}$ holds $(S,\prec)$-rarely and hence the proof of the proposition. \qedhere

\end{proof} 
We now prove a slightly weaker version \cref{thm:scheme_function}, from which we will deduce \cref{thm:scheme_function}. To lighten notation we write 
\[ \Pi^{R_1,R_2}(S,\prec):=\prod_{1\leq i \leq k} \widehat{\mathbf{W}}^{R_1,R_2}(B_i,\prec_i) \quad \text{ and } \quad G_S(x_1,x_2,\dots,x_k):=\prod_{((i,n),(j,m)) \in E^{1/2}(S)} G(x_i,x_j).\]

\begin{lem} \label{lem:weaker_scheme_function} The double limit $\Pi(S)=\lim_{R_1\to \infty}\lim_{R_2\to \infty}\Pi^{R_1,R_2}(S,\prec)$ exists independently of the choice of order $\prec$ and the scheme function satisfies the asymptotic formula
\[ S(x_1,x_2,\dots,x_k)=(\Pi(S)+o(1))(1-p)^{|W_1|+|W_2|+\dots+|W_k|} A^{|E(S)|} G_S(x_1,\dots,x_k) \]
as $\min \|x_i-x_j\|\to \infty$. 
\end{lem}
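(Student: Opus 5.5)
We start from the identity \eqref{eq:InductiveAizenmanOff}, which gives
\[
S(x_1,\ldots,x_k)=(1-p)^{|W_1|+\cdots+|W_k|}\,\P\bigl(\forall n,\ y_n^+\in K_n\bigr),
\]
using independence to write $\P(E_W\text{ closed})=(1-p)^{|W_1|+\cdots+|W_k|}$ once the sets $x_i+W_i$ are disjoint (true for $\min\|x_i-x_j\|$ large). Since $\{y_n^+\in K_n\}\subseteq\{y_n^+\in\tilde K_n\}$, we factor
\[
\P(\forall n,\ y_n^+\in K_n)=\P(\forall n,\ y_n^+\in\tilde K_n)\cdot\P\bigl(\forall n,\ y_n^+\in K_n \;\big|\; \forall n,\ y_n^+\in\tilde K_n\bigr).
\]
The $\tilde K_n$ are independent, so by \eqref{eq:two_point_assumption} the first factor is $\prod_n T(y_n^-,y_n^+)\sim A^{N}G_S(x_1,\ldots,x_k)$. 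Here $N=|E(S)|$, and we use $G(x+b,x'+b')\sim G(x,x')$ for fixed offsets $b,b'$.

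It remains to show that the conditional probability $p(\mathbf{x}):=\P(\forall n,\ y_n^+\in K_n\mid\forall n,\ y_n^+\in\tilde K_n)$ converges to a limit $\Pi(S)$ as $\min\|x_i-x_j\|\to\infty$. Write $q^{R_1,R_2}(\mathbf{x})$ for the conditional probability of the localized event in \cref{prop:BlobOffApprox}. By that proposition,
\[
\limsup_{R_1}\limsup_{R_2}\limsup_{\mathbf{x}}\bigl|p(\mathbf{x})-q^{R_1,R_2}(\mathbf{x})\bigr|=0,
\]
since the symmetric difference has vanishing conditional probability in the iterated sense. By \cref{lem:BlobOffApproxCV}, $q^{R_1,R_2}(\mathbf{x})\to\Pi^{R_1,R_2}(S,\prec)$ as $\min\|x_i-x_j\|\to\infty$ with $R_1,R_2$ fixed. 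Combining the two statements gives
\[
\limsup_{R_1}\limsup_{R_2}\Bigl(\limsup_{\mathbf{x}}p(\mathbf{x})-\Pi^{R_1,R_2}\Bigr)\le 0\le\liminf_{R_1}\liminf_{R_2}\Bigl(\liminf_{\mathbf{x}}p(\mathbf{x})-\Pi^{R_1,R_2}\Bigr).
\]
Hence $\limsup_{\mathbf{x}}p\le\liminf_{R_1}\liminf_{R_2}\Pi^{R_1,R_2}\le\limsup_{R_1}\limsup_{R_2}\Pi^{R_1,R_2}\le\liminf_{\mathbf{x}}p$. All of these quantities coincide, so both $\lim_{\mathbf{x}}p(\mathbf{x})$ and the double limit $\Pi(S)=\lim_{R_1}\lim_{R_2}\Pi^{R_1,R_2}(S,\prec)$ exist and are equal. (Inner limits are handled by taking $\liminf/\limsup$ in $R_2$ first; the squeeze forces the iterated limit to exist in the stated sense.) This yields the asymptotic formula.

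For independence of $\prec$: the left-hand side $S(x_1,\ldots,x_k)$ does not depend on $\prec$ at all. Two orders $\prec,\prec'$ therefore give $(\Pi_\prec(S)-\Pi_{\prec'}(S))A^{N}G_S=o(G_S)$. Since $G_S>0$, this forces $\Pi_\prec=\Pi_{\prec'}$. One small point is that the orientation of the edges used in constructing the $\tilde K_n$ is part of the chosen data; the same argument handles it, because the orientation also does not affect $S$.

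The only delicate step is the iterated-limit bookkeeping above: making sure the ``usual/rare'' notion with its nested $\liminf$s yields existence of the $R$-limits rather than only bounds. The squeeze between $\limsup_{\mathbf{x}}p$ and $\liminf_{\mathbf{x}}p$ resolves this. A minor secondary point is the case $\Pi(S)=0$, where the formula reads $S=o(G_S)$; this is also covered by the squeeze.
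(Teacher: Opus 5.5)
Your proposal is correct and follows the paper's proof essentially step for step. It uses the identity \eqref{eq:InductiveAizenmanOff}, the two-point asymptotics for the independent clusters, and \cref{prop:BlobOffApprox} together with \cref{lem:BlobOffApproxCV} via the triangle inequality; independence of $\prec$ then follows because $S$ itself does not depend on it. Your explicit $\liminf/\limsup$ squeeze is an unpacked version of the paper's Cauchy-criterion step. Like that step, it yields $\liminf_{R_1}\liminf_{R_2}\Pi^{R_1,R_2}=\limsup_{R_1}\limsup_{R_2}\Pi^{R_1,R_2}$ rather than existence of $\lim_{R_2}\Pi^{R_1,R_2}$ for each fixed $R_1$, and this weaker statement is all that is used afterwards.
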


The only difference between this lemma and \cref{thm:scheme_function} is that we have not yet shown $\Pi(S)$ can be written in the form $\prod_{1\leq i \leq k} \widehat{\mathbf{W}}(B_i)$.

\begin{proof}[Proof of \cref{lem:weaker_scheme_function}]
To lighten notation let us write
\[
  \overline{\lim} \qquad \text{ to denote } \qquad \limsup_{R_1\to \infty}\limsup_{R_2\to \infty} \limsup_{\min_{i\neq j}\|x_i-x_j\|\to \infty}.
\]
In this notation, \cref{prop:BlobOffApprox} states that
\[ 
\overline{\lim} \left | \proba \left (\forall n, y_n^+\in K_n \;\middle |\;\forall n,y_n^+\in \tilde K_n\right ) 
- \proba \left (\forall i,j,  K^{R_2}_{i,j} \cap \partial B_{R_1}(x_i) \neq \emptyset \;\middle |\;\forall n,y_n^+\in \tilde K_n \right) \right |= 0 \label{eq:RecallBlobA} \]
while \cref{lem:BlobOffApproxCV} states that
\[
\overline{\lim}\left | \proba \left (\forall i,j,  K^{R_2}_{i,j} \cap \partial B_{R_1}(x_i) \neq \emptyset \;\middle |\;\forall n,y_n^+\in \tilde K_n \right ) 
- \Pi^{R_1,R_2}(S,\prec) \right | = 0. 
\]
The assumption \eqref{eq:two_point_assumption} ensures that $\proba(\forall n,y_n^+\in \tilde K_n)\sim A^{|E(S)|} G_S(x_1,\dots,x_k)$ as $\min_{i\neq j}\|x_i-x_j\|\to \infty$, and hence by \eqref{eq:InductiveAizenmanOff} that
\[ \overline{\lim}\left |\frac{S(x_1,x_2,\dots,x_k)}{A^{|E(S)|}G_S(x_1,\dots,x_k)}- \proba(E_W \text{ is closed})\proba \left (\forall n, y_n^+\in K_n \;\middle |\; \forall n,y_n^+\in \tilde K_n\right )  \right | =0. \label{eq:RecallBlobB} \]
Putting these estimates together, we have by the triangle inequality that
\begin{equation} \overline{\lim}\left |\frac{S(x_1,x_2,\dots,x_k)}{A^{|E(S)|}G_S(x_1,\dots,x_k)} - \proba(E_W \text{ is closed}) \Pi^{R_1,R_2}(S,\prec) \right |= 0. 
\label{eq:PiR1R2}
\end{equation}
Since the ratio $S(x_1,x_2,\dots,x_k)/(A^{|E(S)|}G_S(x_1,\dots,x_k))$ does not depend on the parameters $R_1$ and $R_2$ or the order $\prec$, it follows that $(\Pi^{R_1,R_2}(S,\prec))_{R_1,R_2}$ satisfies a Cauchy criterion in the double limit as we first send $R_2\to \infty$ and then send $R_1\to \infty$, and therefore converges toward some limit $\Pi(S)$ in the same double limit which does not depend on the order $\prec$. This is easily seen to imply the claim in conjunction with  \eqref{eq:PiR1R2}.
\end{proof} 

We are now ready to complete the proof of \cref{thm:scheme_function}, modulo the topological characterisation of the positivity of the blob factor which is deferred to \Cref{sec:positivity_of_local_factors}.

\begin{proof}[Proof of \cref{thm:scheme_function}] It remains to show that we may write $\Pi(S)$ in the form $\prod_{1\leq i \leq k} \widehat{\mathbf{W}}(B_i)$ and that $\mathbf{W}(B):=(1-p)^{|W|}\widehat{\mathbf{W}}(B)>0$ for a blob $B=(W,b_1,\ldots,b_k)$ if and only if there exist vertex-disjoint paths from $b_1,\ldots,b_k$ to $\infty$ in $\Z^d_L\setminus W$. The proof of the second part is deferred to \cref{lem:blob_factor_positivity}.
For the first part,
it suffices by \eqref{eq:PiR1R2} to show that the limit
\[
  \widehat{\mathbf{W}}(B) = \lim_{R_1\to \infty}\lim_{R_2\to \infty} \widehat{\mathbf{W}}^{R_1,R_2}(B,\prec)
\]
exists and does not depend on the order $\prec$ for each blob $B$. (The fact that this limit exists independently of the choice of order immediately implies that it is invariant under permuting the indices of the vertices in the blob.)

We first show this is true for the
 trivial blob  $B_\emptyset=(\emptyset,0)$ with $\widehat{\mathbf{W}}(B_\emptyset)=1$. Let $\prec_\emptyset$ be the trivial order on $\{1\}$, let $(H_1,\ell_1)$ be the plan with two vertices and one edge, and consider the scheme $S_2=(H_1,\ell_1, B^1)$ where $B^1$ maps $1$ and $2$ to the trivial blob $B_\emptyset$, so that $S_2(x_1,x_2)=T_{p_c}(x_1,x_2)$ is the usual two-point function.  Let $\prec_2$ be an arbitrary order for $S_2$. By \cref{lem:weaker_scheme_function}, we have that the limit 
\[
  \Pi(S_2) = \lim_{R_1\to \infty} \lim_{R_2\to \infty} \Pi^{R_1,R_2}(S_2,\prec_2) = \lim_{R_1\to \infty} \lim_{R_2\to \infty} \widehat{\mathbf{W}}^{R_1,R_2}(B_\emptyset,\prec_\emptyset)^2
\]
exists. Since $S_2(x_1,x_2)=T_{p_c}(x_1,x_2)\sim AG(x_1,x_2)$ by \eqref{eq:two_point_assumption} we must have $\Pi(S_2) =1$ and hence that
\begin{equation}
\label{eq:trivial_blob_factor}
  \lim_{R_1\to \infty} \lim_{R_2\to \infty} \widehat{\mathbf{W}}^{R_1,R_2}(B_\emptyset,\prec_\emptyset)=1.
\end{equation}
In particular, the limit exists as required. (Here independence of order is automatic since there is only one order on the trivial blob.)
%

Now fix $k\geq 1$ and a $k$-blob $B$ and let $\prec_B$ be an order on $B$. Let $H_k$ be the graph with a single vertex $k+1$ of degree $k$ and leaves $\{1,\dots,k\}$, let $\ell_k$ be any corresponding labelling function, and consider the scheme $S_B$ defined by mapping each of the vertices $1,\ldots,k$ to the trivial blob and the vertex $k+1$ to the blob $B$.
If we fix an order $\prec$ on $S_B$ such that the induced order $\prec_{k+1}$ is equal to $\prec_B$, it follows from \cref{lem:weaker_scheme_function} that the limit
\[
 \Pi(S_B)= \lim_{R_1\to \infty}\lim_{R_2\to\infty} \Pi^{R_1,R_2} (S_B,\prec) = \lim_{R_1\to \infty}\lim_{R_2\to\infty} (\widehat{\mathbf{W}}^{R_1,R_2}(B_\emptyset,\prec_\emptyset))^k \widehat{\mathbf{W}}^{R_1,R_2}(B,\prec_B)
\]
is well-defined independently of $\prec$. It follows from this together with \eqref{eq:trivial_blob_factor} that 
\[
 \widehat{\mathbf{W}}(B):=\lim_{R_1\to \infty}\lim_{R_2\to\infty}  \widehat{\mathbf{W}}^{R_1,R_2}(B,\prec_B) =\Pi(S_B)
\]
is also defined independently of the choice of $\prec_B$ as required.
\end{proof}

\section{$k$-point function asymptotics} \label{sec:k_point_proof}

In this section we apply the scheme function factorization theorem (\cref{thm:scheme_function}) to prove our main theorem for $k$-point function asymptotics, \cref{thm:k_point}.

\medskip

Before proceeding further, let us set up some relevant notation.
Let $k\geq 3$ and suppose $x_1,\ldots,x_k$ are distinct points in $\Z^d$. If $x_1,\ldots, x_k$ all belong to the same percolation cluster in $\Z^d_L$, there must exist (not necessarily distinct) points $x_{k+1},\ldots, x_{2k-2}$ in $\Z^d$, a tree $\cT \in \mathsf{Tr}(k)$, and a collection of edge-disjoint open paths $(\gamma_e)_{e\in \cT}$ in $\Z^d_L$ such that $\gamma_{i,j}$ connects $x_i$ and $x_j$ for each $\{i,j\}\in E(\cT)$. We denote the event that this holds for a particular $\cT\in \mathsf{Tr}(k)$ and collection of points $x_{k+1},\ldots, x_{2k-2}$ by 
\[\Psi_\cT(x_1,\ldots,x_{2k-2})=\bigcirc_{i\sim j} \{x_i \leftrightarrow x_j\},\]
where the adjacency relation $i\sim j$ is in the tree $\cT$ and where $\bigcirc$ denotes the disjoint occurrence of all the relevant connection events. (Recall that for notational convenience we take each isomorphism-class-representative $\cT\in\mathsf{Tr}(k)$ to have vertex set $\{1,\ldots,2k-2\}$, with leaves $\{1,\ldots,k\}$ labelled by the prescribed external points and internal vertices $\{k+1,\ldots,2k-2\}$ given fixed auxiliary labels which are not relevant for determining the isomorphism class.) Note that the event $\Psi_\cT(x_1,\ldots,x_m)$ is defined for any $m\geq 1$, tree with vertex set $\{1,\ldots,m\}$, and points $x_1,\ldots,x_m \in \Z^d$. 

\medskip

Recall that the tree-graph inequality \eqref{eq:tree_graph_intro} is proven by using the BK inequality to bound the probability
$\P_p(\Psi_\cT(x_1,\ldots,x_{2k-2}))$ by $\prod_{i\sim j} \P_p(x_i\leftrightarrow x_j)$
before summing over the possible choices of $\cT$ and $x_{k+1},\ldots, x_{2k-2}$ in a union bound. 
We wish to understand the precise manner in which this inequality is wasteful.
%
%
If we define the \textbf{overcount factor}
\[
  N(x_1,\ldots,x_k) = \# \{(\cT,x_{k+1},\ldots,x_{2k-2}): \cT \in \mathsf{Tr}(k), x_{k+1},\ldots,x_{2k-2} \in \Z^d,\; \text{$\Psi_\cT(x_1,\ldots,x_{2k-2})$ holds}\}
\]
then we can write
\[ \1(x_1,\ldots,x_k \text{ all connected}) = \sum_{\cT\in \mathsf{Tr}(k)} \sum_{x_{k+1},\dots,x_{2k-2}} \frac{\1(\Psi_\cT(x_1,\dots,x_{2k-2}))}{N(x_1,\dots, x_k)},\]
where we define the ratio appearing here to be zero when $\Psi_\cT(x_1,\dots,x_{2k-2})$ does not hold, 
so that the $k$-point function can be expressed as
\begin{equation} T_{p_c}(x_1,\ldots,x_k)=\sum_{\cT\in \mathsf{Tr}(k)} \sum_{x_{k+1},\dots,x_{2k-2}} \E\left [\frac{\1(\Psi_\cT(x_1,\dots,x_{2k-2}))}{N(x_1,\dots, x_k)}\right ]. \label{eq:ExpOvercountBranch}\end{equation}
We will use this formula as the basis of our analysis of the $k$-point function.

%

\medskip

The rest of this section is summarised as follows:
\begin{itemize}
  \item \textbf{Asymptotics for tree-shaped edge-disjoint connections.} In \cref{Sec:TreeScheme} we prove an asymptotic formula for the probability that a fixed collection of points $x_1,\ldots,x_k$ are connected by \emph{edge-disjoint} open paths with the combinatorics of a prescribed tree $\cT$ (\cref{thm:scheme_function_variant}), together with a stronger theorem also letting us control the local geometry around each point (\cref{thm:scheme_function_variantW}). This theorem is a close relative of the scheme function factorization theorem (\cref{thm:scheme_function}), the main difference being that the connections are now required only to be edge-disjoint rather than to belong to distinct clusters. 
   It follows from this theorem that the \emph{monochromatic $k$-arm IIC measures} are well-defined for every $k\geq 1$.

  \item \textbf{Localization of the overcount factor.}
   In \cref{subsec:localization_of_overcount_factors}, we prove (\cref{pro:CountBranchT}) that, on the event $\Psi_\cT(x_1,\ldots,x_{2k-2})$ and up to negligible errors, the branching trees for $(x_1,\ldots,x_k)$ are precisely those obtained by keeping the same combinatorial tree $\cT$ and moving each branch point $x_{k+1},\ldots,x_{2k-2}$ within the set of nearby vertices that have three long disjoint arms emanating from them. As a consequence, the overcount factor $N(x_1,\ldots,x_k)$ also localizes in the sense that it is well-approximated by a product of local quantities, one for each branch point.  The main input here is that we are negligibly unlikely to have additional arms around the branch points that are not forced by the point being a branch point. 


  \item \textbf{Negligibility of degenerate branching trees.}
  The analysis of \cref{subsec:localization_of_overcount_factors} is carried out only in the case that all distances between the points $x_1,\ldots,x_{2k-2}$ are large, whereas for applications to \cref{thm:k_point} we can assume this only for $x_1,\ldots,x_k$. In \cref{subsec:negligibility_of_degenerate_branching_trees} we prove that the contribution to \eqref{eq:ExpOvercountBranch} from \emph{degenerate} branching trees in which two of the points $x_1,\ldots,x_{2k-2}$ are at bounded distance is negligible (\cref{cor:DegenerateTreeBK}). 

  \item \textbf{Proof of \cref{thm:k_point}.}
  Finally,  in \cref{subsec:proof_of_the_k_point_function} we put the previous steps together to conclude the proof of \cref{thm:k_point}. As part of this, we prove a lower bound on the three-point function of the correct order (\cref{lem:three_point_lower}) to establish that the vertex factor is positive.
\end{itemize}

\subsection{Asymptotics of branching tree probabilities} \label{Sec:TreeScheme}

The goal of this section is to prove the following asymptotic formula for edge-disjoint connections with the combinatorics of a given tree. Note that the tree in this theorem is \emph{not} required to have binary branching, and the set $\{1,\ldots,k\}$ is now the entire vertex set of the tree rather than just the set of leaves.

\begin{theorem}
\label{thm:scheme_function_variant}
Let $d>6$ and $L\geq 1$ be such that critical percolation on $\mathbb{Z}^d_L$ satisfies the estimates \eqref{eq:two_point_assumption} and \eqref{eq:two_blob_assumption}. There exists a function $\mathbf{C}$ from the positive integers to $[0,\infty)$ such that 
\[
\P_{p_c}\bigl(\Psi_\cT(x_1,x_2,\dots, x_k)\bigr) :=\P_{p_c}\Bigl(\bigcirc_{i\sim j} \{x_i \leftrightarrow x_j\}\Bigr) \sim \prod_{i=1}^k \mathbf{C}(\deg(i)) \prod_{i\sim j} A G(x_i,x_j) 
\]
as $\min_{i\neq j}\|x_i-x_j\|\to\infty$ for each tree $\cT$ with vertex set $\{1,\ldots,k\}$. 
Moreover, the constant $\mathbf{C}(m)$ is positive if and only if there exist $m$ edge-disjoint paths from $0$ to $\infty$ in $\Z^d_L$.
\end{theorem}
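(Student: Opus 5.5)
We will prove \cref{thm:scheme_function_variant} by running the same machinery as in the proof of \cref{thm:scheme_function}, replacing ``distinct clusters'' by ``edge-disjoint paths''.

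\textbf{Step 1: reduction to independent clusters.} Fix an orientation and an enumeration $e_1\prec\cdots\prec e_N$ of the $N=k-1$ edges of $\cT$, and let $(\tilde K_n)$ be independent critical clusters of the tails $y_n^-$. We would like an exact identity analogous to \eqref{eq:InductiveAizenmanOff}. The natural candidate comes from a BK-type sequential exploration: the event $\bigcirc_{n}\{y_n^-\leftrightarrow y_n^+\}$ is monotone and a union of witness sets. We can couple by revealing, for $n=1,\dots,N$, a canonical (say lexicographically minimal) open path $\gamma_n$ from $y_n^-$ to $y_n^+$ in $\tilde K_n$ with the edges of $\gamma_1,\ldots,\gamma_{n-1}$ deleted.

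This only gives an inequality in general, since disjoint occurrence is not characterized by greedily chosen paths. I would therefore work directly with two-sided bounds:
\begin{itemize}
\item an upper bound via the BK inequality localized away from the vertices;
\item a lower bound via the greedy construction.
\end{itemize}
Both agree with the same local quantities up to $(S,\prec)$-rare events.

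\textbf{Step 2: localization.} The key localization statement is an analogue of \cref{prop:BlobOffApprox}. Conditionally on all $y_n^+\in\tilde K_n$, edge-disjoint occurrence of the global connections is, up to rare events, equivalent to a local condition. The local condition is that, for each vertex $i$, the $\deg(i)$ truncated clusters $\tilde K^{R_2}_{i,j}$ restricted to $B_{R_2}(x_i)$ contain edge-disjoint paths from $x_i$ to $\partial B_{R_1}(x_i)$.

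Interactions at macroscopic distance are ruled out exactly as in cases (a)--(d) of \cref{lem:BlobOffApproxA} and in the proof of \cref{prop:BlobOffApprox}. The tools are:
\begin{itemize}
\item the 2-arm lemma (\cref{lem:2Arm}), which forbids each cluster from having two long edge-disjoint arms near $x_i$, so that the arms of different paths at $x_i$ cannot reroute through far regions;
\item the triangle/convolution bounds valid for $d>6$, which kill intersections of a path with other clusters away from the points $x_a$.
\end{itemize}
In particular, a failure of global edge-disjointness that is not local requires an extra intersection point $v$ far from every $x_a$. This costs a convolution factor tending to $0$, exactly as in \eqref{eq:BlobOffStopFarA}.

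\textbf{Step 3: factorization and identification of constants.} By the two-blob estimate \eqref{eq:two_blob_assumption} (applied cluster by cluster as in \cref{lem:BlobOffApproxCV}), the local events at different $x_i$ become independent. Each has a limiting probability depending only on $\deg(i)$ (all blobs are now the trivial point with no closed set) and on $R_1,R_2$ and the order.

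The Cauchy argument of \cref{lem:weaker_scheme_function} then gives convergence of the product. The individual factors are extracted from the star trees exactly as in the end of the proof of \cref{thm:scheme_function}, using $\mathbf C(1)=1$ from \eqref{eq:two_point_assumption}. Order-independence follows because the left side does not depend on the order.

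Positivity is handled as follows. If no $m$ edge-disjoint paths to infinity exist, then by Menger there is a finite edge cut of size less than $m$ around $0$. Hence edge-disjoint occurrence of $m$ long connections from $x_i$ is impossible, so $\mathbf C(m)=0$. Conversely, a finite-energy argument in the style of \cref{sec:positivity_of_local_factors} forces $m$ prescribed edge-disjoint arms to exit a ball, which gives positivity.

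\textbf{Main obstacle.} The hardest step will be Step 1/2: obtaining an exact (or asymptotically exact) coupling for disjoint occurrence, since edge-disjointness is not captured by removing whole earlier clusters. I would first try the greedy canonical-path coupling for the lower bound, and use van den Berg--Kesten's witness formulation for the upper bound. The claim to show is that the discrepancy between the two is supported on configurations with an extra long arm near some $x_i$, which is rare by \cref{lem:2Arm}.
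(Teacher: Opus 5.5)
\textbf{Gap.} The gap is in Steps 1--3, and it is exactly the obstacle you flag without resolving: nothing in your plan produces a formula that pins down the constant $\prod_i\mathbf C(\deg(i))$.

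The identity \eqref{eq:InductiveAizenmanOff} works only because, for distinct clusters, the $n$th cluster is $\tilde K_n$ explored in the complement of the clusters already revealed. On $\Psi_\cT$ all the $x_i$ lie in a single cluster, so there is nothing to delete. Your greedy construction therefore does not define a coupling with one critical configuration. Moreover, a local condition phrased through the independent truncated clusters $\tilde K^{R_2}_{i,j}$, analysed ``cluster by cluster'' as in \cref{lem:BlobOffApproxCV}, describes the wrong local law. Near $x_i$ the $\deg(i)$ arms live in one configuration (a monochromatic $\deg(i)$-arm IIC), not in $\deg(i)$ independent IICs glued at $x_i$.

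The proposed two-sided bounds do not close this either:
\begin{itemize}
\item The BK bound gives constant $1$. Localizing it only splits $\Psi_\cT$ into local and global pieces whose product discards precisely the correlation that determines $\mathbf C(m)$.
\item The greedy lower bound loses a non-vanishing amount, because a path chosen without regard to the later connections can block them locally. Suppose $x_i$ (of degree $2$ in $\cT$) has open edges only to $a$ and $b$, the edge $\{a,b\}$ is open, and long arms leave from $a$ and from $b$. A $\gamma_1$ that arrives along the arm at $a$ and then uses $\{a,b\}$ and $\{b,x_i\}$ leaves no second path out of $x_i$. Yet $\Psi_\cT$ holds via the arm to $a$ plus $\{a,x_i\}$, and $\{x_i,b\}$ plus the arm from $b$.
\end{itemize}
Such finite local pictures have probability bounded below in the limit. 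So your claim that the discrepancy is supported on extra long arms is false: it is a local effect of order one.

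\textbf{The missing idea.} No new coupling is needed; the paper reduces to \cref{thm:scheme_function} by cutting out the balls.
\begin{itemize}
\item Using BK, the triangle bound and the rerouting \cref{lem:rerouting_spare_arm}, one first shows that $\Psi_\cT$-usually there are no unforced connections (\cref{lem:PsiArms}). Outside $\bigcup_i B_{R_1}(x_i)$ there is exactly one cluster touching both $B_{R_1}(x_i)$ and $B_{R_1}(x_j)$ for each edge of $\cT$, none for non-adjacent pairs, and at most $\deg(i)$ edge-disjoint open crossings of each annulus. This, rather than \cref{lem:2Arm}, is the relevant arm control, since it concerns the single configuration.
\item Consequently $\Psi_\cT$ is asymptotically equivalent to the existence of boundary points $x_i+b_{i,j}$, joined pairwise off the balls in distinct clusters, together with local edge-disjoint arms in $B_{R_2}(x_i)$ (\cref{lem:FindScheme,lem:ConverseFindScheme}).
\item Taking the minimal witness for a fixed order on $\partial B_{R_1}(0)$ makes the witness unique up to a negligible error (\cref{lem:DescribeScheme}).
\item One then sums over the set $O_i$ of open edges in $B^{\mathbf e}_{R_1}(x_i)$. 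This rewrites the probability as $\sum_O\prod_i (p_c/(1-p_c))^{|O_i|}$ times the probability of an event in which all ball edges are closed. That event is a scheme event with blobs $(B^{\mathbf e}_{R_1}(0),(b_{i,j})_j)$ plus cylinder conditions in $B_{R_2}(x_i)$.
\item \cref{thm:scheme_function} with inclusion--exclusion then gives $\prod_i\mathbf C^{R_1,R_2}(\deg(i))\,A^{k-1}G_\cT$, after which your Cauchy and star-tree steps finish the argument.
\end{itemize}

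\textbf{Positivity.} Forcing arms out of a ball is not enough by itself. You must also make the long connections from the exit points edge-disjoint with probability comparable to the product. The paper does this via eventually affine escape paths with distinct slopes (\cref{lem:affine_escape_edge}) and the decay of the open bubble.
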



\medskip

For applications later in the paper, it will be useful to prove a more general version of the theorem in which we also specify the microscopic geometry around the points $x_1,\ldots,x_k$. To this end, given a tree $\cT$ with vertex set $\{1,\ldots,k\}$ and a collection $W=(W_i)_{i=1}^k$ of finite sets of edges in $\Z^d_L$, we write $\Psi_{\cT,W}(x_1,\ldots,x_k)$ for the event that $\Psi_{\cT}(x_1,\ldots,x_k)$ holds and that $x_i+W_i$ is closed for every $1\leq i \leq k$. The following theorem generalizes \cref{thm:scheme_function_variant}.

\begin{thm} \label{thm:scheme_function_variantW} Let $d>6$ and $L\geq 1$ be such that critical percolation on $\mathbb{Z}^d_L$ satisfies the estimates \eqref{eq:two_point_assumption} and \eqref{eq:two_blob_assumption}. There exists a function $\mathbf{C}$ from pairs $(m,W)$ of positive integers and finite sets of edges in $\Z^d_L$ to $[0,\infty)$ such that 
\[
\P_{p_c}\bigl(\Psi_{\cT,W}(x_1,x_2,\dots, x_k)\bigr) \sim \prod_{i=1}^k \mathbf{C}(\deg(i),W_i) \prod_{i\sim j} A G(x_i,x_j) 
\]
as $\min_{i\neq j}\|x_i-x_j\|\to\infty$ for each tree $\cT$ with vertex set $\{1,\ldots,k\}$ and collection $(W_i)_{i=1}^k$ of finite sets of edges in $\Z^d_L$.  Moreover, the constant $\mathbf{C}(m,W)$ is positive if and only if there exist $m$ edge-disjoint paths from $0$ to $\infty$ in $\Z^d_L\setminus W$.
\end{thm}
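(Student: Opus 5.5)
We will follow the same scheme as the proof of \cref{thm:scheme_function}, replacing ``distinct clusters'' by ``edge-disjoint connections''. First we reduce to a coupling with independent clusters. Orient and enumerate the edges $e_1\prec\cdots\prec e_{k-1}$ of $\cT$, and let $\tilde K_1,\ldots,\tilde K_{k-1}$ be independent copies of critical percolation configurations, with $\tilde K_n$ distributed as the cluster of $x_{e_n^-}$. By the BK inequality, $\P(\Psi_{\cT,W})\le (1-p)^{\sum|W_i|}\prod_n \P(x_{e_n^-}\leftrightarrow x_{e_n^+})$. For the matching lower bound we need an exact identity analogous to \eqref{eq:InductiveAizenmanOff}. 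We therefore use the van den Berg--Kesten/Reimer-type exploration in which the $n$th connection is required to use only edges not in $E_W$ and not used by a canonically chosen path (or the set of edges ``certified'') for connections $1,\ldots,n-1$. Concretely, we sequentially reveal in $\tilde K_n$ a connection off the edges of the previously chosen paths $\gamma_1,\ldots,\gamma_{n-1}$ and off $E_W$. The event that every step succeeds has probability comparable to $\P(\Psi_{\cT,W})$ up to the overcounting of choices. It is cleaner to define $\gamma_m$ as a fixed measurable choice of path, and then to show that the disjoint-occurrence event equals, up to $(S,\prec)$-rare errors, the event that each $\tilde K_n$ connects $x_{e_n^-}$ to $x_{e_n^+}$ off the earlier paths.

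The main step, as in \cref{prop:BlobOffApprox}, is to localize the interaction. Conditioned on all $\tilde K_n$ making their connections, we show two things. First, interactions between different arms away from the balls $B_{R_1}(x_i)$ are rare; this uses the tree-graph/BK estimates and the convolution bounds from the proof of \cref{prop:BlobOffApprox}, exploiting $d>6$ exactly as in cases (a), (b) there. Second, the absence of extra arms (\cref{lem:2Arm}) means that each $\tilde K_n$ crosses the annulus $B_{R_2}(x_i)\setminus B_{R_1}(x_i)$ through an essentially unique ``backbone'' route. Hence edge-disjointness of the connections at vertex $i$ is determined, up to rare events, by whether the $\deg(i)$ truncated clusters $\tilde K^{R_2}_{i,j}$ admit edge-disjoint paths from $x_i$ to $\partial B_{R_1}(x_i)$ avoiding $x_i+W_i$. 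Here all $\deg(i)$ clusters start at the same point $x_i$, so the local event is ``$m$ edge-disjoint arms in the union of independent local IIC-like clusters'', rather than distinct clusters.

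Next, the two-blob estimate \eqref{eq:two_blob_assumption}, applied as in \cref{lem:BlobOffApproxCV}, gives convergence of the local configurations $(\tilde K^{R_2}_{i,j}-x_i)$ to independent truncated IIC clusters. This yields a product $\prod_i \mathbf{C}^{R_1,R_2}(\deg(i),W_i)$ of localized factors. The Cauchy argument of \cref{lem:weaker_scheme_function}, taking $R_2\to\infty$ and then $R_1\to\infty$, shows that the product of localized factors converges. We then factorize as in the proof of \cref{thm:scheme_function}. For the leaf factor, the tree with one edge gives $\mathbf{C}(1,\emptyset)^2=1$ by \eqref{eq:two_point_assumption}, and a single edge with one endpoint decorated by $W$ gives $\mathbf{C}(1,W)$. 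For the star with centre of degree $m$ and undecorated leaves, we obtain $\mathbf{C}(m,W)$ as the limit, and this is independent of the order chosen. Positivity is a finite-energy argument deferred to the appendix, in the style of \cref{lem:blob_factor_positivity}: if $m$ edge-disjoint paths to infinity avoid $W$, open them locally and glue them to distant connections; otherwise a cut argument (Menger) shows the event is impossible once the points are far apart.

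The main obstacle will be the first localization step. Unlike the distinct-clusters case, the connections share a single configuration, so there is no exact product identity like \eqref{eq:InductiveAizenmanOff}, and paths may be rerouted in many ways. I would try first to handle this by working directly with the event $\Psi_{\cT,W}$. Using \cref{lem:2Arm}-type bounds (now needing that extra arms of the whole cluster near $x_i$, beyond the $\deg(i)$ forced ones, are rare, proved by BK plus the one-arm decay), I would show that on $\Psi_{\cT,W}$ the macroscopic structure is a tree of pivotal-like ``tubes''. Disjoint occurrence then reduces to a local Menger condition at each $x_i$, whose probability is computed by the two-blob limit.
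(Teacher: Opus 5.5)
Your main route breaks at the coupling step. The identity \eqref{eq:InductiveAizenmanOff} is exact only because in a scheme the clusters are \emph{distinct}. That is why taking $K_n=\tilde K_n$ off the earlier clusters reproduces the law of a single configuration. The event $\Psi_{\cT,W}$ instead asks for edge-disjoint paths inside \emph{one} cluster of one configuration, and no such coupling exists. Conditioning on a measurable choice of $\gamma_1,\ldots,\gamma_{n-1}$ biases the configuration off those paths, and BK/Reimer-type splitting only gives an inequality. So ``comparable up to overcounting'' is all you can get, and that is useless at first order. For the same reason, your local object (edge-disjoint arms in the union of $\deg(i)$ independent truncated IIC clusters rooted at $x_i$) is not the local law near $x_i$ on $\Psi_{\cT,W}$. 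That law is a single configuration conditioned to have $\deg(i)$ edge-disjoint long arms, i.e.\ the monochromatic IIC. The factors $\mathbf{C}^{R_1,R_2}(m,W)$ you would define therefore have no established relation to $\P(\Psi_{\cT,W})$, and the Cauchy and star arguments have nothing to act on.

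Your fallback in the last paragraph is the right start and matches \cref{lem:PsiArms,lem:FindScheme,lem:ConverseFindScheme}. One caveat: the no-extra-arm bound and the converse inclusion need the rerouting argument of \cref{lem:rerouting_spare_arm}, because the spare crossings may share edges with the paths realizing $\Psi_\cT$, and BK alone does not exhibit them disjointly.

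What is missing is how to compute the probability of the localized event. The two-blob estimate decorrelates the two ends of a single connection. It cannot show that several tubes meeting in one cluster at $x_i$ decorrelate from each other and from the local geometry. The paper does this by reducing to \cref{thm:scheme_function} via a cut at the balls, in three steps:
\begin{enumerate}
\item Choose the exit points $x_i+b_{i,j}\in x_i+\partial B_{R_1}(0)$ as the minimal witness for a fixed order on $\partial B_{R_1}(0)$. Witnesses are unique up to connections in the annulus by \cref{lem:DescribeScheme}, and non-uniqueness is killed by a BK bound.
\item Decompose according to the set $x_i+O_i$ of open edges inside each $B_{R_1}(x_i)$. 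The probability becomes $\sum_O\prod_i(p/(1-p))^{|O_i|}$ times the probability of the event in which all edges inside the balls are closed, with the local arm and minimality conditions evaluated after reopening $x_i+O_i$.
\item Once the interior edges are closed, items 1 and 2 of \cref{lem:FindScheme} say precisely that the points $x_i+b_{i,j}$ are joined in \emph{distinct} clusters along the edges of $\cT$. Everything else is a cylinder event in $B_{R_2}(x_i)$, so \cref{thm:scheme_function} with inclusion--exclusion yields the product of local factors.
\end{enumerate}
Only after this do the Cauchy and star-factorization steps go through.
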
 

As usual, the asymptotic estimates of \cref{thm:scheme_function_variant,thm:scheme_function_variantW} are valid even when the right-hand side is zero, in which case the left-hand side is also identically zero when $\min_{i\neq j}\|x_i-x_j\|$ is sufficiently large.
We interpret this proposition as establishing two things in addition to the asymptotics of \cref{thm:scheme_function_variant}:
\begin{enumerate}
  \item The existence of a \textbf{monochromatic\footnote{We use this terminology by analogy with the planar case.} $k$-arm IIC measure} describing the local limit of the percolation configuration at the origin when the origin is conditioned to be connected by edge-disjoint paths to $k$ distant vertices that have a large distance from the origin and from each other, provided that $k$ is not so large that such connections are impossible. The probability that a finite set of edges $W$ is closed in this limit measure is precisely $\mathbf{C}(k,W)/\mathbf{C}(k)$, with the probabilities of all other cylinder events determined by inclusion-exclusion. We remark that the monochromatic $2$-arm IIC measure was constructed in the independent recent work \cite{cabezas2025bi} using different techniques (relying more heavily on the lace expansion).
  \item When we condition on the points $x_1,\ldots,x_k$ to be connected via edge-disjoint paths according to the combinatorial tree $\cT$, in the limit as the points become well-separated we see \emph{independent} copies of the monochromatic $\deg(i)$-arm IIC around each point $x_i$.
\end{enumerate}

Let us start by introducing some relevant notation.
Fix a tree $\cT$ with vertex set $\{1,\ldots,k\}$ and write
\[ G_\cT(x_1,x_2,\dots, x_k):= \prod_{i\sim j} G(x_i,x_j),\]
so that $\P(\Psi_{\cT}(x_1,\ldots,x_k)) \leq (1+o(1)) A^{k-1} G_\cT(x_1,x_2,\dots, x_k)$ as $\min_{i\neq j}\|x_i-x_j\|\to \infty$ by \eqref{eq:two_point_assumption} and the BK inequality.
We say that an event $E(R_1,R_2,x_1,\dots, x_k)$ with parameters $R_1,R_2,$ and $x_1,\ldots,x_k$ is $\Psi_{\cT}$\textbf{-usual} if 
\[
\lim_{R_1\to \infty}\lim_{R_2\to \infty}\lim_{\min_{i\neq j}\|x_i-x_j\|\to\infty} \frac{\proba_{p_c}(\Psi_{\cT}(x_1,x_2,\dots, x_k)\backslash E(R_1,R_2,x_1,\dots, x_k))}{G_\cT(x_1,x_2,\dots,x_k)}=0.
\]
We might intuitively think that an event is $\Psi_\cT$-usual if and only if it holds with high probability conditional on $\Psi_\cT(x_1,x_2,\dots, x_k)$, but we cannot yet conclude this is the case as we have not proven $\proba(\Psi_\cT(x_1,x_2,\dots, x_k))$ is of the same order as $G_\cT(x_1,\ldots,x_k)$. (Indeed, this is not even true when $\cT$ contains vertices of high degree and $\Psi_{\cT}(x_1,\ldots,x_k)$ has probability zero.) 

\medskip


The first step towards \cref{thm:scheme_function_variantW} is the following lemma.

\begin{lemma}[No unforced connections] \label{lem:PsiArms} We have $\Psi_{\cT}$-usually: 
\begin{itemize}
\item[(a)] Every cluster in $\Z^d\backslash \bigcup_{1\leq i \leq k} B_{R_1}(x_i)$  touches at most two of the balls $(B_{R_2}(x_i))_{1\leq i \leq k}$.
\item[(b)] For each pair of adjacent vertices $i$ and $j$ in $\cT$ there is exactly one cluster in $\Z^d\backslash \bigcup_{1\leq i \leq k} B_{R_1}(x_i)$ that touches both $B_{R_1}(x_i)$ and $B_{R_1}(x_j)$. Moreover,  no cluster in $\Z^d\backslash \bigcup_{1\leq i \leq k} B_{R_1}(x_i)$ touches both $B_{R_1}(x_i)$ and $B_{R_1}(x_j)$ when $i$ and $j$ are not adjacent in $\cT$.
\item[(c)] For each $1\leq i \leq k$ there are no more than $\deg_{\cT}(i)$ edge-disjoint open paths from $\partial B_{R_1}(x_i)$ to $\partial B_{R_2}(x_i)$.
\end{itemize}
\end{lemma}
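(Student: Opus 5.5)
Each of (a), (b), (c) will be established by showing that the probability of $\Psi_\cT(x_1,\ldots,x_k)$ together with the relevant bad event is $o(G_\cT)$ in the triple limit. Every bad event forces an \emph{extra} disjoint connection on top of the $k-1$ edge-disjoint connections witnessing $\Psi_\cT$. The tool is the BK inequality, combined with a union bound over the point where the extra connection attaches. The resulting diagrams are then estimated using \eqref{eq:two_point_assumption}, the elementary inequality \eqref{eq:ConvenientSplitFrac}, and the finiteness (and tail smallness) of the triangle and of $G*G*G$ when $d>6$. This is the same mechanism as in the proofs of \cref{lem:2Arm} and \cref{prop:BlobOffApprox}.

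\textbf{Part (c).} Suppose there are $\deg(i)+1$ edge-disjoint open paths from $\partial B_{R_1}(x_i)$ to $\partial B_{R_2}(x_i)$. The $\deg(i)$ tree paths leaving $x_i$ use up at most $\deg(i)$ of these crossings "in a forced way". Rerouting as in \cref{lem:2Arm}, I will find disjoint occurrences of the following: the tree connections (with the arms at $x_i$ now starting from $\partial B_{R_1}(x_i)$), and an additional connection $\partial B_{R_1}(x_i)\leftrightarrow\partial B_{R_2}(x_i)$. BK then bounds the probability by $C(R_1)\,G_\cT\cdot \P(\partial B_{R_1}\leftrightarrow \partial B_{R_2})$, where the constant $C(R_1)$ comes from replacing $x_i$ by a point of $B_{R_1}(x_i)$. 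This tends to $0$ as $R_2\to\infty$. The delicate point is the combinatorial rerouting when the extra crossing shares vertices with the tree paths: I would argue by Menger that the union contains $\deg(i)+1$ edge-disjoint paths from $B_{R_1}(x_i)$ to the outside of $B_{R_2}$, of which at most $\deg(i)$ are needed to reach the other $x_j$.

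\textbf{Parts (a) and (b).} Consider a cluster of $\Z^d\setminus\bigcup_i B_{R_1}(x_i)$ touching three balls, or touching two non-adjacent balls, or two distinct such clusters both joining adjacent $B_{R_1}(x_i)$ and $B_{R_1}(x_j)$. In each case we get a connection disjoint from part of the tree witness. Let $u$ be the vertex where the extra path branches (for three balls), or where it meets a tree path. I will then bound the configuration by a diagram of the shape $G_\cT$ times a factor such as
\[\sum_{u,v}\frac{G(x_a,u)G(u,x_b)}{G(x_a,x_b)}G(u,v)\frac{G(x_i,v)G(v,x_j)}{G(x_i,x_j)}\]
restricted to $v$ far from the relevant $x$'s, or by $\sum_v G(x_i,v)G(v,x_j)G(v,x_l)/G(x_i,x_j)$. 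These are exactly the sums controlled at the end of the proof of \cref{prop:BlobOffApprox}. The terms where $u,v$ lie near some $x_i$ (within $R_2$ but outside $R_1$) are handled instead by part (c): they produce an extra crossing of the annulus at $x_i$, since a cluster avoiding $B_{R_1}$ but touching $B_{R_2}$ at a point connected elsewhere yields such a crossing. The terms with $u,v$ far away vanish as $\min\|x_i-x_j\|\to\infty$ and then as the cutoff grows, since $G^{*2}(x,y)\to0$ and $G^{*3}<\infty$ in $d>6$.

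For existence in (b), the path $\gamma_{ij}$ must, after its last exit from $B_{R_1}(x_i)$ and before its first entry into $B_{R_1}(x_j)$, leave a cluster joining the two balls. The exception is a path that passes through some third ball $B_{R_1}(x_l)$, which is handled as a touching-three-balls or extra-arm event.

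The main obstacle is the bookkeeping in the BK decomposition. Touching a ball $B_{R_2}(x_i)$ rather than $B_{R_1}(x_i)$ gives a weaker connection, so I must make sure every bad configuration either produces a genuinely disjoint extra connection (giving a vanishing diagram) or an extra annulus crossing (handled by (c)). I would first settle (c), then use it to reduce (a) and (b) to far-field diagram estimates.
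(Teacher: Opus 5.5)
Your overall mechanism matches the paper's: BK with one extra disjoint connection on top of the tree witness, diagram estimates via \eqref{eq:ConvenientSplitFrac}, and rerouting for (c). But one step in your treatment of (a) and (b) fails. You restrict the two-path diagram to $v$ far from the $x$'s. You then assert that the near-field terms, where the extra connection attaches to the tree paths inside $B_{R_2}(x_i)\setminus B_{R_1}(x_i)$, produce an extra annulus crossing and are covered by (c). They are not.

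Take two tree arms $\gamma_{i,j_1},\gamma_{i,j_2}$ at $x_i$ and an open ``rung'' joining them inside $B_{R_2}(x_i)\setminus B_{R_1}(x_i)$, disjoint from the witness. The cluster of $\Z^d\setminus\cB_{R_1}$ containing the rung meets both arms. It therefore touches $B_{R_2}(x_i)$, $B_{R_2}(x_{j_1})$ and $B_{R_2}(x_{j_2})$, violating (a). It also touches $B_{R_1}(x_{j_1})$ and $B_{R_1}(x_{j_2})$ with $j_1\not\sim j_2$, violating (b). Yet in the union of the arms and the rung, cutting the two joined arms just outside the rung and every other arm once gives an edge cut of size $\deg_\cT(i)$ between $\partial B_{R_1}(x_i)$ and $\partial B_{R_2}(x_i)$. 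So no $(\deg_\cT(i)+1)$-th crossing is forced, and (c) says nothing about this configuration.

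Your stated reason is also false. A complementary cluster that merely touches $B_{R_2}(x_i)$ need not come near $\partial B_{R_1}(x_i)$, and one that does may reach it only along a tree arm. The near/far split you imported from \cref{prop:BlobOffApprox} worked there because both nearby paths lay in a single cluster $\tilde K_{n(i,j)}$ forced to have only one arm at $x_i$, so \cref{lem:2Arm} applied. Here both arms are forced by $\Psi_\cT$.

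The repair, which is what the paper does with its event $E_2$, is not to split at $R_2$ at all. Bound the event that two distinct tree paths are joined in $\Z^d\setminus\cB_{R_1}$ by the BK diagram with $v_1,v_2$ only required to lie outside $\cB_{R_1}$. After \eqref{eq:ConvenientSplitFrac}, terms in which the two factors share an endpoint $x_i$ are at most $C\sum_{\|w\|>R_1}\langle w\rangle^{-d+2}G^{*2}(0,w)\asymp\sum_{\|w\|>R_1}\langle w\rangle^{-2d+6}$. This is the tail of the open triangle, which vanishes as $R_1\to\infty$ uniformly in $R_2$ and the $x$'s since $d>6$. Terms with distinct endpoints are $O(G^{*3}(x_a,x_b))\to 0$ as the separations diverge. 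So the rungs are killed by the triangle tail in $R_1$, which is why $R_1\to\infty$ is the outermost limit, and (a) and (b) then need no input from (c).

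Two smaller points on (c). First, your BK decomposition requires the tree paths not incident to $i$ to avoid $B_{R_2}(x_i)$; otherwise the spare crossing may share edges with them. This is a one-line first-moment bound, which the paper gets from its event $E_3$. Second, the statement you attribute to Menger is not the one needed. You must show that the $\deg_\cT(i)$ arms arriving from the neighbours of $i$ can be rerouted to end on $\partial B_{R_1}(x_i)$ while one of the $\deg_\cT(i)+1$ crossings stays edge-disjoint from all of them. That is exactly \cref{lem:rerouting_spare_arm}. A Menger argument can also be made to work, but only with a super-source construction that forces exactly one of the paths to start on $\partial B_{R_2}(x_i)$, together with a check that every cut has size at least $\deg_\cT(i)+1$.
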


The proof of this lemma will rely on the following abstract path rerouting lemma. We say that a path is \textbf{edge-simple} if it does not cross the same edge twice.

\begin{lemma}[Rerouting around spare arms]
\label{lem:rerouting_spare_arm}
Let $G$ be a graph. Let $m\geq n\geq  1$ and suppose that 
$\Gamma_1,\ldots,\Gamma_{m}$ 
and $\gamma_1,\ldots,\gamma_n$ are two collections of oriented finite,  edge-simple paths such that the paths $\gamma_1,\ldots,\gamma_n$ are pairwise edge-disjoint and the paths $\Gamma_1,\ldots,\Gamma_{m}$ are pairwise edge-disjoint.
There exists a subset $A$ of $\{1,\ldots,n\}$, a  bijection $\pi$ from $A$ to a subset $B$ of $\{1,\ldots,m\}$, and a collection of pairwise edge-disjoint edge-simple paths
$\gamma'_1,\ldots,\gamma'_n$ such
that:
\begin{enumerate}
  \item If $i\notin A$ then $\gamma_i'=\gamma_i$.
\item If $i\in A$ then $\gamma'_i$ is equal to the concatenation of some initial segment of $\gamma_i$ with some terminal segment of $\Gamma_{\pi(i)}$.
\item If $j\notin B$ then $\Gamma_j$ is edge-disjoint from all of the paths $\{\gamma_i':1\leq i \leq n\}$.
\end{enumerate}
In particular, if $m>n$ then there exists $1\leq j_0 \leq m$ such that $\Gamma_{j_0}$ is edge-disjoint from all of the paths $\gamma_1',\ldots,\gamma_n'$. 
\end{lemma}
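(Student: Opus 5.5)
We will construct the rerouted paths by an iterative procedure driven by a potential that strictly decreases, so that it terminates. Throughout we maintain a set $A\subseteq\{1,\ldots,n\}$, an injection $\pi:A\to\{1,\ldots,m\}$ with image $B$, and pairwise edge-disjoint edge-simple paths $\gamma_1',\ldots,\gamma_n'$ satisfying properties 1 and 2. We start from $A=\emptyset$ and $\gamma_i'=\gamma_i$. If some $\Gamma_j$ with $j\notin B$ shares an edge with some $\gamma_i'$, we modify the configuration as follows.

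Orient $\Gamma_j$ and let $e$ be the \emph{last} edge of $\Gamma_j$ that lies in $\bigcup_i\gamma_i'$. Say $e\in\gamma'_{i}$. We cut $\gamma'_i$ at its first traversal of $e$ and append the terminal segment of $\Gamma_j$ starting at $e$, reading the edge $e$ in the orientation of $\Gamma_j$ (if the orientations disagree, we cut just before $e$ at the appropriate endpoint). Because $e$ is the last common edge, this terminal segment shares no edges with any $\gamma'_{i'}$. The new path is therefore edge-disjoint from the others and is edge-simple.

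The difficulty is that $\gamma_i'$ may already have the form (initial segment of $\gamma_i$) followed by (terminal segment of $\Gamma_{\pi(i)}$). Two cases arise.

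\emph{Case 1: $e$ lies in the $\gamma_i$-part.} We set $\pi(i)=j$. The index $\pi(i)$, if it existed, is released from $B$, and the released $\Gamma_{\pi(i)}$ may now meet the new paths. To guarantee termination we use the potential
\[
\Phi=\sum_i\bigl(\text{length of the retained initial segment of }\gamma_i\bigr),
\]
which strictly decreases in this case provided we choose the cut at the first occurrence of an edge of $\Gamma_j$ in the $\gamma_i$-part.

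\emph{Case 2: $e$ lies in the $\Gamma_{\pi(i)}$-part.} The $\Gamma$ paths are pairwise edge-disjoint and $j\neq\pi(i)$, so $e$ cannot lie in both $\Gamma_j$ and $\Gamma_{\pi(i)}$. Hence this case is impossible.

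To make the process well-defined and monotone, we refine the choice as follows. Among all edges of $\Gamma_j$ lying in some $\gamma_i$-initial part, we pick the one that is earliest along $\gamma_i$, and we append the rest of $\Gamma_j$ after it. The remainder of $\Gamma_j$ can only hit the other paths in their $\gamma$-parts. The main obstacle is this intersection issue. To handle it, we choose the cut point as the first edge, over all $i$ and along each $\gamma_i$, where some unused $\Gamma_j$ hits; we then reroute along $\Gamma_j$ and stop as soon as $\Gamma_j$ hits another retained $\gamma$-part, recursing there. Each step strictly shortens some retained initial segment, so $\Phi$ strictly decreases.

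Since $\Phi$ is a nonnegative integer, the process terminates. At termination, no $\Gamma_j$ with $j\notin B$ meets any $\gamma'_i$, which is property 3. Finally, the ``in particular'' clause follows because $|B|\le n<m$, so some index $j_0\notin B$ exists and $\Gamma_{j_0}$ is edge-disjoint from all of $\gamma_1',\ldots,\gamma_n'$.
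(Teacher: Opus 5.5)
Your first rule already gives a complete proof, by a different and simpler route than the paper's. The problem is the second half of your write-up, which abandons that rule. Take $e$ to be the last edge of $\Gamma_j$ (in its own orientation) lying in $\bigcup_i\gamma_i'$. You correctly rule out Case 2, so $e$ is the $s$-th edge of $\gamma_i$ for some $s\le\ell_i$, where $\ell_i$ is the retained length ($\ell_i=|\gamma_i|$ for $i\notin A$). The new $\gamma_i'$ is $\gamma_i$ up to the vertex $\gamma_i(s-1)$, followed by the tail of $\Gamma_j$ from that vertex; this tail crosses $e$ first if the orientations agree and starts just after $e$ if not. Its retained length is therefore $s-1<\ell_i$, and no other $\gamma'_{i'}$ changes. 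Hence $\Phi$ drops at every step, wherever $e$ sits in the $\gamma_i$-part. Edge-simplicity and pairwise edge-disjointness follow from the maximality of $e$ along $\Gamma_j$, exactly as you say. At termination, property 3 holds for the original $\Gamma_j$, since it is precisely the stopping condition.

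By contrast, you should delete the proviso ``provided we choose the cut at the first occurrence of an edge of $\Gamma_j$ in the $\gamma_i$-part'' and the entire ``refinement'' paragraph. They are unnecessary and, as written, wrong:
\begin{itemize}
\item If you cut at an edge of $\Gamma_j$ other than its last edge in $\bigcup_i\gamma_i'$, the appended tail of $\Gamma_j$ can run along some other $\gamma'_{i'}$. This breaks edge-disjointness, and it is exactly the ``intersection issue'' you then run into.
\item Your remedy of following $\Gamma_j$ only until it meets another retained $\gamma$-part and ``recursing there'' is not a well-defined procedure.
\item It produces a path ending in a middle segment of $\Gamma_j$ rather than a terminal one, violating property 2.
\item It comes with no actual argument that $\Phi$ decreases.
\end{itemize}
As submitted, a reader cannot tell which procedure is the proof.

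For comparison, the paper works from the $\gamma$ side. It cycles through the active $\gamma_i$ and cuts each at its \emph{first} edge meeting the current truncated paths $\Gamma_j'$. It then shortens that $\Gamma_j'$ to the tail used. If $\Gamma_j'$ was already assigned to some $\gamma'_{i'}$, it resets that path to $\gamma_{i'}$ and reactivates it. Termination comes from the total length of the $\Gamma_j'$ decreasing. Property 3 needs the extra invariant that an assigned $\Gamma_j$ stays assigned forever. Cutting at the \emph{last} common edge along $\Gamma_j$, as you do, makes the appended tail automatically disjoint from everything. So nothing is ever reset, no active/inactive bookkeeping is needed, and a single potential suffices. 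The only cost is that $B$ is not monotone (indices get released and reused), which the lemma does not care about.
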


\begin{proof}[Proof of \cref{lem:rerouting_spare_arm}]
We will use an algorithmic procedure in which at each step we have some paths $\gamma_1',\ldots,\gamma'_n$, $\Gamma'_1,\ldots,\Gamma'_{m}$, and an \textbf{assignment} mapping some subcollection of the paths $\gamma_i'$ bijectively to some subcollection of the paths $\Gamma'_j$ (and with some paths being unassigned.) We will keep track of which paths $\gamma_1',\ldots,\gamma_i'$ are \textbf{active} at each step.
Start with $\gamma_i'=\gamma_i$ active and unassigned for each $1\leq i \leq n$ and $\Gamma'_i=\Gamma_i$ for every $1\leq i \leq m$. At each step of the algorithm, the following conditions will hold:
\begin{itemize}
 \item $\Gamma_i'$ is a terminal segment of $\Gamma_i$ for every $1\leq i \leq m$. ($\Gamma_i'$ may have length zero, meaning that it is just the terminal endpoint of $\Gamma_i$.)
 \item Every active path $\gamma_i'$ is unassigned and equal to $\gamma_i$. 
 \item If the path $\gamma_i'$ is assigned to the path $\Gamma_j'$ then $\gamma_i'$ is the concatenation of an initial segment of $\gamma_i$ with the path $\Gamma_j'$. (These segments may consist of a single vertex and no edges.) 
 \item If the path $\gamma_i'$ is both inactive and unassigned, it is equal to $\gamma_i$ and is edge-disjoint from the all of the paths $\Gamma_j'$.
 \item The inactive paths among $\gamma'_1,\ldots,\gamma'_n$ are pairwise edge-disjoint.
\end{itemize}
At step $k\geq 1$ of the algorithm, inspect the current value of $\gamma'_{i}$ for $i=k \mod n$, where we set $0 \mod n =  n \mod n = n$ for notational convenience. If $\gamma'_i$ is inactive, do nothing. Otherwise, $\gamma_i'=\gamma_i$. In this case, find the first edge crossed by $\gamma_i$ that is also crossed 
by one of the paths $\Gamma_j'$. If no such edge exists, keep $\gamma'_i=\gamma_i$ and declare $\gamma_i'$ to be inactive. If $\gamma_i$ first meets the set of edges crossed by the union of the paths at $\Gamma_j'$ at some edge $e$, let $1\leq j \leq m$ be the value of $j$ for which $\Gamma_j'$ passes through $e$ (which is unique since the $\Gamma_j$ are edge-disjoint).
We set the new value of $\gamma_i'$ to be the concatenation of the initial segment of $\gamma_i$ up until the first time it visits an endpoint of $e$, the terminal segment of $\Gamma_j'$ after it last visits an endpoint of $e$, putting the edge $e$ in the middle of this concatenation if the two paths cross $e$ in the same direction and omitting it otherwise. We also set $\gamma_i'$ to be assigned to $\Gamma_j'$ and redefine $\Gamma_j'$ to be the terminal segment of $\Gamma_j'$ after its last visit to an endpoint of $e$, and declare $\gamma_i'$ to be inactive. If another path $\gamma_{i'}'$ was assigned to $\Gamma_j'$ prior to this step, we set $\gamma_{i'}'=\gamma_{i'}$ and declare $\gamma_{i'}'$ to be unassigned and active.
We easily verify by induction that the output of each stage of the algorithm satisfies the conditions above. We terminate the algorithm if and when every path is inactive.

Note that every time we reassign a path in this algorithm, the length of the associated path $\Gamma_j'$ decreases by at least one. As such, at most finitely many such reassignment steps can occur and the algorithm must eventually terminate with all paths inactive. Moreover, once the index $1\leq j \leq m$ is such that $\Gamma_j'$ becomes assigned to a path $\gamma_i'$, the path $\Gamma_j'$ remains assigned to some path $\gamma_{i'}'$ at all future steps. 
If we define the sets $A$ and $B$ to be the sets of indices $1\leq i \leq n$ and $1\leq j \leq m$ that are assigned to each other when the algorithm terminates and define $\pi$ to be this assignment, 
the properties of the algorithm discussed above ensure that this data and the paths $\gamma_1',\ldots,\gamma_n'$ have the desired properties.
\end{proof}

\begin{proof}[Proof of \cref{lem:PsiArms}] To lighten notation we write
$\Psi_\cT=\Psi_\cT(x_1,\ldots,x_k)$ and
 $\cB_r:=\bigcup_{1\leq i \leq k} B_r(x_i)$ for each $r>0$. We will write $C$ for a constant that does not depend on the parameters $R_1,R_2,x_1,\ldots,x_k$, the exact value of which may vary from line to line. For constants depending on some but not all of these parameters we will write e.g. $C(R_1)$ or $C(R_1,R_2)$ as appropriate.

\medskip

\noindent
\textbf{Proof of (a).} 
Suppose that $\Psi_\cT$ holds and let $(\gamma_{i,j})_{\{i,j\}\in E}$ be  an associated collection of edge-disjoint paths from $x_i$ to $x_j$ indexed by the edges of $\cT$, taking $\gamma_{j,i}$ to be the reversal of $\gamma_{i,j}$. It suffices to prove that the three events
\begin{align*}
E_1 &= \Bigl\{ \forall \{i,j\}, B_{R_2}(x_i) \text{ is not connected to } B_{R_2}(x_j) \text{ disjointly from $(\gamma_{i,j})_{\{i,j\}\in E}$}\Bigr\},\\
E_2 &=\Bigl\{ \forall \{i_1,j_1\}\neq\{i_2,j_2\}  \text{ the paths } \gamma_{i_1,j_1} \text{ and } \gamma_{i_2,j_2} \text{ are not connected in }\Z^d\backslash \cB_{R_1}\Bigr\},
\intertext{and}
E_3&=\Bigl\{\forall \{i_1,j_1\}\in E, i_2\notin \{i_1,j_1\},  \text{ the path } \gamma_{i_1,j_1} \text{ is not connected to } B_{R_2}(x_{i_2}) \text{ in }\Z^d\backslash \cB_{R_1}\Bigr\} 
\end{align*}
are all $\Psi_\cT$-usual, the event we are interested in being contained in the intersection $E_1\cap E_2 \cap E_3$. (Note that to make the event $E_1$ well-defined we should choose in advance a measurable way of choosing the paths $(\gamma_{i,j})_{\{i,j\}\in E}$ on the event $\Psi_{\cT}$. The precise way that we do this is not important.) The fact that $E_1$ holds $\Psi_{\cT}$-usually is a trivial consequence of the BK inequality, which together with \eqref{eq:two_point_assumption} yields that
\[
  \frac{\P(\Psi_{\cT}\setminus E_1)}{G_\cT(x_1,\ldots,x_k)} \leq C(R_2) \sum_{i,j} \P(x_i\leftrightarrow x_j),
\]
which converges to zero as $\min_{i\neq j} \|x_i-x_j\|\to \infty$ with $R_1$ and $R_2$ fixed.

We now prove that $E_2$ is $\Psi_{\cT}$-usual.
If $E_2$ does \emph{not} hold then there exists $v_1\in  \gamma_{i_1,j_1}\backslash \cB_{R_1}$ and $v_2\in \gamma_{i_2,j_2}\backslash \cB_{R_1}$ such that disjointly from $(\gamma_{i,j})_{\{i,j\}\in E}$ we have $v_1\slr v_2$. We can upper bound the probability that such a path exists for each fixed $(i_1,j_1)$ and $(i_2,j_2)$ using the BK inequality and the two-point estimate \eqref{eq:two_point_assumption} to obtain a bound of the form
\[ CG_\cT(x_1,\dots,x_k) \sum_{v_1,v_2\notin \cB_{R_1}} \frac{\langle x_{i_1}-v_1\rangle^{-d+2}\langle v_1-x_{j_1}\rangle^{-d+2}}{\langle x_{i_1}-x_{j_1}\rangle^{-d+2}}\frac{\langle x_{i_2}-v_2\rangle^{-d+2}\langle v_2-x_{j_2}\rangle^{-d+2}}{\langle x_{i_2}-x_{j_2}\rangle^{-d+2}} \langle v_1-v_2\rangle^{-d+2}. \]
 As in the proof of \eqref{eq:ConvenientSplitFrac}, we have by the triangle inequality that $\max(\|x_{i_1}-v_1\|,\|v_1-x_{j_1}\|)\geq \|x_{i_1}-x_{j_1}\|/2$ and similarly for $(i_2,j_2), v_2$, so that we may bound the sum appearing here (without the $G_\cT$ term) by 
\[ C \sum_{v_1,v_2\notin \cB_{R_1}} (\langle x_{i_1}-v_1\rangle^{-d+2}+\langle v_1-x_{j_1}\rangle^{-d+2})(\langle x_{i_2}-v_2\rangle^{-d+2}+\langle v_2-x_{j_2}\rangle^{-d+2})\langle v_1-v_2\rangle^{-d+2}. \]
As in the proof of \cref{lem:BlobOffApproxA}, by distributing the products above and using that the open triangle condition holds we deduce that this sum converges uniformly to $0$ as $R_1\to \infty$, so that $E_2$ is $\Psi_\cT$-usual as claimed.

Finally we prove that $E_3$ is $\Psi_\cT$-usual.
Observe that if $E_3$ does not hold but $E_1$ does, then there exists an edge $\{i_1,j_1\}$ of $\cT$, a vertex $i_2 \notin \{i_1,j_1\}$ of $\cT$, and a point $v\in  \gamma_{i_1,j_1}$ such that $v$ is connected to $B_{R_2}(x_{i_2})$ disjointly from the paths $(\gamma_{i,j})_{\{i,j\}\in E}$. Using the BK inequality and \eqref{eq:two_point_assumption}, we may bound the probability this happens for fixed $i_1,j_1,i_2$ by 
\[ C(R_2)G_{\cT}(x_1,\dots,x_k) \sum_{v} \frac{\langle x_{i_1}-v\rangle^{-d+2}\langle v-x_{j_1}\rangle^{-d+2}}{\langle x_{i_1}-x_{j_1}\rangle^{-d+2}}\langle v-x_{i_2}\rangle^{-d+2}. \]
As before we may bound the sum appearing here (without the $C(R_2)G_{\cT}$ term) from above by 
\[ C  \sum_{v} (\langle x_{i_1}-v\rangle^{-d+2} + \langle v-x_{j_1}\rangle^{-d+2})\langle v-x_{i_2}\rangle^{-d+2}. \]
This last sum converges to $0$ as $\min \|x_i-x_j\|\to \infty$ with $R_1,R_2$ fixed, so that $E_3$ is $\Psi_\cT$-usual as claimed. This completes the proof of part (a).


\medskip

\noindent
\textbf{Proof of (b).}
Since the events $E_1$ and $E_2$ are both $\Psi_\cT$-usual, it follows that, $\Psi_\cT$-usually, every connected component of $\Z^d\backslash \cB_{R_1}$ intersects at most one of the paths $(\gamma_{i,j})_{\{i,j\}\in E}$ and every such component touching at least two of the balls $(B_{R_2}(x_i))_{1\leq i \leq k}$ intersects exactly one of these paths. Thus, by a simple bijective argument, it suffices to show that the event 
\begin{equation*} 
E_4= \Bigl\{
\forall i,j,  \gamma_{i,j} \text{ intersects at most one cluster of } \Z^d\backslash \cB_{R_1} \text{ touching two of the balls } (B_{R_2}(x_i))_{1\leq i \leq k}\Bigr\}
\end{equation*}
is $\Psi_\cT$-usual.
 If $E_4$ does not hold but $E_3$ does then
the following events must occur disjointly:
 \begin{itemize} \item disjointly for every $\{i,j\}\in E(\cT)$ we have $B_{R_2}(x_i)\slr B_{R_2}(x_j)$
\item for some $i\neq j$ we have $B_{R_2}(x_i)\slr B_{R_2}(x_j)$.
\end{itemize}
Thus, it follows by the BK inequality and \eqref{eq:two_point_assumption} 
that
 \[ 
 \frac{\P((\Psi_\cT \cap E_3)\setminus E_4)}{G_\cT(x_1,\ldots,x_k)} \leq C(R_2) 
  \sum_{i\neq j} \proba(x_i\slr x_j).
  \]
  Since this bound converges to zero as $\min \|x_i-x_j\|\to \infty$ with $R_1$ and $R_2$ fixed it follows that $E_4$ is $\Psi_\cT$-usual as claimed, concluding the proof of part (b).

\medskip

\noindent
\textbf{Proof of (c).} We now bound the probability that there is an additional arm event. Suppose that for some  vertex $u$ there exist $\deg_\cT(u)+1$ edge-disjoint open paths from $\partial B_{R_1}(x_u)$ to $\partial B_{R_2}(x_u)$. Write $q=\deg_\cT(u)$ and enumerate the neighbours of $u$  in $\cT$  as $v_1,\ldots,v_q$ . Since the event $E_3$ holds $\Psi_\cT$-usually, we may  restrict to  the case that  $\gamma_{i,j}$ does not intersect  $B_{R_2}(x_u)$  for every edge $\{i,j\}$ of $\cT$ not incident to  $u$.
 Fixing some collection of $q+1$ edge-disjoint open crossings of the annulus, we may apply \cref{lem:rerouting_spare_arm} to these crossings (oriented from $\partial B_{R_2}(x_u)$ toward $\partial B_{R_1}(x_u)$) and the $q$ paths of the form $\gamma_{v_s,u}$ (oriented from $x_{v_s}$ toward $x_u$) to obtain a modified collection of edge-disjoint paths from the vertices $x_{v_s}$  to  $\partial B_{R_1}(x_u)$ together with a further path from $\partial B_{R_2}(x_u)$ to $\partial B_{R_1}(x_u)$ that is edge-disjoint from all these modified paths.  Thus, on the event under consideration, a modified version of the event $\Psi_\cT$ in which the connections incident to $u$ end at $\partial B_{R_1}(x_u)$ occurs disjointly from an additional crossing of the annulus $B_{R_2}(x_u)\setminus B_{R_1}(x_u)$ whenever the $\Psi_\cT$-usual event $E_3$ holds.
 It  follows by the BK inequality and \eqref{eq:two_point_assumption} that the probability that $E_3$ holds and we have an extra arm is at most
 \[
C(R_1)\sum_{u=1}^k
 \P(\partial B_{R_1}(x_u)\leftrightarrow \partial B_{R_2}(x_u))
 G_\cT(x_1,\dots,x_k)
\]
for some constant $C(R_1)$. This is easily seen to imply the claim. \qedhere
\end{proof}

Toward proving \cref{thm:scheme_function_variant}, we next show the following result:

\begin{lemma} \label{lem:FindScheme}  $\Psi_\cT$-usually there exists a collection 
$\{b_{i,j}\}_{1\leq i,j \leq k: \{i,j\}\in E}$ of points in $\partial B_{R_1}(0)$ such that the following hold:
\begin{enumerate}
\item For each $\{i,j\}\in E$, the points $x_i+b_{i,j}$ and $x_j+b_{j,i}$ are connected by an open path whose internal vertices lie in $\Z^d\backslash \bigcup_{1\leq i \leq k} B_{R_1}(x_i)$.
\item  For each $\{i,j\}\neq \{i',j'\}$, the points $x_i+b_{i,j}$ and $x_{i'}+b_{i',j'}$ are \emph{not} connected by an open path whose internal vertices lie in $\Z^d\backslash \bigcup_{1\leq i \leq k} B_{R_1}(x_i)$.
\item For every $1\leq i \leq k$, there exists a collection of edge-disjoint open paths $(\Gamma_{i,j})_{j:\{i,j\}\in E}$ connecting $x_i$ to $\partial B_{R_2}(x_i)$ in $B_{R_2}(x_i)$ such that for each $j$ with $\{i,j\}\in E$, the last vertex of $\Gamma_{i,j}$ inside $B_{R_1}(x_i)$ is $x_i+b_{i,j}$.
\end{enumerate}
\end{lemma}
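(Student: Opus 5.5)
We work on the intersection of $\Psi_\cT$ with the $\Psi_\cT$-usual events of \cref{lem:PsiArms} and with the events $E_1,E_2,E_3,E_4$ from its proof. We also assume $\min_{i\neq j}\|x_i-x_j\|$ is large compared to $R_2$, so that the balls $B_{R_2}(x_i)$ are pairwise disjoint. The $b_{i,j}$ will be read off from a fixed (measurably chosen) family of edge-disjoint open paths $(\gamma_{i,j})_{\{i,j\}\in E}$ witnessing $\Psi_\cT$.

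For each edge $\{i,j\}$ of $\cT$, the path $\gamma_{i,j}$ starts at $x_i\in B_{R_1}(x_i)$ and ends at $x_j$. On $E_3$ it does not touch any other ball $B_{R_2}(x_{i'})$ through the complement of $\cB_{R_1}$. Hence it has a segment whose internal vertices avoid $\cB_{R_1}$, which starts at a vertex of $\partial B_{R_1}(x_i)$ and ends at a vertex of $\partial B_{R_1}(x_j)$. To build it, take the last vertex of $\gamma_{i,j}$ in $B_{R_1}(x_i)$ before its first visit to $\cB_{R_1}\setminus B_{R_1}(x_i)$; that visit lands in $B_{R_1}(x_j)$. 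We call the endpoints $x_i+b_{i,j}$ and $x_j+b_{j,i}$. These endpoints lie in the inner boundary since they are adjacent to vertices outside the ball. Item 1 then holds by construction.

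Item 2 follows from \cref{lem:PsiArms}(b) together with $E_2$. Suppose $x_i+b_{i,j}$ and $x_{i'}+b_{i',j'}$ with $\{i,j\}\neq\{i',j'\}$ were joined by an open path through $\Z^d\setminus\cB_{R_1}$. Then the outside clusters carrying the segments of $\gamma_{i,j}$ and $\gamma_{i',j'}$ would coincide, which $E_2$ forbids. One small subtlety is that the endpoints themselves lie in $\cB_{R_1}$, while "internal vertices outside" means the two outside clusters adjacent to them merge. If $i'=i$, the two chosen segments must still lie in distinct outside clusters, again by $E_2$. Uniqueness in (b) is therefore not even needed beyond $E_2$. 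If instead $E_2$ is phrased only for the paths as a whole, we use $E_4$/(b): the cluster touching $B_{R_1}(x_i)$ and $B_{R_1}(x_j)$ is unique and contains $\gamma_{i,j}$'s segment, so a second edge's segment cannot join it.

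Item 3 is the main step. For fixed $i$, consider the initial pieces of the paths $\gamma_{i,j}$ for $j\sim i$, running from $x_i$ until they exit $B_{R_2}(x_i)$. Each such piece must exit, because its continuation through $x_i+b_{i,j}$ leaves $B_{R_2}(x_i)$ en route to $x_j$. Let $\Gamma_{i,j}$ be the initial segment of $\gamma_{i,j}$ up to its first hit of $\partial B_{R_2}(x_i)$. These segments are edge-disjoint and open.

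The difficulty is that the last visit of $\Gamma_{i,j}$ to $B_{R_1}(x_i)$ need not be at $x_i+b_{i,j}$: $\gamma_{i,j}$ may leave $B_{R_1}(x_i)$ and return before reaching $\partial B_{R_2}$. To fix this, choose $b_{i,j}$ differently. Let $x_i+b_{i,j}$ be the last vertex of $\Gamma_{i,j}$ in $B_{R_1}(x_i)$. The tail of $\Gamma_{i,j}$ from there to $\partial B_{R_2}(x_i)$ then lies outside $B_{R_1}(x_i)$, and on $E_3$ it cannot enter another ball. We then continue along $\gamma_{i,j}$ from the exit point. If $\gamma_{i,j}$ re-entered $B_{R_1}(x_i)$ after reaching $\partial B_{R_2}(x_i)$, we would obtain additional edge-disjoint crossings of the annulus $B_{R_2}(x_i)\setminus B_{R_1}(x_i)$: the returning crossing plus the outgoing one. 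This gives more than $\deg(i)$ edge-disjoint crossings in total, which \cref{lem:PsiArms}(c) rules out $\Psi_\cT$-usually. The same crossing count shows the continuation reaches $B_{R_1}(x_j)$ without revisiting $B_{R_1}(x_i)$, with $b_{j,i}$ defined symmetrically.

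Items 1 and 2 are then re-verified as above with these choices. The expected main obstacle is exactly this crossing-counting argument using \cref{lem:PsiArms}(c). Counting annulus crossings carefully across all $\deg(i)$ paths, so that the total exceeds $\deg(i)$ when any one path backtracks, is the step needing care.
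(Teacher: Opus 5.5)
Your treatment of items 1 and 3 is the paper's. The paper takes $x_i+b_{i,j}$ to be the last vertex of the whole path $\gamma_{i,j}$ in $B_{R_1}(x_i)$; on the usual event this agrees with your final choice, the last vertex of $\Gamma_{i,j}$ in $B_{R_1}(x_i)$. It proves item 3 exactly as you do: an excursion of $\gamma_{i,j}$ from $B_{R_1}(x_i)$ out to $\partial B_{R_2}(x_i)$ and back would give more than $\deg(i)$ edge-disjoint annulus crossings, which part (c) of \cref{lem:PsiArms} excludes. Your provisional first choice of $b_{i,j}$ can simply be dropped.

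The gap is in item 2. You argue that if $x_i+b_{i,j}$ and $x_{i'}+b_{i',j'}$ are joined by an open path $\eta$ with internal vertices outside $\cB_{R_1}$, then the outside clusters carrying the segments of $\gamma_{i,j}$ and $\gamma_{i',j'}$ coincide. This does not follow. The endpoints lie in $\cB_{R_1}$ and can be adjacent to several clusters of $\Z^d\setminus\cB_{R_1}$, so $\eta$ need not meet either $\gamma$ outside $\cB_{R_1}$. For example, $x_i+b_{i,j}$ and $x_i+b_{i,j'}$ could be within distance $L$ and joined by a single open edge, a path with no internal vertices at all. Or both could be attached to a third small outside cluster. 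Such configurations are compatible with $E_1,\ldots,E_4$ and with parts (a)--(c) of \cref{lem:PsiArms}, which only constrain outside vertices of the $\gamma$'s, clusters of the complement touching the balls, and annulus crossings. Your remark that the two outside clusters ``merge'' is precisely the unproved claim. The paper's one-line appeal to part (a) does not cover this case either (e.g.\ $i=i'$), so the missing ingredient is the same in both.

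What closes the gap is an endpoint-inclusive version of $E_2$:
\begin{itemize}
\item Let $s$ be the last vertex of $\eta$ on $\gamma_{i,j}$, and $t$ the first vertex of $\eta$ at or after $s$ that lies on some other $\gamma_{i'',j''}$.
\item Then $s,t\notin\cB_{R_1-L}$, since points of $\partial B_{R_1}(x_i)$ are at distance more than $R_1-L$ from $x_i$.
\item The piece of $\eta$ between $s$ and $t$ uses no edge of any $\gamma$. Hence $\{x_i\leftrightarrow s\}$, $\{s\leftrightarrow x_j\}$, $\{x_{i''}\leftrightarrow t\}$, $\{t\leftrightarrow x_{j''}\}$, $\{s\leftrightarrow t\}$ and the remaining tree connections occur disjointly.
\end{itemize}
The BK computation from the proof of $E_2$, run with $R_1-L$ in place of $R_1$ (the $v_1=v_2$ terms already cover $s=t$), bounds the probability of this by $o(G_\cT)$ in the triple limit. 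This gives item 2 in all cases.
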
 

\begin{proof}[Proof of \cref{lem:FindScheme}]
 We keep the notations of the last lemma and proof: Recall that for each $\{i,j\}\in E$, $\gamma_{i,j}$ is a path from $x_i$ to $x_j$ and that $\gamma_{j,i}$ is the reversal of $\gamma_{i,j}$. For each $\{i,j\}\in E$ let $x_i+b_{i,j}$ be the last vertex in $\gamma_{i,j}\cap B_{R_1}(x_i)$. The first item is direct from definition of the collection $(\gamma_{i,j})$, while the second item holds $\Psi_\cT$ usually by part (a) of \cref{lem:PsiArms}. 
Finally, for the last statement, it suffices to note that, $\Psi_\cT$-usually, the corresponding part of the path $\gamma_{i,j}$ stays inside the ball $B_{R_2}(x_i)$. If this part of $\gamma_{i,j}$ did \emph{not} stay in $B_{R_2}(x_i)$ then this would create an extra $(R_1,R_2)$ arm, which is prevented by part (c) of Lemma \ref{lem:PsiArms}. 
\end{proof}

We next show a converse to \cref{lem:FindScheme}, stating that the existence of the structure guaranteed by that lemma is \emph{equivalent} to the event $\Psi_\cT(x_1,\ldots,x_k)$ up to a negligible error term.

\begin{lemma} \label{lem:ConverseFindScheme} If we define $\Psi^*_\cT(x_1,\ldots,x_k;R_1,R_2)$ to be the event that there exists $\{b_{i,j}\}_{1\leq i,j \leq k: \{i,j\}\in E}$ as in \cref{lem:FindScheme}, then
\[
  \lim_{R_1\to \infty}\lim_{R_2\to \infty}\lim_{\min_{i\neq j}\|x_i-x_j\|\to\infty}\frac{\P(\Psi^*_\cT(x_1,\ldots,x_k;R_1,R_2) \setminus \Psi_\cT(x_1,\ldots,x_k))}{G_\cT(x_1,\ldots,x_k)} =0.
\]
\end{lemma}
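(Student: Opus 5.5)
The plan is to show that, outside a negligible event, the structure provided by $\Psi^*_\cT$ can be glued into $|E(\cT)|$ edge-disjoint open paths realizing $\Psi_\cT$. On $\Psi^*_\cT(x_1,\ldots,x_k;R_1,R_2)$ we have, for each $\{i,j\}\in E(\cT)$:
\begin{itemize}
\item an \emph{outer path} $\eta_{i,j}$ from $x_i+b_{i,j}$ to $x_j+b_{j,i}$ whose internal vertices avoid $\cB_{R_1}:=\bigcup_a B_{R_1}(x_a)$;
\item for each endpoint $i$, an \emph{inner segment} $\Gamma^{\circ}_{i,j}$, the initial segment of $\Gamma_{i,j}$ up to its last visit to $B_{R_1}(x_i)$, which ends at $x_i+b_{i,j}$ and stays inside $B_{R_2}(x_i)$.
\end{itemize}
The candidate path for $\{i,j\}$ is $\Gamma^\circ_{i,j}\cdot\eta_{i,j}\cdot\overline{\Gamma^\circ_{j,i}}$. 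We must check that these candidates are pairwise edge-disjoint, except on an event $F$ with $\P(\Psi^*_\cT\cap F)=o(G_\cT)$ in the iterated limit $\min_{i\neq j}\|x_i-x_j\|\to\infty$, then $R_2\to\infty$, then $R_1\to\infty$. Concretely, I will show $\Psi^*_\cT\setminus\Psi_\cT\subseteq \Psi^*_\cT\cap F$.

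The easy conflicts are handled first.
\begin{itemize}
\item Inner segments at the same vertex $i$ are edge-disjoint by item 3.
\item Inner segments at different vertices lie in the disjoint balls $B_{R_2}(x_i)$ once the points are far apart.
\item Two outer paths $\eta_{i,j}$ and $\eta_{i',j'}$ with $\{i,j\}\neq\{i',j'\}$ cannot share an edge. A shared edge has an endpoint that is an internal vertex of one of them (a single edge cannot join two far balls of range $L$), and following the two paths would then connect $x_i+b_{i,j}$ to $x_{i'}+b_{i',j'}$ outside $\cB_{R_1}$, contradicting item 2.
\end{itemize}
So the only genuine obstruction is an inner segment $\Gamma^\circ_{i,j}$ sharing an edge with an outer path $\eta$. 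This can happen because $\Gamma^\circ_{i,j}$ may make excursions into the annulus $B_{R_2}(x_i)\setminus B_{R_1}(x_i)$. I will split this into two cases.

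\emph{Case 1: $\eta$ is not incident to $i$ in $\cT$.} Then $\eta$ enters $B_{R_2}(x_i)$. This is controlled exactly as $E_3$ in the proof of \cref{lem:PsiArms}. Since the outer paths $\eta_{a,b}$ lie in distinct outer clusters, they witness disjoint occurrence of the connections $B_{R_1}(x_a)\leftrightarrow B_{R_1}(x_b)$. The BK inequality and \eqref{eq:two_point_assumption} then give a bound $C(R_2)G_\cT\sum_v(\langle x_{a}-v\rangle^{-d+2}+\langle v-x_b\rangle^{-d+2})\langle v-x_i\rangle^{-d+2}=o(G_\cT)$.

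\emph{Case 2: $\eta=\eta_{i,j'}$ is incident to $i$.} Here I introduce an intermediate radius $R_1\ll R_{3/2}\ll R_2$ and argue in two steps.
\begin{itemize}
\item First, $\Psi^*_\cT$-usually no excursion of any $\Gamma^\circ_{i,j}$ reaches $\partial B_{R_{3/2}}(x_i)$. Otherwise $\Gamma_{i,j}$ contains, besides its final crossing, an extra crossing of $B_{R_{3/2}}\setminus B_{R_1}$. The rerouting lemma (\cref{lem:rerouting_spare_arm}) applied to the $\deg(i)+1$ crossings and the incoming outer paths then produces a modified $\Psi^*$-type configuration occurring disjointly from an annulus crossing. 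BK bounds this by $C(R_1)G_\cT\,\P(\partial B_{R_1}\leftrightarrow\partial B_{R_{3/2}})$, which vanishes as $R_{3/2}\to\infty$, just as in part (c) of \cref{lem:PsiArms}.
\item Second, given this, a shared edge means $\eta_{i,j'}$, after leaving $x_i+b_{i,j'}$, wanders back into $B_{R_{3/2}}(x_i)$ far from its starting point. Together with the crossings $\Gamma_{i,\cdot}\setminus\Gamma^\circ_{i,\cdot}$ out to $\partial B_{R_2}$, this again yields $\deg(i)+1$ edge-disjoint crossings of $B_{R_2}(x_i)\setminus B_{R_{3/2}}(x_i)$ (after rerouting), or a disjoint return connection. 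Both are negligible by the same BK argument.
\end{itemize}

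The main obstacle is making Case 2 rigorous, namely producing a genuine extra disjoint arm from an overlap between an inner excursion and an incident outer path. I would first try to handle it directly with \cref{lem:rerouting_spare_arm}. Alternatively, if that becomes messy, I would perform local surgery: replace $\Gamma^\circ_{i,j}$ by the union of $\Gamma^\circ_{i,j}$ and the conflicting $\eta$-segment, re-extracting edge-disjoint paths from $x_i$ via Menger's theorem. This succeeds unless the edge cut at $x_i$ within $B_{R_{3/2}}$ is too small, and that situation is again an extra-arm or annulus-crossing event. Once all conflicts are excluded, the concatenated paths witness $\Psi_\cT$, which proves the lemma.
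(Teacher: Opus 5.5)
There is a genuine gap in the second bullet of Case 2, and it cannot be patched by more careful arm counting. Your two bullets would show that an overlap between $\Gamma^\circ_{i,j}$ and an incident outer path $\eta_{i,j'}$ with $j'\neq j$ is negligible for each \emph{fixed} $R_1$, since both bounds vanish as $R_2\to\infty$ and then $R_{3/2}\to\infty$. But nothing forces $\eta_{i,j'}$ to ``wander back far from its starting point'': the shared edge can sit a few steps from $x_i+b_{i,j'}$. For example, let $\Gamma_{i,j}$ make a short excursion out of $B_{R_1}(x_i)$ passing next to $x_i+b_{i,j'}$. Let $\eta_{i,j'}$ step onto an edge of this excursion before heading out to $x_{j'}$, and let $\Gamma_{i,j'}$ leave $x_i+b_{i,j'}$ by another route around a small open loop. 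This is consistent with all three items of \cref{lem:FindScheme} (the excursion need not be joined to $x_i+b_{i,j}$ off $\cB_{R_1}$) and with every event of \cref{lem:PsiArms}, and it creates no extra crossing of any annulus. For fixed $R_1$ it is a bounded-range configuration whose probability is at least $c(R_1)G_\cT$ by finite energy, and nothing in your construction stops the chosen $\eta_{i,j'}$ and $\Gamma_{i,j}$ from having this form. So your argument would prove something false, and the bound on $\P(\Psi^*_\cT\cap F)$ is not established. Such overlaps only become rare as $R_1\to\infty$, and for a different reason: they place $x_i$ on an open cycle of diameter of order $R_1$, which would need a separate open-triangle tail estimate that you do not give. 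Note that $\Psi_\cT$ does hold in this configuration, just not via your candidate paths.

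The missing idea is to abandon the fixed inner segments and reroute. Your Menger fallback points this way, but its key claim, that failure of the re-extraction is an extra-arm event, is exactly what needs proof. The paper gets it from \cref{lem:rerouting_spare_arm}. At each $x_i$, take as the $\gamma$'s the $\deg(i)$ incident outer paths oriented towards $x_i$, and as the $\Gamma$'s the $\deg(i)$ arms of item~3 oriented from $\partial B_{R_2}(x_i)$ to $x_i$.
\begin{itemize}
\item If every outer path is assigned, each is spliced into the terminal segment of a distinct arm, giving edge-disjoint paths that end at $x_i$. Overlaps like the one above are absorbed automatically.
\item If some outer path is unassigned, some arm is edge-disjoint from all the modified paths. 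Together these give $\deg(i)+1$ edge-disjoint crossings of $B_{R_2}(x_i)\setminus B_{R_1}(x_i)$, which is ruled out $\Psi^*_\cT$-usually by the analogue of \cref{lem:PsiArms}(c). Your first bullet essentially reproves this estimate.
\end{itemize}
Together with your Case~1 (non-incident outer paths avoid $B_{R_2}(x_i)$) and the disjointness of the balls, this yields $\Psi_\cT$ deterministically on the usual event. No intermediate radius and no case analysis of overlaps is needed.
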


\begin{proof}[Proof of \cref{lem:ConverseFindScheme}] By directly adapting the proof of Lemma \ref{lem:PsiArms}, we can prove that the probability that $\Psi^*_\cT(x_1,\ldots,x_k)$ holds but at least one of the events in Lemma \ref{lem:PsiArms} does not hold is $o(G_\cT(x_1,\dots,x_k))$ in the relevant triple limit; we omit the details as all arguments are identical.

We now  show  that if $\Psi_\cT^*(x_1,\ldots,x_k)$ holds and the events in Lemma \ref{lem:PsiArms} hold then  $\Psi_\cT(x_1,\dots,x_k)$  also holds. To this end, consider a collection of edge-disjoint paths $(\gamma_{i,j})_{\{i,j\}\in E}$ from $x_i+b_{i,j}$ to $x_j+b_{j,i}$ whose internal vertices lie in $\Z^d\backslash \bigcup_{1\leq i \leq k} B_{R_1}(x_i)$, where as usual we take $\gamma_{i,j}$ to be the reversal of $\gamma_{j,i}$.  For each $i$, apply \cref{lem:rerouting_spare_arm} inside $B_{R_2}(x_i)$ to the arms supplied by the definition of $\Psi_\cT^*$ and to the incident paths $(\gamma_{i,j})_{j:\{i,j\}\in E}$. Part  (c) of \cref{lem:PsiArms}  rules out the spare-arm alternative, so the incident paths can be rerouted edge-disjointly to end  at $x_i$ instead of $x_i+b_{i,j}$ . Performing this local modification independently for each $i$ yields edge-disjoint paths realizing  $\Psi_\cT(x_1,\ldots,x_k)$. \qedhere 
\end{proof}

We have one more technical lemma to prove before completing the proof of \cref{thm:scheme_function_variant,thm:scheme_function_variantW}. We say that a collection of vertices $\{b_{i,j}\}_{1\leq i,j \leq k: \{i,j\}\in E}$ \textbf{witnesses} $\Psi_\cT^*(x_1,\ldots,x_k;R_1,R_2)$ if the conclusions of \cref{lem:FindScheme} hold with this choice of $\{b_{i,j}\}_{1\leq i,j \leq k: \{i,j\}\in E}$.

\begin{lemma} \label{lem:DescribeScheme} The following holds $\Psi_\cT$-usually: 
If $\{b_{i,j}\}_{1\leq i,j \leq k: \{i,j\}\in E}$ is a collection of points witnessing $\Psi_\cT^*(x_1,\ldots,x_k;R_1,R_2)$, then a second collection of points $\{b'_{i,j}\}_{1\leq i,j \leq k: \{i,j\}\in E}$ also witnesses $\Psi_\cT^*(x_1,\ldots,x_k;R_1,R_2)$ if and only if $x_i+b_{i,j}$ is connected to $x_{i}+ b'_{i,j}$ by an open path whose internal vertices lie in $B_{R_2}(x_i)\backslash B_{R_1}(x_i)$ for every $\{i,j\} \in E$.
\end{lemma}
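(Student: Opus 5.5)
We work on the $\Psi_\cT$-usual intersection of the events of \cref{lem:PsiArms}(a)--(c) and of the auxiliary events $E_1,\dots,E_4$ from its proof, and then argue deterministically. Fix a witnessing collection $\{b_{i,j}\}$. By (b) of \cref{lem:PsiArms}, for each edge $\{i,j\}\in E$ there is a unique cluster $\mathcal{C}_{i,j}$ of $\Z^d\setminus\cB_{R_1}$ that touches both $B_{R_1}(x_i)$ and $B_{R_1}(x_j)$. Item 1 of \cref{lem:FindScheme} forces $x_i+b_{i,j}$ and $x_j+b_{j,i}$ to be the endpoints of an open path through $\mathcal{C}_{i,j}$. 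Item 2, together with (a), forces distinct edges to use distinct clusters.

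For the ``only if'' direction, suppose $\{b'_{i,j}\}$ is a second witness. Then $x_i+b'_{i,j}$ is also adjacent to $\mathcal{C}_{i,j}$. Joining the two points through $\mathcal{C}_{i,j}$ gives an open path between them, but this path travels out to $x_j$'s ball. To localise it, I would use the arms $\Gamma_{i,j}$ and $\Gamma'_{i,j}$ from item 3. Each of these leaves $B_{R_1}(x_i)$ at $x_i+b_{i,j}$ (respectively $x_i+b'_{i,j}$) and then runs to $\partial B_{R_2}(x_i)$ inside the annulus. Such an annulus segment lies in a single cluster of $\Z^d\setminus\cB_{R_1}$, and by (a)/(b) this cluster must be one of the $\mathcal{C}_{i,j''}$, because the segment touches both $B_{R_1}(x_i)$ and $B_{R_2}(x_i)$ and, $\Psi_\cT$-usually, reaches far. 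Since item 2 forbids connections between different edge-labels, the segment must lie in $\mathcal{C}_{i,j}$.

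It then remains to show that two annulus crossings lying in the same cluster $\mathcal{C}_{i,j}$ are connected inside the annulus $B_{R_2}(x_i)\setminus B_{R_1}(x_i)$. Here I would use (c) and the two-arm estimate \cref{lem:2Arm}. The crossings $\Gamma_{i,j}$, $\Gamma'_{i,j}$, and the portion of $\mathcal{C}_{i,j}$ running to $x_j$ must share edges near $\partial B_{R_2}(x_i)$. Otherwise the edge-disjoint arms at $x_i$, namely the $\deg(i)-1$ arms of other labels plus two edge-disjoint arms in $\mathcal{C}_{i,j}$, would exceed $\deg(i)$. Taking $R_2$ large, any connection that exits $B_{R_2}(x_i)$ and comes back would create an extra crossing of a slightly smaller annulus. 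So I would introduce an intermediate radius $R_{3/2}$, as in \cref{lem:BlobOffApproxA}, to make this rigorous. I expect this to be the main obstacle: making precise that, on the usual event, two crossings in the same exterior cluster are linked within the annulus. The cleanest route seems to be applying \cref{lem:rerouting_spare_arm} to the family of crossings to derive a $(\deg(i)+1)$-arm configuration whenever they are not linked inside, contradicting (c).

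For the ``if'' direction, suppose each $x_i+b'_{i,j}$ is linked to $x_i+b_{i,j}$ inside the annulus. Appending this link to the exterior path from item 1 gives item 1 for $b'$. Item 2 holds because, as above, $b'$ lies in the cluster $\mathcal{C}_{i,j}$, and (a)/(b) forbid linking distinct clusters. For item 3, I would reroute the original arms $\Gamma_{i,j}$ by splicing in the link and then apply \cref{lem:rerouting_spare_arm} to restore edge-disjointness. One subtlety is that the last visit to $B_{R_1}(x_i)$ must now be $x_i+b'_{i,j}$, which one arranges by following the link's final entry point. Edge-disjointness across different $j$ follows from the separation of the clusters $\mathcal{C}_{i,j}$ within the annulus.
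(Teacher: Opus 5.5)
Your ``only if'' direction uses the same counting as the paper, and it closes immediately. By item~1 and (b), both $x_i+b_{i,j}$ and $x_i+b'_{i,j}$ are joined by an open edge to the unique cluster $\mathcal{C}_{i,j}$. A simple path through $\mathcal{C}_{i,j}$ between them that left $B_{R_2}(x_i)$ would contain two edge-disjoint crossings of the annulus. Together with one crossing inside each $\mathcal{C}_{i,j'}$, $j'\neq j$, this contradicts (c). So you need no intermediate radius, no \cref{lem:2Arm} and no \cref{lem:rerouting_spare_arm}, and the detour through the arms of item~3 can be dropped. If you keep it, note that it is (c), not (a)/(b), that places their annulus segments in one of the $\mathcal{C}_{i,j''}$. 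Once they are in $\mathcal{C}_{i,j}$, two such segments sharing no edge again give $\deg(i)+1$ crossings, and if they share an edge they already meet inside the annulus.

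The genuine gap is in the ``if'' direction. Appending the link to the item-1 path out of $x_i+b_{i,j}$ gives a path with $x_i+b_{i,j}\in B_{R_1}(x_i)$ as an internal vertex, so it does not verify item~1 for $b'$. You would need $x_i+b'_{i,j}$ itself to have an open edge into $\mathcal{C}_{i,j}$, i.e.\ the link to run through $\mathcal{C}_{i,j}$, and the hypothesis does not give this.

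For example, let $x_i+b_{i,j}$ have an open edge to an exterior vertex $u$ whose only other open edge goes to $x_i+b'_{i,j}$, and let $x_i+b'_{i,j}$ have no other open edge to the exterior. Then $x_i+b_{i,j},u,x_i+b'_{i,j}$ is an admissible link. But $x_i+b'_{i,j}$ is not joined to $x_j+b_{j,i}$ through the exterior, so $b'$ (equal to $b$ elsewhere) is not a witness. This configuration is compatible with (a)--(c), since $\{u\}$ touches one ball and creates no crossing. It is a bounded local modification near $x_i+b_{i,j}$, so \cref{lem:PsiArms} cannot exclude it.

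Your item~2 and item~3 steps have the same weakness. Conditions (a) and (b) constrain exterior clusters, not which of them a single ball vertex touches. Separation of the $\mathcal{C}_{i,j}$ also does not make the spliced arms edge-disjoint, because the arms $\Gamma_{i,j''}$ can pass through $\mathcal{C}_{i,j}$ before their last exit.

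The paper simply declares this direction trivial on the event of \cref{lem:PsiArms}, so it does not supply the missing step either. The direction becomes provable if the link is required to run through $\mathcal{C}_{i,j}$, which is exactly what your ``only if'' argument produces. Item~1 then follows from connectivity of $\mathcal{C}_{i,j}$, while items~2 and~3 still need their own argument or an additional usual event.
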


\begin{proof}[Proof of \cref{lem:DescribeScheme}] Fix two collections of vertices  $\{b_{i,j}\}_{1\leq i,j \leq k: \{i,j\}\in E}$ and $\{b'_{i,j}\}_{1\leq i,j \leq k: \{i,j\}\in E}$. If the collection $\{b_{i,j}\}_{1\leq i,j \leq k: \{i,j\}\in E}$ witnesses $\Psi_\cT^*(x_1,\ldots,x_k;R_1,R_2)$ and $x_i+b_{i,j}$ is connected to $ x_{i}+ b'_{i,j}$ by an open path whose internal vertices lie in $B_{R_2}(x_i)\backslash B_{R_1}(x_i)$ for every $\{i,j\}\in E$ then we trivially have that $\{b'_{i,j}\}_{1\leq i,j \leq k: \{i,j\}\in E}$ witnesses $\Psi_\cT^*(x_1,\ldots,x_k;R_1,R_2)$ also on the $\Psi_\cT$-usual event from \cref{lem:PsiArms}.
Conversely, 
suppose that the two collections $\{b_{i,j}\}_{1\leq i,j \leq k: \{i,j\}\in E}$ and $\{b'_{i,j}\}_{1\leq i,j \leq k: \{i,j\}\in E}$ both witness $\Psi_\cT^*(x_1,\ldots,x_k;R_1,R_2)$.
By part (b) of \cref{lem:PsiArms},  there is $\Psi_\cT$-usually exactly one connected  component $C_{i,j}$ of $\Z^d\backslash \bigcup_{1\leq i \leq k} B_{R_1}(x_i)$ touching both $B_{R_1}(x_i)$ and $B_{R_1}(x_j)$ for each $\{i,j\}\in E$, so that $x_i+b_{i,j}$ and $x_i+b'_{i,j}$ both touch $C_{i,j}$ for each $\{i,j\}\in E$. Moreover by parts (b) and (c) of \cref{lem:PsiArms}  there is, $\Psi_\cT$-usually, no two edge-disjoint paths from $\partial B_{R_1}(x_i)$ to $\partial B_{R_2}(x_i)$ using vertices of $C_{i,j}$ for each $\{i,j\}\in E$, so that every path through $C_{i,j}$ from $x_i+b_{i,j}$ to $x_i+b'_{i,j}$ has all its internal vertices in $B_{R_2}(x_i)$ for each $\{i,j\}\in E$. This concludes the proof.
\end{proof}

\begin{proof}[Proof of \cref{thm:scheme_function_variant} and \cref{thm:scheme_function_variantW}]
Fix $\cT$ and $W=(W_i)_{1\leq i \leq k}$ and let $\sW(x_1,\ldots,x_k)$
 be the event that the edges of $x_1+W_1,\dots, x_k+W_k$ are all closed, so that $\Psi_{\cT,W}(x_1,\ldots,x_k)=\Psi_{\cT}(x_1,\ldots,x_k)\cap \sW(x_1,\ldots,x_k)$. For each $R_1\geq 0$, fix an arbitrary order $\prec_{R_1}$ on the set $\partial B_{R_1}(0)$, and let 
\[ \Psi^{**}_\cT((x_i)_{1\leq i \leq k}, (b_{i,j})_{1\leq i,j \leq k: \{i,j\}\in E};R_1,R_2)\]
be the event that $(b_{i,j})_{1\leq i,j \leq k: \{i,j\}\in E}$ witnesses $\Psi_\cT^*(x_1,\ldots,x_k;R_1,R_2)$ and that for each $\{i,j\}\in E$ there do not exist $b\prec_{R_1} b_{i,j}\in \partial B_{R_1}(0)$ such that $x_i+b_{i,j}$ is connected to $x_i+b$ by an open path whose internal vertices lie in $B_{R_2}(x_i)\backslash B_{R_1}(x_i)$. By \cref{lem:DescribeScheme}, this is $\Psi_\cT$-usually equivalent to the event that $(b_{i,j})_{1\leq i,j \leq k: \{i,j\}\in E}$ is the \emph{minimal} witness for $\Psi_\cT(x_1,\ldots,x_k)$ with respect to the order $\prec_{R_1}$ in the natural sense, so that $\Psi_\cT$-usually there exists exactly one collection $(b_{i,j})_{1\leq i,j \leq k: \{i,j\}\in E}$ such that $\Psi^{**}_\cT((x_i)_{1\leq i \leq k}, (b_{i,j})_{1\leq i,j \leq k: \{i,j\}\in E};R_1,R_2)$ holds whenever $\Psi_\cT^*(x_1,\ldots,x_k;R_1,R_2)$ holds. Together with \cref{lem:FindScheme,lem:ConverseFindScheme}, this shows that
\[
 \lim_{R_1\to \infty} \lim_{R_2\to\infty} \lim_{\min_{i\neq j} \|x_i-x_j\|\to \infty} \frac{\P\Bigl(\Psi_\cT(x_1,\ldots,x_k) \Delta \{ \exists! (b_{i,j})_{i,j} \text{ s.t. } \Psi_\cT^{**}((x_i)_i,(b_{i,j})_{i,j}) \text{ holds}\}\Bigr)}{G_\cT(x_1,\ldots,x_k)} =0.
\]
Next, let $\sU_\cT(x_1,\dots,x_k)$ be the event that there exists a \emph{unique} tuple $(b_{i,j})_{i,j}$ such that $\Psi_\cT^{**}((x_i)_i,(b_{i,j})_{i,j})$ holds. By dividing the last inequality according to the value of the tuple $(b_{i,j})_{i,j}$ we get 
\begin{equation} \proba(\Psi_{\cT,W}(x_1,\dots,x_k))=\sum_{(b_{i,j})_{i,j}}  \proba(\Psi^{**}_\cT((x_i)_i, (b_{i,j})_{i,j}) \cap \sU_\cT((x_i)_i)\cap \sW((x_i)_i))+o(G_\cT(x_1,\dots,x_k)),  \label{eq:SplitWtreeA_HalfStep} \end{equation}
where the little-$o$ notation appearing here refers to our usual triple limit in which we first send $\min_{i\neq j}\|x_i-x_j\|\to\infty$, then send $R_2\to \infty$, and then finally send $R_1\to \infty$.

We now get rid of the event $\sU_\cT(x_1,\dots,x_k)$ in \eqref{eq:SplitWtreeA_HalfStep}. Assume that $\Psi^{**}_\cT((x_i)_i, (b_{i,j})_{i,j})$ holds but $\sU_\cT((x_i)_i)$ does not.
Let $(b'_{i,j})_{i,j}\neq (b_{i,j})_{i,j}$ be such that $\Psi^{**}_\cT((x_i)_i, (b'_{i,j})_{i,j})$ also holds. Let $I,J$ be such that $b'_{I,J}\neq b_{I,J}$. 
By definition of $\Psi^*$, we have disjointly for every edge  $\{i,j\}$ of $\cT$ that the points $x_i+b_{i,j}$ and $x_j+b_{j,i}$ are connected by an open path $\gamma_{i,j}$ whose internal vertices lie in $\Z^d\backslash \bigcup_{1\leq i \leq k} B_{R_1}(x_i)$. 
By \cref{lem:DescribeScheme}, there must exist (as long as $\min \|x_i-x_j\|_2$ is large enough depending on $R_2$) a path $\Gamma_{I,J}$ from $x_I+b'_{I,J}$ to $\partial B_{R_2}(x_I)$ whose internal vertices lie in $B_{R_2}(x_I)\backslash B_{R_1}(x_I)$. The path $\Gamma_{I,J}$ cannot intersect $\gamma_{I,J}$ since otherwise it would contradict the minimality of either $b_{I,J}$ or $b'_{I,J}$ for the order $\prec_{R_1}$, which is imposed by definition of $\Psi^{**}$. 
Also $\Gamma_{I,J}$ cannot intersect any of the paths $\gamma_{i,j}$ for $\{i,j\}\neq \{I,J\}$ since it would otherwise contradict item 2 of Lemma \ref{lem:FindScheme}. We deduce by the BK inequality,  
\[ \sum_{(b_{i,j})_{i,j}}  \proba(\Psi^{**}_\cT((x_i)_i, (b_{i,j})_{i,j}) \cap \sW((x_i)_i) \backslash \sU_\cT((x_i)_i))\leq C(R_1)\proba(\partial B_{R_1}(0)\slr \partial B_{R_2}(0)) G_\cT(x_1,\dots,x_k),\]
which is $o(G_\cT(x_1,\dots,x_k))$ in the usual triple limit. Thus, by \eqref{eq:SplitWtreeA_HalfStep} and a union bound, we have
\begin{equation} \proba(\Psi_{\cT,W}(x_1,\dots,x_k))=\sum_{(b_{i,j})_{i,j}}  \proba(\Psi^{**}_\cT((x_i)_i, (b_{i,j})_{i,j})\cap \sW((x_i)_i))+o(G_\cT(x_1,\dots,x_k)).  \label{eq:SplitWtreeA} \end{equation}

Next, for each collection $O=(O_i)_{1\leq i \leq k}$ of sets of edges in $B_{R_1}^{\mathbf e}(0)$, we define
\[\Psi^{***}_{\cT}((x_i)_{1\leq i \leq k}, (b_{i,j})_{1\leq i,j \leq k: \{i,j\}\in E},O;R_1,R_2)\]
to be the event that the following conditions all hold:
\begin{itemize}
\item[(a)] The points $x_i+b_{i,j}$ and $x_j+b_{j,i}$ are connected by an open path whose internal vertices lie in $\Z^d\backslash \bigcup_{\ell=1}^k B_{R_1}(x_\ell)$ for every $\{i,j\}\in E$;
\item[(b)] For every $\{i,j\}\neq \{i',j'\}\in E$, the points $x_i+b_{i,j}$ and $x_{i'}+b_{i',j'}$ are \emph{not} connected by an open path whose internal vertices lie in $\Z^d\backslash \bigcup_{\ell=1}^k B_{R_1}(x_\ell)$;
\item[(c)] All the edges in $\bigcup_{1\leq i \leq k} B_{R_1}^{\mathbf e}(x_i)$ are closed;
\item[(d)] After opening $x_\ell+O_\ell$ for every $1\leq \ell\leq k$, for each $\{i,j\}\in E$ there does not exist $b\prec_{R_1} b_{i,j}\in \partial B_{R_1}(0)$ such that $x_i+b_{i,j}$ is connected to $x_i+b$ by an open path whose internal vertices lie in $B_{R_2}(x_i)\backslash B_{R_1}(x_i)$; 
\item[(e)] After opening $x_i+O_i$ for every $1\leq i \leq k$, there exists a collection of edge-disjoint open paths $(\gamma_{i,j})_{j:\{i,j\}\in E}$ from $x_i+b_{i,j}$ to $x_i$ in $B_{R_2}(x_i)$.
\end{itemize}
It is straightforward from the definitions to check that as long as $\inf \|x_i-x_j\|_\infty>2R_2$:
\begin{itemize}
\item If $\Psi^{***}_\cT((x_i)_i, (b_{i,j})_{i,j},(O_i)_i;R_1,R_2)$ holds then after opening the edges $x_i+O_i$ for every $i$ the event $\Psi^{**}_\cT((x_i)_i, (b_{i,j})_{i,j};R_1,R_2)$ also holds.
\item Conversely, if $\Psi_\cT^{**}((x_i)_i, (b_{i,j})_{i,j};R_1,R_2)$ holds and $x_1+O_1,\dots, x_k+O_k$ are the sets of edges open in $B_{R_1}(x_1),\dots, B_{R_1}(x_k)$, then after closing these sets of edges, the event $\Psi^{***}_\cT((x_i)_i, (b_{i,j})_{i,j},(O_i)_i)$ holds. 
\end{itemize}
Therefore, as long as $\min \|x_i-x_j\|_\infty> 2R_2 \geq R_1$ and $x_i+W_i\subseteq B_{R_1}^{\mathbf e}(x_i)$ for every $i$, we have that 
\begin{equation} 
\proba(\Psi^{**}_\cT((x_i)_i, (b_{i,j})_{i,j}) \cap \sW((x_i)_i))
 =  \hspace{-2em} \sum_{\substack{ O_1,\dots, O_k \\ \forall 1\leq i \leq k, W_i\cap O_i=\emptyset}}  
 \left[\prod_{i=1}^k \left ( \frac{p}{1-p} \right )^{|O_i|} \right]\proba \left ( \Psi_\cT^{***}((x_i)_i, (b_{i,j})_{i,j},(O_i)_i) \right ), \label{eq:SplitWtreeB}\end{equation}
where the sum is over sets of edges in $B_{R_1}^{\mathbf e}(0)$.
Since the conditions (c), (d), and (e) above depend only on the status of edges in $B_{R_2}(x_1),\dots, B_{R_2}(x_k)$, we may apply \cref{thm:scheme_function} (and inclusion-exclusion) to deduce that
 there exists a non-negative function $\mathbf{C}^{R_1,R_2}$ such that 
\[ \proba \left ( \Psi^{***}_\cT((x_i)_i, (b_{i,j})_{i,j},(O_i)_i;R_1,R_2) \right ) = A^{k-1} G_\cT(x_1,x_2,\dots,x_k)\left (\prod_{i=1}^k \mathbf{C}^{R_1,R_2}((b_{i,j})_{j:\{i,j\}\in E},O_i)+o(1) \right ) \]
as $\min_{i\neq j}\|x_i-x_j\|\to \infty$ with $R_1,R_2$ fixed.
Thus, by summing over $(O_i)_i,(b_{i,j})_{i,j}$, and by using \eqref{eq:SplitWtreeB}, the sum in \eqref{eq:SplitWtreeA} is equal to
\[A^{k-1}G_\cT(x_1,\dots,x_k) \left[\prod_{i=1}^k \left ( \sum_{(b_{i,j})_{j: \{i,j\}\in E}} \sum_{O_i: W_i\cap O_i=\emptyset}\mathbf{C}^{R_1,R_2} \left ((b_{i,j})_{j:\{i,j\}\in E},O_i \right )  \right )+o(1)\right] \]
as $\min_{i\neq j} \|x_i-x_j\|\to \infty$ with $R_1,R_2$ fixed. 
Note that each term in the product appearing here depends only on $R_1,R_2,\deg(i)$, and $W_i$, and we denote it by $\mathbf{C}^{R_1,R_2}(\deg(i),W_i)$.
Putting everything together, we have that
\begin{equation} \lim_{R_1\to \infty}\lim_{R_2\to \infty}\lim_{\min_{i\neq j}\|x_i-x_j\|\to\infty} \left | \frac{\proba(\Psi_{\cT,W}(x_1,\dots,x_k))}{A^{k-1}G_\cT(x_1,\dots,x_k) } - \prod_{i=1}^k \mathbf{C}^{R_1,R_2}(\deg(i),W_i) \right |= 0.  \label{eq:EndOfProof_Thm:Sheme_function_variant} \end{equation}

We finish exactly as at the end of the proof of \cref{thm:scheme_function}; we go faster as no argument changes. Since the first term above does not depend on $R_1,R_2$, we have a Cauchy criterion for $(\prod_{i=1}^k \mathbf{C}^{R_1,R_2}(\deg(i),W_i))_{R_1,R_2}$ and hence that the double-limit 
\[\mathbf{C}(\cT,W)=\lim_{R_1\to \infty} \lim_{R_2\to \infty} \prod_{i=1}^k \mathbf{C}^{R_1,R_2}(\deg(i),W_i)\] is well-defined for every tree $\cT$ and collection $(W_i)_i$. Considering the case where the tree $\cT$ consists of a single edge and using \eqref{eq:two_point_assumption}, we find that $\mathbf{C}^{R_1,R_2}(1,\emptyset)$ converges to $1$. Considering the case where $\cT$ has a single vertex $m+1$ of degree $m$ and leaves $\{1,2,\dots,m\}$ and where we set $W_1=W_2=\dots=W_m=\emptyset$ and $W_{m+1}=W$, we deduce that the limit $\mathbf{C}(m,W)=\lim_{R_1\to\infty}\lim_{R_2\to\infty}\mathbf{C}^{R_1,R_2}(m,W)$ is well-defined also. The claim now follows from \eqref{eq:EndOfProof_Thm:Sheme_function_variant}. The proof of the claim regarding when $\mathbf{C}(m,W)$ is positive is deferred to  \cref{lem:variant_positivity}.\end{proof}

\subsection{Localization of overcount factors}
\label{subsec:localization_of_overcount_factors}

Recall from 
\eqref{eq:ExpOvercountBranch} that the $k$-point function may be expressed as
\begin{equation*} T_{p_c}(x_1,\ldots,x_k)=\sum_{\cT\in \mathsf{Tr}(k)} \sum_{x_{k+1},\dots,x_{2k-2}} \E\left [\frac{\1(\Psi_\cT(x_1,\dots,x_{2k-2}))}{N(x_1,\dots, x_k)}\right ], \end{equation*}
where $N(x_1,\ldots,x_k)$ is the \textbf{overcount factor}
\[
  N(x_1,\ldots,x_k) = \# \{(\cT,x_{k+1},\ldots,x_{2k-2}): \cT \in \mathsf{Tr}(k), x_{k+1},\ldots,x_{2k-2} \in \Z^d,\; \text{$\Psi_\cT(x_1,\ldots,x_{2k-2})$ holds}\}.
\]
We say that the pair $(\cT,(x_{k+1},\ldots,x_{2k-2}))$ is a \textbf{branching tree} for $(x_1,\ldots,x_k)$ if $\cT\in \mathsf{Tr}(k)$ and $\Psi_\cT(x_1,\ldots,x_{2k-2})$ holds, so that $N(x_1,\ldots,x_k)$ is the number of  branching trees for $(x_1,\ldots,x_k)$.  
The main goal of this section is to relate the overcount factors to the localized quantities 
 $|\cS^\Psi_{R_1,R_2}(x)|$, where we define $\cS^\Psi_{R_1,R_2}(x)$ to be the set of vertices $y\in B_{R_1}(x)$ such that there exist three edge-disjoint open paths from $y$ to $\partial B_{R_2}(x)$.

\begin{prop} \label{pro:CountBranchT} Given $\cT\in \mathsf{Tr}(k)$ and $x_1,\ldots,x_{2k-2}$, we have $\Psi_\cT$-usually that $(\cT',y_{k+1},\ldots,y_{2k-2})$ with $\cT'\in \mathsf{Tr}(k)$ is a branching tree for $(x_1,\ldots,x_k)$ if and only if $\cT'=\cT$ and $(y_{k+1},\dots, y_{2k-2}) \in \prod_{i=k+1}^{2k-2} \cS^\Psi_{R_1,R_2}(x_{i})$. As a consequence, we have $\Psi_\cT$-usually that
\[ N(x_1,\dots, x_k)=\prod_{i=k+1}^{2k-2} |\cS^\Psi_{R_1,R_2}(x_i)|. \]
\end{prop}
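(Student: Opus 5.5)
The plan is to work on the $\Psi_\cT$-usual event supplied by \cref{lem:PsiArms}, applied to the full tree $\cT$ on $2k-2$ vertices with all points $x_1,\ldots,x_{2k-2}$ well separated. On this event, the outside region $\Z^d\setminus\cB_{R_1}$ has a rigid structure. For each edge $\{i,j\}$ of $\cT$ there is exactly one open cluster $C_{i,j}$ of $\Z^d\setminus \cB_{R_1}$ joining $B_{R_1}(x_i)$ to $B_{R_1}(x_j)$. No other cluster touches two of the balls $B_{R_2}(x_\ell)$. Each annulus $B_{R_2}(x_\ell)\setminus B_{R_1}(x_\ell)$ has at most $\deg_\cT(\ell)$ edge-disjoint crossings, namely one for a leaf and three for an internal vertex.

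I will first prove the ``if'' direction. Take $y_i\in\cS^\Psi_{R_1,R_2}(x_i)$ for each internal $i$, so that $y_i$ has three edge-disjoint open paths to $\partial B_{R_2}(x_i)$. For each $\{i,j\}\in E(\cT)$, the witness structure of \cref{lem:FindScheme} provides connections through $C_{i,j}$ between the annuli. I will then run the rerouting lemma (\cref{lem:rerouting_spare_arm}) inside each $B_{R_2}(x_i)$, exactly as in the proof of \cref{lem:ConverseFindScheme}. The three arms from $y_i$ play the role of the $\Gamma$'s and the incoming paths from $C_{i,j}$ play the role of the $\gamma$'s. Part (c) of \cref{lem:PsiArms} excludes the spare-arm alternative, so every incoming path is rerouted to end at $y_i$, edge-disjointly. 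For leaves $x_i$ nothing changes. The result witnesses $\Psi_\cT(x_1,\ldots,x_k,y_{k+1},\ldots,y_{2k-2})$.

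For the ``only if'' direction, suppose $(\cT',y_{k+1},\ldots,y_{2k-2})$ is a branching tree with edge-disjoint paths $(\gamma'_e)$.
\begin{itemize}
\item \emph{Each $y_i$ lies in some $B_{R_1}(x_\ell)$.} A branch point $y$ of $\cT'$ emits three edge-disjoint open paths to the three subtrees, each of which contains a leaf. If $y\notin\cB_{R_1}$, then $y$ lies in one of the outside clusters. Every outside cluster meeting two or more balls is some $C_{i,j}$, and a vertex of $C_{i,j}$ cannot support three edge-disjoint arms to three different far regions without creating either an extra annulus crossing or a connection forbidden by part (b). I will make this rigorous with one more BK bound, showing $\Psi_\cT$-rarely that some point outside $\cB_{R_1}$ sends three disjoint arms of diameter greater than $R_2$.
\item \emph{No $y_i$ sits at a leaf ball, and each $B_{R_1}(x_\ell)$ for internal $\ell$ receives exactly one $y_i$.} At a leaf ball $B_{R_1}(x_a)$ only one annulus crossing exists. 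A degree-3 point there needs three disjoint crossings unless two of its arms return through $B_{R_1}(x_a)$ itself, and that is impossible because both would have to leave through the single crossing. Now count crossings. Every edge of $\cT'$ contributes crossings at both of its endpoint balls, and the totals must match the supply $\deg_\cT(\ell)$ at every ball. Combined with the edge-to-cluster correspondence of part (b), this forces the contracted tree on balls to equal $\cT$, with a bijection from internal vertices of $\cT'$ to internal vertices of $\cT$. This gives $\cT'=\cT$ up to relabelling of internal vertices, which is the identity on isomorphism class representatives.
\item \emph{Each $y_i$ lies in $\cS^\Psi_{R_1,R_2}(x_i)$.} Its three paths cross the annulus, so $y_i$ has three edge-disjoint open paths to $\partial B_{R_2}(x_i)$.
\end{itemize}
The product formula for $N$ then follows immediately from the bijection.

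I expect the main obstacle to be this combinatorial counting step, which rules out degenerate placements such as two branch points in the same ball or a branch point outside all the balls. I would handle it with the crossing-count argument above, together with one extra $\Psi_\cT$-rare BK estimate for far-away triple points. The rerouting half should be routine given the proof of \cref{lem:ConverseFindScheme}.
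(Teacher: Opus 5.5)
Your ``if'' direction is the paper's argument: rerouting at each internal ball via \cref{lem:rerouting_spare_arm}, with part (c) of \cref{lem:PsiArms} excluding a partial assignment. Your leaf-ball exclusion is also correct. The gap is in the first bullet of the ``only if'' direction. The event you plan to show is $\Psi_\cT$-rare is that some point outside $\cB_{R_1}$ sends three edge-disjoint open arms of diameter greater than $R_2$, and this event is not rare at all. Each connection $x_i\leftrightarrow x_j$ with $\{i,j\}\in E(\cT)$ has a long backbone carrying dangling branches. Because $\min\|x_i-x_j\|\to\infty$ before $R_2\to\infty$, with conditional probability tending to one given $\Psi_\cT$, some of these branches reach distance $R_2$ from attachment points lying far from every ball. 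Such an attachment point has two arms along the backbone and a third into the branch, all of diameter larger than $R_2$. So no BK bound can give your estimate. Moreover, a backbone vertex with a large dangling branch is exactly the kind of point your heuristic about ``three different far regions'' has to distinguish from a genuine branch point.

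What is true, and what you need, is that no point outside $\cB_{R_1}$ has three edge-disjoint open paths to three \emph{distinct leaves}. The paper obtains this topologically in \cref{lem:PositionBranch}. By \cref{lem:PathInBranchA}, which follows from parts (a)--(b) of \cref{lem:PsiArms}, at least two of the three arms pass through the median ball $B_{R_1}(x_j)$. A branch point outside $B_{R_2}(x_j)$ would then produce four edge-disjoint crossings of that annulus, contradicting (c). Since this event does not involve $R_2$, it suffices to prove it with $R_2$ in place of $R_1$; this is what handles points in $B_{R_2}(x_j)\setminus B_{R_1}(x_j)$, which your sketch does not address.

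With this input repaired, your crossing count does work and is a genuinely different route from the paper's. Let $\phi$ send each vertex of $\cT'$ to the index of the ball containing its point. Part (c) allows at most $\deg_\cT(\ell)$ edges of $\cT'$ with exactly one endpoint in $\phi^{-1}(\ell)$. Connectivity of $\cT'$ after contracting by $\phi$ forces at least $|\phi(V(\cT'))|-1$ uncontracted edges. Summing these two bounds shows that $\phi$ hits all $k-2$ internal balls and is therefore a bijection. A $\cT'$-path passing through a third ball $m$ would add two crossings to the $\deg_\cT(m)$ already present there. Hence each edge path passes through a single outside cluster between its own two balls, and (b) makes $\phi$ a tree isomorphism. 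The paper instead places a branch point in the median ball of every leaf triple, gets the bijection by pigeonhole, and rules out $\cT'\neq\cT$ with \cref{lem:PathInBranchA}.
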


Note that, by the definition of being $\Psi_\cT$-usual, this proposition has content only in the limit in which the branch points $x_{k+1}$, $\ldots$, $x_{2k-2}$ become very well-separated. We will return to the issue of estimating the contribution to the $k$-point function from branching trees with some branch points at bounded distance from one another in \cref{subsec:negligibility_of_degenerate_branching_trees}.

\medskip

Before proving \cref{pro:CountBranchT} we first prove two auxiliary lemmas:

\begin{lemma} \label{lem:PathInBranchA} 
Given $\cT\in \mathsf{Tr}(k)$, the following holds $\Psi_\cT$-usually: For every $1\leq i\neq j\leq k$ and every open path $\gamma$ from $x_i$ to $x_j$, if $i_0,i_1,\dots, i_a$ is the longest sequence such that
\begin{itemize}
\item  $i_t\neq i_{t+1}$ for every $0\leq t<a$ and
\item there exists $\tau_0<\tau_1<\dots<\tau_a$ such that $\gamma(\tau_t)\in B_{R_1}(x_{i_t})$ for every $t$,
\end{itemize}
then $(i_0,i_1,\dots, i_a)$ is the geodesic path in $\cT$ from $i$ to $j$. 
\end{lemma}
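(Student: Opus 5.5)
We will work on the intersection of the $\Psi_\cT$-usual events supplied by \cref{lem:PsiArms}, applied to the tree $\cT$ on vertex set $\{1,\ldots,2k-2\}$ and the points $x_1,\ldots,x_{2k-2}$. In particular we use part (a) (every cluster of $\Z^d\setminus\cB_{R_1}$ touches at most two of the balls $B_{R_2}(x_\ell)$) and part (b): for each edge $\{\ell,m\}$ of $\cT$ there is exactly one such cluster $C_{\ell,m}$ touching both $B_{R_1}(x_\ell)$ and $B_{R_1}(x_m)$, and no such cluster exists for non-adjacent pairs. Here $\cB_{R_1}=\bigcup_\ell B_{R_1}(x_\ell)$, and we also assume $\min_{\ell\neq m}\|x_\ell-x_m\|>2R_2$.

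Fix $i\neq j$ and an open path $\gamma$ from $x_i$ to $x_j$, and let $(i_0,\ldots,i_a)$ be the sequence of distinct consecutive balls that $\gamma$ visits (equivalently, the record of successive visits to balls, with consecutive repeats collapsed). The maximal sequence obviously starts at $i_0=i$ and ends at $i_a=j$. Between consecutive visits to $B_{R_1}(x_{i_t})$ and $B_{R_1}(x_{i_{t+1}})$, the segment of $\gamma$ has all its interior outside $\cB_{R_1}$; otherwise it would visit some ball in between, contradicting maximality. This segment therefore lies in a single cluster of $\Z^d\setminus\cB_{R_1}$ touching both balls, and part (b) then forces $\{i_t,i_{t+1}\}\in E(\cT)$. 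Hence $(i_0,\ldots,i_a)$ is a walk in $\cT$ from $i$ to $j$.

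It remains to show that the walk never backtracks. Since $\cT$ is a tree, a non-backtracking walk is the geodesic, so it suffices to rule out $i_{t-1}=i_{t+1}$. Suppose $i_{t-1}=i_{t+1}=\ell$ and $i_t=m$. Then $\gamma$ crosses from $B_{R_1}(x_\ell)$ to $B_{R_1}(x_m)$ through the unique cluster $C_{\ell,m}$, and then returns through $C_{\ell,m}$ again. The entry and exit parts near $x_m$, together with the connection out to $x_\ell$, produce two excursions reaching $B_{R_1}(x_m)$ from far away. Each excursion crosses the annulus $B_{R_2}(x_m)\setminus B_{R_1}(x_m)$. These two crossings are edge-disjoint pieces of the edge-simple path $\gamma$: we may take $\gamma$ simple, since loop-erasing only removes visits and so can only shorten the sequence. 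We must nonetheless handle general $\gamma$ carefully, so the plan is to loop-erase $\gamma$ and argue that the loop-erased sequence is a subsequence of the original one in the appropriate sense.

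This alone is not a contradiction, since $C_{\ell,m}$ may legitimately cross the annulus around $x_m$ once. The key is the other arms of $x_m$: $m$ has degree $1$ or $3$, and we must count arms. The path $\gamma_{\ell,m}$ from the $\Psi_\cT$ witness, together with the other $\deg(m)-1$ tree paths incident to $m$, all lie in distinct clusters outside $\cB_{R_1}$ by event $E_2$. If $C_{\ell,m}$ contained two edge-disjoint annulus crossings around $x_m$, then in total there would be $\deg(m)+1$ edge-disjoint crossings of the annulus, contradicting part (c) of \cref{lem:PsiArms}. The main obstacle is making this edge-disjointness rigorous, since the $\gamma$-crossings might share edges with $\gamma_{\ell,m}$ inside $C_{\ell,m}$. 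I would handle this by repeating the BK argument from the proof of \cref{lem:2Arm}/\cref{lem:PsiArms}(c) directly. Two edge-disjoint crossings of the annulus around $x_m$ lying in $C_{\ell,m}$, together with the connection of $C_{\ell,m}$ to $B_{R_1}(x_\ell)$, give a disjoint occurrence of an extra annulus crossing alongside a modified $\Psi_\cT$, using \cref{lem:rerouting_spare_arm} as in the proof of part (c). This bounds the bad probability by $C(R_1)\P(\partial B_{R_1}\leftrightarrow\partial B_{R_2})G_\cT$, which is $\Psi_\cT$-negligible. For the case $m$ a leaf, i.e. $m=j$ or $m=i$ itself, the same argument applies with $\deg(m)=1$.
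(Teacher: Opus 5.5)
Your argument is correct once $\gamma$ is required to be edge-simple, but it excludes backtracking differently from the paper. You get adjacency of consecutive entries from \cref{lem:PsiArms}, exactly as the paper does via parts (a) and (b).

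For a backtrack $i_{t-1}=i_{t+1}=\ell$, $i_t=m$:
\begin{itemize}
\item \textbf{Paper.} It notes that $\gamma$ then contains two edge-disjoint open connections between $B_{R_1}(x_\ell)$ and $B_{R_1}(x_m)$, and says this is prevented by part (b). Part (b) as stated counts clusters, not edge-disjoint paths inside one cluster. So this step really rests on rerunning the BK argument from the proof of (b), where the extra ball-to-ball connection costs a factor of order $G(x_\ell,x_m)$.
\item \textbf{You.} You localize at $x_m$. The two segments of $\gamma$ inside $C_{\ell,m}$, together with the other $\deg(m)-1$ witness arms at $m$, give $\deg(m)+1$ edge-disjoint crossings of the annulus. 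That contradicts part (c).
\end{itemize}
Your route reduces the lemma to the \emph{statements} of (b) and (c) plus the $\Psi_\cT$-usual event $E_2$, with no new estimate. Negligibility then comes from $\P(\partial B_{R_1}\leftrightarrow\partial B_{R_2})\to 0$ rather than from $G(x_\ell,x_m)\to 0$. Your route is arguably the cleaner justification of the paper's one-line step.

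Two parts of your write-up should change.

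\textbf{The loop-erasure step runs the wrong way.} The record of the loop erasure of $\gamma$ is a subsequence of the record of $\gamma$. So knowing the former is a geodesic says nothing about extra backtracks of $\gamma$ itself. General walks cannot be handled at all. Take a walk that runs from $x_i$ to its tree-neighbour $x_n$ along $\gamma_{i,n}$, back along the same edges, and out again before continuing to $x_j$. Its record is non-geodesic on all of $\Psi_\cT$. So the lemma must be read for edge-simple $\gamma$. This is how the paper's own proof uses it (``two edge-disjoint paths''), and also its application in \cref{pro:CountBranchT}, where $\gamma$ is a concatenation of edge-disjoint witness paths. For such $\gamma$ your two segments are edge-disjoint automatically.

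\textbf{The ``main obstacle'' you describe does not arise.} $\gamma_{\ell,m}$ is not one of the $\deg(m)+1$ crossings you need: these are the two $\gamma$-segments and the paths $\gamma_{m,m'}$ with $m'\neq\ell$.
\begin{itemize}
\item The record of $\gamma_{\ell,m}$ is a walk from $\ell$ to $m$ in $\cT$, so it traverses the edge $\{\ell,m\}$. Hence $\gamma_{\ell,m}$ meets $C_{\ell,m}$.
\item $E_2$ then forces each $\gamma_{m,m'}$ with $m'\neq\ell$ to contain no vertex of $C_{\ell,m}$.
\item Every edge of the $\gamma$-segments has an endpoint in $C_{\ell,m}$, so no $\gamma_{m,m'}$ shares an edge with them.
\end{itemize}
Thus all $\deg(m)+1$ crossings are edge-disjoint and part (c) applies as stated. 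The rerouting/BK fallback you propose is just a re-proof of (c) and can be dropped.
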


\begin{proof}[Proof of \cref{lem:PathInBranchA}]
By parts (a) and (b) of Lemma \ref{lem:PsiArms}, we have $\Psi_\cT$-usually that $(i_t,i_{t+1})\in E(\cT)$  for every $0\leq t<a$ and every open path $\gamma$ from $x_i$ to $x_j$. Moreover, $\Psi_\cT$-usually  we have that $i_t\neq i_{t+2}$ for every $0\leq t<a-1$ for every sequence $i_0,\ldots,i_a$ of the form appearing in the statement of the lemma, since otherwise there would exist $i\neq j$ and two edge-disjoint paths from $B_{R_1}(x_i)$ to $B_{R_1}(x_j)$, which is prevented by part (b) of \cref{lem:PsiArms}. It follows that $(i_0,i_1,\dots, i_a)$ must be the geodesic path from $i$ to $j$ in $\cT$.
\end{proof} 

For each $x_1,x_2,x_3\in \Z^d$ we write 
$\Upsilon(x_1,x_2,x_3) := \{ x : (x_1\slr x)\circ (x_2\slr x)\circ (x_3\slr x)\}$.

\begin{lemma} \label{lem:PositionBranch}If $\cT\in \mathsf{Tr}(k)$ then
\[ \Upsilon(x_{i_1},x_{i_2},x_{i_3})\subset B_{R_1}(x_{j(i_1,i_2,i_3)}) \]
 $\Psi_\cT$-usually for every $i_1<i_2<i_3$, 
where $j(i_1,i_2,i_3)$ is the unique integer at which the three geodesics between $i_1$, $i_2$, and $i_3$ in $\cT$ all meet.
\end{lemma}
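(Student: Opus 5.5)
Fix $i_1<i_2<i_3$ and write $j=j(i_1,i_2,i_3)$. We work on the intersection of the $\Psi_\cT$-usual events of \cref{lem:PsiArms} and \cref{lem:PathInBranchA}. Suppose $x\in\Upsilon(x_{i_1},x_{i_2},x_{i_3})$, with edge-disjoint open paths $\eta_1,\eta_2,\eta_3$ from $x_{i_1},x_{i_2},x_{i_3}$ to $x$. The goal is to show $x\in B_{R_1}(x_j)$, arguing by cases on where $x$ sits relative to the balls $B_{R_1}(x_\ell)$.

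\emph{Case 1: $x\in B_{R_1}(x_\ell)$ with $\ell\neq j$.} Concatenating $\eta_a$ with the reversal of $\eta_b$ gives an open path from $x_{i_a}$ to $x_{i_b}$. By \cref{lem:PathInBranchA}, the sequence of balls it visits is the $\cT$-geodesic from $i_a$ to $i_b$, and this sequence passes through $\ell$. Since $\ell\neq j$, the vertex $\ell$ fails to lie on at least one of the three geodesics between the pairs from $\{i_1,i_2,i_3\}$; choose $a,b$ so that $\ell\notin \mathrm{geod}_\cT(i_a,i_b)$. That concatenated path visits $B_{R_1}(x_\ell)$, which contradicts \cref{lem:PathInBranchA}. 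The one subtlety is that the lemma is stated for open paths, not simple ones. I would either note that its proof only uses the ball-visiting sequence, or first reduce to a simple path. The reduction must be done carefully so that the visit to $B_{R_1}(x_\ell)$ is not lost: we use that $\eta_a$ and $\eta_b$ are edge-disjoint and both end at $x\in B_{R_1}(x_\ell)$, so the concatenation actually enters that ball.

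\emph{Case 2: $x\notin \bigcup_\ell B_{R_1}(x_\ell)$.} Follow each $\eta_a$ backward from $x$ until it first hits $\bigcup_\ell B_{R_1}(x_\ell)$. These three terminal segments lie in a single cluster $C$ of $\Z^d\setminus\bigcup_\ell B_{R_1}(x_\ell)$ containing $x$. By part (a) of \cref{lem:PsiArms}, $C$ touches at most two of the balls. If it touches only one ball $B_{R_1}(x_\ell)$, the three edge-disjoint segments give three edge-disjoint routes from $x$ to that ball. When $\ell$ is a leaf, or when $C$ is the cluster corresponding to a tree edge, this produces too many edge-disjoint crossings of the annulus around $x_\ell$, contradicting part (c) (if $x$ is far away) or (b). If $C$ touches two balls $B_{R_1}(x_\ell)$ and $B_{R_1}(x_{\ell'})$, then by pigeonhole two of the segments reach the same ball. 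This yields two edge-disjoint connections from one ball into $C$ and onward, again giving either two edge-disjoint paths between adjacent balls or an extra arm, which is excluded by parts (b) and (c). I expect Case 2 to be the main obstacle, because of how $x$'s distance to the balls interacts with the radius $R_2$. Concretely, if $x$ lies in the annulus $B_{R_2}(x_\ell)\setminus B_{R_1}(x_\ell)$, the extra arm must be extracted using the rerouting \cref{lem:rerouting_spare_arm} together with the tree paths $\gamma$ incident to $\ell$. In the far regime, I would instead bound the probability directly: by the BK inequality, the event that some $x$ outside all $R_1$-balls has three disjoint arms to the $x_{i_a}$ while $\Psi_\cT$ holds has probability at most $G_\cT$ times an open triangle/bubble sum over $x\notin \cB_{R_1}$. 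That sum tends to $0$ as $R_1\to\infty$ when $d>6$, exactly as in the proof of event $E_2$ in \cref{lem:PsiArms}.

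For a clean write-up, I would in fact handle Case 2 entirely with this direct diagrammatic bound rather than with topology. Disjoint occurrence of $\Psi_\cT$ and of three arms from $x$ to $x_{i_1},x_{i_2},x_{i_3}$ can fail only by sharing edges. Using \cref{lem:PathInBranchA}-type decompositions, the configuration then contains a tree diagram with an extra vertex $x$ attached via a loop, and summing over $x\notin\cB_{R_1}$ gives a factor $o_{R_1}(1)$ uniformly. Case 1 then finishes the proof, and a union bound over the finitely many triples $(i_1,i_2,i_3)$ gives the statement.
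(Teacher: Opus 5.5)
Your Case 1 is correct and uses \cref{lem:PathInBranchA} in the same way the paper does. The gap is in Case 2, specifically for points $x$ in the annulus $B_{R_2}(x_j)\setminus B_{R_1}(x_j)$ around the median vertex $j$ itself.

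Nothing in \cref{lem:PsiArms} at the same parameters $(R_1,R_2)$ excludes such points. Consider the following configuration:
\begin{itemize}
\item one arm of $x_j$ leaves $B_{R_1}(x_j)$, passes through $x$ in the annulus, and continues as a single path toward the neighbour of $j$ on the side of $i_3$;
\item a second open path, edge-disjoint from everything else, runs from $x$ back into $B_{R_1}(x_j)$ and joins the branch toward $i_2$.
\end{itemize}
Then $x\in\Upsilon(x_{i_1},x_{i_2},x_{i_3})$. Yet the annulus still has only three edge-disjoint crossings, and (a), (b), (c) and \cref{lem:PathInBranchA} all hold. So no application of \cref{lem:rerouting_spare_arm} can produce an extra arm there.

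Your fallback, a BK bound by ``a tree diagram with an extra vertex $x$ attached via a loop'', is asserted rather than proved. It is also not ``exactly as in $E_2$''. There the extra connection is edge-disjoint from the tree paths by construction. Here the arms of $x$ may share edges with the $\Psi_\cT$ paths and may leave and re-enter them. Cutting the arms at their first hitting points of the tree paths gives diagrams in which $x$ and all its hitting points sit next to a single tree edge, far from every $x_\ell$. The resulting local loops are only $O(1)$, and summing the position along that edge costs a factor of order $\|x_a-x_b\|^2$ rather than $o(1)$. To do better you would have to use that the continuations of the arms are edge-disjoint and head to three different leaves, and your sketch does not do this.

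The missing idea is a relabelling of scales. The event $\{\Upsilon(x_{i_1},x_{i_2},x_{i_3})\subset B_R(x_j)\}$ involves a single radius. $\Psi_\cT$-usual is defined by the nested limit in which $\min_{a\neq b}\|x_a-x_b\|\to\infty$ first, then $R_2\to\infty$, then $R_1\to\infty$. So it suffices to prove containment in $B_{R_2}(x_j)$ and then rename $R_2$ as $R_1$.

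This weaker claim needs no case analysis and no diagrammatic estimate:
\begin{itemize}
\item By \cref{lem:PathInBranchA}, each of the three pairwise concatenations of $\eta_1,\eta_2,\eta_3$ visits $B_{R_1}(x_j)$, so at least two of the arms do.
\item If $x\notin B_{R_2}(x_j)$, each of those two arms starts at a distant point and ends at $x$, so it crosses the annulus once inward and once outward.
\item That gives four edge-disjoint crossings, contradicting part (c) of \cref{lem:PsiArms} since $\deg_\cT(j)=3$.
\end{itemize}
This one argument covers your Case 1 and all of Case 2, including the far regime.
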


\begin{proof}[Proof of \cref{lem:PositionBranch}]
It suffices to prove the result with $R_1$ replaced by $R_2$. Let $j=j(i_1,i_2,i_3)$ and, assuming $\Psi_\cT(x_1,\ldots,x_{2k-2})$ holds, let $\gamma_1$,  $\gamma_2$, and $\gamma_3$ be edge-disjoint paths from $x_{i_1}$, $x_{i_2}$, and $x_{i_3}$ to some vertex $z$. By \cref{lem:PathInBranchA}, $\Psi_\cT$-usually, the paths $\gamma_1\cup \gamma_2$, $\gamma_1\cup \gamma_3$,  and $\gamma_2\cup \gamma_3$ all pass through $B_{R_1}(x_{j})$. As such, at least two of the paths $\gamma_1,\gamma_2$, and $\gamma_3$ intersect $B_{R_1}(x_j)$. Without loss of generality let us assume that $\gamma_1$ and $\gamma_2$ both intersect $B_{R_1}(x_j)$. If the point $z$ does \emph{not} belong to $B_{R_2}(x_j)$, there must exist within each of $\gamma_1$ and $\gamma_2$ two edge-disjoint paths from $\partial B_{R_1}(x_j)$ to $\partial B_{R_2}(x_j)$, yielding a total of four edge-disjoint paths from $\partial B_{R_1}(x_j)$ to $\partial B_{R_2}(x_j)$. This is ruled out by part (c) of \cref{lem:PsiArms}, completing the proof.
\end{proof}

\begin{proof}[Proof of \cref{pro:CountBranchT}] We first prove that, $\Psi_\cT$-usually, every branching tree $(\cT',(y_{k+1},\dots, y_{2k-2}))$ must be of the desired form. 
Suppose that $\Psi_\cT(x_1,\ldots,x_{2k-2})$ holds and that $(\cT',(y_{k+1},\dots, y_{2k-2}))$ is also a branching tree for $(x_1,\ldots,x_k)$.
For each distinct triple $i_1,i_2,i_3 \in \{1,\ldots,k\}$, the assumption that $(\cT',(y_{k+1},\dots, y_{2k-2}))$ is a branching tree for $(x_1,\ldots,x_k)$ guarantees that there exists $\ell \in \{k+1,\dots, 2k-2\}$ such that $y_\ell \in \Upsilon(x_{i_1},x_{i_2},x_{i_3})$. Applying \cref{lem:PositionBranch}, it follows that, $\Psi_\cT$-usually, the point $y_\ell$ must belong to $B_{R_1}(x_{j(i_1,i_2,i_3)})$ where $j(i_1,i_2,i_3)$ is the unique integer at which the three geodesics between $i_1$, $i_2$, and $i_3$ in $T$ all intersect.
 Moreover, since every internal vertex of $\cT$ has degree $3$, for each $j\in \{k+1,\dots, 2k-2\}$ there exists $i_1,i_2,i_3\in \{1,\ldots,k\}$ such that $j(i_1,i_2,i_3)=j$.
Since the two trees $\cT$ and $\cT'$ have the same number of internal vertices and the balls $B_{R_1}(x_j)$ and $B_{R_1}(x_{j'})$ are disjoint for all $j\neq j'$ in the relevant triple limit, it follows that, $\Psi_\cT$-usually, there exists (for every branching tree $(\cT',(y_{k+1},\dots, y_{2k-2}))$) a unique bijection $\ell:\{k+1,\ldots,2k-2\}\to \{k+1,\ldots,2k-2\}$ mapping the internal vertices of $\cT$ to the internal vertices of $\cT'$ such that $y_{\ell(j)}\in B_{R_1}(x_j)$ for every $j\in \{k+1,\ldots,2k-2\}$. Moreover, since $(\cT',(y_{k+1},\ldots,y_{2k-2}))$ is a branching tree for $(x_1,\ldots,x_k)$, we must have (when $\min_{i\neq j}\|x_i-x_j\| > 3R_1$) that $y_{\ell(j)} \in \cS^\Psi_{R_1,R_2}(x_j)$ for every $j\in \{k+1,\ldots,2k-2\}$.

It thus remains  to show that $\cT'=\cT$. Since $\mathsf{Tr}(k)$ is a set of isomorphism class representatives, it suffices to prove that $\ell$ extends to an isomorphism of leaf-labelled trees. 
We proceed by contradiction. Assume that there exists $\{i,j\}\in E(\cT')$ such that $\{\ell^{-1}(i),\ell^{-1}(j)\}$ is not an edge of $\cT$, where we extend $\ell$ to $\{1,\ldots,2k-2\}$ by setting $\ell(i)=i$ for every $1\leq i\leq k$. Since $\{\ell^{-1}(i),\ell^{-1}(j)\}\notin E(\cT)$ there exists $\ell^{-1}(z)\notin\{\ell^{-1}(i),\ell^{-1}(j)\}$ that is on the path from $\ell^{-1}(i)$ to $\ell^{-1}(j)$ in $\cT$. Without loss of generality we may assume that $z$ is closer to $i$ than $j$ in $\cT'$ (it cannot be equally close to both as they are neighbours). As a result, we may find two leaves $L(i), L(j)$ in $\cT'$ such that the path  from $L(i)$ to $L(j)$ in $\cT'$ passes through $L(i),z,i,j$, and $L(j)$ in that order (as well as possibly some other vertices in between). Since $(\cT',(y_{k+1},\dots, y_{2k-2}))$ is a branching tree for $(x_1,x_2,\dots, x_k)$ it follows that there exists an open path $\gamma$ from $x_{L(i)}$ to $x_{L(j)}$ that visits the points $x_{L(i)},y_z,y_i,y_j$, and $x_{L(j)}$ in the same order. However, since $\ell^{-1}(z)$ is between $\ell^{-1}(i)$ and $\ell^{-1}(j)$ in the tree $\cT$, we have by \cref{lem:PathInBranchA} that, $\Psi_\cT$-usually, there does not exist any pair of leaves $a,b\in \{1,\ldots,k\}$ and an open path $\gamma$ from $x_a$ to $x_{b}$ such that $\gamma$ goes through $B_{R_1}(x_{\ell^{-1}(z)})$ before going through both $B_{R_1}(x_{\ell^{-1}(i)})$ and $B_{R_1}(x_{\ell^{-1}(j)})$. This yields a contradiction, so that we must have $\cT'=\cT$. (More precisely, the event that $\cT'=\cT$ for every branching tree $(\cT',(y_{k+1},\ldots,y_{2k-2}))$ holds $\Psi_\cT$-usually.)

\medskip
 
 It remains to show conversely that, $\Psi_\cT$-usually, $(\cT,(y_{k+1},\ldots,y_{2k-2}))$ is a branching tree for $(x_1,\ldots,x_k)$ for every
\[ (y_{k+1},\ldots,y_{2k-2})\in \prod_{i=k+1}^{2k-2} \cS^\Psi_{R_1,R_2}(x_{i}).\]
 Suppose that $\Psi_\cT(x_1,\ldots,x_{2k-2})$ holds, let $(y_{k+1},\ldots,y_{2k-2})\in \prod_{i=k+1}^{2k-2} \cS^\Psi_{R_1,R_2}(x_{i})$, and let $(\gamma_{i,j})_{\{i,j\}\in E(\cT)}$ be a collection of edge-disjoint open paths where $\gamma_{i,j}$ connects $x_i$ to $x_j$ and where we define $\gamma_{j,i}$ to be the reversal of $\gamma_{i,j}$ for every $\{i,j\}\in E$. We write $y_i=x_i$ for $1\leq i \leq k$. We wish to modify each path $\gamma_{i,j}$ locally inside the balls $B_{R_2}(x_i)$ and $B_{R_2}(x_j)$ to obtain a new family of edge-disjoint paths $(\gamma'_{i,j})_{\{i,j\}\in E(\cT)}$ with $\gamma'_{i,j}$ connecting $y_i$ to $y_j$ for each $\{i,j\}\in E(\cT)$. Since these modifications will be performed locally, it suffices to prove we can modify our family of paths to replace a single $x_i$ with $y_i$ for some $k+1\leq i \leq 2k-2$; the claim will follow by applying the same construction iteratively for all $i\in \{k+1,\ldots,2k-2\}$.
  Let $j\in \{k+1,\dots, 2k-2\}$ and let $i_1\neq i_2\neq i_3$ be three distinct vertices such that $\{i_1,j\}$, $\{i_2,j\}, \{i_3,j\}$ are all edges of $\cT$. Let $\gamma_1,\gamma_2,\gamma_3$ be edge-disjoint paths from $x_{i_1}$, $x_{i_2}$, and $x_{i_3}$ to $x_j$ and let
   $\Gamma_1,\Gamma_2,\Gamma_3$ be three edge-disjoint paths from $\partial B_{R_2}(x_j)$ to $y_j$, which exist since $y_j \in \cS^\Psi_{R_1,R_2}(x_j)$. 
When applying \cref{lem:rerouting_spare_arm} to these two collections of paths, we must obtain $\Psi_\cT$-usually that the assignment is total (i.e., $A=B=\{1,2,3\}$), since otherwise there would be an extra $(R_1,R_2)$-arm around $x_j$, which is not the case $\Psi_\cT$-usually by \cref{lem:PsiArms}. As such, we must $\Psi_\cT$-usually be able to reroute the three paths $\gamma_1,\gamma_2,\gamma_3$ to obtain edge-disjoint paths from $x_{i_1}$, $x_{i_2}$, and $x_{i_3}$ to $y_j$ as required. \qedhere
\end{proof}

\subsection{Negligibility of degenerate branching trees}
\label{subsec:negligibility_of_degenerate_branching_trees}

Before proving \cref{thm:k_point}, we deal here with a last annoying case. We say that a branching tree $(\cT,\{x_{k+1},\dots, x_{2k-2}\})$ for $(x_1,\dots, x_k)$ is \textbf{$r$-degenerate} if there exists $1\leq i <j\leq 2k-2$ such that $\|x_i-x_j\|\leq r$. We write $N_{\leq r}(x_1,\dots, x_k)$ for the number of $r$-degenerate branching trees for $(x_1,x_2,\dots, x_k)$. The aim of this section is to prove that degenerate branching trees are negligible by  establishing upper bounds on $\E[N_{\leq r}(x_1,\ldots,x_k)]$. We prove this claim for trees that may have labelled internal vertices or internal vertices of degree different than $3$ for applications beyond the $k$-point function.

\begin{lemma} \label{lem:DegenerateSum}
Let $\cT$ be a tree with vertex set $\{1,\ldots,k+\ell\}$ for some $k,\ell\geq 1$ and suppose that the set $\{1,\ldots,k\}$ contains all the leaves of $\cT$. For each $r\geq 1$ we have that

\[\sum_{x_{k+1},\ldots,x_{k+\ell}\in \Z^d}\1(\exists i\neq j \text{ s.t.\ } \|x_i-x_j\|\leq r) G_\cT(x_1,\ldots, x_{k+\ell}) =o\left(\sum_{x_{k+1},\ldots,x_{k+\ell}\in \Z^d}G_\cT(x_1,\ldots,x_{k+\ell})\right) \]
 as $\min_{i\neq j} \|x_i-x_j\|\to \infty$, where we take this statement to hold vacuously when the sum on the right-hand side is infinite.
\end{lemma}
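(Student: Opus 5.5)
We prove the estimate by induction on the number $\ell$ of internal (summed) vertices, after reducing to the case where the degenerate pair involves at least one summed vertex. Since the $x_1,\ldots,x_k$ are pairwise far apart, for $\min_{i\neq j}\|x_i-x_j\|>r$ the indicator can only be triggered by a pair $\{i,j\}$ with $j\in\{k+1,\ldots,k+\ell\}$. By a union bound over the finitely many such pairs, it suffices to show that for each fixed such pair
\[
\Sigma_{i,j}:=\sum_{x_{k+1},\ldots,x_{k+\ell}}\1(\|x_i-x_j\|\leq r)\, G_\cT(x_1,\ldots,x_{k+\ell})=o\Bigl(\textstyle\sum G_\cT\Bigr).
\]
Throughout we use $G(x,y)\asymp \langle x-y\rangle^{-d+2}$ and that $d>6>4$, so $G*G(x,y)\asymp\langle x-y\rangle^{-d+4}\to 0$.

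The key step is a ``merging'' bound. On $\{\|x_i-x_j\|\le r\}$ we have $G(x_j,z)\le C(r)G(x_i,z)$ for every $z$, uniformly, since $\langle x_j-z\rangle\ge c(r)\langle x_i-z\rangle$. We replace every edge of $\cT$ incident to $j$ (other than possibly $\{i,j\}$) by an edge to $i$, and sum over $x_j\in B_r(x_i)$, which costs a factor of at most $C(r)$. This bounds $\Sigma_{i,j}$ by $C(r)$ times the analogous unrestricted sum for a contracted tree $\cT/\{i,j\}$ in which $i$ and $j$ are identified. This contracted sum may have fewer Green's function factors, or a vertex of higher degree, or, if $i$ and $j$ are not adjacent, a cycle.

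It therefore remains to compare the contracted diagram with the full diagram, and I expect this comparison to be the main obstacle. We want to show that sums of the form $\sum G_{\cT/e}$ (and cycle variants) are $o(\sum G_\cT)$ when all leaves are far apart. The natural route is power counting. Each summed internal vertex contributes volume $\asymp R^d$ and each edge contributes $R^{-(d-2)}$, so merging two summed vertices joined by an edge loses $R^d$ in volume but gains $R^{d-2}$ from the removed edge. The net effect is a factor $R^{-2}$, where $R$ is the relevant scale. To make this rigorous uniformly, including when the leaves are at wildly different distances, I would first handle the adjacent case by induction on $\ell$. We peel off a summed vertex $u$ of degree $\ge 2$ and use the convolution bounds $\sum_u \langle a-u\rangle^{-d+2}\langle u-b\rangle^{-d+2}\asymp\langle a-b\rangle^{-d+4}$ against $\sum_{u\in B_r(a)}\langle u-b\rangle^{-d+2}\lesssim \langle a-b\rangle^{-d+2}$. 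The ratio between these is $\langle a-b\rangle^{-2}\to0$ when the neighbors of the merged pair are far; when they are close we fall into another degenerate configuration, which is handled by induction.

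The non-adjacent case reduces to the adjacent one. We use \eqref{eq:ConvenientSplitFrac}-type splitting, $\min(G(a,b),G(b,c))\le CG(a,c)$, to break the cycle, which returns us to a tree with a merged vertex. If the full sum is infinite, for example when a summed vertex has degree $\le 2$ in a way that causes divergence, the statement holds vacuously, so we only need to treat the case where all relevant convolutions converge, which uses $d>6$.
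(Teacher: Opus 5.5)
\textbf{There is a genuine gap, in the step you flag yourself: showing the contracted sum is $o(\sum G_\cT)$ uniformly in how the fixed points are spread out.} Two parts of your proposal are fine. The reduction to pairs $\{i,j\}$ with $j$ summed works. So does the merging bound, since the contracted sum is, up to a constant, the full sum restricted to the diagonal $x_j=x_i$, and is therefore finite whenever $\sum G_\cT$ is.

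\emph{Non-adjacent case.} Here the proposed cycle-breaking fails. For a cycle of length two (the case where $i$ and $j$ are at tree distance two), the inequality $\min(G(a,b),G(b,c))\le CG(a,c)$ reads $G(x_m,x_n)\le C$. Longer cycles are reduced to this case by the same splitting. If the cycle runs through a summed vertex of degree two, which the hypotheses allow, this turns a convergent bubble into a divergent sum. Take edges $\{1,5\},\{5,4\},\{4,2\},\{4,3\}$, fixed points $x_1,x_2,x_3$, and the pair $(1,4)$. The diagonal sum is $G(x_1,x_2)G(x_1,x_3)\sum_{x_5}G(x_1,x_5)^2<\infty$. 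Your bound replaces it by $G(x_1,x_2)G(x_1,x_3)\sum_{x_5}G(x_1,x_5)=\infty$. The vacuity clause does not rescue this, because $\sum G_\cT<\infty$ here. Even when the output is finite, the tree produced is in general not an edge-contraction of $\cT$, so the step does not ``reduce to the adjacent case''.

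\emph{Adjacent case.} This is argued only by power counting ``at the relevant scale''. That is precisely what needs a uniform argument when the fixed points sit at wildly different distances.

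\emph{The missing idea.} Compare pointwise in the other variables, rather than comparing two different diagrams. Fix all variables except $x_j$ and set $\rho=\frac14\inf_{n\sim j,\,n\neq i}\langle x_n-x_i\rangle$.
\begin{itemize}
\item For $\|x_j-x_i\|\le\rho$, replacing each $G(x_n,x_j)$ by $G(x_n,x_i)$ changes the summand by at most a constant factor.
\item After this replacement the summand depends on $x_j$ only through $G(x_i,x_j)$ (adjacent case) or not at all (non-adjacent case).
\item Hence, once $\rho\ge r$, the sum over $\|x_j-x_i\|\le r$ is at most $C(r/\rho)^2$ (adjacent), resp.\ $C(r/\rho)^d$ (non-adjacent), times the sum over $\|x_j-x_i\|\le\rho$. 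That sum is itself at most the full sum.
\end{itemize}
This handles both cases at once and never creates cycles.

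The paper then disposes of configurations with small $\rho$ without induction. Outside the set $\mathscr{B}_\delta$ of configurations that are locally degenerate at relative scale $\delta$, a pair at distance $\le r$ propagates along a chain of comparable distances to two distinct fixed vertices. This forces those two fixed vertices within $C(\delta,\cT)r$ of each other, which is impossible once they are far apart.

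Within your own framework you could instead close an induction on $\ell$. Configurations with $\rho<M$ are of two kinds:
\begin{itemize}
\item in the adjacent case, degenerate configurations at scale $4M$ of $\cT/\{i,j\}$, which has one fewer summed vertex and satisfies $\sum G_{\cT/\{i,j\}}\le C\sum G_\cT$;
\item in the non-adjacent case, adjacent-pair degeneracies $\|x_n-x_j\|\le 4M+r$ of $\cT$ itself.
\end{itemize}
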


Applying a union bound and the BK inequality, \cref{lem:DegenerateSum} has the following immediate corollary.

\begin{corollary} \label{cor:DegenerateTreeBK} $\E[N_{\leq r}(x_1,x_2,\dots, x_k)]=o(G(x_1,\ldots,x_k))$ as $\min_{1\leq i\neq j \leq k} \|x_i-x_j\|\to \infty$ for every $k\geq 2$ and $r\geq 1$.
\end{corollary}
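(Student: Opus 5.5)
The corollary is deduced from \cref{lem:DegenerateSum} by a union bound followed by the BK inequality. By definition, $N_{\leq r}(x_1,\ldots,x_k)$ is a sum of indicators over $\cT\in\mathsf{Tr}(k)$ and over points $x_{k+1},\ldots,x_{2k-2}\in\Z^d$ satisfying $\min_{i<j\leq 2k-2}\|x_i-x_j\|\leq r$. Taking expectations therefore gives
\[
\E[N_{\leq r}(x_1,\ldots,x_k)] = \sum_{\cT\in \mathsf{Tr}(k)} \sum_{x_{k+1},\ldots,x_{2k-2}} \1(\exists i\neq j:\|x_i-x_j\|\leq r)\,\P\bigl(\Psi_\cT(x_1,\ldots,x_{2k-2})\bigr).
\]
Since $\Psi_\cT$ is a disjoint occurrence of the connection events $\{x_i\leftrightarrow x_j\}$ over the edges of $\cT$, the BK inequality bounds each probability by $\prod_{i\sim j}T_{p_c}(x_i,x_j)$.

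Next we replace each factor $T_{p_c}$ by $G$. By \eqref{eq:two_point_assumption}, together with $T_{p_c}\leq 1$ and $G(x,y)\asymp\langle x-y\rangle^{-d+2}$, there is a constant $C$ such that $T_{p_c}(x,y)\leq C\,G(x,y)$ for all $x,y$. This includes coincident or adjacent points, which can occur in degenerate configurations. Consequently each summand is at most $C^{2k-3}\,\1(\exists i\neq j:\|x_i-x_j\|\leq r)\,G_\cT(x_1,\ldots,x_{2k-2})$.

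For each of the finitely many trees $\cT\in\mathsf{Tr}(k)$, the leaves are exactly $\{1,\ldots,k\}$, so \cref{lem:DegenerateSum} applies with $\ell=k-2$. It shows that the degenerate part of the sum is $o\bigl(\sum_{x_{k+1},\ldots,x_{2k-2}}G_\cT(x_1,\ldots,x_{2k-2})\bigr)$. Summing over $\cT$ and recalling the definition of $G(x_1,\ldots,x_k)$ as the sum over $\cT\in\mathsf{Tr}(k)$ of exactly these quantities, we get $o(G(x_1,\ldots,x_k))$. For this to be meaningful the right-hand side must be finite. That holds because $d>6>4$, so each internal summation is convergent; this is the same fact that underlies the finiteness of the tree-graph bound.

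The case $k=2$ is trivial: there are no internal vertices and $\|x_1-x_2\|\to\infty$, so $N_{\leq r}=0$ eventually. None of these steps is a genuine obstacle, since all the analytic work sits in \cref{lem:DegenerateSum}. The only point needing care is the uniform comparison $T_{p_c}\leq CG$ for nearby points, which follows directly from \eqref{eq:two_point_assumption}.
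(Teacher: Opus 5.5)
Your proposal is correct and takes the same route the paper has in mind, where the corollary is stated as an immediate consequence of \cref{lem:DegenerateSum} via a union bound and the BK inequality. You expand $\E[N_{\leq r}]$ over trees and internal positions, bound each $\P(\Psi_\cT)$ by BK and the uniform comparison $T_{p_c}\leq CG$, apply the lemma to each of the finitely many $\cT\in\mathsf{Tr}(k)$, and dispose of $k=2$ separately; this simply spells out the details the paper leaves implicit.
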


We warn the reader that while \cref{lem:DegenerateSum} may seem obvious, the proof is a little more delicate than one might expect.

\begin{proof}[Proof of \cref{lem:DegenerateSum}] 
Fix $k,\ell\geq 1$ and the tree $\cT$. We first prove that for each $\eps>0$ there exists $\delta>0$ such that
\begin{multline}\sum_{x_{k+1},\ldots,x_{k+\ell}\in \Z^d}\1\left(\langle x_j-x_{j'}\rangle\leq \delta \inf_{i\sim j,i\neq j'} \langle x_i-x_{j'}\rangle\right) G_\cT(x_1,\ldots,x_{k+\ell}) \\\leq \eps \sum_{x_{k+1},\ldots,x_{k+\ell}\in \Z^d}G_\cT(x_1,\ldots,x_{k+\ell}).\label{eq:sumLocalVsGlobal} \end{multline}
for every $x_1,\ldots,x_k\in \Z^d$, $k+1\leq j \leq k+\ell$, and $1\leq j' \leq k+\ell$ with $j'\neq j$.
To prove this, first note that if 
$\langle x_j-x_{j'}\rangle \leq \frac{1}{4}\inf_{i\sim j,i\neq j'} \langle x_i-x_{j'}\rangle$ then $\langle x_i -x_{j'}\rangle$ is of the same order as $\langle x_i-x_{j}\rangle$ for every $i$ that is adjacent to $j$ but not equal to $j'$, so that if we replace each term $G(x_i,x_j)$ with $i\neq j'$ in the product defining $G_\cT(x_1,\ldots,x_{k+\ell})$ with $G(x_i,x_{j'})$ then the order of this product changes by at most a constant factor and this modified product either has no dependence on $x_j$ (if $j'$ is not adjacent to $x_j$) or depends on $x_j$ only through the single term $G(x_j,x_{j'})$ (if $j'$ is adjacent to $j$).
 Since we also have that
\[\frac{|\{x \in \Z^d : \langle x \rangle \leq r\}|}{|\{x\in \Z^d: \langle x \rangle \leq R\}|} = O((r/R)^d) \qquad \text{ and } \qquad 
\frac{\sum_{\langle x \rangle \leq r} G(0,x)}{\sum_{\langle x \rangle \leq R} G(0,x)} = O((r/R)^2)
\]
for every $R\geq r >0$ with $R\geq 2$, it follows that there exists a constant $C$ (depending on the choice of tree $\cT$) such that if $\delta \leq 1/4$ then
\begin{multline}\sum_{x_{k+1},\ldots,x_{k+\ell}\in \Z^d}\1\left(\langle x_j-x_{j'}\rangle\leq \delta \inf_{i\sim j,i\neq j'} \langle x_i-x_{j'}\rangle\right) G_\cT(x_1,\ldots,x_{k+\ell}) \\\leq C \delta^2
\sum_{x_{k+1},\ldots,x_{k+\ell}\in \Z^d}\1\left(\langle x_j-x_{j'}\rangle\leq \frac{1}{4} \inf_{i\sim j,i\neq j'} \langle x_i-x_{j'}\rangle\right) G_\cT(x_1,\ldots,x_{k+\ell})
\\
\leq C \delta^2
\sum_{x_{k+1},\ldots,x_{k+\ell}\in \Z^d} G_\cT(x_1,\ldots,x_{k+\ell})
 \end{multline}
for every $x_1,\ldots,x_k\in \Z^d$, $k+1\leq j \leq k+\ell$, and $1\leq j' \leq k+\ell$ with $j'\neq j$ as required.

For each $\delta>0$, define $\mathscr{B}_\delta(x_1,\ldots,x_k)$ to be the set of sequences $(x_{k+1},\ldots,x_{k+\ell})$ in $\Z^d$ such that there exists $k+1\leq j\leq k+\ell$ and $1\leq j' \leq k+\ell$ such that $j\neq j'$ and  $\langle x_j-x_{j'}\rangle\leq \delta \inf_{i\sim j,i\neq j'} \langle x_i-x_{j'}\rangle$.
Summing \eqref{eq:sumLocalVsGlobal} over all $j,j'$, we obtain that for each $\eps>0$ there exists $\delta>0$ such that
\begin{equation}\sum_{(x_{k+1},\ldots,x_{k+\ell})\in \mathscr{B}_\delta(x_1,\ldots,x_k)} G_\cT(x_1,\ldots,x_{k+\ell}) \\\leq \eps \sum_{x_{k+1},\ldots,x_{k+\ell}\in \Z^d}G_\cT(x_1,\ldots,x_{k+\ell})\label{eq:sumLocalVsGlobal2} \end{equation}
for every $x_1,\ldots,x_k \in \Z^d$. Thus, to conclude the proof, it suffices to prove that for each $\delta>0$ and $r\geq 1$ there exists $R<\infty$ such that if $\min_{1 \leq i < j\leq k}\|x_i-x_j\|\geq R$ and $\min_{1 \leq i < j\leq k+\ell}\|x_i-x_j\|\leq r$ then $(x_{k+1},\ldots,x_{k+\ell})\in \mathscr{B}_\delta(x_1,\ldots,x_k)$.

To proceed, we will suppose that $x_1,\ldots,x_{k+\ell}\in \Z^d$ are such that $\min_{1 \leq i < j\leq k}\|x_i-x_j\|\geq R$ and $\min_{1 \leq i < j\leq k+\ell}\|x_i-x_j\|\leq r$ but $(x_{k+1},\ldots,x_{k+\ell})\notin \mathscr{B}_\delta(x_1,\ldots,x_k)$, and show that this yields an upper bound on $R$ depending only on $r$, $\delta$, and $\cT$ (and hence only on $r$, $\eps$, and $\cT$ since $\delta$ was chosen as a function of $\eps$ and $\cT$). We may assume that $R>r$.
 By assumption, there must exist a pair $i\neq j$ with $\|x_{i}-x_{j}\|\leq r$, and since the indices $i,j$ cannot both belong to $\{1,\ldots,k\}$ we may without loss of generality take $k+1\leq j_0\leq k+\ell$ such that
 $\|x_{j_0}-x_{i_0}\|\leq r$ for some $1\leq i_0 \leq k+\ell$.
 Since $(x_{k+1},\ldots,x_{k+\ell})\notin \mathscr{B}_\delta(x_1,\ldots,x_k)$ there must exist a neighbour $j_1$ of $j_0$ distinct from $i_0$ such that
$\langle x_{j_1}-x_{i_0}\rangle \leq \delta^{-1} \langle x_{j_0}-x_{i_0}\rangle$. If $k+1\leq j_1 \leq k+\ell$ then similarly there must exist a neighbour $j_2$ of $j_1$ distinct from $j_0$ such that $\langle x_{j_2}-x_{j_0}\rangle \leq \delta^{-1} \langle x_{j_1}-x_{j_0}\rangle$, and we may continue this construction inductively until finding $a \geq 1$ such that $1\leq j_a\leq k$. Similarly, we may construct a sequence $j_{-1},\ldots,j_{-b}$ with $1\leq b \leq k+l$ such that $j_{-b}\in \{1,\ldots,k\}$, $j_{-i}$ is adjacent to $j_{-i+1}$ and distinct from $j_{-i+2}$ for each $1\leq i \leq b$ and $\langle x_{j_{-i}} -x_{j_{-i+2}}\rangle \leq \delta^{-1} \langle x_{j_{-i+1}} -x_{j_{-i+2}}\rangle$ for every $1\leq i \leq b$. The sequence $j_{-b},\ldots,j_a$ must define a geodesic between two distinct vertices in $\{1,\dots,k\}$, and the triangle inequality implies that the endpoints of this geodesic satisfy
\[
  \|x_{j_{-b}}-x_{j_{a}}\| \leq \sum_{i=-b}^{a-1} \langle x_{j_i}-x_{j_{i+1}}\rangle \leq C_1(\delta,\cT) \langle x_{j_0}-x_{i_0}\rangle \leq C_2(\delta,\cT) r
\]
for every $r\geq 1$ and some constants $C_1(\delta,\cT)$ and $C_2(\delta,\cT)$.
We deduce that in this scenario we must have that $R \leq C_2(\delta,\cT) r$, concluding the proof as explained above. \qedhere
\end{proof}

\subsection{Proof of \cref{thm:k_point}}
\label{subsec:proof_of_the_k_point_function}

We are now almost ready to prove \cref{thm:k_point}. The proof will require one more lemma establishing a lower bound on the three-point function of the correct order.

\begin{lemma}
\label{lem:three_point_lower}
Consider critical percolation on $\Z^d_L$. If $d>6$ and \eqref{eq:two_point_assumption} and \eqref{eq:two_blob_assumption} both hold then there exists a positive constant $c$ such that
\[
  T_{p_c}(x,y,z) \geq c\, G(x,y,z)
\]
for all triples of distinct points $x,y,z\in \Z^d$.
\end{lemma}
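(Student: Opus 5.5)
The plan is a second-moment argument on the number of branch points, with the well-separated regime handled by \cref{thm:scheme_function_variant} and the degenerate regime by finite energy.

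\textbf{Degenerate triples.} Fix a large constant $R$. If two of the points are within distance $R$, say $\|x-y\|\leq R$, I would use a finite-energy argument: open a fixed path of length $O(R)$ from $x$ to $y$. This gives $T_{p_c}(x,y,z)\geq c(R)\,T_{p_c}(x,z)$. By \eqref{eq:two_point_assumption}, the right-hand side is $\asymp \langle x-z\rangle^{-d+2}$. On the other hand, a direct estimate of $G(x,y,z)=\sum_w G(x,w)G(w,y)G(w,z)$ gives $G(x,y,z)\leq C(R)\langle x-z\rangle^{-d+2}$ when $\|x-y\|\leq R$. This uses $d>4$: the sum is dominated by $w$ near $x$, with the remaining contribution bounded by convolution estimates. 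It therefore suffices to treat triples with $\min\|\cdot\|\geq R$ for a suitably large $R$.

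\textbf{Well-separated triples.} Let $N=N(x,y,z)$ be the number of $w\in\Z^d$ such that $(x\leftrightarrow w)\circ(y\leftrightarrow w)\circ(z\leftrightarrow w)$ holds. Then $N>0$ implies that $x,y,z$ are all connected, so by Cauchy--Schwarz
\[
  T_{p_c}(x,y,z)\geq \P(N>0)\geq \frac{\E[N]^2}{\E[N^2]}.
\]
For the first moment I would take $\cT$ to be the star with center $w$ and apply \cref{thm:scheme_function_variant}. Since three edge-disjoint paths from $0$ to $\infty$ exist in $\Z^d_L$ (already for $d\geq 3$), $\mathbf{C}(3)>0$. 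Hence there exists $R$ such that, whenever $w$, $x$, $y$, $z$ are pairwise at distance $\geq R$,
\[
  \P(\Psi_\cT(x,y,z,w))\geq \tfrac12\,\mathbf{C}(3)A^3G(x,w)G(w,y)G(w,z).
\]
Summing over such $w$ and invoking \cref{lem:DegenerateSum} (with $r=R$) shows that these $w$ carry all but a vanishing fraction of $G(x,y,z)$ once $\min\|\cdot\|$ is large. This gives $\E[N]\geq c\,G(x,y,z)$ for all sufficiently separated triples.

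\textbf{Second moment.} This is the step I expect to be the main obstacle. We have $\E[N^2]=\sum_{w,w'}\P(\text{both } w,w' \text{ are branch points})$. On this event, I would extract via BK a disjoint-occurrence structure consisting of the tree connecting $x,y,z$ through $w$, together with connections from $w'$ that either reattach to that tree or to $w$. Enumerating the finitely many possible diagram shapes bounds $\E[N^2]$ by a finite sum of diagrams. Each such diagram is either $G(x,y,z)$ multiplied by an open bubble/triangle factor, which is finite since $d>6$, or a four-leg tree-graph diagram with an extra loop, which is likewise $O(G(x,y,z))$ by the convolution bounds used in \cref{lem:BlobOffApproxA} and \cref{lem:PsiArms}.

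In practice, the cleanest route I would try first is as follows. On the event, $w'$ is joined by three edge-disjoint arms to $x,y,z$. Each arm from $w'$ must meet the $w$-tree. The union bound over the meeting points $u_1,u_2,u_3$ then gives diagrams in which the $w'$-star is glued onto the $w$-star. Summing over $w'$ produces bubble/triangle factors, which are bounded when $d>6$. This yields $\E[N^2]\leq C\,G(x,y,z)$, and combined with the first-moment bound we conclude $T_{p_c}(x,y,z)\geq (c^2/C)\,G(x,y,z)$. The delicate point is checking that every gluing pattern produces a convergent diagram, uniformly in the triple, without losing a power of the separation.
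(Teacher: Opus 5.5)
Your reduction to well-separated triples and your first-moment bound $\E[N]\geq c\,G(x,y,z)$ (via \cref{thm:scheme_function_variant} with $\mathbf{C}(3)>0$ and \cref{lem:DegenerateSum}) are fine. There is a genuine gap, though: the whole argument rests on $\E[N^2]\leq C\,G(x,y,z)$, and the union bound you propose does not give this. Recording only the first points $u_1,u_2,u_3$ at which the arms of $w'$ hit the $w$-star throws away the requirement that these arms then continue edge-disjointly to $x,y,z$. So nothing in the resulting diagrams prevents $u_1,u_2,u_3$ from all lying on the same arm, say the one from $w$ to $x$, at or near a common point $u$ far from $w$. That gluing pattern alone contributes at least a constant times
\[
\sum_{w,u} G(w,u)G(u,x)G(w,y)G(w,z)\cdot \sum_{w'\neq u} G(w',u)^3 \asymp \sum_{w} G^{*2}(w,x)G(w,y)G(w,z).
\]
For $x,y,z$ at mutual distances of order $r$, this is of order $r^{2}G(x,y,z)$. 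The blob $\{w',u_1,u_2,u_3\}$ is locally summable, but it then sits as a degree-two insertion on a macroscopic tree edge, and summing its position along that edge costs the squared length. So the claim that every gluing pattern gives a convergent diagram fails for your decomposition. From this bound, Cauchy--Schwarz only yields $T_{p_c}(x,y,z)\gtrsim r^{-2}G(x,y,z)$.

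Repairing this needs a genuinely new diagrammatic input, not a routine check. You could use that two distinct branch points are joined by two edge-disjoint open paths: a single separating edge would put at least two of $x,y,z$ on each side. You would then have to extract BK structures in which both $w$ and $w'$ are vertices of degree at least three in a tree-plus-loop diagram. If either is allowed to be a degree-two insertion on the loop you lose again; for example, a triangle loop with $w,w'$ inserted on two sides becomes a pentagon diagram, which grows with the separation when $6<d\leq 10$.

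The paper avoids second moments altogether. It applies \cref{thm:scheme_function} to the blob $(\emptyset,0,e_1,-e_1)$ to get $\P(x\leftrightarrow w,\ w+e_1\leftrightarrow y,\ w-e_1\leftrightarrow z \text{ in three distinct clusters})\geq c\,G(x,w)G(w,y)G(w,z)$ for well-separated points. It then forces the two edges $\{w,w\pm e_1\}$ open, at cost $(p_c/(1-p_c))^2$. Afterwards $w$ is the \emph{unique} vertex joined to $x,y,z$ by paths that are vertex-disjoint away from their endpoints, so these events are disjoint in $w$ and can simply be summed.

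Within your own framework, an alternative repair is to write $T_{p_c}(x,y,z)=\sum_w \E[\mathbbm{1}(\Psi_\cT(x,y,z,w))/N]$ as in \eqref{eq:ExpOvercountBranch} and invoke \cref{pro:CountBranchT}. For a fixed large $R_1$ and well-separated $x,y,z,w$, it gives $N\leq |B_{R_1}(0)|$ off an event of probability at most $\eps\,G(x,w)G(w,y)G(w,z)$. Combined with your first-moment lower bound, this gives the claim with no second moment.
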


\begin{proof}[Proof of \cref{lem:three_point_lower}]
Let $e_1$ be a basis vector in $\Z^d$. Since $0,e_1,-e_1$ are connected to $\infty$ by three vertex-disjoint paths, 
 it follows from \cref{thm:scheme_function} that there exist positive constants $c$ and $R$ such that
\begin{multline*}
  \P(x \leftrightarrow w, w+e_1 \leftrightarrow y, w-e_1 \leftrightarrow z, \text{ and } w,w+e_1,w-e_1 \text{ are all in distinct clusters}) 
  \\\geq c G(x,w)G(w,y)G(w,z)
\end{multline*}
whenever all pairwise distances between $x,y,z,$ and $w$ are at least $R$. If we modify a configuration in which this event holds by forcing the edges $\{w,w+e_1\}$ and $\{w,w-e_1\}$ to be open, we obtain a configuration in which $x$ is connected to $y$ and $z$ and $w$ is the only vertex of $\Z^d_L$ that is connected to $x$, $y$, and $z$ by paths that are vertex disjoint other than their endpoints.
If we define $\mathscr{A}(x,y,z;w)$ to be the event that $w$ is the unique vertex with this property, we deduce that
\begin{equation*}
    \P(\mathscr{A}(x,y,z;w)) 
  \geq \frac{c p_c^2}{(1-p_c)^2} G(x,w)G(w,y)G(w,z)
\end{equation*}
whenever all pairwise distances between $x,y,z,$ and $w$ are at least $R$. The claim follows by summation over $w$ since the events $\mathscr{A}(x,y,z;w)$ are disjoint for different choices of $w$. (As usual there are some easy further details to handle regarding points at bounded distance from each other. When one of the distances between $x,y,z$ is at most $R$, it suffices to use Harris-FKG to bound the three-point function from below by the product of two two-point functions, where one of the two-point functions is between the two points of minimal distance. When $x,y,z$ all have pairwise distance at least $R$, the contribution to $G(x,y,z)=\sum_w G(x,w)G(w,y)G(w,z)$ from points $w$ with distance at least $R$ from all three points $x,y,z$ is easily seen to be of the same order as the entire sum $G(x,y,z)$, so that the bound we have proven here suffices to deduce the claim.)
\end{proof}

\begin{proof}[Proof of \cref{thm:k_point}]
We analyze the formula \eqref{eq:ExpOvercountBranch}.
Fix $k\geq 2$ and $\eps>0$. By \cref{pro:CountBranchT} we may take $R_1,R_2,R_3$ such that if $\min_{1\leq i \neq j \leq 2k-2} \|x_i-x_j\|\geq R_3$ then
\[ \proba\left (\Psi_\cT(x_1,\dots,x_{2k-2}) \text{ holds and }  N(x_1,\dots, x_k)\neq  \prod_{i=k+1}^{2k-2} |\cS^\Psi_{R_1,R_2}(x_i)|\right ) \leq \eps G_\cT(x_1,x_2,\dots, x_{2k-2})\]
for every $\cT\in \mathsf{Tr}(k)$.
It follows that we can bound
\begin{align*}  \Bigg | T_{p_c}(x_1,x_2&,\ldots, x_k)- \sum_{\cT\in \mathsf{Tr}(k)} \sum_{x_{k+1},\dots,x_{2k-2}} \E\left [\frac{\1(\Psi_\cT(x_1,\dots,x_{2k-2}))}{\prod_{i=k+1}^{2k-2} |\cS^\Psi_{R_1,R_2}(x_i)|}\right ] \Bigg | 
\\ & \leq \eps \sum_{\cT \in \mathsf{Tr}(k)} \sum_{x_{k+1},\dots,x_{2k-2}} G_\cT(x_1,\dots, x_{2k-2})  
+ \sum_{\cT\in \mathsf{Tr}(k)} \sum_{\substack{x_{k+1},\dots,x_{2k-2}\\ \exists i\neq j :\|x_i-x_j\|\leq R_3}} \proba(\Psi_\cT(x_1,x_2,\dots,x_{2k-2})) 
\\  & = \eps G(x_1,\ldots,x_k) +  \E[N_{\leq R_3}(x_1,x_2,\dots, x_k)].
\end{align*}
Since $\eps>0$ was arbitrary, it follows from this and \cref{cor:DegenerateTreeBK}
that 
\begin{equation} T_{p_c}(x_1,\ldots,x_k) = \sum_{\cT\in \mathsf{Tr}(k)} \sum_{x_{k+1},\dots,x_{2k-2}} \E\left [\frac{\1(\Psi_\cT(x_1,\dots,x_{2k-2}))}{\prod_{i=k+1}^{2k-2} |\cS^\Psi_{R_1,R_2}(x_i)|}\right ]+o(G(x_1,\ldots,x_k))\label{eq:ApproximateCountingBranchingTree} \end{equation}
in the triple limit in which we first send $\min_{i\neq j}\|x_i-x_j\|\to\infty$, then send $R_2\to \infty$, then send $R_1\to \infty$.
 On the other hand, since $|\cS^\Psi_{R_1,R_2}(x_i)|$ only depends on the status of the edges inside $B_{R_2}(x_i)$, we may apply  \cref{thm:scheme_function_variant} to deduce (by bounded convergence) that for each $R_2\geq R_1$ there exists a constant $C(R_1,R_2)$   such that
\[ \E\left [\frac{\1(\Psi_\cT(x_1,\dots,x_{2k-2}))}{\prod_{i=k+1}^{2k-2} \left |\cS^\Psi_{R_1,R_2}(x_i) \right |}\right ]
= \Big(C(R_1,R_2)^{k-2}A^{2k-3}+o(1) \Big ) G_\cT(x_1,x_2,\dots,x_{2k-2})\]
 as $\min\|x_i-x_j\| \to \infty$ with $R_1,R_2$ fixed. It follows from this and a second application of \cref{cor:DegenerateTreeBK} that
\begin{align} \sum_{\cT\in \mathsf{Tr}(k)} \sum_{x_{k+1},\dots,x_{2k-2}} \E\left [\frac{\1(\Psi_\cT(x_1,\dots,x_{2k-2}))}{ \prod_{i=k+1}^{2k-2} \left |\cS^\Psi_{R_1,R_2}(x_i) \right |}\right ] &
 = [C(R_1,R_2)+o(1)]^{k-2}A^{2k-3}  G(x_1,\ldots,x_k) \label{eq:LocalMiscountBranchingTree}\end{align}
 as $\min_{i\neq j}\|x_i-x_j\|\to\infty$ with $R_1,R_2$ fixed.
Finally, combining \eqref{eq:ApproximateCountingBranchingTree} and \eqref{eq:LocalMiscountBranchingTree}, we deduce that
\[  T_{p_c}(x_1,\ldots,x_k)=(C(R_1,R_2)^{k-2}A^{2k-3} +o(1)) G(x_1,\ldots,x_k)\]
in the triple limit in which we first send $\min_{i\neq j}\|x_i-x_j\|\to\infty$, then send $R_2\to\infty$, then send $R_1\to \infty$.
Since the left-hand side does not depend on $R_1$ and $R_2$ and since $A>0$, the constant $C(R_1,R_2)$ must converge to some $V$ in the double limit in which we first send $R_2\to \infty$ and then send $R_1\to \infty$. As a consequence, for this constant $V$ we must have
\[  T_{p_c}(x_1,\ldots,x_k)=(V^{k-2}A^{2k-3} +o(1)) G(x_1,\ldots,x_k)\]
as $\min_{i\neq j}\|x_i-x_j\|\to\infty$.
It follows from \cref{lem:three_point_lower} that $V>0$, completing the proof.
\end{proof}

\section{Scaling limits}
\label{sec:scaling_limit}

\subsection{Integrated super-Brownian excursion}
\label{sec:integrated_super_brownian_motion}

In this section we prove the part of our main scaling limit theorem in which we consider the cluster of the origin only as a measure in $\R^d$.

\begin{thm}\label{thm:integrated_SBM}
Let $d>6$ and $L\geq 1$ be such that critical percolation on $\mathbb{Z}^d_L$ satisfies the estimates \eqref{eq:two_point_assumption} and \eqref{eq:two_blob_assumption}. There exist positive constants $C_\text{\emph{prob}}$ and $C_\text{\emph{vol}}$ such that
 \[
   C_\mathrm{prob} r^2 \cdot \P_{p_c}\bigl(|K| \geq \lambda C_\mathrm{vol} r^4\bigr) \sim  \lambda^{-1/2} 
 \]
 and
\begin{equation*}
\P_{p_c}\left( \frac{1}{C_\mathrm{vol} r^4}\sum_{x\in K} \delta_{x/r} \in \cdot \;\Bigg|\; |K| \geq \lambda C_\mathrm{vol} r^4 \right) 
\\\longrightarrow \mathbb{N}\Bigl(\mu \in \cdot \mid \mu(\R^d) \geq \lambda\Bigr)
\end{equation*}
weakly in the weak topology as $r\to \infty$ for each $\lambda>0$, where $\N$ denotes the canonical measure of the continuum random tree equipped with its standard Brownian embedding into $\R^d$ and $\mu$ is the pushforward of the mass measure on the tree under this embedding (i.e., $\mu$ is integrated super-Brownian excursion). Moreover, the constants $C_\mathrm{prob}$ and $C_\mathrm{vol}$ are given in terms of the amplitude $A$ and the vertex factor $V$ by $C_\mathrm{prob}=AV/\N(\mu(\R^d)\geq 1)$ and $C_\mathrm{vol}=A^2V$.
\end{thm}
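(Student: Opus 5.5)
We prove \cref{thm:integrated_SBM} by the method of moments, using \cref{thm:k_point} as the only probabilistic input. Set $\mu_{K,r}=\frac{1}{A^2Vr^4}\sum_{x\in K}\delta_{x/r}$. The first step is to show the moment convergence \eqref{eq:spatial_moment_asymptotics_intro}. For compactly supported continuous $f_1,\ldots,f_n$ we write
\[
\E\Bigl[\prod_{i=1}^n\int f_i\,\dif\mu_{K,r}\Bigr]=(A^2Vr^4)^{-n}\sum_{y_1,\ldots,y_n}T_{p_c}(0,y_1,\ldots,y_n)\prod_i f_i(y_i/r).
\]
We then split the sum into two parts:
\begin{itemize}
\item On tuples where all of $0,y_1,\ldots,y_n$ are pairwise at distance at least $\delta r$, we substitute \cref{thm:k_point}. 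With $k=n+1$, it gives $V^{n-1}A^{2n-1}G(0,y_1,\ldots,y_n)$. We then convert the Riemann sum to an integral using the scaling of $G$ to $G_{\mathrm{SBM}}$. The powers of $r$ work out to exactly $r^{-2}(AV)^{-1}$ times the right-hand side of \eqref{eq:canonical_measure_diagrams}.
\item On the near-diagonal tuples, we bound $T_{p_c}$ by the tree-graph inequality \eqref{eq:tree_graph_intro} and \eqref{eq:two_point_assumption}. An elementary diagram estimate, which uses $d>6$ so that all tree diagrams are locally summable at the correct scale, shows this contribution is $O(\delta^c)r^{-2}$ uniformly in $r$. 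Letting $\delta\to0$ gives \eqref{eq:spatial_moment_asymptotics_intro}.
\end{itemize}
Taking all $f_i\equiv1$ on a large ball, together with a cutoff argument handling mass far away, gives in particular $AVr^2\,\E[\mu_{K,r}(\R^d)^n]\to\N[\mu(\R^d)^n]$ for $n\geq1$. The far-mass part can be handled with the same tree-graph bounds, since $\E|K|^n$ is dominated by explicit diagrams whose tails are controlled.

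The second step converts the moment asymptotics for the infinite ``measures'' $AVr^2\P(\mu_{K,r}\in\cdot)$ into weak convergence of conditional laws. Let $\mathbb{M}_r$ be the measure $AVr^2\P(\mu_{K,r}\in\cdot)$ restricted to nonzero $\mu$. For $n\geq1$ its moments converge to those of $\N$. Since $n\ge1$ moments kill the singularity at the zero measure, the measures $\mu(\R^d)\,\mathbb{M}_r(\dif\mu)$ are finite. These measures have converging mixed moments of all orders, and their limit moments are those of $\mu(\R^d)\,\N(\dif\mu)$, which are uniquely determining by \cite[Section I.5.4]{LRPpaper1} (\cref{lem:CRT_SBM_Carleman}-type growth bounds). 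Hence $\mu(\R^d)\,\mathbb{M}_r\to\mu(\R^d)\,\N$ weakly as finite measures on the space of finite measures on $\R^d$. The total mass converges, and tightness comes from $\mu(\R^d)$ and second spatial moments. We then divide by $\mu(\R^d)$ on the set $\{\mu(\R^d)\geq\lambda\}$. This set is a continuity set because $\N(\mu(\R^d)=\lambda)=0$, by scaling invariance of $\N$. Dividing gives $\mathbb{M}_r(\mu(\R^d)\geq\lambda)\to\N(\mu(\R^d)\geq\lambda)$ and the conditional weak convergence. The normalisation $\mu_{K,r}(\R^d)\geq\lambda$ matches $|K|\geq\lambda C_{\mathrm{vol}}r^4$ with $C_{\mathrm{vol}}=A^2V$.

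The last step identifies the constants. Scaling of $\N$ gives $\N(\mu(\R^d)\geq\lambda)=\lambda^{-1/2}\N(\mu(\R^d)\geq1)$, so $AVr^2\P(|K|\geq\lambda A^2Vr^4)\to\lambda^{-1/2}\N(\mu(\R^d)\geq 1)$. This is the claimed statement with $C_{\mathrm{prob}}=AV/\N(\mu(\R^d)\geq1)$. I expect the main obstacle to be the determinacy and tightness step, which is needed because $\N$ is an infinite measure. My first attempt would be to work with the size-biased finite measures as above, using the factorial-type growth $\N[\mu(\R^d)^n]\leq C^n n!^{1/2}\cdot$(poly) for Carleman's condition. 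The diagonal-error estimate in the first step should be routine by comparison with the proof of \cref{lem:DegenerateSum}.
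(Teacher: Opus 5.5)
Your first step is fine: it is exactly the moment convergence \eqref{eq:spatial_moment_asymptotics_intro} for compactly supported test functions. The second step, however, is built on quantities that are all infinite.

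At criticality $\E_{p_c}|K|=\sum_x T_{p_c}(0,x)=\infty$, because $T_{p_c}(0,x)\asymp\langle x\rangle^{-d+2}$ is not summable. Equivalently, $\P_{p_c}(|K|\geq n)\asymp n^{-1/2}$ has infinite mean. The same holds in the limit. Since $\N(\mu(\R^d)\geq\lambda)=\lambda^{-1/2}\N(\mu(\R^d)\geq 1)$, the total mass has density proportional to $\lambda^{-3/2}$ under $\N$, so $\N[\mu(\R^d)^n]=\infty$ for every $n\geq 1$. Taking $n\geq 1$ removes the singularity at the zero measure, but the divergence comes from \emph{large} masses.

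Several consequences follow:
\begin{itemize}
\item $\mu(\R^d)\,\mathbb{M}_r$ and $\mu(\R^d)\,\N$ are not finite measures.
\item Your claimed convergence $AVr^2\E[\mu_{K,r}(\R^d)^n]\to\N[\mu(\R^d)^n]$ is the vacuous statement $\infty=\infty$.
\item There is no Carleman-type growth bound on these moments.
\item The ``second spatial moments'' you want for tightness are infinite too.
\end{itemize}
Your assertion that $\E|K|^n$ is ``dominated by explicit diagrams whose tails are controlled'' is false. No cutoff argument based on tree-graph bounds can repair this, because the mass far from the origin genuinely has infinite expectation.

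What is missing is spatial localization, followed by an a priori arm estimate to undo it. The paper biases by $\mu[\phi]$ for a compactly supported $\phi\geq 0$ rather than by total mass.
\begin{itemize}
\item Under this bias the moments of $(\mu_{K,r}[f_i])_i$ are finite and converge by \eqref{eq:spatial_moment_asymptotics_intro}.
\item The bound $\N[\mu[f]^m]\leq C(f)^m m!$ gives determinacy via Carleman.
\item Tightness of the biased laws comes from \cref{lem:two_point_conditioned_on_arm}, which itself rests on the Kozma--Nachmias one-arm bound.
\end{itemize}
The cutoff $\phi_M$ is then removed via
$|\P(|K|\geq\lambda A^2Vr^4)-\P(\mu_{K,r}[\phi_M]\geq\lambda)|\leq\P(0\leftrightarrow\{x:\|x\|\geq Mr\})=O(M^{-2}r^{-2})$.

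This is a genuine input beyond \cref{thm:k_point}. The monotone limit $\N(\mu[\phi_M]\geq\lambda)\uparrow\N(\mu(\R^d)\geq\lambda)$ gives you only the \emph{lower} bound on the volume tail. The matching upper bound, and tightness of the spatial law, require ruling out clusters of mass of order $r^4$ that reach distance $Mr$. Moment/Markov bounds cannot give the needed $o(r^{-2})$ here: the relevant moments of $|K\cap\{\|x\|\geq Mr\}|$ are either infinite or, on annuli of radius $R\gg r$, of order $R^{4n-2}$. So your claim that \cref{thm:k_point} is the only probabilistic input is not supported by your argument, and you need to import the one-arm estimate as the paper does.
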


The proof will make use of the following easy consequence of the Kozma-Nachmias \cite{MR2748397} one-arm estimate and the fact that the expected number of connections from the origin to the boundary of a box is bounded \cite{hutchcroft2023high,panis2025sharp}.

\begin{lemma}
\label{lem:two_point_conditioned_on_arm}
Let $d>6$ and $L\geq 1$ be such that critical percolation on $\mathbb{Z}^d_L$ satisfies the estimate \eqref{eq:two_point_assumption}. There exists a constant $C$ such that
\[
  \P_{p_c}(0\leftrightarrow x \text{ and } 0 \leftrightarrow \Z^d\setminus [-r,r]^d) \leq C \min \{\langle x\rangle^{-d+2}, r^{-2} \langle x\rangle^{-d+4}\}
\]
for every $x\in \Z^d$ and $r\geq 1$.
\end{lemma}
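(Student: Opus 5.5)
The bound $\P_{p_c}(0\leftrightarrow x \text{ and } 0\leftrightarrow \Z^d\setminus[-r,r]^d)\leq C\langle x\rangle^{-d+2}$ follows immediately from \eqref{eq:two_point_assumption} by dropping the arm event. So the content is the second bound $C r^{-2}\langle x\rangle^{-d+4}$. We only need it when $\langle x\rangle \leq r$, since otherwise it is weaker than the first bound. We therefore assume $\|x\|\leq r/2$ say; the range $r/2<\|x\|\leq r$ is also covered by the first bound up to constants.

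On the event in question, the cluster of $0$ contains both $x$ and a vertex outside $[-r,r]^d$. Choosing a vertex $w$ of the cluster where the geodesic-like connections branch, we get edge-disjoint occurrence of $\{0\leftrightarrow w\}\circ\{w\leftrightarrow x\}\circ\{w\leftrightarrow \Z^d\setminus[-r,r]^d\}$ for some $w$; this is the usual tree-graph decomposition with three arms meeting at $w$. By the BK inequality,
\[
\P(0\leftrightarrow x,\ 0\leftrightarrow \Z^d\setminus[-r,r]^d) \leq \sum_{w} T(0,w)T(w,x)\,\P(w\leftrightarrow \Z^d\setminus[-r,r]^d).
\]

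We split the sum according to whether $w\in[-2r/3\cdot,\cdot]$, i.e. $\|w\|_\infty\leq r/2$, or not.

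For $\|w\|_\infty\leq r/2$, we have $\P(w\leftrightarrow \Z^d\setminus[-r,r]^d)\leq \P(w\leftrightarrow \partial(w+[-r/2,r/2]^d))\leq Cr^{-2}$ by the Kozma--Nachmias one-arm estimate \cite{MR2748397}. The remaining sum satisfies $\sum_w T(0,w)T(w,x)\leq C\langle x\rangle^{-d+4}$ by \eqref{eq:two_point_assumption} and the standard convolution estimate for $d>4$.

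For $\|w\|_\infty> r/2$, we bound $T(w,x)\leq C\langle w\rangle^{-d+2}$ and $T(0,w)\leq C\langle w\rangle^{-d+2}$, using $\|x\|\leq r/4$ say. For the arm factor we use the bounded expected number of boundary connections \cite{hutchcroft2023high,panis2025sharp}. Concretely, we decompose $w$ into dyadic shells $\|w\|\asymp 2^j r$. There the arm probability is at most $1$, but more carefully the number of such $w$ is $(2^jr)^d$, giving a contribution $\sum_j (2^jr)^{d}(2^jr)^{-2d+4}\asymp r^{-d+4}$. Since $\langle x\rangle\leq r$, we have $r^{-d+4}\leq r^{-2}\langle x\rangle^{-d+4}\cdot (\langle x\rangle/r)^{d-4}r^{...}$, so this contribution is at most $C r^{-2}\langle x\rangle^{-d+4}$ because $r^{-d+4}\leq r^{-2} r^{-d+6}\leq r^{-2}\langle x\rangle^{-d+4}$ when $d>6$ and $\langle x\rangle\leq r$. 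Hence the crude bound with arm probability $\leq 1$ already suffices, and the boundary-connection estimate is not even needed.

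The main obstacle is justifying the three-arm decomposition rigorously: that a single branch vertex $w$ exists with the three connections occurring disjointly. This is exactly the tree-graph inequality argument of Aizenman--Newman, applied with one endpoint being a set rather than a point, and I would cite it in that form.
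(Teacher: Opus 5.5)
Your treatment of branch points with $\|w\|_\infty\le r/2$ is fine; it is exactly the paper's second term. The far region, however, breaks the argument. You correctly find that $\|w\|_\infty>r/2$ contributes order $r^{-d+4}$, but the comparison you then make is false.

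Indeed, $r^{-d+4}\le Cr^{-2}\langle x\rangle^{-d+4}$ is equivalent to $\langle x\rangle^{d-4}\le Cr^{d-6}$, which fails once $\langle x\rangle\gg r^{(d-6)/(d-4)}$. Your step $r^{-d+6}\le \langle x\rangle^{-d+4}$ goes the wrong way, since $\langle x\rangle\le r$ only gives $\langle x\rangle^{-d+4}\ge r^{-d+4}$. For example, at $\langle x\rangle=r/\log r$ the target is $r^{-d+2}(\log r)^{d-4}$, while your bound is $r^{-d+4}$. The trivial bound $\langle x\rangle^{-d+2}$ is also too large there, by a factor $(\log r)^2$.

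This cannot be repaired by estimating the far region more carefully. The right-hand side of your BK inequality is itself at least of order $r^{-d+4}$, because the terms with $w\notin[-r,r]^d$ have arm probability one. Even if you restrict to $w\in[-r,r]^d$, the $\asymp r^{d-1}$ points with $\|w\|_\infty=r$ have arm probability bounded below and already contribute order $r^{-d+3}$. So summing $T(0,w)T(w,x)$ over far branch points is intrinsically too lossy, and the claim that ``the boundary-connection estimate is not even needed'' is wrong.

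The missing input is precisely the estimate you set aside: $\sum_{e}\P(0\leftrightarrow e^-\text{ in }[-r,r]^d)\le C_1$, the sum running over oriented edges $e$ exiting $[-r,r]^d$ \cite{hutchcroft2023high}. The paper uses it by splitting not according to the location of $w$, but according to whether $0\leftrightarrow x$ inside $[-r/2,r/2]^d$ (with $\|x\|_\infty\le r/4$).
\begin{itemize}
\item If not, every open path from $0$ to $x$ leaves the box. Applying BK at the first exit edge $e$ bounds this probability by $\sum_e\P(0\leftrightarrow e^-\text{ in }[-r/2,r/2]^d)\,\P(e^+\leftrightarrow x)\le Cr^{-d+2}\le Cr^{-2}\langle x\rangle^{-d+4}$.
\item If so, take a simple open $0$--$x$ path inside $[-r/2,r/2]^d$ and a simple open path from $0$ out of $[-r,r]^d$. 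Let $w$ be the last vertex of the second path lying on the first. This gives the three disjoint connections with $w\in[-r/2,r/2]^d$ automatically, and your first-region computation finishes the proof.
\end{itemize}
This also shows that the three-arm decomposition you flagged as the main obstacle is elementary and needs no appeal to the tree-graph inequality.
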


\begin{proof}[Proof of \cref{lem:two_point_conditioned_on_arm}]
We will use the fact that if $\partial [-r,r]^d$ denotes the set of oriented edges of $\Z^d_L$ with  $e^-\in [-r,r]^d$ and $e^+\notin [-r,r]^d$ then there exists a constant $C_1$ such that
\begin{equation}
  \sum_{e\in \partial [-r,r]^d} \P(0\leftrightarrow e^- \text{ in $[-r,r]^d$}) \leq C_1
\label{eq:box_boundary}
\end{equation}
for every $r\geq 1$: this follows from \cite[Theorem 2.3]{hutchcroft2023high}, and the recent work \cite{panis2026reversing} shows that in fact this holds with $[-r,r]^d$ replaced by any finite set. (Both results apply when $d>6$ and \eqref{eq:two_point_assumption} holds.)

It suffices to consider the case $r\geq 4 \|x\|_\infty$, the claim following trivially from \eqref{eq:two_point_assumption} otherwise. 
We use the union bound 
\begin{multline}
\P_{p_c}(0\leftrightarrow x \text{ and } 0 \leftrightarrow \Z^d\setminus [-r,r]^d) \leq 
\P_{p_c}(0\leftrightarrow x \text{ only via $\Z^d\setminus [-r/2,r/2]^d$})
\\+ 
\P_{p_c}(0\leftrightarrow x \text{ in $[-r/2,r/2]^d$} \text{ and } 0 \leftrightarrow \Z^d\setminus [-r,r]^d).
\label{eq:two_point_arm_union_bound}
\end{multline}
For the first event, we use a further union bound and the BK inequality to write
\begin{multline}
  \P_{p_c}(0\leftrightarrow x \text{ only via $\Z^d\setminus [-r/2,r/2]^d$}) \\\leq \sum_{e\in \partial [-r/2,r/2]^d} \P(0\leftrightarrow e^- \text{ in $[-r/2,r/2]^d$}) \P(e^+\leftrightarrow x) \leq C_2 r^{-d+2}
  \label{eq:two_point_arm1}
\end{multline}
for some constant $C_2$, where we used \eqref{eq:two_point_assumption} and the assumption that $\|x\|_\infty\leq r/4$ to bound $\P(e^+\leftrightarrow x)=O(r^{-d+2})$ and used \eqref{eq:box_boundary} to bound the remaining sum over $e$.
For the other term in \eqref{eq:two_point_arm_union_bound}, we note that if the origin is connected to $x$ inside $[-r/2,r/2]^d$ and the origin is connected to the complement of $[-r,r]^d$ then there must exist a vertex $w$ in $[-r/2,r/2]^d$ such that the events $\{0\leftrightarrow w\}$, $\{w\leftrightarrow x\}$, and $\{w\leftrightarrow \Z^d\setminus [-r,r]^d\}$ occur disjointly. (Indeed, take a simple open path from $0$ to $x$ in $[-r/2,r/2]^d$ and from $0$ to $\Z^d\setminus [-r,r]^d$ and consider the last place the second path intersects the first path.) It follows by a union bound and the BK inequality that
\begin{multline}
  \P_{p_c}(0\leftrightarrow x \text{ in $[-r/2,r/2]^d$ and $0\leftrightarrow \Z^d\setminus [-r,r]^d$}) \\ \leq \sum_{w\in [-r/2,r/2]^d} T_{p_c}(0,w)T_{p_c}(w,x) \P_{p_c}(w\leftrightarrow \Z^d\setminus [-r,r]^d)
  \leq C_3 r^{-2}\langle x\rangle^{-d+4}
  \label{eq:two_point_arm2}
\end{multline}
for some constant $C_3$, where in the final inequality we used that $ \P_{p_c}(w\leftrightarrow \Z^d\setminus [-r,r]^d)=O(r^{-2})$ for every $w\in [-r/2,r/2]^d$ and bounded the remaining sum over $w$ by $\sum_{w\in [-r,r]^d} T_{p_c}(0,w)T_{p_c}(w,x) = O(G^{*2}(0,x))=O(\langle x \rangle^{-d+4})$.
The claim follows from \eqref{eq:two_point_arm_union_bound}, \eqref{eq:two_point_arm1}, and \eqref{eq:two_point_arm2} since $r^{-d+2}=O(r^{-2}\langle x \rangle^{-d+4})$ when $r\geq 4\|x\|_\infty$.
\end{proof}

\begin{proof}[Proof of \cref{thm:integrated_SBM}]
 We keep the proof brief as similar arguments are well-known in the literature; we refer the reader to e.g.\ \cite[Section I.5.4]{LRPpaper1} for a more detailed treatment of a similar theorem.
We write $\P=\P_{p_c}$ and $\E=\E_{p_c}$. 
As explained in \eqref{eq:canonical_measure_diagrams} and \eqref{eq:spatial_moment_asymptotics_intro}, it follows immediately from \cref{thm:k_point} that if we define $\mu_{K,r} = (A^2V r^4)^{-1}\sum_{x\in K} \delta_{x/r}$ and write $\mu[f]=\int f(x)\dif \mu(x)$ then
\begin{multline}
  AV r^2 \cdot \E\left[\prod_{i=1}^n \mu_{K,r}[f_i]\right] =
  AV r^2 \cdot \E\left[\prod_{i=1}^n \int f_i(x/r) \frac{\dif \mu_K}{A^2V r^4}\right] \\\to \idotsint G_\mathrm{SBM}(0,x_1,\ldots,x_n) \prod_{i=1}^n f_i(x) \dif x_1 \cdots \dif x_n 
  = \N\left[ \prod_{i=1}^n \mu[f_i] \right]
  \label{eq:spatial_moment_asymptotic_restate}
\end{multline}
as $r\to \infty$ for each collection of continuous, compactly supported functions $f_1,\ldots,f_n$. 
Although $\N$ is not a finite measure, the measure $\N_f$ with Radon-Nikodym derivative $\mu[\phi]/\N[\mu[\phi]]$ is a well-defined probability measure for each continuous, compactly supported, non-negative function $\phi$ that is not identically zero. If we define $\E_{\phi,r}$ by biasing $\E$ by $\mu_{K,r}[\phi]$ for the same function $\phi$, it follows from \eqref{eq:spatial_moment_asymptotic_restate} that 
\begin{equation}
  \E_{\phi,r}\left[\prod_{i=1}^n \mu_{K,r}[f_i]\right]=\frac{\E\left[\mu_{K,r}[\phi] \prod_{i=1}^n \mu_{K,r}[f_i]\right]}{\E[\mu_{K,r}[\phi]]} \to \N_\phi\left[ \prod_{i=1}^n \mu[f_i] \right]
\label{eq:phi_biased}
\end{equation}
as $r\to \infty$ for each sequence of continuous, compactly supported functions $f_1,\ldots,f_n$. Since $d>6>4$, we have by \cite[Lemma I.5.34]{LRPpaper1} that the canonical measure $\N$ satisfies the exponential-type estimate $\N[|\mu(f)|^m] \leq C(f)^m m!$ for each continuous compactly supported function $f$ and integer $m\geq 1$. Thus, by Carleman's criterion for finite-dimensional arrays \cite{petersen1982relation}, it follows from \eqref{eq:phi_biased} (applied to sequences in which we repeat each $f_i$ an arbitrary number of times in order to consider arbitrary mixed moments) that if $f_1,\ldots,f_n$ is any sequence of compactly supported continuous functions then the joint law of $(\mu_{K,r}[f_1],\ldots,\mu_{K,r}[f_n])$ under $\E_{\phi,r}$ converges to the joint law of $(\mu[f_1],\ldots,\mu[f_n])$ under $\N_\phi$.
On the other hand, integrating the estimate of \cref{lem:two_point_conditioned_on_arm} shows that
for each continuous, compactly supported, non-negative function $\phi$ that is not identically zero, there exists a constant $C(\phi)$ such that
\begin{equation}
  \P_{\phi,r}(\mu_{K,r}(\{x\in \R^d: \|x\|\geq \lambda \})\neq 0) \leq  C(\phi) \lambda^{-2}
\label{eq:arm_exponent_tightness}
\end{equation}
for every $\lambda,r \geq 1$. 
This provides the tightness needed to deduce that the law of the random measure $\mu_{K,r}$ under $\E_{\phi,r}$ converges weakly as $r\to\infty$ to the law of the random measure $\mu$ under $\N_\phi$ for each continuous, compactly supported, non-negative function $\phi$ that is not identically zero. 

Fix $\lambda>0$.
To see that this implies the claim concerning the volume tail, observe that if $M<\infty$ and $\phi_M$ is compactly supported, non-negative, bounded by 1, and equal to $1$ on $\{x \in \R^d :\|x\|\leq M\}$ then we have by the one-arm estimate $\P_{p_c}(0\leftrightarrow \Z^d\setminus [-r,r]^d) \asymp r^{-2}$ that
\[
  |\P(|K|\geq \lambda A^2 V r^4)-\P(\mu_{K,r}[\phi_M] \geq \lambda)| \leq \P(0\leftrightarrow \{x:\|x\|\geq Mr\})=O\left(\frac{1}{M^2 r^2}\right),
\]
while \eqref{eq:spatial_moment_asymptotic_restate} and the weak convergence established above yield 
(since  $\N_{\phi_M}(\mu[\phi_M]= \lambda)=0$) that
\begin{multline*}
\P(\mu_{K,r}[\phi_M] \geq \lambda) = \E\left[\mu_{K,r}[\phi_M]\right]\E_{\phi_M,r}\left[\frac{\1(\mu_{K,r}[\phi_M] \geq \lambda)}{\mu_{K,r}[\phi_M]}\right] 
\\
\sim \frac{1}{AVr^2} \N\left[\mu[\phi_M]\right]\N_{\phi_M} \left[\frac{\1(\mu[\phi_M] \geq \lambda)}{\mu[\phi_M]}\right] = \frac{\N(\mu[\phi_M]\geq \lambda )}{AV r^2}.
\end{multline*}
Since $M<\infty$ was arbitrary, it follows easily from these two estimates that
\begin{equation}
  \P(|K|\geq \lambda A^2 V r^4) \sim \frac{\N(\mu(\R^d)\geq \lambda )}{AVr^2} = \frac{\lambda^{-1/2}\N(\mu(\R^d)\geq 1)}{AVr^2}
\label{eq:volume_tail_asymptotics_proof}
\end{equation}
as $r\to\infty$, so that the claimed asymptotic estimate on the volume tail holds with $C_\mathrm{vol}=A^2 V$ and $C_\mathrm{prob}= AV/\N(\mu(\R^d)\geq 1)$.

The fact that the conditional law of $\mu_{K,r}$ on the event $\{|K|\geq \lambda A^2 V r^4\}$ converges to the conditional measure $\N(\mu \in \cdot \mid \mu(\R^d) \geq \lambda )$ follows easily from \eqref{eq:volume_tail_asymptotics_proof} and the weak convergence of biased measures established above. Indeed, for each $r\geq 1$, $M,\lambda>0$, and each bounded continuous function $F$ we can write
\begin{align*}
  \E[F(\mu_{K,r}) \mid \mu_{K,r}[\phi_M] \geq \lambda]&=
  \frac{\E\left[\1(\mu_{K,r}[\phi_M] \geq \lambda)F(\mu_{K,r})\right]}{\P(\mu_{K,r}[\phi_M] \geq \lambda)} \\
  &=\frac{\E[\mu_{K,r}[\phi_M]]}{\P(\mu_{K,r}[\phi_M] \geq \lambda)}\E_{\phi_M} \left[\frac{\1(\mu_{K,r}[\phi_M] \geq \lambda)F(\mu_{K,r})}{\mu_{K,r}[\phi_M]}\right]
  \\
&\sim \frac{\N[\mu[\phi_M]]}{\N(\mu[\phi_M] \geq \lambda)}\N_{\phi_M} \left[\frac{\1(\mu[\phi_M] \geq \lambda)F(\mu)}{\mu[\phi_M]}\right] = \N(F(\mu)\mid \mu[\phi_M]\geq \lambda)
\end{align*}
as $r\to\infty$ with $M,\lambda>0$ fixed, where we used that $\N(\mu[\phi_M] = \lambda)=0$ to apply the portmanteau theorem in the last line. The claim follows since the symmetric difference $\{|K|\geq \lambda A^2Vr^4\}\Delta \{\mu_{K,r}[\phi_M]\geq \lambda\}$ has probability much smaller than the probability of either $\{|K|\geq \lambda A^2Vr^4\}$ or $\{\mu_{K,r}[\phi_M]\geq \lambda\}$ when $M$ is a large constant and $r\to \infty$.
\end{proof}

\subsection{Scaling limits with the pivotal metric}

The goal of this section is to prove the following scaling limit theorem for the pivotal metric, which we use to prove \cref{thm:scaling_limit_intro} in the next section.

\begin{theorem}[Scaling limits for the pivotal metric]
\label{thm:scaling_limit_bb}
Let $d>6$ and $L\geq 1$ be such that critical percolation on $\mathbb{Z}^d_L$ satisfies the estimates \eqref{eq:two_point_assumption} and \eqref{eq:two_blob_assumption}. There exist positive constants $C_\text{\emph{prob}}$, $C_\text{\emph{vol}}$, and $C_\text{\emph{piv}}$ such that
 \[
   C_\mathrm{prob} r^2 \cdot \P_{p_c}\bigl(|K| \geq \lambda C_\mathrm{vol} r^4\bigr) \sim  \lambda^{-1/2} 
 \]
 and
\begin{equation*}
\P_{p_c}\left( \left(K,\frac{d_\text{\emph{piv}}}{C_\text{\emph{piv}} r^2}, \frac{\mu_K}{C_\text{\emph{vol}}r^4}, \frac{1}{r}\Phi_K,0 \right)\in \cdot \;\Bigg|\; |K| \geq \lambda C_\mathrm{vol} r^4 \right) 
\longrightarrow \mathbb{N}\Bigl((\sT,d,\nu,\Phi,o) \in \cdot \mid \nu(\sT) \geq \lambda\Bigr)
\end{equation*}
weakly in the weak array topology as $r\to \infty$ for each $\lambda>0$, where $\mathbb{N}$ denotes the canonical measure of the continuum random tree $(\sT,d,\nu,o)$ equipped with its Brownian embedding $\Phi$ into $\R^d$,  $\mu_K$ denotes the counting measure on $K$, and $\Phi_K$ denotes the inclusion map from $K$ to $\R^d$. The constants $C_\mathrm{prob}$ and $C_\mathrm{vol}$ are as in \cref{thm:integrated_SBM}, while $C_\mathrm{piv}=AG_\mathrm{piv}$ where $A$ is the amplitude and $G_\mathrm{piv}$ is the pivotal geodesic factor from \cref{thm:derivative}.
\end{theorem}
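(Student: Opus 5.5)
We propose to run the same method of moments used for \cref{thm:integrated_SBM}, but now for joint moments that also record pivotal distances. The volume tail statement is already given by \cref{thm:integrated_SBM}, so the task is the weak array convergence. By the biasing trick used there (biasing by $\mu_{K,r}[\phi]$ and using Carleman's criterion via \cref{lem:CRT_SBM_diagram_moment_formula,lem:CRT_SBM_Carleman}), it suffices to compute asymptotics of mixed moments of the following form. Take $k$ points $x_1,\ldots,x_k\in K$ (together with $x_0=0$), apply test functions of their rescaled positions, and multiply by products of rescaled pivotal distances $d_\mathrm{piv}(x_a,x_b)/(C_\mathrm{piv}r^2)$, each raised to some power.

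\textbf{Reduction to pivot-marked $k$-point functions.} Writing $d_\mathrm{piv}(x_a,x_b)$ as a sum over open edges pivotal for $x_a\leftrightarrow x_b$ expands each such moment into sums of probabilities of the following kind: $x_0,\ldots,x_k$ are connected, and prescribed edges $e_1,\ldots,e_m$ (with multiplicity) are open pivotals lying on prescribed branches of the spanning tree. These are the ``pattern'' $k$-point functions of \cref{thm:geodesic_bb_pointwise}. The key intermediate result is that such a probability is asymptotic to
\[
V^{k-1}A^{2k-1+m}(G_\mathrm{piv}/p_c)^m\, p_c^{m}
\]
times a Green's function tree sum. In that tree, each pivotal becomes a degree-$2$ internal vertex inserted on the appropriate branch. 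We would prove this exactly as \cref{thm:k_point} was proved. A pivotal $e=(a,a+z)$ is encoded as a two-blob $(\emptyset,0,z)$ with the two sides in distinct clusters, so \cref{thm:scheme_function,thm:scheme_function_variantW} supply the local factor $\sum_z\mathbf{W}(B_z)$, matching the derivative computation in \cref{thm:derivative}. The branch points are then handled by localizing the overcount factor through \cref{pro:CountBranchT}. Degenerate configurations are discarded using \cref{lem:DegenerateSum}, which is stated for trees with degree-$2$ internal vertices precisely for this purpose.

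\textbf{Identifying the limit.} After rescaling, the sums over positions of pivotals along a branch of length $\|x-y\|$ turn into integrals over the Brownian-embedded tree. This uses the identity $G^{*(m+1)}$ corresponding to the Green's function weighted by $t^m/m!$ along a branch. The resulting moments then match the CRT/SBM moment formula of \cref{lem:CRT_SBM_diagram_moment_formula}: the branch lengths $d(Y_a,Y_b)$ come with Lebesgue measure, and the normalization $C_\mathrm{piv}=AG_\mathrm{piv}$ is dictated by the fact that each inserted vertex contributes $AG_\mathrm{piv}$ relative to the bare branch. Convergence of moments under the $\phi$-biased measures, together with the exponential moment bound on $\N$ (Carleman), then gives convergence in law. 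Tightness in the spatial coordinates comes from \cref{lem:two_point_conditioned_on_arm} as before. Finally, we pass from biased measures to conditioning on $|K|\ge\lambda C_\mathrm{vol}r^4$ exactly as at the end of the proof of \cref{thm:integrated_SBM}.

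\textbf{Main obstacle.} We expect the hardest step to be establishing the pivot-marked $k$-point asymptotics uniformly enough to sum over pivotal locations. Two difficulties arise. First, pivotals may come within bounded distance of each other or of branch points, and we must show such configurations contribute negligibly. We would handle this with BK tree-graph bounds together with \cref{lem:DegenerateSum}. Second, the localization of the overcount factor must be made compatible with the pivotal constraints. Here the plan is to treat each pivotal as an extra vertex of a scheme and to reuse \cref{lem:PsiArms}(c) to rule out spare arms near pivotals. The count of pivotals is not affected by the branch-point multiplicity, because a pivotal edge is a single edge rather than a set of vertices of size $|\cS^\Psi|$.
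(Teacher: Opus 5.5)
Your pointwise step essentially reproduces the paper's \cref{thm:geodesic_bb_pointwise}: localized pivotality as a cylinder event, \cref{thm:scheme_function_variantW}, localization of the overcount factor, \cref{lem:DegenerateSum}, and identification of the local constant through the derivative computation. That part is fine. The gap is in the method-of-moments step: the moments you propose to match are infinite.

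You put test functions only on the positions of the sampled points and sum the pivotal edges over all of $\Z^d$. Already for two points the relevant quantity is $\sum_x f(x/r)\,\E_{p_c}[d_\mathrm{piv}(0,x)^m\mathbbm{1}(0\leftrightarrow x)]$. Its diagram is the $(m+1)$-gon $G^{*(m+1)}(0,x)$, which is $+\infty$ once $2(m+1)\geq d$, since pivotals at scale $R$ contribute $R^{dm}R^{-(d-2)(m+1)}=R^{2m+2-d}$. The lower bounds from \cref{thm:geodesic_bb_pointwise} show this divergence is real rather than an artifact of the BK bound. Your own identity exhibits it: $\int_0^\infty \frac{t^m}{m!}p_t(x,y)\,\dif t$ diverges for $m\geq d/2-1$.

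The limit object has the same problem. By the subdivided-tree formula, $\N\bigl[\int d(o,y)^m\phi(\Phi(y))\,\dif\nu(y)\bigr]=m!\int G_\mathrm{BM}^{*(m+1)}(0,y)\phi(y)\,\dif y=\infty$. Consequently \cref{lem:DegenerateSum} holds only vacuously and your moment asymptotics read $\infty\sim\infty$, so Carleman's criterion has nothing to act on. Note also that \cref{lem:CRT_SBM_Carleman} bounds moments of the truncated distance $d^{(\phi)}$, not of $d$, so it does not apply to the quantities you compute. Since the method of moments needs every $m$, the argument fails in every dimension $d>6$.

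What is missing is a spatial truncation of the metric, together with a separate step that removes it. The paper replaces $d_\mathrm{piv}$ by $d^{(\phi,r)}_\mathrm{piv}(x,y)=\sum_e\mathbbm{1}(e\text{ an open pivotal for }x\leftrightarrow y)\,\phi(e^-/r)$. In the limit it replaces $d$ by $d^{(\phi)}(x,y)=\int_0^{d(x,y)}\phi(\Phi(\gamma_{x,y}(t)))\,\dif t$, with $\phi$ compactly supported. Then every vertex of every diagram, including the degree-two ones, is integrated against a compactly supported function. These moments are finite and satisfy the factorial bound of \cref{lem:CRT_SBM_Carleman}.

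The method of moments then gives weak convergence of the truncated arrays, including $\mu_{K,r}[\phi]$. This holds first under the measures biased by $\mu_{K,r}[\phi]^{k-1}$ and $\mu[\phi]^{k-1}$, and then under conditioning on $\{\mu[\phi]\geq\lambda\}$. The bias must be the power $k-1$, not $\mu_{K,r}[\phi]$ itself; that power is what turns sums over $k-1$ cluster points into expectations over sampled points.

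The truncation is then removed at the level of weak convergence, not of moments. Take $\phi_M\equiv1$ on $\{\|x\|\leq M\}$. The one-arm bound and the volume tail give $\P_{p_c}(K\not\subseteq\{\|x\|\leq Mr\}\mid |K|\geq\lambda C_\mathrm{vol}r^4)=O(\lambda^{1/2}M^{-2})$ uniformly in $r$. Off this event three things hold:
\begin{itemize}
\item $d^{(\phi_M,r)}_\mathrm{piv}=d_\mathrm{piv}$ for all pairs in $K$, since every pivotal edge has $e^-\in K$;
\item sampling proportional to $\phi_M(\cdot/r)$ coincides with uniform sampling;
\item $\{\mu_{K,r}[\phi_M]\geq\lambda\}=\{|K|\geq\lambda C_\mathrm{vol}r^4\}$.
\end{itemize}
The analogous statements hold under $\N(\,\cdot\mid\nu(\sT)\geq\lambda)$. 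Your appeal to \cref{lem:two_point_conditioned_on_arm} is in the right spirit, but it has to be used to control the metric, and only after the truncated moment problem has been solved.
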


As with \cref{thm:integrated_SBM}, we will deduce this theorem from an appropriate \emph{pointwise} asymptotic estimate analogous to \cref{thm:k_point}.
Given $k\geq 2$ and $\ell \geq 0$, we define $\mathsf{TrSub}(k,\ell)$ to be a set of isomorphism class representatives of trees with leaves labelled $1,\ldots,k$, internal vertices of degree $2$ labelled $1,\ldots,\ell$, and $k-2$ unlabelled internal vertices of degree $3$.
Such a tree can be obtained from a tree in $\mathsf{Tr}(k)$ by subdividing edges $\ell$ times and labeling the new vertices that are created. As usual, for notational convenience we take every tree in $\mathsf{TrSub}(k,\ell)$ to have vertex set $\{1,\ldots,2k+\ell-2\}$ with the first $k$ vertices being the leaves in order of labeling and the next $\ell$ vertices being the degree $2$ internal vertices in order of labeling.
For each tree $\cT \in \mathsf{TrSub}(k,\ell)$ we will choose once and for all an ordering of the two neighbours of each internal vertex of degree two (this information is not part of the data used to determine isomorphism classes, but is used only for notational convenience below).

 Given $\cT \in \mathsf{TrSub}(k,\ell)$, vertices $x_1,\ldots,x_{k+\ell}$ and oriented edges $e_1,\ldots,e_\ell$ in $\Z^d_L$, 
  we define the event $\Psi_\cT^\mathrm{sub}(x_1,\ldots,x_{k+\ell},e_1,\ldots,e_\ell)$ to hold if there exist $x_{k+\ell+1},\ldots,x_{2k+\ell-2}$ such that $\Psi_\cT(x_1,\ldots,x_{2k+\ell-2})=\bigcirc_{i\sim j}\{x_i \leftrightarrow x_j\}$ holds and if $k+i$ is an internal vertex of $\cT$ of degree $2$ with ordered pair of neighbours $(j_1,j_2)$ as above, then $x_{k+i}+e_i$ is an open pivotal for the connection between $x_{j_1}$ and $x_{j_2}$. (Note that for technical convenience we do \emph{not} specify the direction in which the path from $x_{j_1}$ to $x_{j_2}$ crosses the edge $x_{k+i}+e_i$.)
We also define $\Psi_\cT^\mathrm{sub}(x_1,\ldots,x_{2k+\ell-2},e_1,\ldots,e_\ell)$ to be the event that this holds with a specific choice of $x_{k+\ell+1},\ldots,x_{2k+\ell-2}$.
We also define 
\[
  G_\cT^\mathrm{sub}(x_1,\ldots,x_{k+\ell}) := \sum_{x_{k+\ell+1},\ldots,x_{2k+\ell-2}\in \Z^d} \prod_{i\sim j}G(x_i,x_j) 
\]
and
\[
  G_{\cT,\mathrm{SBM}}^\mathrm{sub}(x_1,\ldots,x_{k+\ell}) := \idotsint \prod_{i\sim j}G_\mathrm{BM}(x_i,x_j)   \dif x_{k+\ell+1} \cdots \dif x_{2k+\ell-2}
\]
for each $\cT\in \mathsf{TrSub}(k,\ell)$ and each $x_1,\ldots,x_{k+\ell}$ in $\Z^d$ or $\R^d$ respectively.

\begin{thm}
\label{thm:geodesic_bb_pointwise}
Let $d>6$ and $L\geq 1$ be such that critical percolation on $\mathbb{Z}^d_L$ satisfies the estimates \eqref{eq:two_point_assumption} and \eqref{eq:two_blob_assumption}. For each $k\geq 2$, $\ell \geq 0$, $\cT \in \mathsf{TrSub}(k,\ell)$, and sequence of oriented edges $e_1,\ldots,e_\ell$ emanating from the origin
we have that
\begin{equation*}
  \P_{p_c}(\Psi^{\text{\emph{sub}}}_{\cT}(x_1,\ldots,x_{k+\ell},e_1,\ldots,e_\ell))
  \sim \left[\prod_{i=1}^\ell \frac{2p_c}{1-p_c}\mathbf{W}(\emptyset,0,e_i^+)\right] V^{k-2} A^{2k-3+\ell}G_\cT^\mathrm{sub}(x_1,\ldots,x_{k+\ell})
\end{equation*}
as $\min_{i\neq j}\|x_i-x_j\|\to \infty$, where $\mathbf{W}$ is the blob factor from \cref{thm:scheme_function}.
\end{thm}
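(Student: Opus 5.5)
We will prove \cref{thm:geodesic_bb_pointwise} by repeating the proof of \cref{thm:k_point} with a larger tree. The pivotal edges are treated as extra local structures, and their factors are computed from \cref{thm:scheme_function}.

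\textbf{Step 1: reduce to a fixed-branch-point sum.} We write the probability of $\Psi^{\mathrm{sub}}_\cT(x_1,\ldots,x_{k+\ell},e_1,\ldots,e_\ell)$ as a sum over the unlabelled degree-$3$ positions $x_{k+\ell+1},\ldots,x_{2k+\ell-2}$. Each summand is divided by an overcount factor $N$, the number of admissible choices of branch points. This is the same as \eqref{eq:ExpOvercountBranch}, except that the degree-$2$ vertices are now fixed labelled points and are not summed over.

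Degenerate configurations, in which two of the points $x_1,\ldots,x_{2k+\ell-2}$ are within distance $r$, are handled by the BK inequality and \cref{lem:DegenerateSum}. This applies because the leaves $\{1,\ldots,k\}$ together with the labelled points $\{k+1,\ldots,k+\ell\}$ contain all leaves of $\cT$. The resulting contribution is $o(G^{\mathrm{sub}}_\cT)$.

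For well-separated configurations, \cref{pro:CountBranchT} shows that $\Psi_\cT$-usually $N=\prod_{i}|\cS^\Psi_{R_1,R_2}(x_i)|$, the product running over the degree-$3$ vertices. Its proof uses only \cref{lem:PsiArms,lem:PathInBranchA,lem:PositionBranch}, and these lemmas are stated for arbitrary trees $\cT$. The degree-$2$ points are fixed, so they contribute nothing to $N$.

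\textbf{Step 2: localize the pivotal condition.} We claim that, $\Psi_\cT$-usually, the event that $x_{k+i}+e_i$ is an open pivotal between its two neighbours $x_{j_1},x_{j_2}$ is equivalent to a local event in $B_{R_2}(x_{k+i})$. The local event is: $x_{k+i}+e_i$ is open, and after removing it there are no two edge-disjoint crossings of the annulus joining the two sides. By \cref{lem:PsiArms}(c) there are exactly two arms near a degree-$2$ vertex, and by \cref{lem:PathInBranchA} every path from $x_{j_1}$ to $x_{j_2}$ must pass through $B_{R_1}(x_{k+i})$. Together these show that pivotality is decided locally.

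Once all events are local, we proceed as in the proof of \cref{thm:scheme_function_variantW}. The relevant local functionals are the indicator of local pivotality at each degree-$2$ point and $1/|\cS^\Psi_{R_1,R_2}|$ at each degree-$3$ point. The probability of the localized event, with these functionals inserted, factorizes as
\[
\prod_{\deg 3}C(R_1,R_2)\prod_{i}D_i(R_1,R_2)\,A^{2k-3+\ell}G_\cT(x_1,\ldots,x_{2k+\ell-2}) .
\]
The powers of $A$ come from the $2k-3+\ell$ edges of $\cT$. The factor $C(R_1,R_2)$ tends to $V$, since it is the same local factor as in the proof of \cref{thm:k_point}. Summing over the degree-$3$ positions then gives $G^{\mathrm{sub}}_\cT$.

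\textbf{Step 3 (main obstacle): identify $D_i=\lim D_i(R_1,R_2)$ with $\frac{2p_c}{1-p_c}\mathbf{W}(\emptyset,0,e_i^+)$.} Rather than computing the monochromatic $2$-arm IIC directly, we compare with a scheme. Take the $3$-scheme whose plan is a path with middle blob $(\emptyset,0,e_i^+)$ and endpoint blobs trivial, as in the proof of \cref{thm:derivative}. The event that $e=\{y,y+e_i^+\}$ is an open pivotal for $x\leftrightarrow z$ is the disjoint union of two events, one for each crossing direction. Each has probability
\[
\frac{p_c}{1-p_c}\,\P\bigl(x\leftrightarrow y\nleftrightarrow y+e_i^+\leftrightarrow z\bigr)
\]
up to relabelling, because opening a closed edge that separates two clusters makes it pivotal, and conversely.

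By \cref{thm:scheme_function} and \eqref{eq:two_point_assumption}, each of these probabilities is asymptotic to $\mathbf{W}(\emptyset,0,e_i^+)A^2G(x,y)G(y,z)$. Comparing with the $\ell=1$, $k=2$ instance of Step 2 identifies $D_i$, including the factor $2$ from the two orientations. Uniqueness of the limit in the Cauchy-criterion argument of \cref{lem:weaker_scheme_function} then carries this identification to general $k$ and $\ell$.

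The delicate point is checking that local pivotality together with the global edge-disjoint tree event reproduces exactly the distinct-cluster scheme event. This needs the no-extra-arm statement \cref{lem:PsiArms}(c) near $x_{k+i}$, together with the rerouting lemma \cref{lem:rerouting_spare_arm}.
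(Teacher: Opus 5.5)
Your proposal is correct and follows essentially the same route as the paper: you use the overcount formula with $N=\prod|\cS^\Psi_{R_1,R_2}(x_i)|$ over the degree-$3$ vertices (via the proof of \cref{pro:CountBranchT}), localize pivotality using \cref{lem:PsiArms}, factorize via \cref{thm:scheme_function_variantW} with the degree-$3$ factor converging to $V$, and identify the pivotal factor from the $k=2$, $\ell=1$ case using the $\frac{p_c}{1-p_c}$ trick and \cref{thm:scheme_function} as in the proof of \cref{thm:derivative}, with the factor $2$ coming from the crossing direction. The one point to tighten is your local event, which as written also holds for a dead-end edge hanging off $x_{k+i}$; like the paper's cylinder event $\mathscr{P}_r(e)$ (which uses a separate localization radius $r$), it should also require that both endpoints of $e$ reach the boundary of the ball by edge-disjoint open arms avoiding $e$.
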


\begin{proof}[Proof of \cref{thm:geodesic_bb_pointwise}]
For each $r\geq 1$ and oriented edge $e$ of $\Z^d_L$, let $\mathscr{P}_r(e)$ be the event that
 $e$ is open and the endpoints of $e$ are connected to the complement of $e^-+[-r,r]^d$ by edge-disjoint open paths disjoint from $e$, but
closing the edge $e$ and the edges outside $e^-+[-r,r]^d$ disconnect $e^-$ and $e^+$.
Let $\Psi_{\cT}^{\mathrm{sub},r}(x_1,\ldots,x_{k+\ell},e_1,\ldots,e_\ell)$ be the event obtained by modifying the definition of $\Psi_{\cT}^{\mathrm{sub}}$ by replacing the pivotality condition on the edge $x_{k+i}+e_i$ with the local pivotality condition.
Define $\Psi_{\cT}^{\mathrm{sub},r}(x_1,\ldots,x_{2k+\ell-2},e_1,\ldots,e_\ell)$ similarly.
As in  \eqref{eq:ExpOvercountBranch}, we can write
\begin{multline*}
  \P_{p_c}(\Psi^{\mathrm{sub},r}_{\cT}(x_1,\ldots,x_{k+\ell},e_1,\ldots,e_\ell)) \\= \!\!\!\!\sum_{x_{k+\ell+1},\ldots,x_{2k+\ell-2}\in \Z^d}  \E_{p_c}\left[\frac{\1\bigl(\Psi_{\cT}(x_1,\ldots,x_{2k+\ell-2}) \cap \bigcap_{i=1}^\ell \mathscr{P}_r(x_{k+i}+e_i)\bigr)}{N^{\mathrm{sub,r}}_\cT(x_1,\ldots,x_{k+\ell})}\right]
\end{multline*}
where $N^{\mathrm{sub,r}}_\cT(x_1,\ldots,x_{k+\ell})$ is the number of sequences of points $x_{k+\ell+1},\ldots,x_{2k+\ell-2}$ such that the event $\Psi_{\cT}^{\mathrm{sub},r}(x_1,\ldots,x_{2k+\ell-2})$ holds. Exactly the same proof used to establish \cref{pro:CountBranchT} also shows that 
$N^{\mathrm{sub,r}}_\cT(x_1,\ldots,x_{k+\ell}) = \prod_{i=k+\ell+1}^{2k+\ell-2} |\cS^\Psi_{R_1,R_2}(x_i)|$ with high probability on the event  $\Psi_{\cT}(x_1,\ldots,x_{2k+\ell-2})$, in our usual triple limit in which we first send $\min_{i\neq j}\|x_i-x_j\|\to\infty$, then send $R_2\to \infty$, then send $R_1\to \infty$. (Indeed, all that has changed is that the tree $\cT$ does not belong to $\mathsf{Tr}(k)$, which was not relevant to that proof.)

Since $\mathscr{P}_r(x_{k+i}+e_i)$ is a cylinder event, it follows from this together with \cref{thm:scheme_function_variantW} and \cref{lem:DegenerateSum} exactly as in the conclusion of the proof of \cref{thm:k_point} that there exist constants $\mathbf{C}'_r(e_i)$ such that
\[
  \P_{p_c}(\Psi^{\mathrm{sub},r}_{\cT}(x_1,\ldots,x_{k+\ell},e_1,\ldots,e_\ell)) \sim 
  \left[\prod_{i=1}^\ell \mathbf{C}'_r(e_i)\right] V^{k-2} A^{2k-3+\ell}G_\cT^\mathrm{sub}(x_1,\ldots,x_{k+\ell})
\]
as $\min_{i\neq j} \|x_i-x_j\|\to \infty$ with the radius $r\geq 1$ fixed.
 Moreover, if we define the event $\mathscr{P}^i=\mathscr{P}^i_\cT(x_1,\ldots,x_{k+\ell},e_1,\ldots,e_\ell)$ to hold when  $x_{k+i}+e_i$ is an open pivotal for the connection between the relevant vertices $x_{j_1}$ and $x_{j_2}$ as in the definition of $\Psi_{\cT}^\mathrm{sub}$, it follows straightforwardly from \cref{lem:PsiArms} that  
\[
  \lim_{r\to \infty}\lim_{\min_{i\neq j} \|x_i-x_j\|\to\infty}\frac{\P_{p_c}(\Psi_{\cT}(x_1,\ldots,x_{2k+\ell-2}) \cap (\mathscr{P}_r(x_{k+i_0}+e_{i_0}) \Delta \mathscr{P}^{i_0})}{G_\cT(x_1,\ldots,x_{2k+\ell-2})} =0
\]
for each $1\leq i_0\leq \ell$, where $\Delta$ denotes the symmetric difference,
since the event in question must entail an unforced connection of the form ruled out by \cref{lem:PsiArms}. It follows from this and \cref{lem:DegenerateSum} that the limit $\lim_{r\to \infty} \prod_{i=1}^\ell \mathbf{C}'_r(e_i)$ is well-defined and satisfies
\[
  \P_{p_c}(\Psi^{\mathrm{sub}}_{\cT}(x_1,\ldots,x_{k+\ell},e_1,\ldots,e_\ell)) \sim 
  \left[\lim_{r\to \infty}\prod_{i=1}^\ell \mathbf{C}'_r(e_i)\right] V^{k-2} A^{2k-3+\ell}G_\cT^\mathrm{sub}(x_1,\ldots,x_{k+\ell})
\]
as $\min_{i\neq j} \|x_i-x_j\|\to \infty$. The fact that $\mathbf{C}'(e)=\lim_{r\to\infty}\mathbf{C}'_r(e)$ is well-defined and equal to the normalized blob factor $\frac{2p_c}{1-p_c}\mathbf{W}(\emptyset,0,e^+)$ follows by comparing the case of this formula in which $\cT$ is a path of length two with that obtained by applying \cref{thm:scheme_function} as in the proof of \cref{thm:derivative}, with the factor of $2$ arising since we did not specify the direction in which the edge $x_{k+i}+e_i$ was crossed (and the blob factor $\mathbf{W}(\emptyset,0,e^+)$ is invariant under switching $0$ and $e^+$). \qedhere
\end{proof}

We now begin to explain how \cref{thm:geodesic_bb_pointwise}  can be used to prove  \cref{thm:scaling_limit_bb} via the analysis of appropriate moments.
A technical problem arises that these moments are often infinite when we work in the full-space, with e.g.\ $\E[ d_{\mathrm{piv}}(0,x)^m ]$ governed by the $(m+1)$-gon diagram $G^{*(m+1)}(0,x)$ and therefore infinite when $2(m+1)>d$.
To circumvent this issue, given $x,y\in \Z^d$ and a continuous, compactly-supported function $\phi:\R^d\to \R$, we write 
\[d^{(\phi,r)}_\mathrm{piv}(x,y)=\sum_{e\in E(\Z^d_L)}\1(\text{$e$ an open pivotal for $x\leftrightarrow y$})\phi(e^-/r).\]
Similarly, given an instance of $(\sT,o,\nu,\Phi)$ of the continuum random tree together with its Brownian embedding $\Phi$, we write
\[
  d^{(\phi)}(x,y) = \int_0^{d(x,y)} \phi(\Phi(\gamma_{x,y}(t))) \dif t
\]
where $\gamma_{x,y}$ is the geodesic between $x$ and $y$ in $\sT$ parameterised by arc-length. We first write down a formula for moments involving both the embedding and intrinsic distances for the scaling limit, and then derive the same formulas asymptotically for critical percolation using \cref{thm:geodesic_bb_pointwise}.

\medskip

Given $k\geq 2$, let $I_k=\{(i,j):1\leq i < j \leq k\}$, equipped with the lexicographic order. We define a \textbf{$k$-multi-index} to be a function $\alpha:I_k\to \{0,1,2,\ldots\}$. Given a $k$-multi-index $\alpha$, we define $|\alpha|=\sum_{(i,j)\in I_k} \alpha_{i,j}$ and for each $1\leq m \leq |\alpha|$ define $(L^-(m),L^+(m)) $ to be the minimal element of $I_k$ with $\sum_{(i,j) \leq (L^-(m),L^+(m))} \alpha_{i,j} \geq m$. We say that a tree $\cT\in \mathsf{TrSub}(k,|\alpha|)$ is \textbf{compatible} with $\alpha$ if for each $1\leq i \leq |\alpha|$, the internal degree-two vertex of $\cT$ labelled $i$ lies on the geodesic between the leaves $L^-(i)$ and $L^+(i)$ in $\cT$, and write $\mathsf{TrSub}(\alpha)$ for the set of trees in $\mathsf{TrSub}(k,|\alpha|)$ that are compatible with the multi-index $\alpha$. 

\begin{lemma}
\label{lem:CRT_SBM_diagram_moment_formula}
Suppose that $d>4$.
 Let $k\geq 2$, let $\alpha:I_k\to \{0,1,\ldots\}$ be a multi-index, and let $\phi$ and $f_1,\ldots,f_k$ be continuous, compactly supported functions. Setting $x_1$ to be the origin in $\R^d$, we have that
\begin{multline*}
  \N\left[\idotsint \prod_{1\leq i < j\leq k} d^{(\phi)}(x_i,x_j)^{\alpha_{i,j}}  \prod_{i=2}^k  f_i(\Phi(x_i)) \dif \nu(x_2) \cdots \dif \nu(x_k)\right]
  \\ = 
  \sum_{\cT\in \mathsf{TrSub}(\alpha)} \idotsint G_{\cT,\mathrm{SBM}}^\mathrm{sub}(x_1,\ldots,x_{k+|\alpha|}) \prod_{i=2}^{k+|\alpha|} f_i(x_i) \dif x_2 \cdots \dif x_{k+|\alpha|}
\end{multline*}
where we set $f_i=\phi$ for every $k+1 \leq i \leq k+|\alpha|$.
\end{lemma}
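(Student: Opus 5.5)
We compute the left-hand side by expanding the powers of $d^{(\phi)}$ and then integrating first over the Brownian embedding and afterwards over the tree structure. Each factor $d^{(\phi)}(x_i,x_j)^{\alpha_{i,j}}$ is written as an $\alpha_{i,j}$-fold integral
\[
\int_{[0,d(x_i,x_j)]^{\alpha_{i,j}}}\prod_{s}\phi(\Phi(\gamma_{x_i,x_j}(t_s)))\,\dif t_1\cdots\dif t_{\alpha_{i,j}} .
\]
The pair $(i,j)$ contributes the points with labels $m$ satisfying $(L^-(m),L^+(m))=(i,j)$. In this way the integrand becomes an integral over $|\alpha|$ points $y_1,\ldots,y_{|\alpha|}$ chosen according to length measure on $\sT$, with $y_m$ lying on the geodesic between $x_{L^-(m)}$ and $x_{L^+(m)}$. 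The expression is multilinear and nonnegative when $\phi,f_i\ge0$, so by splitting into positive and negative parts it suffices to treat $\phi,f_i\geq 0$. This makes every use of Fubini legitimate.

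Next we condition on the tree. For the canonical measure $\N$ of the CRT, the subtree spanned by the root $o=x_1$ and $k-1$ points sampled from $\nu$ has an explicit ``law'' under $\N[\,\cdot\,\dif\nu^{\otimes(k-1)}]$, namely the standard moment formula: it is a binary tree in $\mathsf{Tr}(k)$ whose $2k-3$ edge lengths are distributed as Lebesgue measure on $(0,\infty)^{2k-3}$, with vertex factor $1$ under our normalization. This is the $\phi\equiv$ const, $\alpha=0$ case of the claim and is consistent with \eqref{eq:canonical_measure_diagrams}. We take it as the starting point; in effect it is the definition of $\N$ through the Brownian snake moment formula with $\psi(u)=u^2/2$.

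Given this skeleton, placing $y_m$ by length measure on the geodesic $[x_{L^-(m)},x_{L^+(m)}]$ means choosing an edge of the skeleton on that geodesic and a position within it. Summing over these choices, including the relative order of the subdivision points on each edge, produces exactly the trees $\cT\in\mathsf{TrSub}(\alpha)$, and each subdivided edge again carries independent Lebesgue length. A degree-two vertex $m$ therefore lies on the geodesic between $L^-(m)$ and $L^+(m)$, which is the compatibility condition. The step to check carefully is that this correspondence is a bijection onto isomorphism classes: degree-two vertices are labelled, so no symmetry factors arise. The measure-zero event of coinciding points can be ignored.

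Finally we integrate out the Brownian embedding. Conditionally on the skeleton, $\Phi$ along each edge of length $t$ is Brownian motion with covariance $\frac1d I$, with independent increments across edges. Integrating the Brownian transition density over each edge length $t\in(0,\infty)$ gives $\int_0^\infty p_t(u,v)\,\dif t=G_\mathrm{BM}(u,v)$. Integrating out the unlabelled degree-three vertex positions then yields $G^{\mathrm{sub}}_{\cT,\mathrm{SBM}}$, tested against $\prod_i f_i$ with $f_i=\phi$ on the subdivision vertices. Finiteness of these integrals when $d>4$ follows from $\phi,f_i$ having compact support, since $G_\mathrm{BM}\asymp|x|^{-d+2}$ makes each internal-vertex integral locally integrable. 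This justifies the Tonelli exchanges, and summing over skeletons gives the claim. I expect the main obstacle to be the combinatorial bookkeeping identifying the subdivided skeletons with $\mathsf{TrSub}(\alpha)$, including the multiplicities coming from ordering points on a common edge; the analytic steps are routine.
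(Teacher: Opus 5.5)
Your proposal is correct and is essentially the paper's argument made explicit. The paper also rests solely on Le Gall's description of the subtree spanned by $o$ and $k-1$ points sampled from $\nu$ (unit weight per shape in $\mathsf{Tr}(k)$, independent Lebesgue edge lengths, Brownian motion along edges) and calls the lemma a direct manifestation of it; you additionally write out the subdivision bijection onto $\mathsf{TrSub}(\alpha)$ and the edge-length integrals that produce $G_\mathrm{BM}$.

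Two minor corrections:
\begin{itemize}
\item The reduced-tree law carries more information than the $\alpha=0$ case of the lemma, which only records leaf positions, so you should cite it as Le Gall's theorem rather than identify it with that case.
\item Finiteness matters only for your reduction to signed functions; Tonelli needs none when $\phi,f_i\geq 0$. Where it is needed, it depends on integrability at infinity of the unlabelled degree-three vertices, not on local integrability.
\end{itemize}
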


\begin{proof}[Proof of \cref{lem:CRT_SBM_diagram_moment_formula}]
It follows from \cite[Chapter III, Theorem 4]{le1999spatial} that 
 if we ``sample'' $(\sT,\nu,o)$ from the biased measure $\N^k$ defined by $\dif \N^k/\dif \N=\frac{1}{|\mathsf{Tr}(k)|}\nu(\sT)^{k-1}$ then sample points $X_2,\ldots,X_k \in \sT$ independently at random from the probability measure $\nu/\nu(\sT)$, the subtree of $\sT$ spanned by the geodesics between the points $X_=o,\ldots,X_k$ has the same ``distribution'' as if we picked a combinatorial tree $\cT\in \mathsf{Tr}(k)$ uniformly at random then assigned each edge an independent Lebesgue$[0,\infty)$ length. (The scare quotes here are to remind the reader that $\N$ is not a probability measure, so we cannot really ``sample'' from it.) The lemma is a direct manifestation of this relationship.
\end{proof}

\begin{lemma}
\label{lem:bb_moment_asymptotics}
Let $d>6$ and $L\geq 1$ be such that critical percolation on $\mathbb{Z}^d_L$ satisfies the estimates \eqref{eq:two_point_assumption} and \eqref{eq:two_blob_assumption}.
 Let $k\geq 2$, let $\alpha:I_k\to \{0,1,\ldots\}$ be a multi-index, and let $\phi$ and $f_1,\ldots,f_k$ be continuous, compactly supported functions. If we set $x_1=0$ then
\begin{multline*}
  \E_{p_c}\left[ \frac{1}{(A^2V r^4)^{k-1}}\sum_{x_2,\ldots,x_k\in K} \prod_{1\leq i < j\leq k} \left(\frac{d^{(\phi,r)}_\mathrm{piv}(x_i,x_j)}{AG_\mathrm{piv}r^2}\right)^{\alpha_{i,j}} \prod_{i=2}^k  f_i(x_i/r)\right]
  \\ = \frac{1}{AVr^2} \left(\N\left[\idotsint \prod_{1\leq i < j\leq k} d^{(\phi)}(x_i,x_j)^{\alpha_{i,j}}  \prod_{i=2}^k  f_i(\Phi(x_i)) \dif \nu(x_2) \cdots \dif \nu(x_k)\right] +o(1)\right)
\end{multline*}
as $r\to \infty$.
\end{lemma}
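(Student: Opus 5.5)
We expand the left-hand side as a sum over points and pivotal edges, apply the pointwise asymptotics of \cref{thm:geodesic_bb_pointwise} term by term, and then compare the resulting Riemann sums with the continuum formula of \cref{lem:CRT_SBM_diagram_moment_formula}.

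\textbf{Step 1: expansion.} Write each factor $d^{(\phi,r)}_\mathrm{piv}(x_i,x_j)^{\alpha_{i,j}}$ as a sum over ordered $\alpha_{i,j}$-tuples of open pivotal edges $e$ for $x_i\leftrightarrow x_j$, weighted by $\phi(e^-/r)$. Group the $|\alpha|$ edges according to the labelling $L^\pm(m)$. The left-hand side becomes a sum over $x_2,\ldots,x_k$, base points $z_1,\ldots,z_{|\alpha|}$, and oriented edges $e_m$ emanating from $0$, namely $z_m+e_m$.

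Tuples in which two of the chosen pivotals coincide, or lie at bounded distance from one another or from some $x_i$, form a lower-order diagonal contribution. We drop it at the end using a BK/tree-graph bound together with \cref{lem:DegenerateSum}.

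For distinct, well-separated pivotals, the event that all $x_i$ are connected with the prescribed edges pivotal for the prescribed pairs decomposes according to the topology of the spanning tree. A pivotal for $x_i\leftrightarrow x_j$ lies on the tree geodesic between leaves $i$ and $j$, so the event is a disjoint union over $\cT\in\mathsf{TrSub}(\alpha)$ of the events $\Psi^{\mathrm{sub}}_\cT$. To make this literal we must handle the ordering of the pivotals along each branch and the case where several pivotals are assigned to the same branch. Up to $\Psi_\cT$-rare events, \cref{pro:CountBranchT} and \cref{lem:PathInBranchA} show that the tree $\cT$ is determined and the overcount is trivial, since pivotals are unique rather than multiply counted. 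This gives the expansion
\[
\sum_{\cT\in\mathsf{TrSub}(\alpha)}\sum_{x,z,e}\P_{p_c}\bigl(\Psi^{\mathrm{sub}}_\cT(x_1,\ldots,x_k,z_1,\ldots,z_{|\alpha|},e_1,\ldots,e_{|\alpha|})\bigr)\prod_i f_i(x_i/r)\prod_m\phi(z_m/r)
\]
plus an error.

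\textbf{Step 2: pointwise asymptotics.} Apply \cref{thm:geodesic_bb_pointwise}. Summing $\frac{2p_c}{1-p_c}\mathbf{W}(\emptyset,0,e^+)$ over the oriented edges $e$ emanating from $0$ gives the factor $G_\mathrm{piv}$ from the proof of \cref{thm:derivative}. Care is needed with orientation conventions here: pivotals are counted once as unoriented edges, which is exactly what produces the factor $2$ in that theorem. Each summand is therefore asymptotic to $G_\mathrm{piv}^{|\alpha|}V^{k-2}A^{2k-3+|\alpha|}G_\cT^\mathrm{sub}$.

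Rescaling the sum over points by $r$, $G$ becomes $r^{-(d-2)}G_\mathrm{BM}$ and each sum becomes $r^d$ times an integral. The powers of $r$ work out to $r^{-2}\cdot r^{4(k-1)}\cdot r^{2|\alpha|}$. Together with the normalizations $(A^2Vr^4)^{k-1}$ and $(AG_\mathrm{piv}r^2)^{|\alpha|}$, this leaves exactly $(AVr^2)^{-1}$ times the integral in \cref{lem:CRT_SBM_diagram_moment_formula}.

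\textbf{Step 3: uniformity, the main obstacle.} Step 2 converts asymptotic equivalence into convergence of the sums, and this requires uniformity, handled by dominated convergence. Split into two regions. In the region where all points, including internal branch points, are pairwise at distance at least $\delta r$, the asymptotics hold uniformly. The complementary region, where some pair is closer than $\delta r$ (or some point leaves a large ball), must contribute $o(1)$ times the main term as $\delta\to0$. We bound it by BK: the whole sum is dominated by diagrams $\sum\prod T$ with pivotal points restricted to the support of $\phi$ scaled by $r$.

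The pivotal points must be confined to the compact support of $\phi$, since otherwise polygon diagrams diverge. Integrability near collisions in the continuum follows because $d>6$. The edges incident to the root $x_1=0$ have no compact cutoff from $f$, but the tree structure with all other vertices compactly supported still makes the continuum integral finite. This uniformity argument, a version of \cref{lem:DegenerateSum} at scale $\delta r$ with compact cutoffs, is where I expect most of the work; the combinatorial identification in Step 1 is the other delicate point.
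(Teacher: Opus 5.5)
Your proposal is correct and follows essentially the same route as the paper. Both arguments expand the powers of $d^{(\phi,r)}_\mathrm{piv}$ into sums over marked pivotals, group them by $\cT\in\mathsf{TrSub}(\alpha)$ (absorbing coinciding pivotals and ambiguous trees into a $1+o(1)$ factor), apply \cref{thm:geodesic_bb_pointwise} and \cref{lem:DegenerateSum}, and identify the rescaled Riemann sum with the continuum diagrams of \cref{lem:CRT_SBM_diagram_moment_formula}.

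One piece of bookkeeping should be stated more carefully. Summing $\frac{2p_c}{1-p_c}\mathbf{W}(\emptyset,0,e^+)$ over $e$ gives $2G_\mathrm{piv}$, not $G_\mathrm{piv}$; this factor of $2$ is cancelled by the double counting of each unoriented pivotal, which the paper records as an explicit factor $2^{|\alpha|}$ in the denominator.

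Your Step 3 spells out the uniformity and dominated-convergence argument that the paper leaves implicit in its appeal to \cref{lem:DegenerateSum}.
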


\begin{proof}[Proof of \cref{lem:bb_moment_asymptotics}]
Let $E_0^\rightarrow$ denote the set of oriented edges emanating from the origin in $\Z^d_L$.
Expanding the powers of the pivotal distance writes the left-hand side as a sum over ordered collections of marked pivotal vertices lying on the backbones between the sampled cluster points. 
Grouping these terms according to the tree obtained after subdividing the corresponding geodesic segments gives
\begin{align*}
  &\E_{p_c}\left[ \frac{1}{(A^2V r^4)^{k-1}}\sum_{x_2,\ldots,x_k\in K} \prod_{1\leq i < j\leq k} \left(\frac{d^{(\phi,r)}_\mathrm{piv}(x_i,x_j)}{AG_\mathrm{piv}r^2}\right)^{\alpha_{i,j}} \prod_{i=2}^k  f_i(x_i/r)\right]
\\
&= (1+o(1))
\sum_{\cT \in \mathsf{TrSub}(\alpha)}\sum_{x_2,\ldots,x_{k+|\alpha|}\in \Z^d} \sum_{e_1,\ldots,e_{|\alpha|}\in E_0^\rightarrow}
\frac{
\P_{p_c}(\Psi^{\text{sub}}_{\cT}(x_1,\ldots,x_{k+|\alpha|},e_1,\ldots,e_{|\alpha|}))
}
{
2^{|\alpha|}A^{2k+|\alpha|-2} V^{k-1} G_\mathrm{piv}^{|\alpha|} r^{4k+2|\alpha|-4}
}
\prod_{i=2}^{k+|\alpha|} f_i\!\left(\frac{x_{i}}{r}\right) 
  \\&=
  \frac{1+o(1)}{AV}\sum_{\cT\in \mathsf{TrSub}(\alpha)} 
  \frac{1}{r^{4k+2|\alpha|-4}}\sum_{x_2,\ldots,x_{k+|\alpha|}} G_{\cT}^\mathrm{sub}(x_1,\ldots,x_{k+|\alpha|})\prod_{i=2}^{k+|\alpha|} f_i\!\left(\frac{x_{i}}{r}\right)
    \\&=
   \frac{1+o(1)}{AVr^2}\sum_{\cT\in \mathsf{TrSub}(\alpha)}
  \idotsint G_{\cT,\mathrm{SBM}}^\mathrm{sub}(x_1,\ldots,x_{k+|\alpha|})\prod_{i=2}^{k+|\alpha|} f_i(x_i) 
  \dif x_2\cdots \dif x_{k+|\alpha|}
\end{align*}
 with $f_i=\phi$ for $k+1\leq i\leq k+|\alpha|$ as above, where we applied \cref{thm:geodesic_bb_pointwise} and \cref{lem:DegenerateSum} in the penultimate line and write $E_0^\rightarrow$ for the set of oriented edges emanating from the origin in $\Z^d_L$. Here the the $o(1)$ in the second line accounts for the situation in which different pivotal edges coincide or the choice of tree is ambiguous, factor of $2^{|\alpha|}$ accounts for the fact we did not specify the direction in which the edge $x_{k+i}+e_i$ is crossed in the definition of 
$\Psi^{\text{sub}}_{\cT}(x_1,\ldots,x_{k+|\alpha|},e_1,\ldots,e_{|\alpha|})$ and
  $AG_\mathrm{piv} = \frac{p_cA}{1-p_c} \sum_{e\in E_0^\rightarrow} \mathbf{W}(\emptyset,0,e^+)$ is the normalizing constant arising from \cref{thm:geodesic_bb_pointwise}; the terms in which two marked vertices have small distance are negligible by \cref{lem:DegenerateSum}. The preceding display is then identified with the right-hand side of the lemma by \cref{lem:CRT_SBM_diagram_moment_formula}.
\end{proof}

Our next lemma gives bounds on moments in the scaling limit implying via Carleman's condition that this scaling limit is determined by these moments.

\begin{lemma} If $d>4$ then for each continuous, compactly-supported function $\phi:\R^d \to \R$ there exists a constant $C(\phi)$ such that
\label{lem:CRT_SBM_Carleman}
\begin{equation*}
  \N\left[\idotsint \prod_{1\leq i < j\leq k} d^{(\phi)}(x_i,x_j)^{\alpha_{i,j}}  \prod_{i=2}^k  \phi(\Phi(x_i)) \dif \nu(x_2) \cdots \dif \nu(x_k)\right]
  \\ \leq C(\phi)^{k+|\alpha|} (k+|\alpha|)!
\end{equation*}
for every $k\geq 2$ and multi-index $\alpha:I_k \to \{0,1,\ldots\}$.
\end{lemma}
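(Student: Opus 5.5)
Since \cref{lem:CRT_SBM_diagram_moment_formula} already expresses the left-hand side as an explicit diagrammatic integral, the plan is to bound that integral directly.

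\textbf{Step 1: bound the integrand.} Bounding $|\phi|\leq \|\phi\|_\infty \1_{B}$, where $B$ is a ball containing $\operatorname{supp}\phi$, the left-hand side is at most
\[
\|\phi\|_\infty^{k-1+|\alpha|}\sum_{\cT\in \mathsf{TrSub}(\alpha)} \idotsint G_{\cT,\mathrm{SBM}}^\mathrm{sub}(0,x_2,\ldots,x_{k+|\alpha|}) \prod_{i=2}^{k+|\alpha|} \1_B(x_i)\dif x_2\cdots \dif x_{k+|\alpha|}.
\]
So there are two things to control: the number of trees in $\mathsf{TrSub}(\alpha)$, and a uniform bound on each diagram integral.

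\textbf{Step 2: count the trees.} A tree in $\mathsf{TrSub}(k,|\alpha|)$ is obtained from one of the $(2k-5)!!$ trees in $\mathsf{Tr}(k)$ by inserting the $|\alpha|$ labelled degree-two vertices one at a time. When the $m$-th vertex is inserted there are at most $2k-3+m$ edges available for it. This gives
\[
|\mathsf{TrSub}(\alpha)| \leq (2k-5)!!\,\frac{(2k-3+|\alpha|)!}{(2k-3)!}\leq C^{k+|\alpha|}(k+|\alpha|)!,
\]
using $(2k-5)!!\leq 2^k k!$ and $(a+b)!\leq 2^{a+b}a!\,b!$.

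\textbf{Step 3: bound a single diagram.} The goal is an estimate of the form $C(\phi)^{k+|\alpha|}$, uniform in the tree shape. I would integrate out the vertices leaf-first, using the following local estimates for $d>4$:
\[
\sup_y\int_B G_\mathrm{BM}(y,x)\dif x\leq C_B,\qquad \sup_y \int_{\R^d} G_\mathrm{BM}(y,w)\,G_\mathrm{BM}(w,z)\,G_\mathrm{BM}(w,z')\,\dif w\leq C\,(\cdots).
\]
Every leaf and every degree-two vertex carries the indicator $\1_B$, so its integral is bounded by $C_B$ uniformly in the position of its neighbour. Concretely, each step removes a leaf $x_i$ attached to a vertex $u$ via $\sup_u\int_B G_\mathrm{BM}(u,x_i)\dif x_i\le C_B$. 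When a degree-two vertex becomes a leaf, it is handled the same way.

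\textbf{Main obstacle: the unconstrained branch points.} The degree-three internal vertices $x_{k+|\alpha|+1},\ldots$ are integrated over all of $\R^d$. In the leaf-first order, each of them eventually becomes a leaf that carries no indicator, and $\int_{\R^d} G_\mathrm{BM}(u,w)\dif w=\infty$. The fix is to remove them in pairs. By the time such a branch point $w$ is processed, both of its two children have already been integrated out, and I would exploit this: instead of bounding each child by a constant, keep the decay of one of them,
\[
\int_B G_\mathrm{BM}(w,x)\dif x\leq C_B\langle w\rangle^{-d+2}.
\]
This is where the choice of estimate matters, so the induction hypothesis must be stated for the right quantity. I would prove by induction on subtrees that for a rooted subtree with root $u$, $a$ marked vertices and $b$ branch points,
\[
F(u):=\int G\cdots\leq C^{a+b}\min\{1,\langle u\rangle^{-d+2}\}.
\]
The induction step is the following convolution bound:
\[
\int_{\R^d} G_\mathrm{BM}(u,w)\min\{1,\langle w\rangle^{-d+2}\}^2\dif w\leq C\min\{1,\langle u\rangle^{-d+2}\}.
\]
This holds because $\langle w\rangle^{-2d+4}$ is integrable when $d>4$, and convolving an integrable, rapidly decaying function with the Green's function decays like $\langle u\rangle^{-d+2}$. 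For a degree-two marked vertex the same bound holds with $\1_B$ in place of the square. This is exactly where $d>4$ enters.

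Rooting the tree at $x_1=0$ gives a bound of $C^{k+|\alpha|}$ per diagram. Combining this with the tree count from Step 2, and absorbing $\|\phi\|_\infty$ and $C_B$ into $C(\phi)$, yields the claimed bound $C(\phi)^{k+|\alpha|}(k+|\alpha|)!$.
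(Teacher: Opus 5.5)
Your proof is correct, but it follows a different route from the paper's.

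\textbf{The paper's route.} The paper first removes every degree-two vertex using
\[
\int_{\operatorname{Supp}(\phi)} G_\mathrm{BM}(x,z)G_\mathrm{BM}(z,y)\dif z\le C_1(\phi)G_\mathrm{BM}(x,y),
\]
which holds already for $d>2$. This bounds each diagram for $\cT\in\mathsf{TrSub}(\alpha)$ by $C_1(\phi)^{|\alpha|}$ times the diagram of the underlying binary tree $\cT'\in\mathsf{Tr}(k)$. It then counts at most $2^{|\alpha|}(k+|\alpha|)!/k!$ subdivisions of each $\cT'$. Finally it imports the bound $C_2(\phi)^k k!$ on the pure mass moments from \cite[Lemma I.5.34]{LRPpaper1}, which is the only place $d>4$ enters.

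\textbf{Your route.} You count all of $\mathsf{TrSub}(\alpha)$ at once. You then bound every diagram by $C^{k+|\alpha|}$ with a single post-order induction from the root $x_1=0$. The induction uses the ansatz that each hanging subtree contributes $\lesssim\langle u\rangle^{-d+2}$ as a function of its parent's position $u$.
\begin{itemize}
\item Leaves and subdivision vertices are handled by $\int_B G_\mathrm{BM}(u,x)\dif x\lesssim\langle u\rangle^{-d+2}$.
\item Branch points are where $d>4$ enters, through $\int G_\mathrm{BM}(u,w)\langle w\rangle^{-2d+4}\dif w\lesssim\langle u\rangle^{-d+2}$. This holds because $\langle w\rangle^{-2d+4}$ is integrable iff $d>4$, and the region $\|w-u\|<\|u\|/2$ contributes $\|u\|^{-2d+6}\le\|u\|^{-d+2}$.
\end{itemize}

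\textbf{Comparison.} Your argument is self-contained and in effect reproves the cited lemma, with one uniform mechanism for all vertex types. The paper's argument is shorter and makes explicit that the distance moments are controlled by the mass moments at a cost of only $C^{|\alpha|}$ times a combinatorial factor.

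\textbf{Two small points.}
\begin{itemize}
\item Your remark about keeping the decay of only \emph{one} child is misleading. Keeping a single factor $\langle w\rangle^{-d+2}$ gives only $\int G_\mathrm{BM}(u,w)\langle w\rangle^{-d+2}\dif w\asymp\langle u\rangle^{-d+4}$, and the induction would not close. Your displayed induction step correctly uses the decay of both children (the square), so the proof is fine as formalized.
\item The moment formula of \cref{lem:CRT_SBM_diagram_moment_formula} is stated for continuous functions. Apply it to $|\phi|$, or to a continuous majorant, and only afterwards bound pointwise by $\|\phi\|_\infty\1_B$.
\end{itemize}
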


\begin{proof}[Proof of \cref{lem:CRT_SBM_Carleman}]
It suffices to consider the case that $\phi$ is non-negative and bounded by $1$. Let $\operatorname{Supp}(\phi)$ be the support of $\phi$, and note that there exists a constant $C_1(\phi)=C_1(\operatorname{Supp}(\phi))$ such that
\[
  \int_{\operatorname{Supp}(\phi)} G_\mathrm{BM}(x,z) G_\mathrm{BM}(z,y) \dif z \leq C_1(\phi) G_\mathrm{BM}(x,y)
\]
for every $x,y\in \R^d$. Indeed, if we decompose $\operatorname{Supp}(\phi)=\{z\in \operatorname{Supp}(\phi): \|z-x\| \leq \|z-y\| \}\cup\{z\in \operatorname{Supp}(\phi): \|z-y\| \leq \|z-x\| \}= S_1 \cup S_2$ according to whether a point is closer to $x$ or to $y$ then 
\begin{multline*}
  \int_{\operatorname{Supp}(\phi)} G_\mathrm{BM}(x,z) G_\mathrm{BM}(z,y) \dif z \\ \leq C G_\mathrm{BM}(x,y) \int_{S_1} G_\mathrm{BM}(x,z) \dif z + C G_\mathrm{BM}(x,y) \int_{S_2} G_\mathrm{BM}(y,z) \dif z \leq C_1(\phi) G_\mathrm{BM}(x,y)
\end{multline*}
for some constant $C_1(\phi)$ as claimed (with $C$ a constant depending only on $d$). As such, when we expand the moments of $\N$ using the diagrammatic formula of \cref{lem:CRT_SBM_diagram_moment_formula}, we can always eliminate each internal subdivided edge at the cost of introducing a factor of $C_1(\phi)$, meaning that
\begin{multline*}
  \idotsint G_{\cT,\mathrm{SBM}}^\mathrm{sub}(x_1,\ldots,x_{k+|\alpha|}) \prod_{i=2}^{k+|\alpha|} \phi(x_i) \dif x_2 \cdots \dif x_{k+|\alpha|} \\\leq C_1(\phi)^{|\alpha|} \idotsint G_{\cT',\mathrm{SBM}}(x_1,\ldots,x_{k}) \prod_{i=2}^{k} \phi(x_i) \dif x_2 \cdots \dif x_{k}
\end{multline*}
for every $k\geq 2$, multi-index $\alpha:I_k\to \{0,1,\ldots\}$, and $\cT\in \mathsf{TrSub}(\alpha)$,
where $\cT'\in \mathsf{Tr}(k)$ is the tree obtained from $\cT \in \mathsf{TrSub}(\alpha)$ by replacing each maximal path of degree-two internal vertices by a single edge. Now, for each tree $\cT'\in \mathsf{Tr}(k)$ there are at most $(2k-3)(2k-2)\cdots(2k+|\alpha|-4) \leq 2^{|\alpha|} \frac{(k+|\alpha|)!}{k!}$ trees $\cT\in \mathsf{TrSub}(\alpha)$ obtained as subdivisions of $\cT$, and it follows that
\begin{multline*}
  \N\left[\idotsint \prod_{1\leq i < j\leq k} d^{(\phi)}(x_i,x_j)^{\alpha_{i,j}}  \prod_{i=2}^k  \phi(\Phi(x_i)) \dif \nu(x_2) \cdots \dif \nu(x_k)\right] \\\leq 2^{|\alpha|}C_1(\phi)^{|\alpha|} \frac{(k+|\alpha|)!}{k!}
  \N\left[\idotsint  \prod_{i=2}^k  \phi(\Phi(x_i)) \dif \nu(x_2) \cdots \dif \nu(x_k)\right].
\end{multline*}
(So far we have only used that $d>2$.)
It is established in \cite[Lemma I.5.34]{LRPpaper1} that the moment $\N[\idotsint  \prod_{i=2}^k  \phi(\Phi(x_i)) \dif \nu(x_2) \cdots \dif \nu(x_k)]$ is bounded by an expression of the form $C_2(\phi)^k k!$ when $d>4$, from which the result now follows.
\end{proof}

\begin{proof}[Proof of \cref{thm:scaling_limit_bb}]
Fix $\lambda>0$.
Let $C_\mathrm{piv}=G_\mathrm{piv}A$ where $G_\mathrm{piv}$ is as in \cref{thm:derivative} and $A$ is as in \eqref{eq:two_point_assumption}.
We begin by proving a compactly supported version of the claim. Let  $\mu_{K,r}=(A^2Vr^4)^{-1}\sum_{x\in K}\delta_{x/r}$, recall that $\mu$ denotes the pushforward $\mu=\Phi_*\nu$, and let $\phi$ be continuous, compactly supported, non-negative, and not identically zero.
For each integer $k\geq 2$, define a probability measure $\P_{\phi,r,k}$ by biasing $\P_{p_c}$ by $\mu_{K,r}[\phi]^{k-1}$, and define $\N_{\phi,k}$ analogously by biasing $\N$ by $\mu[\phi]^{k-1}$. Conditional on the cluster $K$, let $X_2,X_3,\ldots,X_k$ be sampled independently from the probability measure on $K$ with probability mass function proportional to $\phi(x/r)$. Similarly, conditional on the sample $(\sT,d,\nu,\Phi,o)$ from $\N_{\phi,k}$, let $Y_2,Y_3,\ldots,Y_k$ be chosen independently at random from the probability measure obtained by multiplying $\nu$ by the density $\phi\circ \Phi$. We also set $X_1=Y_1=0$.

Observe that moments involving these random points can be written in terms of moments of the unbiased measure in which points are summed over rather than chosen at random, with e.g.\
\begin{align*}
  &\E_{\phi,r,k}\left[\prod_{1\leq i < j \leq k} \left(\frac{d^{(\phi,r)}_\mathrm{piv}(X_i,X_j)}{C_\mathrm{piv}r^2}\right)^{\alpha_{i,j}} \prod_{i=2}^k P_i(X_i/r)\right] 
  \\
  &\hspace{3cm}=  
  \E_{\phi,r,k}\left[ \frac{(A^2 V r^4)^{-k+1}}{\mu_{K,r}[\phi]^{k-1}} \sum_{x_2,\ldots,x_k \in K} \prod_{1\leq i < j \leq k} \left(\frac{d^{(\phi,r)}_\mathrm{piv}(x_i,x_j)}{C_\mathrm{piv}r^2}\right)^{\alpha_{i,j}} \prod_{i=2}^k P_i(x_i/r) \phi(x_i/r) \right] 
  \\
  &\hspace{3cm}=
 \frac{(A^2 V r^4)^{-k+1}}{\E_{p_c}[\mu_{K,r}[\phi]^{k-1}]} \E_{p_c}\left[ \sum_{x_2,\ldots,x_k \in K} \prod_{1\leq i < j \leq k} \left(\frac{d^{(\phi,r)}_\mathrm{piv}(x_i,x_j)}{C_\mathrm{piv}r^2}\right)^{\alpha_{i,j}} \prod_{i=2}^k P_i(x_i/r) \phi(x_i/r) \right]
\end{align*}
for each $k\geq 2$ and polynomials $P_2,\ldots,P_k$.
As such, for each fixed $k\geq 2$, it follows from \cref{lem:CRT_SBM_diagram_moment_formula,lem:bb_moment_asymptotics} 
 that all the moments of the array
%
$((d^{(\phi,r)}_\mathrm{piv}(X_i,X_j)/(C_\mathrm{piv}r^2)_{1\leq i\leq j\leq k},(X_i/r)_{1\leq i\leq k})$
under the measure $\P_{\phi,r,k}$ converge to those of the array
$((d^{(\phi)}(Y_i,Y_j))_{1\leq i\leq j\leq k},(\Phi(Y_i))_{1\leq i\leq k})$
under the measure $\N_{\phi,k}$. Moreover, the same is true for the arrays
\[
\left(\mu_{K,r}[\phi],\,\left(\frac{d^{(\phi,r)}_\mathrm{piv}(X_i,X_j)}{C_\mathrm{piv}r^2}\right)_{1\leq i\leq j\leq k},\,\left(\frac{X_i}{r}\right)_{1\leq i\leq k}\right)
\text{ and }
\left(\nu[\phi],\,\left(d^{(\phi)}(Y_i,Y_j)\right)_{1\leq i\leq j\leq k},\,(\Phi(Y_i))_{1\leq i\leq k}\right)
\]
under the measures $\P_{\phi,r,k}$ and $\N_{\phi,k}$ as can be seen by summing over additional points in the equalities above. Using \cref{lem:CRT_SBM_Carleman}, it follows from this and Carleman's criterion that the same convergence statement holds weakly as well as in the sense of moments. Since $\N_{\phi,k}$ is supported on configurations with $\mu[\phi]>0$ and division by $\mu[\phi]^{k-1}$ is continuous, it follows by the same argument used in the proof of \cref{thm:integrated_SBM} that the same convergence of arrays holds with respect to the measures $\P^{\phi,\lambda,r}$ and $\N^{\phi,\lambda}$ defined by conditioning $\P_{p_c}$ and $\N$ on the events $\{\mu_{K,r}[\phi] \geq \lambda\}$ and $\{\mu[\phi] \geq \lambda\}$ respectively, with the number of points $k$ in the array remaining an arbitrary integer $k\geq 2$ and with the convergence holding both weakly and in the sense of moments.

 We now remove the compact support. 
  Let $\phi_M:\R^d\to [0,1]$ be a continuous bump function, equal $1$ on $\{x:\|x\|\leq M\}$ and vanishing on $\{x:\|x\|\geq 2M\}$. The Kozma-Nachmias one-arm estimate \cite{MR2748397} and the volume-tail asymptotic \eqref{eq:volume_tail_asymptotics_proof} imply that, after conditioning on $|K|\geq \lambda A^2Vr^4$ , the  probability that $K$ is not contained in the ball $\{x:\|x\|\leq Mr\}$ is $O(\lambda^{1/2} M^{-2})$, uniformly in  $r\geq 1$. On the complementary event in which the cluster is contained in this ball, choosing a uniform point from the cluster $K$ is equivalent to choosing a point of $K$ proportional to $\phi_M(x/r)$ and the truncated pivotal distance $d_\mathrm{piv}^{(\phi_M,r)}$ is equal to the true pivotal distance $d_\mathrm{piv}$ for all pairs of points in the cluster. Since we also have that $\mathbb N(\mu(\{x:\|x\|\geq M\}) \neq 0\mid \nu(\mathscr T)\geq\lambda) = O(\lambda^{1/2} M^{-2})$, this is easily seen to imply the desired weak convergence in conjunction with the weak convergence of the conditional measures $\P^{\phi,\lambda,r}$ as in the proof of \cref{thm:integrated_SBM}.
\end{proof}

\subsection{Comparing intrinsic distances}

In this section we complete the proof of \cref{thm:scaling_limit_intro} by proving the asymptotic equivalence of different intrinsic distances (after appropriate constant rescaling) on large critical clusters.

\begin{thm} \label{thm:CompareDistance} 
Let $d>6$ and $L\geq 1$ be such that critical percolation on $\mathbb{Z}^d_L$ satisfies the estimates \eqref{eq:two_point_assumption} and \eqref{eq:two_blob_assumption}.
For each $\# \in \{\mathrm{piv},\mathrm{res},\mathrm{chem}\}$ there exists a positive constant $G_\#$, which we call the \textbf{geodesic factor} associated to the distance $d_\#$, such that
\[
  \E_{p_c} \left[d_\#(x,y) \mathbbm{1}(x\leftrightarrow y)\right] \sim G_\# A^2 \cdot G^{*2}(x,y), \qquad \E_{p_c} \left[d_\#(x,y)^2 \mathbbm{1}(x\leftrightarrow y)\right] \sim 2G_\#^2 A^3 \cdot G^{*3}(x,y)
\]
and
\[ \E_{p_c}\left [\left (d_\#(x,y)-\frac{G_\#}{G_\mathrm{piv}} \cdot d_\mathrm{piv}(x,y)\right)^2 \mathbbm{1}(x\leftrightarrow y) \right ]=o(G^{*3}(x,y)) \]
as $\|x-y\|\to \infty$.
\end{thm}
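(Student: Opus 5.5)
The plan is to decompose each distance $d_\#(x,y)$ as a sum of local contributions attached to the open pivotal edges between $x$ and $y$, and then compute first and second moments using the scheme-function machinery of \cref{thm:scheme_function} and \cref{thm:scheme_function_variantW}. On $\{x\leftrightarrow y\}$, the pivotals $e_1,\ldots,e_M$ (ordered along the backbone) cut the cluster into 2-edge-connected pieces. The three distances then take the following forms:
\begin{itemize}
\item $d_\mathrm{piv}=M$;
\item $d_\mathrm{res}(x,y)=M+\sum_{t}R_t$, where $R_t$ is the effective resistance across the $t$-th piece between the endpoints of consecutive pivotals (series law);
\item $d_\mathrm{chem}(x,y)=M+\sum_t D_t$, with $D_t$ the graph distance across that piece.
\end{itemize}
Hence each $d_\#$ is $\sum_{e \text{ pivotal}} \xi_\#(e)$, where $\xi_\#(e)$ is (say) $1$ plus the contribution of the piece immediately following $e$. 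It is a function of the configuration near $e$, up to the rare event that a piece is macroscopic.

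Step one is the first moment. For each $r$, truncate $\xi_\#$ to $\xi^{(r)}_\#(e)$, computed inside $e^-+[-r,r]^d$ and capped at $r$; this is a bounded cylinder function. Write $\E[\sum_e \xi^{(r)}_\#(e)\mathbbm{1}(e\text{ open pivotal for }x\leftrightarrow y)]$ as a sum over $a\in\Z^d$ and $e\in E_0^\rightarrow$ exactly as in the proof of \cref{thm:derivative}. Decomposing by level sets of $\xi^{(r)}$, which are cylinder events, \cref{thm:scheme_function} (equivalently the one-subdivision case of \cref{thm:geodesic_bb_pointwise}, run with $\mathscr{P}_r$ replaced by $\mathscr{P}_r\cap\{\xi^{(r)}=m\}$) gives asymptotics $G^{(r)}_\# A^2 G^{*2}(x,y)$. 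Degenerate $a$ near $x,y$ are handled by BK as before.

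Step two is the second moment. Expanding $d_\#^2=\sum_{e,e'}\xi(e)\xi(e')$, the off-diagonal pairs at macroscopic separation factorize via \cref{thm:geodesic_bb_pointwise} with $\ell=2$ (with marks), giving $2(G^{(r)}_\#)^2A^3G^{*3}$. The near-diagonal pairs contribute $O(G^{*2})=o(G^{*3})$ by \cref{lem:DegenerateSum}. The same expansion applied to $d_\#-c\,d_\mathrm{piv}$ with $c=G_\#/G_\mathrm{piv}$ gives a leading term proportional to $(G^{(r)}_\#-cG_\mathrm{piv})^2$, which vanishes; this yields the third claim once the truncation is removed.

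The main obstacle is removing the truncation: showing $G^{(r)}_\#\to G_\#<\infty$ and that
\[
\E\Bigl[\Bigl(\sum_e(\xi_\#-\xi^{(r)}_\#)(e)\Bigr)^2\mathbbm{1}(x\leftrightarrow y)\Bigr]\le \eps_r G^{*3}(x,y)\quad\text{with }\eps_r\to0.
\]
For $\#=\mathrm{res}$ I would bound $R_t\le D_t$, so it suffices to treat chemical distance. There the difference is nonzero only when some piece between consecutive pivotals has diameter at least $r$ or internal distance at least $r$. Bounding $D_t$ by the number of vertices $z$ of that piece and using BK on the resulting loop (two edge-disjoint routes around $z$) produces diagrams of the type in \cref{lem:PsiArms}: a triangle through the backbone point, tail-truncated at scale $r$. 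This contribution is $o_r(1)\,G^{*2}$ for the first moment and $o_r(1)\,G^{*3}$ for the second moment, by the open triangle condition when $d>6$. I expect this diagrammatic bound for the second moment, where two non-local pieces may interact, to be the delicate part. Finally, $G_\#>0$ because $\xi_\#\ge1$ gives $G_\#\ge G_\mathrm{piv}>0$.
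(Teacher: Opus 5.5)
Your plan follows the paper's route. You write $d_\#$ as a sum of per-sausage contributions, truncate, and get the truncated first and mixed second moments from scheme-function factorization; the third claim then follows from $(G_\#-cG_\mathrm{piv})^2=0$. Finally you show the truncation error is $o_r(1)G^{*2}$, respectively $o_r(1)G^{*3}$.

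The gap is in the step you single out as the main obstacle, and it comes from your assertion that the pivotals cut the cluster into $2$-edge-connected pieces. They do not. The sausage $\mathscr{S}_i(x,y)$ is its $2$-edge-connected backbone block $\mathscr{B}_i(x,y)$ plus dangling subclusters attached through bridges that are not pivotal for $x\leftrightarrow y$. Two things follow.
\begin{enumerate}
\item $D_t$ must be bounded by the number of \emph{backbone} vertices of the piece. Dangling vertices lie on no cycle, so your loop bound says nothing about them, and counting them brings in the divergent critical susceptibility.
\item Your case analysis misses pieces whose backbone block is small but which carry a dangling branch reaching distance $r$. Then no backbone point lies on a long edge-simple cycle, so the triangle-type bound gives nothing. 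Yet your box-computed $\xi^{(r)}_\#(e)$ can be wrong: the component of $e^+$ reaches the boundary of $e^-+[-r,r]^d$ both beyond the next pivotal and along the dangling branch. So the next pivotal cannot be identified inside the box, and the error can be as large as the cap $r$ even though the true $\xi_\#(e)$ is $O(1)$.
\end{enumerate}

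The missing ingredient is the second case in the paper's proof of \cref{lem:sausage_functional_truncation_removal}. Suppose the block containing a backbone point $z$ has diameter at most $r_1$ but its sausage has diameter at least $r_2$. Then some crossing of the annulus $\{w: r_1\le\|w-z\|\le r_2\}$ occurs disjointly from $\{x\leftrightarrow z\}\circ\{z\leftrightarrow y\}$. BK therefore bounds this contribution by $\P_{p_c}(\partial B_{r_1}(0)\leftrightarrow\partial B_{r_2}(0))\cdot O(G^{*2}(x,y))$, which is small when $r_2\gg r_1$.

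The large-block case is exactly your loop bound (this is \cref{lem:TriangleCountCycle}). The second moment uses the same two cases with one extra backbone point, bounding $\E[d^{>r}_\mathrm{chem}d_\mathrm{chem}\mathbbm{1}(x\leftrightarrow y)]$; this suffices since $(d^{>r})^2\le d^{>r}d$. With your capped box truncation you would also need the quantitative one-arm bound (crossing probability $O_{r_1}(r^{-2})$) to absorb the factor $r$ from the cap.

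The paper avoids the local-identification problem entirely by truncating with $\mathbbm{1}(R_i\le r)$ on the actual sausage diameter. A sausage of diameter at most $r$ is one of finitely many shapes $(H,e_1,e_2)$. Its occurrence is exactly a scheme event with blob $(W,b_1,b_2)$, where $W$ is the set of edges incident to but not contained in $H$. So the truncated moments are exact finite combinations of scheme functions, and no comparison of local and global pivotality is needed.
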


We will apply this theorem primarily through the following corollary, which immediately implies \cref{thm:scaling_limit_intro} in conjunction with \cref{thm:scaling_limit_bb} and which is proven at the end of this section.

\begin{corollary}
\label{cor:CompareDistance}
Let $d>6$ and $L\geq 1$ be such that critical percolation on $\mathbb{Z}^d_L$ satisfies the estimates \eqref{eq:two_point_assumption} and \eqref{eq:two_blob_assumption}. Given $K$, let $X_2,X_3,\ldots$ be independent uniform random elements of $K$ and let $X_1=0$. Then
\[
  \lim_{r\to\infty} \P_{p_c}\left( \max_{1\leq i,j \leq k}|G_\# d_\mathrm{piv}(X_i,X_j)- G_\mathrm{piv} d_\#(X_i,X_j)| \geq \eps r^2 \;\middle|\; |K| \geq \lambda C_\mathrm{vol} r^4\right)=0
\]
for every $\eps,\lambda>0$, $k<\infty$ and $\#\in \{\mathrm{chem},\mathrm{res}\}$.
\end{corollary}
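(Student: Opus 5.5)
We derive the corollary from \cref{thm:CompareDistance} by the same size-biasing and truncation device used in the proofs of \cref{thm:integrated_SBM,thm:scaling_limit_bb}. Fix $\#\in\{\mathrm{chem},\mathrm{res}\}$ and write $D(x,y)=G_\# d_\mathrm{piv}(x,y)-G_\mathrm{piv}d_\#(x,y)$. By a union bound over the $\binom{k}{2}$ pairs, and since $X_i=X_j$ gives $D=0$, it suffices to treat a single pair. By exchangeability of $X_2,\ldots,X_k$ together with the root, the pairs come in two types: $(X_1,X_2)=(0,X_2)$ and $(X_2,X_3)$.

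\textbf{Step 1 (truncation).} Fix a large $M$ and let $\phi_M$ be the bump function from the proof of \cref{thm:scaling_limit_bb}. As in that proof, the Kozma--Nachmias one-arm bound and the volume-tail asymptotic \eqref{eq:volume_tail_asymptotics_proof} show that, conditionally on $|K|\geq\lambda C_\mathrm{vol}r^4$, the cluster fails to be contained in $\{\|x\|\leq Mr\}$ with probability $O(\lambda^{1/2}M^{-2})$ uniformly in $r$. Consequently we may restrict to the event that $K$ lies in this box.

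\textbf{Step 2 (first moment).} On the event $\{|K|\geq \lambda C_\mathrm{vol}r^4\}\cap\{K\subseteq \{\|x\|\le Mr\}\}$, Markov's inequality gives
\[
\P\bigl(|D(0,X_2)|\geq \eps r^2,\ \ldots\bigr)\le \frac{1}{\eps^2 r^4}\E_{p_c}\Bigl[\frac{1}{|K|}\sum_{x\in K,\ \|x\|\le Mr}D(0,x)^2\Bigr]\le \frac{1}{\eps^2\lambda C_\mathrm{vol}r^8}\sum_{\|x\|\le Mr}\E_{p_c}\bigl[D(0,x)^2\mathbbm 1(0\leftrightarrow x)\bigr].
\]
By \cref{thm:CompareDistance}, after multiplying the $L^2$ statement there by $G_\mathrm{piv}^2$, we have $\E[D(0,x)^2\mathbbm 1(0\leftrightarrow x)]=o(G^{*3}(0,x))=o(\langle x\rangle^{-d+6})$ as $\|x\|\to\infty$. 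Finitely many small $x$ contribute $O(1)$, and the tail contributes $o(\sum_{\|x\|\le Mr}\langle x\rangle^{-d+6})=o(M^6r^6)$. Dividing by $r^8\cdot r^{-2}$, where the $r^{-2}$ comes from $\P(|K|\ge\lambda C_\mathrm{vol}r^4)\asymp r^{-2}$ in the conditioning, the conditional probability is $o_M(1)\cdot \eps^{-2}\lambda^{-1}$, which tends to $0$ as $r\to\infty$ for fixed $M$.

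\textbf{Step 3 (second type of pair).} For the pair $(X_2,X_3)$ the same argument gives the bound
\[
\frac{r^2}{\eps^2 r^4(\lambda r^4)^2}\sum_{\|y\|,\|z\|\le Mr}\E\bigl[D(y,z)^2\mathbbm 1(0\leftrightarrow y\leftrightarrow z)\bigr].
\]
This is the main obstacle, because \cref{thm:CompareDistance} controls only two-point quantities. I would bound it by the tree-graph/BK decomposition: choose a branch point $w$ with disjoint connections $0\leftrightarrow w$, $w\leftrightarrow y$, $w\leftrightarrow z$. Then $D(y,z)$ splits as $D(y,w)+D(w,z)$ up to a local error at $w$, which should be controlled by the arm bounds of \cref{lem:PsiArms}. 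Applying the BK inequality to the event that pivotals lie on the $y$--$w$ geodesic disjointly from the other arms is delicate because $D^2$ is not an increasing event. The fallback is to use the identity $D^2=$ (second moment) $-\,2\,$(cross) $+$ (second moment) and evaluate each term as in \cref{thm:geodesic_bb_pointwise}, extending \cref{thm:CompareDistance} to the three-point setting via \cref{thm:scheme_function_variantW}, and then apply \cref{lem:DegenerateSum}. This shows the sum is $o(\sum_w G(0,w)G^{*3}(w,y)G(w,z))=o(r^{10})$, where the summation over $y,z,w$ in the box gives $r^{2+6+2}$. The resulting bound is then $o(1)$.

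Finally, send $r\to\infty$ and then $M\to\infty$.
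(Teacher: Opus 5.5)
Your Steps 1 and 2 are correct and are essentially the paper's argument for the pair $(0,X_2)$. You apply Markov's inequality to $\sum_{\|x\|\le Mr}\mathbbm{1}(0\leftrightarrow x)D(0,x)^2=o(r^6)$, which comes from \cref{thm:CompareDistance}. You then divide by $\P(|K|\ge\lambda C_\mathrm{vol}r^4)\asymp r^{-2}$ and use the one-arm truncation.

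The gap is Step 3, which as written is not a proof. It needs a three-point analogue of \cref{thm:CompareDistance}. That means decomposing $D(y,z)$ through the branch point $w$ with a controlled local error, and computing second and cross moments in the presence of the extra arm to $0$. None of this is established. You note yourself that the BK step fails for the non-monotone quantity $D^2$. Your ``fallback'' would amount to redoing \cref{lem:two_intrinsic_distances_truncated,lem:sausage_functional_truncation_removal} with an additional arm, and you only sketch it.

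The missing idea is that Step 3 is unnecessary: the root is exchangeable with the uniform points, so there is only one type of pair, not two. The conditioning event and $D$ depend only on the cluster and are translation covariant, so you can use the mass-transport principle on $\Z^d$. Set $n=\lambda C_\mathrm{vol}r^4$ and
\[
f(u,v)=\E\Bigl[\mathbbm{1}\bigl(v\in K_u,\,|K_u|\geq n\bigr)\,|K_u|^{-2}\sum_{y\in K_u}\mathbbm{1}\bigl(|D(v,y)|\geq \eps r^2\bigr)\Bigr],
\]
which satisfies $f(u+z,v+z)=f(u,v)$. The identity $\sum_x f(0,x)=\sum_x f(x,0)$ then reads
\[
\P\bigl(|D(X_2,X_3)|\geq\eps r^2,\,|K|\geq n\bigr)=\P\bigl(|D(0,X_2)|\geq\eps r^2,\,|K|\geq n\bigr).
\]
On the right-hand side, $0\in K_x$ if and only if $K_x=K$, and summing over $x\in K$ cancels one factor of $|K|^{-1}$. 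So every pair $(X_i,X_j)$ has the same conditional law of $D$ as $(0,X_2)$. Your Step 2 together with a union bound over the $\binom{k}{2}$ pairs then finishes the proof, which is exactly how the paper proceeds.
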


\begin{remark}
In \cref{lem:uniform_distance_comparison} we strengthen the conclusion of this lemma to show that 
\[\lim_{r\to\infty}\P\left(\sup_{u,v\in K}
|G_\# d_\mathrm{piv}(u,v)-G_\mathrm{piv}d_\#(u,v)| \geq \eps r^2 \;\middle|\; |K|\geq r^4 \right)=0\]
for every $\eps>0$ and $\#\in \{\mathrm{chem},\mathrm{res}\}$.
\end{remark}

We now begin working towards the proof of \cref{thm:CompareDistance}.
 For each pair of vertices $x$ and $y$ belonging to the same cluster, we write $\mathrm{Piv}(x,y)$ for the set of open pivotal edges for the connection $x\leftrightarrow y$. We enumerate this set as $\mathrm{Piv}(x,y)=\{\mathrm{Piv}_1(x,y),\mathrm{Piv}_2(x,y),$ $\ldots,\mathrm{Piv}_{d_\mathrm{piv}(x,y)}(x,y)\}$ according to the order in which an open simple path from $x$ to $y$ crosses the pivotal edges, and for each $1\leq i \leq d_\mathrm{piv}(x,y)$ let $\mathrm{Piv}_i(x,y)^-$ and $\mathrm{Piv}_i(x,y)^+$ denote the endpoints of the pivotal closer to $x$ and closer to $y$ respectively on an open path from $x$ to $y$.
For each $0\leq i \leq d_\mathrm{piv}(x,y)$ we also define the \textbf{$i$th sausage} $\sS_i(x,y)$ to be the connected component of $K \setminus \mathrm{Piv}(x,y)$ containing both $\mathrm{Piv}_i(x,y)^+$ and $\mathrm{Piv}_{i+1}(x,y)^-$, where we set $\mathrm{Piv}_0(x,y)^+=x$ and $\mathrm{Piv}_{d_\mathrm{piv}(x,y)+1}(x,y)^-=y$, and define $R_i(x,y)$ to be the diameter of $\sS_i(x,y)$ in $\Z^d$. (Note that in this terminology sausages are not necessarily $2$-edge-connected.)

  Each of the distances we consider can be written as a sum over distances between successive open pivotals:
\[
  d_\#(x,y) = \sum_{0\leq i \leq d_\mathrm{piv}(x,y)} \left[d_\#(\mathrm{Piv}_i(x,y)^+,\mathrm{Piv}_{i+1}(x,y)^-)+\mathbbm{1}(i\geq 1)\right].
\]
We decompose the intrinsic distance $d_{\#}$ as $d_{\#}^{>r}+d_{\#}^{\leq r}$, where 
\begin{align*} d^{\leq r}_{\#}(x,y) := \sum_{0\leq i \leq d_\mathrm{piv}(x,y)} \left[d_\#(\mathrm{Piv}_i(x,y)^+,\mathrm{Piv}_{i+1}(x,y)^-) +\mathbbm{1}(i\geq 1)\right] \mathbbm{1}(R_i \leq r)
\end{align*}
and
\begin{align*}
d^{> r}_{\#}(x,y) := 
\sum_{0\leq i \leq d_\mathrm{piv}(x,y)} \left[d_\#(\mathrm{Piv}_i(x,y)^+,\mathrm{Piv}_{i+1}(x,y)^-) +\mathbbm{1}(i\geq 1)\right] \mathbbm{1}(R_i > r).
  \end{align*}
  To prove \cref{thm:CompareDistance} we will prove a similar result for $d_\#^{\leq r}$ for each fixed $r\geq 1$ and prove separately that
$d_{\#}^{>r}$ is asymptotically negligible when $r$ is large.

\begin{lemma}
\label{lem:two_intrinsic_distances_truncated}
Let $d>6$ and $L\geq 1$ be such that critical percolation on $\mathbb{Z}^d_L$ satisfies the estimates \eqref{eq:two_point_assumption} and \eqref{eq:two_blob_assumption}. For each $\# \in \{\mathrm{piv},\mathrm{chem},\mathrm{res}\}$ and $r\geq 1$ there exists a constant $G_{\#}^{\leq r}$ such that
\[
\E_{p_c}\left[d_\#^{\leq r}(x,y)\mathbbm{1}(x\leftrightarrow y)\right] \sim G_{\#}^{\leq r} A^2 \cdot G^{*2}(x,y) \]
and
\[
\E_{p_c}\left[d_\#^{\leq r}(x,y)d_{\#'}^{\leq r}(x,y)\mathbbm{1}(x\leftrightarrow y)\right] \sim 2 G_{\#}^{\leq r} G_{\#'}^{\leq r} A^3 \cdot G^{*3}(x,y)
\]
as $\|x-y\|\to \infty$ for each fixed $\#,\#'\in \{\mathrm{piv},\mathrm{chem},\mathrm{res}\}$ and $r\geq 1$.
\end{lemma}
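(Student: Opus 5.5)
The idea is to write $d_\#^{\leq r}$ as a sum of local contributions, one per sausage, and to feed these into the scheme function machinery exactly as in the proofs of \cref{thm:derivative} and \cref{thm:geodesic_bb_pointwise}.

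\textbf{First moment.} Each sausage $\sS_i(x,y)$ with $R_i\leq r$ other than the two end sausages lies within distance $r$ of its entry pivotal $\mathrm{Piv}_i(x,y)$. I therefore rewrite
\[
d_\#^{\leq r}(x,y)=\sum_{a\in\Z^d}\sum_{e\in E_0^\rightarrow} F_{\#,r}(a,e)+O(\text{end sausages}),
\]
where $F_{\#,r}(a,e)$ is the indicator that $a+e$ is an open pivotal crossed from $a$ to $a+e^+$, times the contribution $[d_\#(\cdot,\cdot)+1]\mathbbm{1}(R\leq r)$ of the following sausage. The sausage following $a+e$ ends at the next pivotal, which lies within distance $r+L$ of $a$. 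The event that $a+e$ is pivotal and this sausage takes a given shape is, up to the same localization errors handled in \cref{thm:geodesic_bb_pointwise}, a cylinder event in $B_{R_2}(a)$ for $R_2\gg r$. The relevant cylinder event records which edges are open or closed in $B_{r+L}(a)$, together with the pivotality of $a+e$ and of the next pivotal edge. Pivotality is checked locally via the local pivotality condition $\mathscr{P}_r$, and the resulting symmetric differences are controlled by \cref{lem:PsiArms}.

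Summing over the finitely many local configurations, and applying \cref{thm:scheme_function_variantW} with the path tree $x$--$a$--$y$ (whose middle vertex has degree $2$ and a local decoration), gives
\[
\E[F_{\#,r}(a,e)]\sim c_{\#,r}(e)A^2G(x,a)G(a,y)
\]
as all distances diverge. The remaining ingredients are as follows. The quantity $d_\#$ restricted to the sausage is bounded by $C(r)$. Summing over $a$ with \cref{lem:DegenerateSum} handles the points $a$ close to $x$ or $y$, since by BK their contribution is $O(G(x,y))=o(G^{*2}(x,y))$. The end sausages also contribute $O(G(x,y))$. Together these give the first asymptotic with $G_\#^{\leq r}:=\sum_e c_{\#,r}(e)$.

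\textbf{Second moment.} Expanding $d_\#^{\leq r}d_{\#'}^{\leq r}$ gives a double sum over pairs of marked pivotals $(a,e)$ and $(a',e')$. The diagonal and near-diagonal terms, with $\|a-a'\|\leq R$, are dominated by a constant times $\sum_a G(x,a)G(a,y)=G^{*2}(x,y)=o(G^{*3}(x,y))$, because $d>6$ and $G^{*3}\asymp\|x-y\|^{-d+6}$ decays more slowly than $G^{*2}\asymp\|x-y\|^{-d+4}$. For well-separated pairs I apply \cref{thm:scheme_function_variantW} to the path tree $x$--$a$--$a'$--$y$ (and to the ordering with $a'$ before $a$). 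The two local blobs decorrelate, giving
\[
c_{\#,r}c_{\#',r}A^3G(x,a)G(a,a')G(a',y).
\]
The two orders contribute the factor $2$, and the sum over $(a,a')$ gives $G^{*3}(x,y)$, again discarding degenerate configurations by \cref{lem:DegenerateSum}.

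\textbf{Main obstacle.} The main obstacle is the uniformity needed to interchange the sum over $a$ (and over $a'$) with the pointwise asymptotics. I plan to handle it as in the proof of \cref{thm:derivative}. The asymptotics hold uniformly once all distances exceed $R(\eps)$. In the remaining region, BK bounds $F_{\#,r}(a,e)\leq C(r)\mathbbm{1}(x\leftrightarrow a)\mathbbm{1}(a\leftrightarrow y)$ disjointly, which controls the leftover contribution.

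A subtler point is that, to place the next pivotal inside $B_{r+L}(a)$ and to verify that it really is a pivotal of $x\leftrightarrow y$, I must use the localized pivotality from the proof of \cref{thm:geodesic_bb_pointwise}. The error from this localization vanishes in the triple limit of $R_1,R_2$ and $\min$ distance, because any discrepancy forces an unforced extra arm, which \cref{lem:PsiArms}(c) rules out.
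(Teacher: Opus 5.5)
Your route differs from the paper's and is workable, but one step fails as written.

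\textbf{The orientation step.} The problem is the orientation built into $F_{\#,r}(a,e)$. You require $a+e$ to be crossed from $a$ to $a+e^+$, i.e.\ that the arm through $e$, the sausage and the next pivotal leads to $y$ rather than to $x$. On $\Psi_\cT(x,a,y)$ this is not determined by the configuration in $B_{R_2}(a)$. Locally you see only two arms leaving the ball, and which one ends at $x$ is global information. \cref{thm:scheme_function_variantW}, even after inclusion--exclusion to allow general cylinder decorations, is symmetric in the neighbours of $a$. It only computes the orientation-free quantity, which after summation is twice what you want. So it does not deliver $\E[F_{\#,r}(a,e)]\sim c_{\#,r}(e)A^2G(x,a)G(a,y)$ as claimed. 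This is exactly why the paper leaves the direction unspecified in \cref{thm:geodesic_bb_pointwise} and picks up a factor $2$ there.

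\textbf{Repair.} Drop the orientation. Let $F$ be the indicator that $a+e$ is an open pivotal and that the component of $K\setminus\mathrm{Piv}(x,y)$ containing $a+e^+$ has diameter at most $r$, weighted by its $d_\#+1$. Every interior sausage is then seen from both of its bounding pivotals. Hence $\sum_{a,e}F=2d_\#^{\leq r}(x,y)$ up to end-sausage terms, and the double sum in the second moment is $4d_\#^{\leq r}d_{\#'}^{\leq r}$ up to diagonal and end terms. The factors of $2$ then cancel consistently to give the stated $2G_\#^{\leq r}G_{\#'}^{\leq r}$.

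A smaller point: the pivotality-localization radius must be a separate parameter sent to infinity, not the fixed truncation radius $r$. The constant $G_\#^{\leq r}$ is then obtained by a Cauchy argument in that radius, as in \cref{thm:geodesic_bb_pointwise}.

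\textbf{Comparison with the paper.} The paper avoids localization entirely with an exact decomposition. For each $(H,e_1,e_2)$ with $H$ of diameter at most $r$ (a finite set up to translation), the probability that $z+H$ is an interior sausage bounded by pivotals $z+e_1,z+e_2$ equals $p_c^{|E(H)|+2}(1-p_c)^{-2}$ times a scheme function. That scheme is the distinct-cluster scheme $S_\cH(x,z,y)$ on the path $x$--$z$--$y$, with middle blob $(W,b_1,b_2)$ and $W$ the edge boundary of $H$. The blob's marked points are matched to $x$ and $y$ respectively, so orientation is built in. \cref{thm:scheme_function} then applies term by term, and to the four-vertex path scheme for the second moment. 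No local pivotality, no appeal to \cref{lem:PsiArms}, and no extra limit are needed.

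Your approach reuses the template of \cref{thm:geodesic_bb_pointwise}, at the cost of localization errors and orientation bookkeeping. The paper gains exactness by exploiting that a bounded sausage, with its closed edge boundary and two pivotals, is itself a blob.
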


Since $d_\#^{>r}(x,y) \leq d_\mathrm{chem}^{>r}(x,y)$ for each $\#\in \{\mathrm{piv},\mathrm{chem},\mathrm{res}\}$, it suffices to state and prove our negligibility lemma only for $\#=\mathrm{chem}$.

\begin{lemma}
\label{lem:sausage_functional_truncation_removal}
Let $d>6$ and $L\geq 1$ be such that critical percolation on $\mathbb{Z}^d_L$ satisfies the estimates \eqref{eq:two_point_assumption} and \eqref{eq:two_blob_assumption}. For each $\eps>0$ there exists $r<\infty$ such that
\[
  \E_{p_c}\left[d_\mathrm{chem}^{> r}(x,y)\mathbbm{1}(x\leftrightarrow y)\right] \leq \eps G^{*2}(x,y) \quad \text{ and } \quad \E_{p_c}\left[d_\mathrm{chem}^{> r}(x,y)d_\mathrm{chem}(x,y)\mathbbm{1}(x\leftrightarrow y)\right] \leq \eps G^{*3}(x,y)
\]
for every $x\neq y$ in $\Z^d$.
\end{lemma}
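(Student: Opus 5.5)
The plan is to bound $d_\mathrm{chem}^{>r}$ through the size of large sausages, using BK-type diagrammatic estimates. Decompose along the pivotal structure. For each sausage $\sS_i(x,y)$ with $R_i>r$, its contribution $d_\mathrm{chem}(\mathrm{Piv}_i^+,\mathrm{Piv}_{i+1}^-)+1$ is at most $|\sS_i|+1$. This in turn is bounded by the number of vertices $w$ of the cluster lying in that sausage. Hence
\[
d_\mathrm{chem}^{>r}(x,y)\leq \sum_{w} \mathbbm{1}\bigl(w\in \textstyle\bigcup_{i:R_i>r}\sS_i(x,y)\bigr)\cdot 2 .
\]
So it suffices to bound, for each $w$, the probability that $x\leftrightarrow y$ and $w$ lies in a sausage of diameter larger than $r$.

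If $w$ lies in sausage $\sS_i$, with endpoints $a=\mathrm{Piv}_i^+$ and $b=\mathrm{Piv}_{i+1}^-$, then $a$ and $b$ are $2$-edge-connected within the sausage. The event therefore implies the disjoint occurrence of several connections: $x\leftrightarrow a$ (through the pivotal edge), two edge-disjoint connections $a\leftrightarrow b$, $b\leftrightarrow y$, and a connection from $w$ to the union of the two $a$–$b$ paths. The sausage diameter exceeding $r$ forces one of the vertices $a,b,w$ or the branching point $z$ (where $w$'s path meets the loop) to be at distance at least $r/4$ from one of the others. Applying BK and summing over $a,b,z$ gives a bound of the form
\[
\sum_{a,b,z,w} T(x,a)\,T(a,z)T(z,b)T(a,b)\,T(z,w)\,T(b,y)\,\mathbbm{1}(\text{some pair among } a,b,z,w \text{ at distance}\geq r/4),
\]
with a symmetric term for the other placement of $z$. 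Summing over $w$ would diverge, since $\sum_w T(z,w)$ is infinite. So instead of bounding $|\sS_i|$ I will bound the chemical length of the sausage directly: $d_\mathrm{chem}(a,b)\leq$ the number of vertices $w$ on a fixed $a$–$b$ geodesic. Then $w$ lies on one of the two edge-disjoint $a$–$b$ paths, and the diagram becomes a ``loop with a marked point'':
\[
\sum_{a,b,w} T(x,a)\,T(a,w)T(w,b)T(a,b)\,T(b,y).
\]
The marked-loop factor $\sum_{w}T(a,w)T(w,b)T(a,b)$ restricted to $\|a-b\|\vee\|a-w\|\geq r/4$ is summable in $b-a$. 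By \eqref{eq:two_point_assumption} its tail is $\sum_{\|u\|\geq r}\langle u\rangle^{-d+2}\langle u\rangle^{-d+4}$, which is finite and tends to $0$ as $r\to\infty$ when $d>6$.

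The remaining convolution $\sum_{a,b}T(x,a)\,\kappa_r(b-a)\,T(b,y)$, with $\kappa_r$ having total mass $\eps_r\to0$, is then handled as in the proof of \cref{cor:IIC_point_to_blob}. It splits into the region where $b-a$ is bounded and the tail region, and yields at most $C\eps_r G^{*2}(x,y)$, uniformly in $x\neq y$. For the region where $b-a$ is not small compared to $\|x-y\|$, one uses $\kappa_r(u)\lesssim\langle u\rangle^{-2d+6}$, which has enough decay since $2d-6>d$. I will also need to handle the case where $w$ is near $a$ or $b$ but the large diameter is carried by the other path; this is handled by a symmetric term.

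For the second moment, I expand $d_\mathrm{chem}^{>r}\,d_\mathrm{chem}$ as a double sum over a vertex $w$ on a large sausage and a vertex $w'$ on the geodesic from $x$ to $y$. The point $w'$ either lies in a different sausage or in the same one. In the first case, BK gives the diagram $T(x,\cdot)\kappa_r T(\cdot,w')T(w',y)$ in either order, which is bounded by $\eps_r G^{*3}$ via the same convolution estimate. In the same-sausage case, the diagram is a loop with two marked points. Its tail is $\sum_{\|u\|\ge r}\langle u\rangle^{-d+2}\langle u\rangle^{-d+6}$, which is finite exactly when $d>6$; I should check whether it is borderline summable. If it is not, I will instead bound $|u|$ times the loop and absorb the extra factor using $G^{*3}$ rather than $G^{*2}$.

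I expect the main obstacle to be this same-sausage second-moment term. The fix there is to note that $\sum_{\|u\|\geq r}\langle u\rangle^{-2d+8}$ is not summable for $d\le 8$. The two-marked-point loop therefore has to be treated as an extra convolution leg, so that it contributes to $G^{*3}$ rather than to $G^{*2}$. That requires a uniform convolution lemma of the form $\sum_{a,b}T(x,a)\,\kappa'_r(b-a)\,T(b,y)\leq\eps_r G^{*3}(x,y)$, with $\kappa'_r(u)\lesssim\langle u\rangle^{-2d+8}\mathbbm{1}(\|u\|\ge r)$. I will verify this lemma by splitting according to whether $\|b-a\|\le\|x-y\|/2$.
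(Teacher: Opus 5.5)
\textbf{There is a genuine gap.} You assert that $R_i>r$ forces some pair among $a,b,w$ (or the branch point $z$) to be at distance at least $r/4$, and then restrict the marked-loop kernel to $\|a-b\|\vee\|a-w\|\geq r/4$. This implication is false.

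$R_i$ is the Euclidean diameter of the whole sausage $\sS_i(x,y)$, i.e.\ of the entire component of $K\setminus\mathrm{Piv}(x,y)$ containing $a$ and $b$. That component includes the part of the $2$-edge-connected core lying off your two chosen $a$--$b$ paths, and every dangling subcluster hanging off the core. Two examples:
\begin{itemize}
\item $a=b$ with a long dangling branch attached;
\item $a,b,w$ neighbours, with the core looping out to distance $r$.
\end{itemize}
Both give $R_i>r$ and contribute to $d^{>r}_\mathrm{chem}(x,y)$, but both receive zero weight in your bound.

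This cannot be repaired by adding a far point to the diagram at the same scale. Forcing a point $v'$ with $\|v'-a\|\geq r/2$ onto the loop, while $a,b,w$ stay within $r/4$, gives a kernel of total mass of order $\sum_{\|b-a\|\leq r/4}G^{*2}(a,b)\cdot r^{4-d}\asymp r^{8-d}$. This does not tend to zero when $6<d\leq 8$. The dangling-end case needs an arm event, which your diagrams do not contain at all.

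There is also a smaller problem: a vertex $w$ of an $a$--$b$ geodesic need not lie on either of two edge-disjoint $a$--$b$ paths. For example, join $a,b$ by two long internally disjoint paths, the first starting with an edge $au$ and the second ending with an edge $vb$, and add a vertex $w$ adjacent only to $u$ and $v$. Then $a,u,w,v,b$ is the unique geodesic, but no pair of edge-disjoint $a$--$b$ paths has a member through $w$. So the diagram $T(a,w)T(w,b)T(a,b)$ with $a,b$ as marked points does not dominate the event.

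\textbf{The missing idea} is a separation of scales, in which the largeness of the sausage is used only qualitatively (through the vanishing of arm probabilities at criticality), after a smaller scale has been fixed. The paper bounds $d^{>r}_\mathrm{chem}(x,y)$ by the number of backbone vertices $z$ (those with $\{x\leftrightarrow z\}\circ\{z\leftrightarrow y\}$) lying in large sausages. It then splits according to the diameter $R^{\mathrm{bb}}(z)$ of the $2$-edge-connected backbone piece containing $z$.
\begin{itemize}
\item If $R^{\mathrm{bb}}(z)>r_1$, then $z$ lies on a long edge-simple cycle. Let $u,v$ be the first points where the paths from $x$ and from $y$ to $z$ hit this cycle. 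BK gives the bound $\sum_{u,v,z}T(x,u)T(v,y)\,\P(u,v,z\text{ on a common edge-simple cycle of diameter}\geq r_1)$. Triples with $d_{\max}(u,v,z)\geq s$ are controlled by the summable kernel $T(u,v)T(v,z)T(z,u)$, using $d>6$. For the $C(s)$ compact configurations one only uses $\P(\operatorname{diam}K_u\geq r_1)\to 0$, with $r_1$ chosen after $s$.
\item If $R^{\mathrm{bb}}(z)\leq r_1$ but the sausage reaches distance $r_2$, there is a crossing of the annulus between radii $r_1$ and $r_2$ \emph{centred at $z$ itself} and disjoint from the backbone. BK then gives $\sum_z T(x,z)T(z,y)$ times the annulus-crossing probability, which vanishes as $r_2\to\infty$ with $r_1$ fixed. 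Centring at $z$ is what avoids the extra summation responsible for the $r^{8-d}$ above.
\end{itemize}
Using the cycle's entry and exit points $u,v$ as marked points, rather than the pivotal endpoints $a,b$, also removes the second problem.

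Your second-moment plan inherits both issues. Apart from that, it (including convolving a kernel of order $\langle u\rangle^{-2d+8}$ into $G^{*3}$) is a reasonable way to carry out the step the paper describes as essentially the same argument.
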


Let us now see how these two lemmas imply \cref{thm:CompareDistance}.

\begin{proof}[Proof of \cref{thm:CompareDistance}]
It follows from \cref{lem:sausage_functional_truncation_removal} that the constants $G_{\#}^{\leq r}$ introduced in \cref{lem:two_intrinsic_distances_truncated} are Cauchy as $r\to \infty$ and hence that the limit $G_\# =\sup_{r\geq 1} G_{\#}^{\leq r}$ is well-defined and satisfies the claimed asymptotic estimate 
$\E \left[d_\#(x,y) \mathbbm{1}(x\leftrightarrow y)\right] \sim G_\# A^2 \cdot G^{*2}(x,y)$ as $\|x-y\|\to \infty$ for each $\# \in \{\mathrm{piv},\mathrm{chem},\mathrm{res}\}$ as claimed. 
Since
\[
  \E[d_\#^{>r}(x,y)d_{\#'}(x,y) \mathbbm{1}(x\leftrightarrow y)] \leq \E[d_\mathrm{chem}^{>r}(x,y)d_{\mathrm{chem}}(x,y) \mathbbm{1}(x\leftrightarrow y)],
\]
for every $\#\in \{\mathrm{piv},\mathrm{chem},\mathrm{res}\}$, 
the claims regarding second moments and covariances also follow easily from the second part of \cref{lem:two_intrinsic_distances_truncated} and the second estimate of \cref{lem:sausage_functional_truncation_removal}.
\end{proof}

We now prove \cref{lem:two_intrinsic_distances_truncated,lem:sausage_functional_truncation_removal}. We begin with \cref{lem:two_intrinsic_distances_truncated}, which is an easy consequence of \cref{thm:scheme_function}.

\begin{proof}[Proof of \cref{lem:two_intrinsic_distances_truncated}]
For each $r\geq 1$, let $\mathscr{C}_r$ be the set of pairs $\cH=(H,e_1,e_2)$ consisting of connected subgraphs $H$ of $\Z^d_L$ of diameter at most $r$ and edges $e_1,e_2$ in the boundary of $H$ (i.e., with one endpoint in the vertex set of $H$ and the other outside of the vertex set of $H$) with the endpoint of $e_1$ contained in the vertex set of $H$ being equal to the origin and with the endpoints of $e_1$ and $e_2$ contained in $H$ being $2$-edge-connected in $H$; this normalization ensures that the set $\mathscr{C}_r$ is finite. For each such triple $\cH=(H,e_1,e_2)$, 
we define an associated blob $(W,b_1,b_2)$ in which $W$ is the set of edges incident to but not contained in $H$ and $b_1$ and $b_2$ are the endpoints of $e_1$ and $e_2$ not belonging to the vertex set of $H$. If we also define $\Delta_\#(\cH)$ to be the relevant intrinsic distance in $H$ between the endpoints of $e_1$ and $e_2$ belonging to $H$, we can write $\E_{p_c} d_\#^{\leq r}(x,y)\mathbbm{1}(x\leftrightarrow y)$ as a weighted sum of scheme functions
\begin{multline*}
  \E_{p_c} d_\#^{\leq r}(x,y)\mathbbm{1}(x\leftrightarrow y) = \sum_{\cH \in \mathscr{C}_r} \frac{p_c^{|E(H)|+2}}{(1-p_c)^2} (\Delta_\#(\cH)+1) \sum_{z \in \Z^d} S_{\cH}(x,z,y)
  \\
   +(\text{contributions from first and last sausages})
\end{multline*}
where we write $S_{\cH}$ for the scheme whose plan is a path with three vertices and with associated blobs being the trivial blob at the two endpoints and the blob $(W,b_1,b_2)$ defined from $\cH$ associated to the midpoint, and where we have split off the contribution from the first and last sausages since they do not fit as neatly into our scheme-functional  language. Given this formula, the claim about the first moment is an immediate consequence of \cref{thm:scheme_function} together with the usual argument showing that the contribution from $z$ within bounded distance of $x,y$ is negligible as $\|x-y\|\to \infty$. (The contributions from the first and last sausage are also easily seen to be of order $G(x,y)$ and hence negligible.)

For the second moment, we have similarly that if $\#,\#'\in \{\mathrm{piv},\mathrm{chem},\mathrm{res}\}$ then
\begin{multline*}
  \E_{p_c} \left[d_\#^{\leq r}(x,y)d_{\#'}^{\leq r}(x,y)\mathbbm{1}(x\leftrightarrow y)\right]= \sum_{\cH\in \mathscr{C}_r} \frac{p_c^{|E(H)|+2}}{(1-p_c)^2} (\Delta_\#(\cH)+1) (\Delta_{\#'}(\cH)+1) \sum_{z \in \Z^d} S_{\cH}(x,z,y)
  \\
  +2\sum_{\cH,\cH' \in \mathscr{C}_r} \frac{p_c^{|E(H)|+|E(H')|+4}}{(1-p_c)^4} (\Delta_\#(\cH)+1)(\Delta_{\#'}(\cH')+1)
  \sum_{z_1,z_2 \in \Z^d} S_{\cH,\cH'}(x,z_1,z_2,y)
  \\
  +(\text{contributions from first and last sausages})
\end{multline*}
for every $x,y\in \Z^d$ and $r\geq 1$, 
where $S_{\cH,\cH'}$ is the scheme whose plan is a path with vertex set $\{1,2,3,4\}$, with endpoints $1$ and $4$ associated to the trivial $1$-blob $(\emptyset,0)$, and with interior vertices $2$ and $3$ associated to the blobs $(W,b_1,b_2)$ and $(W',b_1',b_2')$ defined from $\cH=(H,e_1,e_2)$ and $\cH'=(H',e_1',e_2')$ respectively. The claimed asymptotics now follow from \cref{thm:scheme_function} by a similar argument to the first moment, where we also use that the first term (which counts the contribution to the second moment from choosing the same sausage twice) is of order $G^{*2}(x,y)=o(G^{*3}(x,y))$ and hence negligible.
\end{proof}

We now begin to work towards the proof of \cref{lem:sausage_functional_truncation_removal}. We begin with the following lemma which shows that the contribution from large $2$-edge-connected components is negligible.

\begin{lemma} \label{lem:TriangleCountCycle} 
Let $d>6$ and $L\geq 1$ be such that critical percolation on $\mathbb{Z}^d_L$ satisfies the estimates \eqref{eq:two_point_assumption} and \eqref{eq:two_blob_assumption}. For each $\eps>0$ there exists $\ell<\infty$ such that
\[\sum_{z\in \Z^d} \proba(\{x\slr z\}\circ \{z\slr y\} \text{ and $z$ belongs to an edge-simple cycle of diameter at least $\ell$})\leq \eps G^{*2}(x,y) \]
for every $x,y\in \Z^d$.
\end{lemma}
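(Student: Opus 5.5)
The plan is to use the BK inequality to turn the probability into a diagrammatic sum. We then split that sum according to whether the large cycle through $z$ lies close to $z$ or far from it, relative to $\ell$.

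First, if $z$ lies on an edge-simple open cycle of diameter at least $\ell$, then there is a vertex $w$ on that cycle with $\|w-z\|\geq \ell/2$. The cycle supplies two edge-disjoint open paths from $z$ to $w$. These paths must be combined with the two paths $z\leftrightarrow x$ and $z\leftrightarrow y$ witnessing $\{x\slr z\}\circ\{z\slr y\}$, but these need not be disjoint from the cycle.

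The standard way to handle this is to follow the path from $x$ to $z$ until it first hits the cycle, at a vertex $u$, and do the same for the path from $y$, hitting at $v$. This produces disjoint occurrences of $x\leftrightarrow u$, $y\leftrightarrow v$, and a ``theta''-like structure containing $u$, $v$ and $w$ on the cycle. Since $z$ lies in the cluster, we then relocate $z$ onto the cycle, or bound the sum over $z$ by the number of choices of $z$ lying on paths.

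I would instead set things up directly. Sum over the points $u,v$ where the two arms attach to the cycle and a far point $w$. Then $\{x\leftrightarrow u\}\circ\{y\leftrightarrow v\}\circ\{u\leftrightarrow v\}\circ\{u\leftrightarrow w\}\circ\{w\leftrightarrow v\}$ holds, up to degenerate cases where $u=v$. Here $z$ lies on one of the arcs, so we sum over $z$ being on an arc, which is dominated by inserting an extra two-point function.

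By BK and the bound \eqref{eq:two_point_assumption}, the probability is at most a constant times
\[\sum_{u,v,w} G(x,u)G(y,v)\,G(u,v)G(u,w)G(w,v)\,\1\bigl(\max(\|w-u\|,\|w-v\|,\|u-v\|)\geq \ell/4\bigr),\]
after absorbing the $z$ location. The factor $G(u,v)G(u,w)G(w,v)$ is a triangle, and because $d>6$ the open triangle with a large side satisfies $\sup_u\sum_{v,w}\1(\mathrm{diam}\geq \ell/4)G(u,v)G(v,w)G(w,u)\to 0$ as $\ell\to\infty$. When $u,v$ are close, the remaining factor is about $G(x,u)G(u,y)$, which sums to $G^{*2}(x,y)$.

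The main obstacle is the case where $u$ and $v$ are themselves far apart. Then the bound $G(x,u)G(u,v)G(v,y)$ looks like $G^{*3}$, which is too big unless the triangle smallness is used. To handle it, use $G(u,v)G(v,w)\lesssim$ the relevant factors and the bound $\sum_w G(u,w)G(w,v)\lesssim \langle u-v\rangle^{-d+4}$. This gives $G(x,u)G(y,v)\langle u-v\rangle^{-2d+6}$, and $\langle u-v\rangle^{-2d+6}$ is summable with small tail for $d>6$. Summing over $v$ with $\|u-v\|\geq\ell/4$ and using $G(y,v)\lesssim G(y,u)$ when $v$ is relatively close to $u$, with the far region handled by the triangle inequality as in \eqref{eq:ConvenientSplitFrac}, gives $o_\ell(1)\,G^{*2}(x,y)$ uniformly in $x,y$.

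The insertion of $z$ costs only a factor of the bubble/triangle. This is because $z$ on an arc of length bounded by a two-point chain contributes $\sum_z G(a,z)G(z,b)$, and the requirement $\{x\slr z\}\circ\{z\slr y\}$ already forces $z$ to be the attachment point up to relabelling. So I would take $z=u$, which WLOG is where the $x$-arm meets the cycle, and route $y$'s arm to $v$, avoiding the extra factor altogether.
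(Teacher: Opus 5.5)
\textbf{There is a genuine gap: the sum over $z$ is never actually accounted for.} Your first-hit reduction to attachment points $u,v$ plus BK is the same as the paper's. Your estimate $\sum_w G(u,w)G(w,v)\lesssim\langle u-v\rangle^{-d+4}$, giving $\langle u-v\rangle^{-2d+6}$ with small tail, is exactly the open-triangle estimate the paper uses.

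The trouble starts once you add a far point $w$ to the loop, because $z$ then becomes a fourth point on the cycle. If you insert $z$ on an arc, the triangle becomes a square, $\sum_{z,w}G(u,z)G(z,v)G(v,w)G(w,u)\asymp\langle u-v\rangle^{-2d+8}$. Already the region $\|u-x\|,\|v-y\|\le\|x-y\|/4$ gives $\sum_{u,v}G(x,u)G(y,v)\langle u-v\rangle^{-2d+8}\gtrsim\|x-y\|^{-2d+12}$. This is much larger than $G^{*2}(x,y)\asymp\|x-y\|^{-d+4}$ when $6<d<8$, and it carries no smallness at $d=8$. So this insertion does not cost only a bubble/triangle factor; it would need a square condition, $d>8$.

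Your fallback, that $z$ may be taken to be the attachment point $u$, is false. Take a cycle with $u,z,v$ in cyclic order and open arms from $x$ to $u$ and from $y$ to $v$. Let $\gamma_1$ be the $x$-arm followed by the arc $u\to z$, and $\gamma_2$ the arc $z\to v$ followed by the $y$-arm. Then $\{x\leftrightarrow z\}\circ\{z\leftrightarrow y\}$ holds with $z$ on the cycle. Any open $x$–$z$ path uses one of the two open edges at $z$, so only one remains for the triangle. Hence the only way to realise your reduced event with $z$ as the $x$-attachment point is the fully degenerate triangle, which your indicator $\max(\cdot)\ge\ell/4$ excludes.

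\textbf{How to fix it.} Do not introduce a fourth point globally. Keep $z$ as the third vertex of the triangle with $u$ and $v$, so the $z$-sum is part of the triangle. BK then gives $\sum_{u,v,z}T(x,u)T(v,y)T(u,v)T(v,z)T(z,u)$. The size of the cycle is then encoded by splitting on $d_{\max}(u,v,z)$:
\begin{itemize}
\item If $d_{\max}(u,v,z)\ge r$, your triangle-tail estimate makes the contribution at most $\frac{\eps}{2}G^{*2}(x,y)$ uniformly in $x,y$.
\item If $d_{\max}(u,v,z)<r$, there are only $O(r^{2d})$ choices of $(v,z)$ for each $u$. Lying on a cycle of diameter at least $\ell$ forces $\mathrm{diam}(K_u)\ge\ell$, whose probability tends to zero as $\ell\to\infty$. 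Equivalently, in this local regime your far point $w$ becomes harmless, since then $\sum_{\|w-u\|\ge\ell/2}G(v,w)G(w,u)\lesssim\ell^{-d+4}$.
\end{itemize}
This is precisely the paper's argument.
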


\begin{proof}[Proof of \cref{lem:TriangleCountCycle}] Fix $\eps>0$. Taking a union bound over the first place the path from $x$ to $z$ and the last place the path from $y$ to $z$ intersects the large loop and applying the BK inequality, the relevant sum is bounded by 
\begin{equation} \sum_{u,v,z \in \Z^d} \proba(x\slr u) \proba(v\slr y)\proba( \text{$u,v,z$ belong to some edge-simple cycle of diameter at least $\ell$}). \label{eq:BKopenCycle2arm} 
\end{equation}
If $u,v,z$ all belong to the same edge-simple cycle, the three arcs between them define disjoint witnesses of the events $\{u\leftrightarrow v\}$, $\{v\leftrightarrow z\}$, and $\{z\leftrightarrow u\}$. Thus, the probability that $u,v,z$ all belong to the same edge-simple cycle is bounded by the product of the two-point functions $\proba(u\slr v)\proba(v\slr z)\proba(z\slr u) \preceq \langle u-v\rangle^{-d+2}\langle v-z\rangle^{-d+2}\langle z-u\rangle^{-d+2}$. Since $d>6$, it follows by standard convolution estimates that there exists $r<\infty$ such that
\begin{multline}
  \sum_{u,v,z \in \Z^d} \mathbbm{1}(d_\mathrm{max}(u,v,z) \geq r) \proba(x\slr u) \proba(v\slr y)\proba( \text{$u,v,z$ belong to some edge-simple cycle}) \\\leq \frac{\eps}{2} G^{*2}(x,y).
  \label{eq:long_loop_separated}
\end{multline}
On the other hand, if $u,v,z$ all have pairwise distances at most $r$ but lie in an edge-simple cycle of diameter at least $\ell$ we must have that $K_u$ has diameter at least $\ell$, and since this event has probability tending to zero as $\ell\to \infty$ it follows that there exists $\ell<\infty$ (depending on our earlier choice of $r$) such that
\begin{multline}
  \sum_{\substack{u,v,z \in \Z^d\\d_\mathrm{max}(u,v,z) \leq r}} \proba(x\slr u) \proba(v\slr y)\proba( \text{$u,v,z$ belong to some edge-simple cycle of diameter  $\geq \ell$}) 
  \\\leq
 \sum_{\substack{u,v,z \in \Z^d\\d_\mathrm{max}(u,v,z) \leq r}}  \proba(x\slr u) \proba(v\slr y)\proba(K_u \text{ has diameter } \geq\ell)
 \leq 
   \frac{\eps}{2} G^{*2}(x,y).
   \label{eq:long_loop_close}
\end{multline}
(Note that this bound is extremely coarse but suffices for our purposes.) The two estimates \eqref{eq:long_loop_separated} and \eqref{eq:long_loop_close} yield the claim. \qedhere
\end{proof}

\begin{proof}[Proof of \cref{lem:sausage_functional_truncation_removal}]
We first prove the claim regarding 
$\E[d_\mathrm{chem}^{>r}(x,y) \mathbbm{1}(x\leftrightarrow y)]$. 
Fix $\eps>0$.
When $x$ is connected to $y$, define the \textbf{backbone} $\mathrm{bb}(x,y)$ between $x$ and $y$ to be the set of vertices $z\in \Z^d$ such that $\{x\leftrightarrow z\}\circ\{z\leftrightarrow y\}$ holds. Each point $z$ on the backbone $\mathrm{bb}(x,y)$ belongs to one of the sausages $\mathscr{S}_i(x,y)$, and we define $R(z)$ to be the diameter of this sausage in $\Z^d$, so that
\begin{equation}
\E[d_\mathrm{chem}^{>r}(x,y) \mathbbm{1}(x\leftrightarrow y)]\leq \E \#\{z\in \mathrm{bb}(x,y):R(z) > r\}.
\label{eq:sausage_truncation1}
\end{equation}
 We also define the graphs $\mathscr{B}_0(x,y),\ldots,\mathscr{B}_{d_\mathrm{piv}(x,y)}(x,y)$ to be the subgraphs of the appropriate sausages $\mathscr{S}_0(x,y),\ldots,\mathscr{S}_{d_\mathrm{piv}(x,y)}(x,y)$ induced by vertices of the backbone, so that each of the graphs $\mathscr{B}_i(x,y)$ is $2$-edge-connected, and for each $z\in \mathrm{bb}(x,y)$ define $R^{\mathrm{bb}}(z)$ to be the diameter in $\Z^d$ of the associated graph $\mathscr{B}_i(x,y)$. Since each of the graphs $\mathscr{B}_i(x,y)$ is $2$-edge-connected, if $R^{\mathrm{bb}}(z)\geq r$ then $z$ must belong to an edge-simple cycle of diameter at least $r$. As such, it follows from \cref{lem:TriangleCountCycle} that there exists $r_1<\infty$ such that
 \begin{equation}\E \#\{z\in \mathrm{bb}(x,y):R^{\mathrm{bb}}(z) > r_1\} \leq \frac{\eps}{2} G^{*2}(x,y)
\label{eq:sausage_truncation2}
 \end{equation}
 for every $x,y\in \Z^d$. On the other hand, if $z\in \mathrm{bb}(x,y)$ and $R^{\mathrm{bb}}(z) \leq r_1$ but $R(z)\geq r_2$ for some $r_2\geq r_1$, then there must exist a crossing of the annulus $\{w \in \Z^d: r_1\leq \|w-z\| \leq r_2\}$ that is disjoint from the backbone, so that the event $\{x\leftrightarrow z\}\circ \{z \leftrightarrow y\}$ occurs disjointly from this annulus crossing event. Thus, the BK inequality implies if $r_2$ is sufficiently large (depending on $\eps$ and the parameter $r_1$ that was already chosen) then 
\begin{equation}\E \#\{z\in \mathrm{bb}(x,y):R^{\mathrm{bb}}(z) \leq r_1, R(z)\geq r_2\} \leq \frac{\eps}{2} G^{*2}(x,y).
\label{eq:sausage_truncation3}
 \end{equation}
 The claim regarding the first moment follows from \eqref{eq:sausage_truncation1}, \eqref{eq:sausage_truncation2}, and \eqref{eq:sausage_truncation3}. 
For the second estimate, we observe as before that
\begin{multline*}
  \E[d_\mathrm{chem}^{>r}(x,y)d_\mathrm{chem}(x,y) \mathbbm{1}(x\leftrightarrow y)] \\\leq \E \sum_{z,w} \mathbbm{1}(\{x\leftrightarrow z\}\circ \{z\leftrightarrow w\} \circ \{w\leftrightarrow y\})[\mathbbm{1}(R(z)>r)+\mathbbm{1}(R(w)>r)]
\end{multline*}
and note that this can be bounded using essentially the same argument as for the first moment.
\end{proof}

\begin{proof}[Proof of \cref{cor:CompareDistance}]
It follows by translation invariance (and more specifically the mass-transport principle) that the conditional distributions of the pair $(d_\mathrm{piv}(X_i,X_j),d_\#(X_i,X_j))$ given $|K| \geq \lambda C_\mathrm{vol} r^4$ are all equal to the conditional distribution of the pair $(d_\mathrm{piv}(0,X_2),d_\#(0,X_2))$ given $|K| \geq \lambda C_\mathrm{vol} r^4$, so that it suffices to prove that $r^{-2}|G_\# d_\mathrm{piv}(0,X)-G_\mathrm{piv} d_\#(0,X)|$ is small with high probability conditioned on $|K| \geq \lambda C_\mathrm{vol} r^4$ when $X$ is a uniform random element of $K$ and $r$ is large. It follows immediately from \cref{thm:CompareDistance} that
\begin{align*}
\E \left[\sum_{\|x\|\leq M r} \mathbbm{1}(0\leftrightarrow x) (G_\# d_\mathrm{piv}(0,x)-G_\mathrm{piv} d_\#(0,x))^2\right] = o(r^6)
\end{align*}
as $r\to \infty$ for each $M>0$. Thus, it follows from Markov's inequality that
\begin{align*}
\P\left(\#\{\|x\|\leq M r: 0\leftrightarrow x \text{ and } |G_\# d_\mathrm{piv}(0,x)-G_\mathrm{piv} d_\#(0,x)| \geq \eps r^2\} \geq \eps r^4\right) = o(r^{-2})
\end{align*}
as $r\to \infty$ for each fixed $M,\eps>0$. The claim follows from this and the fact that $K$ is unlikely to contain a point with $\|x\|> Mr$ under the conditional measure $\P_{p_c}( \,\cdot\, \mid |K|\geq \lambda C_\mathrm{vol} r^4)$ when $M$ is much larger than $\lambda^{1/4}$; this last fact is an easy consequence of the estimates $\P(|K| \geq n) \asymp n^{-1/2}$ and $\P(0\leftrightarrow \Z^d \setminus [-r,r]^d) \asymp r^{-2}$ that was already used in the proof of \cref{thm:scaling_limit_bb}.
\end{proof}

\begin{proof}[Proof of \cref{thm:scaling_limit_intro}]
The theorem is an immediate consequence of \cref{thm:scaling_limit_bb} and \cref{cor:CompareDistance} together with the fact that \eqref{eq:two_point_assumption} and \eqref{eq:two_blob_assumption} hold under the perturbative hypotheses of the theorem. (Of course the conclusions of the theorem hold whenever $d>6$ and \eqref{eq:two_point_assumption} and \eqref{eq:two_blob_assumption} both hold as stated in \cref{thm:k_point_A_and_B}.)
\end{proof}

\section{Stronger topologies of convergence}
\label{sec:tightness}

In this section we state and prove a stronger version of \cref{thm:scaling_limit_intro} in which the weak array topology is replaced by the \emph{rooted Gromov-Hausdorff-Prokhorov-function} (rooted GHPf) topology as introduced in \cite{ambrosio2025tightness,djankovic2026scaling} (see also \cite{croydon2008convergence,croydon2018scaling}). 
 The main advantage of this improved scaling limit theorem is that it enables studying the limiting distribution of quantities such as the \emph{diameter} of large clusters, which are not continuous with respect to the weak array topology. See e.g.\ the introduction of \cite{archer2024ghp} for a detailed discussion of the benefits of GHP convergence over Gromov-weak convergence in the context of high-dimensional random graphs converging to the continuum random tree.
 We write $\bar d_\#(x,y)=d_\#(x,y)+\mathbbm{1}(x\neq y)$ for each $\#\in \{\mathrm{piv},\mathrm{chem},\mathrm{res}\}$ to make $\bar d_\mathrm{piv}$ a metric rather than a semimetric.

\begin{theorem}[The scaling limit of high-dimensional percolation clusters]
\label{thm:scaling_limit_GHPf}
Let $d>6$ and $L\geq 1$ be such that critical percolation on $\mathbb{Z}^d_L$ satisfies the estimates \eqref{eq:two_point_assumption} and \eqref{eq:two_blob_assumption}. For each $\# \in \{\mathrm{chem},\mathrm{piv},\mathrm{res}\}$ the convergence
\begin{equation*}
\P_{p_c}\left( \left(K,\frac{\bar d_\#}{C_\# r^2}, \frac{\mu_K}{C_\text{\emph{vol}}r^4}, \frac{1}{r}\Phi_K,0 \right)\in \cdot \;\Bigg|\; |K| \geq \lambda C_\mathrm{vol} r^4 \right) 
\\\longrightarrow \mathbb{N}\Bigl((\mathscr{T},d,\nu,\Phi,o) \in \cdot \mid \nu(\sT) \geq \lambda\Bigr)
\end{equation*}
holds weakly with respect to the rooted Gromov-Hausdorff-Prokhorov-function topology as $r\to \infty$ for each $\lambda>0$.
\end{theorem}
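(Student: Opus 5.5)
We prove \cref{thm:scaling_limit_GHPf} by the standard reduction: weak array (Gromov-weak) convergence together with a \emph{global lower mass bound} gives GHP convergence. The function component also needs uniform control of the embedding. \Cref{thm:scaling_limit_intro} (via \cref{thm:scaling_limit_bb,cor:CompareDistance}) already supplies convergence of the finite-dimensional arrays of distances, masses and positions. It therefore suffices to prove tightness in the rooted GHPf topology under $\P_{p_c}(\,\cdot\mid |K|\geq \lambda C_\mathrm{vol}r^4)$. Tightness will be obtained from two estimates.
\begin{enumerate}
\item[(i)] \emph{Lower mass bound.} For each $\delta>0$,
\[
\lim_{\eta\to 0}\limsup_{r\to\infty}\P\Bigl(\inf_{v\in K}\mu_K\bigl(B_{\bar d_\#}(v,\delta r^2)\bigr)\leq \eta r^4 \Bigm| |K|\geq \lambda C_\mathrm{vol}r^4\Bigr)=0 .
\]
\item[(ii)] \emph{Equicontinuity.} $\sup\{\|u-v\|/r : \bar d_\#(u,v)\leq \delta r^2\}\to 0$ in probability as $\delta\to0$, uniformly in $r$.
\end{enumerate}
Given (i) and (ii), Gromov-weak convergence upgrades to GHP convergence by the criterion of Athreya--L\"ohr--Winter. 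The function part then follows from (ii) together with the finite-dimensional convergence of positions. This is the same route as Archer--Nachmias--Shalev for high-dimensional uniform spanning trees, adapted to clusters.

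I would prove everything first for $\#=\mathrm{chem}$ (or $\mathrm{piv}$) and transfer to the other metrics using the uniform comparison \cref{lem:uniform_distance_comparison}. That lemma itself follows from (i) plus the pointwise comparison of \cref{cor:CompareDistance}: if a few uniform points are $\eps r^2$-dense, the distance discrepancy on sampled pairs controls it everywhere, because distances are sums over sausages and are monotone along geodesics.

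For (i), the plan follows Archer--Nachmias--Shalev. Let $\mathrm{N}_\delta$ be the number of chemical-distance balls of radius $\delta r^2$ needed to cover $K$ (through a net built along the tree of backbones). The goal is to show that each such ball carries mass $\gtrsim \delta^2 r^4$ with high probability. The probability that some $v\in K$ has small ball mass is bounded by a union bound over $v$. Each term is estimated by the BK inequality together with the intrinsic one-arm bound $\P(\partial B_{\mathrm{chem}}(0,n)\neq\emptyset)\asymp n^{-1}$ (Kozma--Nachmias) and a bound of the form $\P(|B_\mathrm{chem}(0,n)|\leq \eta n^2,\ \partial B_\mathrm{chem}(0,n)\neq\emptyset)\leq \phi(\eta)n^{-1}$ with $\phi(\eta)\to0$. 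I would derive the latter from the moment asymptotics of \cref{lem:bb_moment_asymptotics}: conditionally on a long geodesic, the mass hanging off an initial segment of length $\delta r^2$ is a sum of nearly independent contributions, with second moment controlled by tree-graph diagrams.

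For (ii), the plan is to bound the expected number of pairs $u,v$ with $d_\mathrm{chem}(u,v)\leq \delta r^2$ but $\|u-v\|\geq \eps r$. Using BK, this reduces to a chemical-distance-restricted two-point function. The needed input is $\P(0\leftrightarrow x \text{ within chemical distance } n)\lesssim$ Gaussian-type decay in $\|x\|^2/n$. Such decay is not stated in the paper. Instead I would chain: cover by $\delta r^2$-balls (finitely many, by (i)), and control the Euclidean displacement along backbone segments using the one-arm estimate \cref{lem:two_point_conditioned_on_arm} and \cref{cor:first_order_arm}-type bounds with a union bound over dyadic scales.

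The main obstacle is (i): proving that \emph{no} point of the cluster has an atypically light ball, uniformly over the $\asymp r^4$ points. A union bound over points costs a factor $r^4$, so the intrinsic-ball mass bound needs a stretched decay in $\eta$ rather than mere smallness. I would first try to obtain this via high moments of $1/|B(v,\delta r^2)|$ computed diagrammatically, exploiting that the ball mass dominates a sum over pivotal-separated subtrees attached along an arm of length $\delta r^2$.
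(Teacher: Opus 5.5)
Your overall architecture matches the paper's: tightness in the rooted GHPf topology comes from a lower mass bound (your (i), \cref{lem:TightGHP}), chemical equicontinuity of the embedding (your (ii), \cref{lem:equicontinuity}), and the uniform metric comparison (\cref{lem:uniform_distance_comparison}, which the paper also derives from (i) and \cref{cor:CompareDistance}). Subsequential limits are then identified using the weak array convergence. The gap is that (i) and (ii) are the whole content of the proof, and the route you sketch for them would not work.

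\textbf{The lower mass bound (i).} The obstruction is not a lack of decay in $\eta$. For fixed $\eta,\delta$, a given vertex has $|B_\mathrm{chem}(v,\delta r^2)|\le \eta r^4$ with a conditional probability that does not tend to zero as $r\to\infty$, since the CRT has light balls with positive probability. So a union bound over the $\asymp r^4$ vertices fails however the single-vertex bound depends on $\eta$. A net ``built along the tree of backbones'' does not help either: it presupposes that boundedly many sampled points are $\delta r^2$-dense, which is equivalent to (i) itself.

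The missing device is a multiscale Markov argument. Let $E_k$ be the event that some $v\in K$ has $|B_\mathrm{chem}(v,2^{-k}\delta r^2)|\le 8^{-k}\eps r^4$. On $E_k\setminus E_{k+1}$, every $u\in B_\mathrm{chem}(v,2^{-k-1}\delta r^2)$ has $|B_\mathrm{chem}(u,2^{-k-1}\delta r^2)|\le 8^{-k}\eps r^4$, and there are at least $8^{-k-1}\eps r^4$ such $u$. Markov's inequality and the mass-transport principle on $\{r^4\le |K|\le \kappa r^4\}$ therefore cost a factor $8^{k+1}\kappa/\eps$, rather than $r^4$, in front of a single-vertex probability. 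One then sums over $k\le 4\log_8 r$.

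This sum requires a single-vertex bound that is uniform in the scale $\ell=2^{-k-1}\delta r^2$ and decays in $\ell^2/M$ fast enough to beat the growth in $k$ of these prefactors. The paper uses $\P(|B_\mathrm{chem}(0,\ell)|\le M,\ \partial B_\mathrm{chem}(0,\ell)\neq\emptyset)\le C\ell^{-1}e^{-c\ell^2/M}$, i.e.\ \eqref{eq:skinny}, a consequence of the triangle condition \cite{hutchcroft2020slightly,MR2653185}. \Cref{lem:bb_moment_asymptotics} cannot supply such a bound, for three reasons:
\begin{itemize}
\item it is an $r\to\infty$ limit at a fixed macroscopic scale, so it gives nothing uniform across scales;
\item it controls only positive moments, from which no lower-tail (small-mass) estimate follows;
\item tree-graph diagrams do not compute negative moments such as those of $1/|B_\mathrm{chem}(v,\delta r^2)|$.
\end{itemize}

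\textbf{Equicontinuity (ii).} You need an estimate relating chemical length to Euclidean displacement. Neither \cref{lem:two_point_conditioned_on_arm} nor the up-to-constants one-arm bounds provide one. Invoking \cref{cor:first_order_arm} would be circular, since the paper deduces it from the very theorem you are proving.

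The paper's input is the monotone coupling with $p_c-1/\ell$ percolation: an open path of length at most $\ell$ survives with probability bounded below. Hence $\P_{p_c}(x\leftrightarrow y,\ d_\mathrm{chem}(x,y)\le \ell)\le C\P_{p_c-1/\ell}(x\leftrightarrow y)\le C'\langle x-y\rangle^{-d+2}e^{-c\|x-y\|/\sqrt{\ell}}$, by the near-critical two-point bound of \cite{hutchcroft2023high}. The same multiscale scheme then goes through. This time you use the lower mass bound at each scale to turn one bad pair $(u,v)$ into $\gtrsim r^8$ bad pairs of points in the chemical balls around $u$ and $v$, before applying Markov's inequality.
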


We now recall the relevant definitions. Fix a target metric space $(Y,d_Y)$, which we will always take to be $\R^d$ equipped with the Euclidean metric. We will define a metric on isomorphism classes of tuples $(X,d,\mu,f,o)$ where $(X,d)$ is a compact metric space, $\mu$ is a finite Borel measure on $X$, $f:X\to Y$ is continuous, and $o\in X$ is a point; an isomorphism of this data is required to simultaneously preserve the metric, the measure, and the choice of root. Note that the measure $\mu$ is \emph{not} required to have full support in $X$.
Given two such (isomorphism class representatives of) tuples $(X_1,d_1,\mu_1,f_1,o_1)$ and  $(X_2,d_2,\mu_2,f_2,o_2)$, we define the rooted GHPf distance
$d_\mathrm{rGHPf}((X_1,d_1,\mu_1,f_1,o_1),(X_2,d_2,\mu_2,f_2,o_2))$ 
 to be the infimum over all isometric embeddings $\psi_i:X_i\to W$ into a common compact metric space $(W,d_W)$ of the quantity
\[
 d_H^W(\psi_1(X_1),\psi_2(X_2))
 +d_P^W((\psi_1)_*\mu_1,(\psi_2)_*\mu_2)
 +d_W(\psi_1(o_1),\psi_2(o_2))
 +\Delta_f(f_1\circ \psi^{-1}_1,f_2\circ \psi_2^{-1}),
\]
where $d_H^W$ and $d_P^W$ denote the Hausdorff and Prokhorov distances in $W$ and $\Delta_f(f_1\circ \psi^{-1}_1,f_2\circ \psi_2^{-1})$ is the infimal value of $\eps>0$ such that for each $i\in\{1,2\}$ and $x_i\in X_i$ there exists $x_{3-i}\in X_{3-i}$ with
\[
 d_W(\psi_i(x_i),\psi_{3-i}(x_{3-i}))\leq \eps
 \quad\text{and}\quad
 \|f_i(x_i)-f_{3-i}(x_{3-i})\|\leq \eps.
\]
Note that convergence of a sequence of tuples $(X_n,d_n,\mu_n,f_n,o_n)$ to a tuple $(X,d,\mu,f,o)$ in the rooted GHPf topology implies the convergence of the rooted metric-measure spaces $(X_n,d_n,\mu_n,o_n)$ to $(X,d,\mu,o)$ in the Gromov-Hausdorff-Prokhorov topology and convergence of the set-measure pairs $(f_n(X_n),(f_n)_*\mu_n)$ to the set-measure pair $(f(X),f_*\mu)$ in the Hausdorff-Prokhorov topology on $Y$.

\begin{remark}
\cref{thm:scaling_limit_GHPf} implies in particular that the cluster of the origin converges \emph{as a set} to the relevant super-Brownian limit object as $r\to \infty$ when we condition on $|K| \geq \lambda C_\mathrm{vol} r^4$. This convergence was established for a number of other high-dimensional models by Holmes and Perkins \cite{holmes2020range}, but remained open for (unoriented) percolation.
\end{remark}


\subsection{Tightness criteria and their verification}

In this section we deduce \cref{thm:scaling_limit_GHPf} from the corresponding statement for the weak array topology (\cref{thm:scaling_limit_intro,thm:k_point_A_and_B}). The main additional step required is to prove \emph{tightness} in the rooted GHPf topology, the proof of which will rely on the following characterisation of precompact sets in rooted GHPf spaces.
Recall that a metric space is said to be \textbf{proper} if all its closed bounded subsets are compact.

\begin{prop}[Compactness in rooted GHPf]
\label{prop:GHPf_compactness}
Let $(Y,d_Y)$ be a proper metric space and let $\mathcal{K}$ be a collection of isomorphism class representatives of tuples $(X,d,\mu,f,o)$ of rooted compact metric-measure spaces equipped with continuous functions $f:X\to Y$. The set $\mathcal{K}$ has compact closure in the rooted GHPf topology if and only if the following conditions all hold:
\begin{enumerate}
  \item The metric spaces $(X,d)$ appearing in $\mathcal{K}$ have bounded diameter and are uniformly totally bounded in the sense that for each $\eps>0$ there exists $N_\eps<\infty$ such that every metric space $(X,d)$ appearing in $\mathcal{K}$ can be covered by $N_\eps$ balls of radius $\eps$.
  \item The measures $\mu$ have bounded total mass.
  \item The functions $f$ appearing in $\mathcal{K}$ are bounded and equicontinuous in the sense that the set $\{f(x):(X,d,\mu,f,o)\in \mathcal{K},\, x\in X\}$ is bounded in $Y$ and for every $\eps>0$ there exists $\delta$ such that $d_Y(f(x),f(y)) \leq \eps$ for every $(X,d,\mu,f,o)\in \mathcal{K}$ and every $x,y\in X$ with $d(x,y) \leq \delta$.
\end{enumerate}
\end{prop}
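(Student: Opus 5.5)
We reduce to the classical Gromov compactness theorem for pointed GHP spaces and add the function component with an Arzelà--Ascoli argument carried out in a common ambient space.

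\emph{Necessity.} The rooted GHPf distance dominates the rooted GHP distance between $(X,d,\mu,o)$'s. It also dominates the Hausdorff distance between the image sets $f(X)\subseteq Y$. Hence if $\mathcal K$ is precompact, the projected families are precompact, and the standard characterisation gives conditions 1 and 2. Boundedness of $\bigcup f(X)$ follows because the Hausdorff distance between image sets is controlled. Equicontinuity is proved by contradiction. Suppose there are $\eps>0$, elements $(X_n,\dots,f_n,\dots)\in\mathcal K$ and points $x_n,y_n$ with $d_n(x_n,y_n)\to 0$ but $d_Y(f_n(x_n),f_n(y_n))\geq\eps$. Pass to a subsequence converging to some limit $(X,d,\mu,f,o)$, whose $f$ is continuous, hence uniformly continuous on compact $X$. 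Use the $\Delta_f$-matching to produce $x,y\in X$ with $d(x,y)$ small and $f(x)$, $f(y)$ within $o(1)$ of $f_n(x_n)$, $f_n(y_n)$. This contradicts uniform continuity of $f$.

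\emph{Sufficiency.} This is the main step. Take a sequence in $\mathcal K$. By conditions 1 and 2 and Gromov's compactness theorem (in its GHP form), pass to a subsequence and realise all $X_n$ and the limit $X$ isometrically inside one compact metric space $W$, with $\psi_n(X_n)\to\psi(X)$ in Hausdorff distance, measures converging in Prokhorov distance, and roots converging. Next, transfer the functions $f_n\circ\psi_n^{-1}$ onto $\psi(X)$. Equicontinuity and the boundedness of the images (which lie in a compact set, $Y$ being proper) give a uniform modulus of continuity $\omega$. One way to proceed is to extend each $f_n\circ\psi_n^{-1}$ to all of $W$ while keeping a comparable modulus of continuity. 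Extension is awkward when $Y$ is a general metric space, so instead I would work with graphs. Consider the compact sets $\Gamma_n=\{(\psi_n(x),f_n(x))\}\subseteq W\times \overline{B}$, where $\overline B\subseteq Y$ is a closed bounded (hence compact) set containing all images. By Blaschke's theorem, a subsequence of the $\Gamma_n$ converges in Hausdorff distance to a compact $\Gamma$. Using the uniform modulus $\omega$, show that $\Gamma$ is the graph of a function $f$ on $\psi(X)$. Indeed, if $(w,a)$ and $(w,b)$ both lie in $\Gamma$, then they are approximated by points of $\Gamma_n$ whose first coordinates are close, so $d_Y(a,b)\leq\omega(o(1))\to0$. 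The same reasoning shows that $f$ has modulus $\omega$, and the projection of $\Gamma$ onto $W$ equals $\psi(X)$ by the Hausdorff convergence of $\psi_n(X_n)$. Finally, Hausdorff convergence of the graphs is exactly $\Delta_f\to0$.

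The only real subtlety is the well-definedness of the limit graph as a function, which is where equicontinuity enters essentially. The remaining steps follow from the standard GHP theory.
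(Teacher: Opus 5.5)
Your proof is correct, but it splits the problem the opposite way from the paper. The paper's proof is a one-line reduction. It cites \cite[Proposition A.4]{ambrosio2025tightness}, which is this compactness criterion for the rooted GHf topology (metric, root and function, but no measure), and then adds the measure component. That is the easy part. Once the spaces sit in a common compact $W$, finite measures of bounded total mass on $W$ are Prokhorov-precompact. Conversely, total mass is $1$-Lipschitz for $d_P^W$, which gives the necessity of condition 2.

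You instead take the classical rooted GHP compactness theorem as the black box and add the function component by hand. You do this via Blaschke compactness of the graphs $\Gamma_n\subseteq W\times\overline{B}$, together with the observation that, for the max metric on $W\times Y$, $\Delta_f$ is exactly the Hausdorff distance between graphs. The steps you sketch go through:
\begin{itemize}
\item The limit graph projects onto $\psi(X)$, because projection is $1$-Lipschitz for Hausdorff distance.
\item It is single-valued and inherits the modulus $\omega$, by equicontinuity.
\item Necessity of equicontinuity follows from uniform continuity of the limit function on the compact limit space. Here the $\Delta_f$-matching may be done in the $n$-dependent near-optimal ambient space $W_n$, which is enough for your triangle-inequality argument.
\end{itemize}

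What your route buys is self-containedness. It uses only the Gromov--Prokhorov compactness theorem and Blaschke's theorem. It also shows exactly where properness of $Y$ is used (compactness of $\overline{B}$) and where equicontinuity is used (single-valuedness and continuity of the limit graph). The paper's route is shorter, but it outsources the genuinely non-trivial function part to the cited GHf result.

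One step you should spell out rather than attribute to ``standard GHP theory'' is the existence of a single compact $W$ hosting all the $X_n$ and the limit. Glue the compact sets $\psi_n(X_n)\cup\psi(X)$ along the common copy of $X$. The result is compact because $d_H(\psi_n(X_n),\psi(X))\to 0$. The Hausdorff, Prokhorov and root distances are unchanged, since the metric on each piece is preserved.
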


\begin{proof}[Proof of \cref{prop:GHPf_compactness}]
This follows easily from \cite[Proposition A.4]{ambrosio2025tightness}, which gives the same statement for the rooted GHf topology (i.e., without considering the measure $\mu$).
\end{proof}

We will apply \cref{prop:GHPf_compactness} mostly via the following sufficient condition for compactness, essentially due to \cite{athreya2016gap} and known as the \emph{lower mass condition}. (Note that item 1 of this sufficient condition is implied by items 2 and 3 for geodesic metric spaces.)

\begin{prop}\label{prop:GHPf_compactness_lower_mass_version}
Let $(Y,d_Y)$ be a proper metric space and let $\mathcal{K}$ be a collection of isomorphism class representatives of tuples $(X,d,\mu,f,o)$ of rooted compact metric-measure spaces equipped with continuous functions $f:X\to Y$. Suppose moreover that the following conditions hold:
\begin{enumerate}
   \item The metric spaces $(X,d)$ appearing in $\mathcal{K}$ have bounded diameter.
    \item The measures $\mu$ have bounded total mass.
  \item For each radius $r>0$ there exists $\eps>0$ such that the ball $B(x,r)$ has $\mu(B(x,r)) \geq \eps$ for every $(X,d,\mu,f,o) \in \mathcal{K}$ and $x\in X$.
  \item The functions $f$ appearing in $\mathcal{K}$ are bounded and equicontinuous in the sense that the set $\{f(x):(X,d,\mu,f,o)\in \mathcal{K},\,x\in X\}$ is bounded in $Y$ and for every $\eps>0$ there exists $\delta$ such that $d_Y(f(x),f(y)) \leq \eps$ for every $(X,d,\mu,f,o)\in \mathcal{K}$ and every $x,y\in X$ with $d(x,y) \leq \delta$.
\end{enumerate}
Then $\mathcal{K}$ has compact closure in the rooted GHPf topology and every tuple $(X,d,\mu,f,o)$ in the closure $\overline{\mathcal{K}}$ has the property that $\mu$ has support $X$.
\end{prop}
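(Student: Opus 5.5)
The plan is to reduce to \cref{prop:GHPf_compactness}: conditions 1, 2 and 4 there coincide with conditions 1, 2 and 4 here, except that uniform total boundedness must be derived from the lower mass condition 3. Afterwards we identify the support of limiting measures.

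\emph{Step 1 (uniform total boundedness).} Fix $r>0$ and let $\eps=\eps(r/2)>0$ be the constant from condition 3 for radius $r/2$, and let $M$ bound the total masses from condition 2. Given $(X,d,\mu,f,o)\in\mathcal{K}$, choose a maximal $r$-separated set $\{x_1,\ldots,x_n\}\subseteq X$, which exists by compactness. The balls $B(x_i,r/2)$ are pairwise disjoint and each has mass at least $\eps$, so $n\eps\leq \mu(X)\leq M$. By maximality, the balls $B(x_i,r)$ cover $X$. Hence $X$ is covered by $N_r:=\lfloor M/\eps\rfloor$ balls of radius $r$, uniformly over $\mathcal{K}$. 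Together with condition 1 this gives condition 1 of \cref{prop:GHPf_compactness}, and the other conditions carry over verbatim. This yields that $\overline{\mathcal{K}}$ is compact.

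\emph{Step 2 (full support in the closure).} Let $(X_n,d_n,\mu_n,f_n,o_n)\in\mathcal{K}$ converge in rGHPf to $(X,d,\mu,f,o)$. Fix $x\in X$ and $r>0$; we must show $\mu(B(x,r))>0$. Take isometric embeddings into common spaces $W_n$ realising the distance up to $\delta_n\to 0$. By the Hausdorff part, there are points $x_n\in X_n$ with $d_{W_n}(\psi_n(x_n),\psi(x))\leq\delta_n$. Condition 3 at radius $r/2$ gives $\mu_n(B(x_n,r/2))\geq\eps(r/2)$. Since the Prokhorov distance is at most $\delta_n$, the image of $\mu$ gives mass at least $\eps(r/2)-\delta_n$ to the $\delta_n$-neighbourhood of $\psi_n(B(x_n,r/2))$. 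That neighbourhood is contained in the $(r/2+2\delta_n)$-ball around $\psi(x)$, intersected with $\psi(X)$ since $\mu$ lives on $X$. For $n$ large this sits inside $\psi(B(x,r))$, so $\mu(B(x,r))\geq\eps(r/2)>0$ after letting $n\to\infty$. This uses only the definition of the Hausdorff and Prokhorov parts, and the function term plays no role.

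\emph{Main obstacle.} The only subtle point is Step 2. The common space $W_n$ changes with $n$, so the comparison must be done within each fixed $W_n$ using the quantitative bound $\delta_n$, rather than by passing to a limiting ambient space. Phrasing it this way makes the argument direct. One could alternatively cite \cite{athreya2016gap} for the measure part, but the self-contained argument above seems cleaner.
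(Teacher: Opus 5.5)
Your proof is correct and is the standard argument the paper has in mind: the paper gives no separate proof, presenting the result as a consequence of \cref{prop:GHPf_compactness} together with the lower mass argument of \cite{athreya2016gap}, which is exactly your packing bound for uniform total boundedness plus the transfer of the lower mass bound to limit points through the Hausdorff and Prokhorov terms. Two minor points to tidy: in Step 1, if the balls are closed, take the separation to be strict, $d(x_i,x_j)>r$, so that the balls $B(x_i,r/2)$ really are disjoint; and in Step 2 the embedding of $X$ into $W_n$ depends on $n$ and should be indexed accordingly.
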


In order to apply this compactness criterion, we next prove the following three lemmas: the first gives the lower mass estimate on chemical balls,  the second yields equicontinuity of the embedding with respect to the chemical metric, and the third improves \cref{cor:CompareDistance} to a uniform comparison of different intrinsic distances holding for all pairs of points in the cluster.

\begin{lemma}[Lower mass condition] \label{lem:TightGHP}  
Let $d>6$ and $L\geq 1$ be such that critical percolation on $\mathbb{Z}^d_L$ satisfies the estimate \eqref{eq:two_point_assumption}. We have that
\[
\lim_{\eps\downarrow0}\limsup_{r\to\infty}
\proba_{p_c}\left (\exists v\in K \text{ such that } |B_\mathrm{chem}(v,\delta r^2)|\leq \eps r^4 \;\middle |\;  |K| \geq  r^4 \right )=0 .
\]
for every  $\delta>0$.
\end{lemma}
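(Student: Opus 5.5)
We would adapt the strategy of Archer, Nachmias and Shalev \cite{archer2024ghp}. The core input is a \emph{volume-growth lower tail} for intrinsic balls. For a fixed vertex $v$, we would show that the probability that $v$ is connected to chemical distance $\delta r^2/2$ and yet $|B_\mathrm{chem}(v,\delta r^2)|\le \eps r^4$ is small compared with the arm probability. This requires only \eqref{eq:two_point_assumption} and the Kozma--Nachmias intrinsic estimates \cite{MR2551766}: $\P(\partial B_\mathrm{chem}(0,n)\neq\emptyset)\asymp n^{-1}$ and $\E|B_\mathrm{chem}(0,n)|\asymp n$, both valid under the triangle condition.

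\textbf{Step 1: a single-vertex estimate.} First we would show that there is $h(\eps)\to0$ as $\eps\to0$ such that
\[
\P\bigl(\partial B_\mathrm{chem}(0,n)\neq\emptyset,\ |B_\mathrm{chem}(0,2n)|\le \eps n^2\bigr)\le h(\eps)\, n^{-1}.
\]
The argument uses the chemical exploration. On the event $\partial B_\mathrm{chem}(0,n)\neq\emptyset$, the levels $\partial B_\mathrm{chem}(0,j)$ for $n\le j\le 2n$ behave like a critical branching process started from a surviving particle. Conditional on survival to level $n$, with high probability the generations between $n$ and $2n$ have total size of order $n^2$. To quantify this, we would use a second-moment/BK argument. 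A Kozma--Nachmias style regeneration argument shows that a vertex at level $n$ spawns an independent-enough subcluster of intrinsic depth $n$ whose volume is at least $c n^2$ with probability at least $c/n$ (a Paley--Zygmund bound from $\E|B_\mathrm{chem}(n)|\asymp n$ and $\E|B_\mathrm{chem}(n)|^2\lesssim n^3$ by the tree-graph bound). Given many surviving lineages, failure becomes unlikely.

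\textbf{Step 2: a union bound over vertices via mass transport.} We would cover $K$ by a net. If some $v\in K$ has $|B_\mathrm{chem}(v,\delta r^2)|\le\eps r^4$, then we would pick a geodesic from $v$ of length at least $\delta r^2/4$. Such a geodesic exists on $\{|K|\ge r^4\}$ once $\eps\ll \delta^2$, since otherwise the ball would be all of $K$. The vertex $w$ on that geodesic at distance $\delta r^2/4$ then satisfies $|B_\mathrm{chem}(w,\delta r^2/2)|\le \eps r^4$ and $\partial B_\mathrm{chem}(w,\delta r^2/4)\neq\emptyset$.

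To avoid a union bound over roughly $r^4$ vertices, we would instead bound the \emph{expected number} of such bad $w$. We would do this in a way that forces a whole chunk of intrinsic mass near $w$ to be bad. If $w$ is bad, then every $u$ on the geodesic within distance $\delta r^2/8$ of $w$ is also bad with slightly altered parameters. So on the bad event there are at least $\delta r^2/8$ bad vertices in $K$, and
\[
\P(\text{bad}, |K|\ge r^4)\le \frac{8}{\delta r^2}\E\#\{u\in K:\ u \text{ bad}\}.
\]
By the mass-transport principle, $\E\#\{u\in K: u\text{ bad}\}=\E[|K|\mathbbm{1}(0\text{ bad})]$.

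\textbf{Step 3: controlling $\E[|K|\mathbbm{1}(0\text{ bad})]$.} We would split according to whether $|K|\le M r^4$. The part with $|K|>Mr^4$ is bounded via $\E[|K|\mathbbm 1(|K|>Mr^4)]$, which diverges, so this route fails naively. Instead we would bound $\E[|K\cap B_\mathrm{chem}(0,Cr^2)|\mathbbm{1}(0\text{ bad})]$ by restricting the count to bad vertices $u$ within chemical distance $\delta r^2/8$ of the geodesic. Using BK together with Step 1 and $\E|B_\mathrm{chem}(n)|\asymp n$, this gives $O(\delta r^2\cdot h(\eps) r^{-2}\cdot r^2)$. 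Dividing by $\P(|K|\ge r^4)\asymp r^{-2}$ yields $O(h(\eps))$. The main obstacle is Step 1, specifically making the regeneration/independence rigorous with only \eqref{eq:two_point_assumption} available. There we would follow the Kozma--Nachmias argument of exploring the cluster off the already-revealed ball and applying BK.
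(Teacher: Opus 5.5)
There is a genuine gap in Steps 2--3: the counting is short by a factor of $r^2$. By mass transport, $\E\#\{u\in K: u \text{ bad}\}=\E[|K|\mathbbm{1}(0\text{ bad})]$. On the relevant event $|K|$ is of order $r^4$, while $\P(0\text{ bad})$ is genuinely of order $r^{-2}$ for fixed $\eps,\delta$. (A typical vertex of a cluster of size $\geq r^4$ has a small ball with conditional probability bounded below in $r$, so this is not an artefact of crude bounds.) The expected number of bad vertices is therefore of order $r^2$, and your own estimate $O(\delta h(\eps) r^2)$ agrees. Dividing by the $\delta r^2/8$ bad vertices that one bad vertex forces along a geodesic only yields $\P(\exists \text{ bad } v,\ |K|\geq r^4)=O(h(\eps))$. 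Dividing that by $\P(|K|\geq r^4)\asymp r^{-2}$ gives $O(h(\eps) r^2)$, not $O(h(\eps))$; the last sentence of Step 3 drops this factor. To absorb $|K|\asymp r^4$ you must exhibit order $r^4$ bad vertices, not order $r^2$. A small ball cannot supply them at its own scale, precisely because it is small. A secondary point: as written, Paley--Zygmund with $\E|B_\mathrm{chem}(0,n)|\asymp n$ and $\E|B_\mathrm{chem}(0,n)|^2\lesssim n^3$ only gives $|B_\mathrm{chem}(0,n)|\gtrsim n$ with probability $\gtrsim 1/n$, not volume $\gtrsim n^2$.

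The missing idea is a multiscale pigeonhole argument, which is what the paper does after truncating to $r^4\leq |K|\leq \kappa r^4$. That truncation, together with $\P(|K|>\kappa r^4)\lesssim \kappa^{-1/2}r^{-2}$, already disposes of your worry about $\E[|K|\mathbbm{1}(|K|>Mr^4)]$ diverging; no localization to $B_\mathrm{chem}(0,Cr^2)$ is needed. Let $E_k$ be the event that $r^4\leq|K|\leq\kappa r^4$ and some $v\in K$ has $|B_\mathrm{chem}(v,2^{-k}\delta r^2)|\leq 8^{-k}\eps r^4$. On $E_k\setminus E_{k+1}$ some such $v$ has $|B_\mathrm{chem}(v,2^{-k-1}\delta r^2)|\geq 8^{-k-1}\eps r^4$. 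Every $u$ in this half-ball satisfies $|B_\mathrm{chem}(u,2^{-k-1}\delta r^2)|\leq |B_\mathrm{chem}(v,2^{-k}\delta r^2)|\leq 8^{-k}\eps r^4$, so there are at least $8^{-k-1}\eps r^4$ bad vertices, which is now a volume of order $r^4$. Markov's inequality and mass transport then bound $\P(E_k\setminus E_{k+1})$ by $\frac{8^{k+1}}{\eps r^4}\cdot \kappa r^4\cdot \P(|B_\mathrm{chem}(0,2^{-k-1}\delta r^2)|\leq 8^{-k}\eps r^4,\ \partial B_\mathrm{chem}(0,2^{-k-1}\delta r^2)\neq\emptyset)$. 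Moreover, $E_k$ is empty once $8^{-k}\eps r^4<1$. This is also where the rate in your Step 1 matters. An unquantified $h(\eps)\to 0$ cannot beat the prefactors $8^k/\eps$ (and the extra $2^k/\delta$ from the arm probability). The paper instead uses the exponential skinny-ball estimate $\P(|B_\mathrm{chem}(0,\ell)|\leq M,\ \partial B_\mathrm{chem}(0,\ell)\neq\emptyset)\leq \frac{C}{\ell}e^{-c\ell^2/M}$ of \cite{hutchcroft2020slightly}, valid under the triangle condition. This gives $\P(E_k\setminus E_{k+1})\lesssim \frac{16^k\kappa}{\eps\delta r^2}\exp[-c2^{k}\delta^2/\eps]$, whose sum over $k$ is $r^{-2}$ times a quantity tending to zero as $\eps\downarrow 0$.
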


\begin{lemma}[Chemical equicontinuity of the embedding] \label{lem:equicontinuity} 
Let $d>6$ and $L\geq 1$ be such that critical percolation on $\mathbb{Z}^d_L$ satisfies the estimate \eqref{eq:two_point_assumption}. We have that
\[ 
\lim_{\delta \downarrow0}\limsup_{r\to\infty} \proba_{p_c} \left (\exists u,v\in K \text{ such that } d_\mathrm{chem}(u,v) \leq \delta r^2 \text{ and } \|u-v\|\geq \eps r \; \middle| \; |K|\geq  r^4 \right )= 0\]
for every $\eps>0$.
\end{lemma}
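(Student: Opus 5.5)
The statement immediately above is \cref{lem:equicontinuity}, and the plan is a first-moment bound on ``bad pairs'' combined with a covering argument. We only need to control pairs $u,v\in K$ with $d_\mathrm{chem}(u,v)\le \delta r^2$ and $\|u-v\|\ge \eps r$. Since $\P(|K|\ge r^4)\asymp r^{-2}$, it suffices to show that the unconditional probability of such a pair existing is $o(r^{-2})$ in the limit $\delta\downarrow 0$ followed by $r\to\infty$.

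\textbf{Reduction to a local event.} First localize. By the Kozma--Nachmias one-arm bound, with probability $1-O(M^{-2}r^{-2})$ the cluster lies in $[-Mr,Mr]^d$, so fix a large $M$. If a bad pair exists, the chemical geodesic $\gamma$ from $u$ to $v$ has length at most $\delta r^2$ and Euclidean displacement at least $\eps r$. Split $\gamma$ into pieces of chemical length $\delta r^2$ at most; by pigeonhole, some vertex $w$ of $\gamma$ reaches Euclidean distance at least $\eps r/2$ from itself within chemical distance $\delta r^2$. Now cover $[-Mr,Mr]^d$ by boxes $Q$ of side $\eta r$ with $\eta=\eps/10$; there are $O((M/\eta)^d)$ of them. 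The bad event then implies that for some box $Q$ the event $\mathcal{E}_Q$ holds: $Q$ is connected to $0$, and some vertex of $Q$ is joined to distance $\eps r/4$ from $Q$ by an open path of length at most $\delta r^2$.

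\textbf{Bounding each box.} Apply the BK inequality together with a union bound over the first vertex $z\in Q$ of the cluster used by the short path. This gives
\[
\P(\mathcal{E}_Q)\le \sum_{z\in Q}\P(0\leftrightarrow z)\,\P\bigl(z\leftrightarrow \partial B(z,\eps r/4)\text{ within }\delta r^2\text{ steps}\bigr),
\]
and the BK decoupling is where the main care is needed. The second factor needs a bound of the form $\P(\text{arm of Euclidean radius }\rho\text{ and chemical length}\le t)$ that is much smaller than $\rho^{-2}$ when $t\ll\rho^2$. I would obtain it from the Kozma--Nachmias intrinsic estimates: the expected volume of the chemical ball $B_\mathrm{chem}(0,t)$ is $O(t)$, and random-walk-type displacement estimates for paths of $t$ steps built from two-point functions give a decay $\exp(-c\rho^2/t)$. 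Concretely, a chemical path of length $t$ reaching distance $\rho$ forces about $\rho^2/t\cdot$(const) disjoint crossings of annuli of width $t^{1/2}$ in chemical time $t$. Summing and using $\sum_{z\in Q}\P(0\leftrightarrow z)\lesssim (\eta r)^2\cdot\mathrm{dist}^{-d+2}\cdots$ then yields a total bound $C(M,\eps)\,r^{-2}\,f(\delta)$ with $f(\delta)\to 0$.

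\textbf{Main obstacle.} The key difficulty is this last intrinsic-versus-extrinsic estimate, namely that a chemical arm of length $\delta r^2$ rarely travels Euclidean distance $\eps r$, with a bound uniform enough to beat the $r^{-2}$ normalization and the $\eps^{-d}$ box count. I would try first to iterate the BK inequality along the geodesic: split the path into $n$ pieces, each crossing an annulus of width $\eps r/n$ in $\delta r^2/n$ steps, and use the Kozma--Nachmias bound $\P(\partial B_\mathrm{chem}(0,t)\ne\emptyset)\asymp t^{-1}$ together with $\E|B_\mathrm{chem}(0,t)|\asymp t$ to show each piece costs a factor less than $1$ when $\delta n\ll\eps^2$. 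This gives geometric decay in $n$, which can be made to dominate the polynomial prefactors.
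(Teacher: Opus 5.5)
There is a genuine gap, and it lies in the counting step as well as in the arm estimate.

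\textbf{Where the bound fails.} Your displayed bound, summed over all boxes $Q$, equals $\sum_{z\in[-Mr,Mr]^d}\P(0\leftrightarrow z)\,a_r(z)$. Here $a_r(z)$ is the probability that $z$ reaches Euclidean distance $\eps r/4$ by an open path of length at most $\delta r^2$. But $\sum_{\|z\|\leq Mr}\P(0\leftrightarrow z)\asymp M^2r^2$. A single box of side $\eta r$ at distance $\asymp r$ from the origin already contributes $\asymp \eta^d r^2$, since it has $(\eta r)^d$ points, not $(\eta r)^2$. The factor $a_r(z)$ is an arm event to Euclidean distance of order $r$. So even granting the estimate you aim for, $a_r\le (\eps r)^{-2}f(\delta)$, the total is $O(M^2\eps^{-2}f(\delta))$ uniformly in $r$. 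That is $O(1)$, whereas you need $o(r^{-2})$.

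No sharper arm estimate can recover the missing $r^{-2}$. Your iterated-annulus argument uses a number of pieces depending only on $\delta$ and $\eps$, so it cannot produce any extra decay in $r$. Moreover, for fixed $\delta,\eps$ the arm probability really is of order $r^{-2}$; in the setting of \cref{thm:scaling_limit_intro} the weak-array limit gives $a_r\geq c(\delta,\eps)r^{-2}$.

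The underlying reason is that BK plus a union bound over $z$ only controls $\E\#\{z\in K: z \text{ starts a chemically short long arm}\}$. On the bad event this number is typically huge: every vertex of the geodesic from $u$ to $v$, and many vertices near it, qualify. Applying Markov's inequality at threshold $1$ therefore wastes exactly the polynomial factor you need.

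\textbf{The missing idea: many witnesses.} You need a lower bound on the number of witnesses on the bad event. The paper does this as follows.
\begin{enumerate}
\item It restricts to the event $\{r^4\le|K|\le\kappa r^4\}$.
\item It replaces your arm estimate by a two-point estimate. Using the monotone coupling with $p_c-1/\ell$ and the near-critical two-point bound, it shows $\P_{p_c}(x\leftrightarrow y,\ d_\mathrm{chem}(x,y)\le \ell)\le C\langle x-y\rangle^{-d+2}e^{-c\langle x-y\rangle/\sqrt{\ell}}$.
\item By mass transport, the expected number of bad \emph{pairs} $(u,v)\in K^2$ on $\{r^4\le|K|\le\kappa r^4\}$ is at most $C\kappa r^6e^{-cr/\sqrt{\ell}}$. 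The upper cutoff $|K|\le\kappa r^4$ is what keeps the prefactor at $r^6$.
\item It runs a dyadic multiscale induction over events $E_k$, at chemical scale $4^{-3k}\delta r^2$ and Euclidean scale $4^{-k}\eps r$. On $E_k\setminus E_{k+1}$, the chemical balls of radius $4^{-3k-3}\delta r^2$ around $u$ and $v$ stay Euclidean-close to their centres.
\item By the quantitative lower mass bound behind \cref{lem:TightGHP}, these balls have volume of order $r^4$, except on a separately controlled event. This produces order $r^8$ bad pairs at a slightly smaller scale.
\item Markov's inequality then gives $r^6/r^8=r^{-2}$, times a factor that vanishes as $\delta\downarrow 0$ and is summable over $k$.
\end{enumerate}

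Your single-point count could be repaired along the same lines: add a volume lower bound on the witness set and multiscale control of Euclidean displacement inside chemical balls. Some such many-witnesses argument is indispensable.
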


\begin{lemma}[Uniform comparison of different intrinsic distances]
\label{lem:uniform_distance_comparison}
Let $d>6$ and $L\geq 1$ be such that critical percolation on $\mathbb{Z}^d_L$ satisfies the estimates \eqref{eq:two_point_assumption} and \eqref{eq:two_blob_assumption}.  For each $\#\in \{\mathrm{piv},\mathrm{res}\}$ we have that
\[
  \lim_{r\to\infty}\P_{p_c}\left( \exists u,v \in K \text{ such that } |C_\# d_\mathrm{chem}(u,v)-C_\mathrm{chem} d_\#(u,v)| \geq \eps r^2 \mid  |K| \geq r^4\right)=0
\]
for every  $\eps>0$.
\end{lemma}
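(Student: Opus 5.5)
The plan is to deduce the uniform comparison from two ingredients. The first is the pointwise (weak array) comparison of \cref{cor:CompareDistance}, upgraded to many sampled points. The second is a chaining argument along the tree-like structure of the cluster, which uses the lower mass condition (\cref{lem:TightGHP}) together with a uniform modulus of continuity for $d_\#$ with respect to $d_\mathrm{chem}$.

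First we note that $d_\mathrm{piv}\le d_\mathrm{res}\le d_\mathrm{chem}$ pointwise, since every open pivotal contributes resistance one and effective resistance is at most graph distance. Hence $d_\#(u,v)\le d_\mathrm{chem}(u,v)$ for $\#\in\{\mathrm{piv},\mathrm{res}\}$, and both $d_\#$ and $d_\mathrm{chem}$ satisfy the triangle inequality (or at least $d_\mathrm{piv}$ is a semimetric). So for any $u,v,u',v'$,
\[
|C_\# d_\mathrm{chem}(u,v)-C_\mathrm{chem}d_\#(u,v)| \le |C_\# d_\mathrm{chem}(u',v')-C_\mathrm{chem}d_\#(u',v')| + (C_\#+C_\mathrm{chem})\bigl(d_\mathrm{chem}(u,u')+d_\mathrm{chem}(v,v')\bigr).
\]
It therefore suffices to find a random finite $\delta r^2$-net of $K$ in the chemical metric on which the comparison holds uniformly.

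The net is built from sampled points. Let $X_1=0,X_2,\ldots,X_k$ be uniform samples from $K$. By \cref{lem:TightGHP}, with high conditional probability every chemical ball of radius $\delta r^2/2$ has mass at least $\eps_0 r^4$. The total mass is $|K|=O(r^4)$ with high probability by \cref{thm:integrated_SBM}. It follows that for $k=k(\delta,\eps_0,\lambda)$ large, the samples form a $\delta r^2$-net with probability close to one: a union bound over a maximal $\delta r^2/2$-separated set, whose size is at most $|K|/(\eps_0 r^4)$, shows each ball is hit. Then \cref{cor:CompareDistance}, applied to the fixed number $k$ of samples, gives $\max_{i,j}|G_\# d_\mathrm{piv}(X_i,X_j)-G_\mathrm{piv}d_\#(X_i,X_j)|\le \eps r^2$ with high probability.

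Recall that $C_\#=AG_\#$. The corollary is stated relative to $d_\mathrm{piv}$, so we apply it twice, once for $\mathrm{chem}$ and once for $\#$, to compare $d_\mathrm{chem}$ with $d_\#$ on the net. Choosing $\delta$ small relative to $\eps$ and combining with the display above yields the claim.

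The conditioning in the statement is on $|K|\ge r^4$ rather than $|K|\ge\lambda C_\mathrm{vol}r^4$, which is harmless after reparametrizing $r$. The main obstacle is verifying that \cref{lem:TightGHP} indeed supplies the net. In particular, uniformity over all $v\in K$ is exactly what that lemma asserts, so the remaining technical point is the union bound over the maximal separated set. Everything else is routine bookkeeping of constants.
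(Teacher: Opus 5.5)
Your argument is correct and rests on the same three ingredients as the paper's proof, but it packages them differently. The ingredients are the lower mass bound of \cref{lem:TightGHP}; the triangle inequality for $d_\#$ together with $d_\#\leq d_\mathrm{chem}$, which makes $d_\#$ $1$-Lipschitz with respect to $d_\mathrm{chem}$; and the sampled-point comparison of \cref{cor:CompareDistance}.

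The paper argues by contradiction. Suppose a bad pair $(u,v)$ exists with non-vanishing conditional probability, and set $\rho=\frac{1}{8}(C_\mathrm{chem}+C_\#)^{-1}\eps r^2$. The Lipschitz bound makes every pair in $B_\mathrm{chem}(u,\rho)\times B_\mathrm{chem}(v,\rho)$ bad at level $\eps r^2/2$. The lower mass bound makes this product set contain order $r^8$ pairs, so two independent uniform points of $K$ are bad with probability bounded below. This contradicts \cref{cor:CompareDistance} using just two sampled points.

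You instead show directly that $k$ samples form a $\delta r^2$-net and apply the corollary to all $k$ of them. This costs a covering-number step and the $k$-point form of the corollary. In return you get a direct argument with explicit control of the failure probability, and it is the standard mechanism behind ``Gromov-weak convergence plus lower mass implies GHP convergence''.

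Two small points. First, in the covering step, balls of radius $\delta r^2/2$ around a maximal $\delta r^2/2$-separated set $S$ need not be disjoint. To get $|S|\leq |K|/(\eps_0 r^4)$, apply \cref{lem:TightGHP} at radius $\delta r^2/4$; the hitting step then uses the larger balls of radius $\delta r^2/2$, which have at least as much mass. Second, the change of conditioning needs no reparametrization of $r$: taking $\lambda=1/C_\mathrm{vol}$ in \cref{cor:CompareDistance} gives exactly the event $\{|K|\geq r^4\}$.
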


\begin{remark}
The proofs of \cref{lem:TightGHP,lem:equicontinuity} show that, in order for the relevant probabilities to be small, it suffices to  take $\eps$ of order $\delta^{2} (-\log \delta)^{-1}$ in \cref{lem:TightGHP} and to take $\delta$ of order $\eps^2 (-\log \eps)^{-2}$ in \cref{lem:equicontinuity}. These bounds are presumably quite close to optimal.
\end{remark}

We begin by proving \cref{lem:TightGHP}. The proof is a straightforward adaptation of \cite[Proposition 5.1]{blanc2024scaling}, which in turn draws on the methods of \cite{archer2024ghp}.

\begin{proof}[Proof of \cref{lem:TightGHP}]
This proof applies to any transitive graph satisfying the triangle condition.
Since $\P(|K|\geq n) \asymp n^{-1/2}$ when the triangle condition holds, it suffices to prove that 
\[
\lim_{\eps\downarrow0}\limsup_{r\to\infty}
\proba \left (\exists v\in K \text{ such that } |B_\mathrm{chem}(v,\delta r^2)|\leq \eps r^4 \;\middle |\; r^4\leq |K|\leq \kappa r^4 \right )=0 
\]
for every $\delta>0$ and $\kappa>1$ sufficiently large that $\P(n\leq |K| \leq \kappa n) \geq \frac{1}{2}\P(|K|\geq n)$ for all $n\geq 1$.

Fix one such $\delta>0$ and $\kappa>1$.
It follows from \cite[Proposition 3.6 and Remark 3.7]{hutchcroft2020slightly} (which applies to any transitive graph satisfying the triangle condition; see also \cite[Lemma 6.2]{MR2653185}) that there exist positive constants $c$ and $C$ such that
\begin{equation}
  \P\left(|B_\mathrm{chem}(x,\ell)|\leq M, \partial B_\mathrm{chem}(x,\ell) \neq \emptyset \right) \leq \frac{C}{\ell} \exp\left[-\frac{c \ell^2}{M}\right]
\label{eq:skinny}
\end{equation}
for every $\ell,M \geq 1$.
As such, if $\eps<1$ then we can use the mass-transport principle to write
\begin{align}
&\E \left[\mathbbm{1}(r^4\leq |K|\leq \kappa r^4)  \sum_{v\in K} \mathbbm{1}(|B_\mathrm{chem}(v,\delta r^2)|\leq \eps r^4)\right]
\nonumber\\&\hspace{8cm}= \E \left[|K| \mathbbm{1}(r^4\leq |K|\leq \kappa r^4) \mathbbm{1}(|B_\mathrm{chem}(0,\delta r^2)|\leq \eps r^4)\right] 
\nonumber\\&\hspace{8cm}\leq \kappa r^4 \P(|B_\mathrm{chem}(0,\delta r^2)|\leq \eps r^4,\, \partial B_\mathrm{chem}(0,\delta r^2)\neq \emptyset)
\nonumber\\&\hspace{8cm}\leq C \delta^{-1} \kappa r^2 e^{-c \delta^2/\eps}
\label{eq:skinny_useful}
\end{align}
for every $r\geq 1$, $\delta>0$, $\kappa>1$, and $0<\eps<1$.
Fix $r\geq 1$, $\kappa>1$, and $0<\delta,\eps <1$ and for each $k\geq 0$ define the event
\[ E_k:=\{r^4\leq |K|\leq \kappa r^4\}\cap\{\exists v\in K \text{ such that } |B_\mathrm{chem}(v,2^{-k}\delta r^2)|\leq 8^{-k}\eps r^4\}, \]
noting that this event cannot possibly hold when $k \geq 4 \log_8 r$.
Observe that if  $k\geq 0$ is such that $E_k$ holds but $E_{k+1}$ does not, then there must exist a vertex $v\in K$ such that $|B_\mathrm{chem}(v,2^{-k}\delta r^2)|\leq 8^{-k}\eps r^4$ and $|B_\mathrm{chem}(v,2^{-k-1}\delta r^2)|\geq 8^{-k-1}\eps r^4$, so that $E_k \setminus E_{k+1}$ is contained in the event
\[ F_k:=\{r^4\leq |K|\leq \kappa r^4\}\cap\{\#\{v\in K : |B_\mathrm{chem}(v,2^{-k-1}\delta r^{2})|\leq 8^{-k}\eps r^4\}\geq 8^{-k-1}\eps r^4\}.  \]
Applying \eqref{eq:skinny_useful} and Markov's inequality yields that
\begin{multline*} \proba(F_k)\leq \frac{8^{k+1}}{\eps r^4} 
\E \left[\mathbbm{1}(r^4\leq |K|\leq \kappa r^4)  \sum_{v\in K} \mathbbm{1}(|B_\mathrm{chem}(v,2^{-k-1}\delta r^2)|\leq 8^{-k} \eps r^4)\right]
\\\leq C \frac{2^{4k+4} \kappa }{\eps \delta r^2}  \exp \left[- \frac{c \delta^2}{\eps} 2^{k-2}\right]
\end{multline*}
for every $k\geq 0$ and hence that
\begin{align*}
\proba \left (\exists v\in K \text{ such that } |B_\mathrm{chem}(v,\delta r^2)|\leq \eps r^4,\, r^4\leq |K|\leq \kappa r^4 \right ) 
&=\P(E_0) \leq  \sum_{k=0}^\infty \P(E_k \setminus E_{k+1}) \leq   \sum_{k=0}^\infty \P(F_{k})
\\& \leq \frac{C \kappa}{r^2}\sum_{k=0}^\infty \frac{2^{4k+4}}{\eps \delta}  \exp \left[- \frac{c \delta^2}{\eps} 2^{k-2}\right].
\end{align*}
To conclude, observe that there exists a constant $c'$ such that if $\eps < c' \delta^2$ then the ratios between successive terms in this sum are all bounded by $1/2$, so that we can bound the sum by twice the first term and obtain that
\begin{equation}
\proba \left (\exists v\in K \text{ such that } |B_\mathrm{chem}(v,\delta r^2)|\leq \eps r^4,\, r^4\leq |K|\leq \kappa r^4 \right ) \leq \frac{32 C \kappa}{\eps \delta r^2} e^{-c \delta^2/(4\eps)}
\label{eq:GHP_tightness_quantitative}
\end{equation}
whenever $\eps \leq c' \delta^2$, which is easily seen to imply the claim.
\end{proof}

We now prove \cref{lem:equicontinuity}.

\begin{proof}[Proof of \cref{lem:equicontinuity}]
As before, it suffices to prove that
\[ 
\lim_{\delta \downarrow0}\limsup_{r\to\infty} \proba \left (\exists u,v\in K \text{ such that } d_\mathrm{chem}(u,v) \leq \delta r^2 \text{ and } \|u-v\|\geq \eps r \; \middle| \; r^4\leq  |K|\leq \kappa r^4 \right )= 0\]
for every  $\eps>0$ and $\kappa>1$ sufficiently large that $\P(n \leq |K| \leq \kappa n) \geq \frac{1}{2} \P(|K|\geq n)$ for every $n\geq 1$.
First note that if $x$ and $y$ are connected by a path of length at most $\ell$ in critical percolation then they remain connected with constant probability in $p_c-1/\ell$ percolation (under the standard monotone coupling), and it follows by Bayes formula that there exist positive constants $C_1,C_2$, and $c_1$ such that
\[
  \P_{p_c}(x\leftrightarrow y, d_\mathrm{chem}(x,y) \leq \ell) \leq C_1 \P_{p_c-1/\ell}(x\leftrightarrow y) \leq C_2 \langle x-y\rangle^{-d+2} \exp\left[-c_1\frac{\langle x-y\rangle}{\sqrt{\ell}}\right]
\]
for every $x,y\in \Z^d$ and $\ell \geq 1$, with the estimate on the subcritical two-point function being proven in \cite{hutchcroft2023high}.
Using the mass-transport principle, we can use this estimate to obtain that there exists a constant $C_3$ such that
\begin{multline}
\E\left[\mathbbm{1}(r^4\leq |K|\leq \kappa r^4) \sum_{u,v\in K} \mathbbm{1}(d_\mathrm{chem}(u,v)\leq \ell, \|u-v\|\geq r) \right]
\\
=
\E\left[\mathbbm{1}(r^4\leq |K|\leq \kappa r^4) |K| \sum_{v\in K} \mathbbm{1}(d_\mathrm{chem}(0,v)\leq \ell, \|v\|\geq r) \right] 
\\
\leq C_2 \kappa r^4 \sum_{\|v\|\geq r} \langle v\rangle^{-d+2} e^{-c_1 \ell^{-1/2}\langle v\rangle} \leq C_3 \kappa r^6 e^{-c_1\ell^{-1/2} r}
\label{eq:small_chemical_distance_two_point_useful} 
\end{multline}
whenever $r\geq \ell^{1/2}\geq 1$ and $\kappa>1$. (For the final inequality, consider the contribution from dyadic scales $2^k r\leq \|v\|\leq 2^{k+1}r$ and note that the ratios between successive contributions of this form are at most $1/2$ once $k$ is larger than some constant $k_0$.)

We now apply \eqref{eq:small_chemical_distance_two_point_useful} to prove the lemma, following a similar strategy to the proof of \cref{lem:TightGHP}.
Fix $r\geq 1$, $\kappa>1$, and $0<\eps,\delta <1$, and for each $k\geq 0$ let $E_k$ be the event that $r^4\leq |K| \leq \kappa r^4$ and there exist $u,v\in K$ with $d_\mathrm{chem}(u,v) \leq 4^{-3k}\delta r^2$ and $\|u-v\|\geq 4^{-k}\eps r$. If $E_k$ holds but $E_{k+1}$ does not, it must be possible to find two vertices $u$ and $v$ having chemical distance in the interval $[4^{-3k-3}\delta r^2,4^{-3k}\delta r^2]$ with $\|u-v\|\geq 4^{-k}\eps r$, and any two points $x\in B_\mathrm{chem}(u, 4^{-3k-3}\delta r^2)$ and $y\in B_\mathrm{chem}(v, 4^{-3k-3}\delta r^2)$ must satisfy $\|x-u\|,\|y-v\|\leq 4^{-k-1} \eps r$ and hence $\|x-y\| \geq \|u-v\|-\|x-u\|-\|y-v\|\geq 4^{-k-1} \eps r$.
Let $F_k$ be the event that $E_k\setminus E_{k+1}$ holds and that every chemical ball of radius 
$4^{-3k-3}\delta r^2$ in $K$ has volume at least $c_2 4^{-7k-6}\delta^3 r^4$, where $c_2$ is the constant referred to as $c'$ in \eqref{eq:GHP_tightness_quantitative}. On the event $F_k$, there must exist at least $c_2^2 4^{-14k-12}\delta^6 r^8$ pairs of points $x,y$ with $d_\mathrm{chem}(x,y)\leq 4^{-3k+1} \delta r^2$ and $\|x-y\|\geq 4^{-k-1}\eps r$. Thus, it follows from \eqref{eq:small_chemical_distance_two_point_useful} and Markov's inequality that
\[
  \P(F_k) \leq  \frac{C_3 \kappa 4^{14k+12}}{c_2^2 \delta^6 r^8}  r^6 \exp\left[-c_1 \delta^{-1/2} \eps 2^{k-1} \right]=\frac{C_4 \kappa 4^{14k}}{ \delta^6 r^2} \exp\left[-c_1 \delta^{-1/2} \eps 2^{k-1} \right]
\] 
for some constant $C_4$.
On the other hand, we also have by \eqref{eq:GHP_tightness_quantitative} that there exist positive constants $C_5,C_6,C_7$ and $c_3,c_4,c_5$ such that 
\begin{multline*}
  \P((E_k\setminus E_{k+1})\setminus F_k) \leq \frac{C_5 \kappa}{(c_2 4^{-7k-6}\delta^3)(4^{-3k-3}\delta) r^2} e^{-c_3 (4^{-3k-3}\delta)^2 /(c_2 4^{-7k-6}\delta^3)} 
  \\\leq  \frac{C_6 \kappa  4^{10k+9}}{c_2 \delta^4  r^2} e^{-c_4  4^k \delta^{-1}} \leq \frac{C_7 \kappa}{\delta^4 r^2} e^{-c_5 4^k \delta^{-1}}.
\end{multline*}
Putting these estimates together, we obtain that there exists a constant $C_8$ such that
\begin{align*}
&\proba \left (\exists u,v\in K \text{ such that } d_\mathrm{chem}(u,v) \leq \delta r^2 \text{ and } \|u-v\|\geq \eps r,\, r^4 \leq |K|\leq \kappa r^4 \right ) 
\\&\hspace{7.4cm}=\P(E_0) \leq \sum_{k=0}^\infty (\P(F_k)+\P((E_k\setminus E_{k+1})\setminus F_k))
\\ &\hspace{7.4cm}\leq \frac{C_8 \kappa}{r^2} \left(\delta^{-6} \sum_{k=0}^\infty 4^{14k} e^{-c_1 \delta^{-1/2} \eps 2^{k-1}} + \delta^{-4} \sum_{k=0}^\infty e^{-c_5 4^k \delta^{-1}} \right).
\end{align*}
As before, there exists a positive constant $c_6$ such that if $\delta \leq c_6 \eps^2$ then the ratio between successive terms in both sums appearing here are at most $1/2$, so that there exist positive constants $C_9$ and $c_7$ such that
\begin{multline*}
  \proba \left (\exists u,v\in K \text{ such that } d_\mathrm{chem}(u,v) \leq \delta r^2 \text{ and } \|u-v\|\geq \eps r,\, r^4 \leq |K|\leq \kappa r^4 \right ) \\\leq \frac{C_9 \kappa}{r^2} \left( \delta^{-6} e^{-c_7 \delta^{-1/2}\eps }+ \delta^{-4} e^{-c_7 \delta^{-1}} \right)
\end{multline*}
whenever $ \delta \leq c_6 \eps^2$. This implies the claim in conjunction with \cref{thm:scaling_limit_intro} as before.
\end{proof}
We now prove \cref{lem:uniform_distance_comparison}.

\begin{proof}[Proof of \cref{lem:uniform_distance_comparison}]
Fix $\#\in \{\mathrm{piv},\mathrm{res}\}$ and suppose for contradiction that there exists $\eps>0$  such that the relevant probability does not converge to zero as $r\to\infty$. Passing to a subsequence if necessary, we may assume that the probability is bounded below by some positive constant $c$. Applying \cref{lem:TightGHP}, we deduce that there exists a positive constant $\delta>0$ such that, with probability at least $c/2$ along this subsequence, there exists a pair of points $u,v\in K$ with $|C_\# d_\mathrm{chem}(u,v)-C_\mathrm{chem} d_\#(u,v)| \geq  \eps r^2$ and every chemical distance ball of radius $\frac{1}{8}(C_\mathrm{chem}+C_\#)^{-1}\eps r^2$ in $K$ has volume at least $\delta r^4$. Since $d_\#(u,v)\leq d_\mathrm{chem}(u,v)$, the two vertices $u,v$ above must have $d_\mathrm{chem}(u,v) \geq (C_\mathrm{chem}+C_\#)^{-1}\eps r^2$.  As such, any two points $x\in B_\mathrm{chem}(u,\frac{1}{8}(C_\mathrm{chem}+C_\#)^{-1} \eps r^2)$ and $y\in B_\mathrm{chem}(v,\frac{1}{8}(C_\mathrm{chem}+C_\#)^{-1} \eps r^2)$ must have
\[
  |d_\mathrm{chem}(x,y) - d_\mathrm{chem}(u,v)|\leq d_\mathrm{chem}(u,x)+d_\mathrm{chem}(v,y) \leq \frac{1}{4}(C_\mathrm{chem}+C_\#)^{-1} \eps r^2
\]
and
\[
  |d_\#(x,y) - d_\#(u,v)| \leq d_\mathrm{chem}(u,x)+d_\mathrm{chem}(v,y) \leq \frac{1}{4} (C_\mathrm{chem}+C_\#)^{-1} \eps r^2
\]
and hence
\begin{align*}
  |C_\# d_\mathrm{chem}(x,y) - C_\mathrm{chem} d_\#(x,y)| &\geq |C_\# d_\mathrm{chem}(u,v) - C_\mathrm{chem} d_\#(u,v)|\\&\hspace{1cm}-C_\# | d_\mathrm{chem}(x,y) - d_\mathrm{chem}(u,v)| - C_\mathrm{chem} | d_\#(x,y) - d_\#(u,v)|
  \\
  &\geq \frac{\eps}{2}r^2.
\end{align*}
That is, in this scenario there must exist, with good probability, order $r^8$ pairs of points $x,y$ for which $|C_\# d_\mathrm{chem}(x,y) - C_\mathrm{chem} d_\#(x,y)|$ is of order at least $r^2$. Since $|K|$ has order $r^4$ with high probability conditioned on the event that $|K|\geq r^4$, this contradicts \cref{cor:CompareDistance}, which implies that $|C_\# d_\mathrm{chem}(X,Y) - C_\mathrm{chem} d_\#(X,Y)|$ is much smaller than $r^2$ with high probability when $X$ and $Y$ are chosen uniformly at random from $K$ and we condition on the event $|K|\geq r^4$.
\end{proof}

\begin{proof}[Proof of \cref{thm:scaling_limit_GHPf}]
Given \cref{lem:TightGHP,lem:equicontinuity,lem:uniform_distance_comparison} and \cref{thm:scaling_limit_intro}, this is just a standard ``convergence in a weak topology plus compactness in a strong topology implies convergence in the strong topology'' argument, and we keep the details light.
Fix $\lambda>0$. In light of \cref{prop:GHPf_compactness_lower_mass_version}, it follows immediately from \cref{lem:TightGHP,lem:equicontinuity,lem:uniform_distance_comparison} that the conditional distributions of 
\[\left(K,\frac{\bar d_\#}{C_\# r^2}, \frac{\mu_K}{C_\text{vol}r^4}, \frac{1}{r}\Phi_K,0 \right)\]
given $|K| \geq \lambda C_\mathrm{vol}r^4$ are tight in the rooted GHPf topology. (The diameter, total mass, and boundedness conditions follow for the chemical distance from the bounds $\P(|K|\geq n)\asymp n^{-1/2}$, $\P(\partial B_\mathrm{chem}(0,\ell) \neq \emptyset)\asymp \ell^{-1}$, and $\P(0\leftrightarrow \Z^d\setminus [-r,r]^d)\asymp r^{-2}$, and extend to the pivotal and resistance distances by \cref{lem:uniform_distance_comparison}.) As such, any divergent sequence $(r_n)$ has a subsequence along which these distributions converge to that of some random rooted metric-measure space equipped with a continuous function into $\R^d$, and moreover the measure on this space must be fully supported a.s. 

By the marked version of Gromov's reconstruction theorem
\cite[Theorem 1]{depperschmidt2011marked},
a compact metric-measure space $(X,d,\mu,f)$ with fully-supported finite measure and
continuous function into $\R^d$ is determined, up to function-preserving
measure-preserving isometry, by its total mass together with the joint law of
$(d(X_i,X_j),f(X_i))_{i,j\geq 1}$,
where the $X_i$ are i.i.d.\ samples from the normalized measure, with a similar statement holding in the rooted case by the same proof.
Since we also have convergence in the weak array topology along the same subsequence, it follows that the random metric-measure space arising as the rooted GHPf limit of $(K,\bar d_\# /C_\# r^2, \mu_K/ C_\mathrm{vol} r^4,\Phi_K/r,0)$ conditioned on $|K|\geq \lambda C_\mathrm{vol} r^4$ along the subsequence $r_n$ must be distributed as $(\sT,d,\nu,\Phi,o)$ under the measure $\N( \,\cdot\, \mid \nu(\sT) \geq \lambda)$. Since this subsequential limit does not depend on the choice of subsequence we must actually have convergence to the desired limit point, completing the proof.
\end{proof}

\subsection{First-order one-arm asymptotics}

We now deduce \cref{cor:first_order_arm} from \cref{thm:scaling_limit_intro,thm:scaling_limit_GHPf}. We begin by recalling the following estimates concerning the probability that a cluster has large diameter without having commensurately large volume.

\begin{lemma}
\label{lem:skinny_clusters}
Let $d>6$ and $L\geq 1$ be such that critical percolation on $\mathbb{Z}^d_L$ satisfies the estimate \eqref{eq:two_point_assumption}. For each $\eps >0$ there exists $\delta>0$ such that the estimates
\[
  \P_{p_c}(0 \leftrightarrow \Z^d \setminus [-r,r]^d \text{ and } |K| \leq \delta r^4) \leq \eps r^{-2}
\quad \text{ and } \quad 
  \P_{p_c}(\partial B_\mathrm{chem}(0,\ell) \neq \emptyset \text{ and } |K| \leq \delta \ell^2) \leq \eps \ell^{-1} 
\]
hold for all $r\geq 1$.
\end{lemma}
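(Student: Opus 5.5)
The chemical estimate is the easier of the two, so I would prove it first and then obtain the extrinsic estimate from it. For the chemical bound, I would use the skinny-ball estimate \eqref{eq:skinny}, which was already used in the proof of \cref{lem:TightGHP}. The event $\{\partial B_\mathrm{chem}(0,\ell)\neq\emptyset,\ |K|\leq \delta\ell^2\}$ is contained in $\{|B_\mathrm{chem}(0,\ell)|\leq \delta \ell^2,\ \partial B_\mathrm{chem}(0,\ell)\neq\emptyset\}$. By \eqref{eq:skinny} with $M=\delta\ell^2$, the probability of the latter event is at most $C\ell^{-1}e^{-c/\delta}$. Choosing $\delta$ small enough that $Ce^{-c/\delta}\leq \eps$ gives the second claim.

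For the extrinsic bound, I would split according to the chemical radius of $K$. Fix a small $\eta>0$, to be chosen depending on $\eps$. There are two cases.

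\emph{Case (i): $\partial B_\mathrm{chem}(0,\eta r^2)\neq\emptyset$.} If $|K|\leq \delta r^4$, then setting $\ell=\eta r^2$ gives $|K|\leq (\delta\eta^{-2})\ell^2$. The chemical estimate with $\delta'=\delta\eta^{-2}$ then bounds the probability by $Ce^{-c\eta^2/\delta}\eta^{-1}r^{-2}$. This is at most $\frac{\eps}{2}r^{-2}$ once $\delta$ is small compared to $\eta^2/\log(1/(\eta\eps))$.

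\emph{Case (ii): $0$ reaches $\Z^d\setminus[-r,r]^d$ but the chemical radius of $K$ is less than $\eta r^2$.} Here there is a point $v$ with $\|v\|_\infty> r$ and $d_\mathrm{chem}(0,v)\leq \eta r^2$. I would apply the subcritical comparison from the proof of \cref{lem:equicontinuity}:
\[
\P_{p_c}(0\leftrightarrow v,\ d_\mathrm{chem}(0,v)\leq \ell)\leq C_2\langle v\rangle^{-d+2}e^{-c_1\langle v\rangle/\sqrt{\ell}}.
\]
A union bound over $v$ directly would cost too much: summing over $\|v\|\geq r$ gives order $r^2 e^{-c_1/\sqrt\eta}$ rather than $r^{-2}$. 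To avoid this, I would sum only over the first exit point. Let $v$ be the first vertex outside $[-r,r]^d$ on a chemical geodesic. Then $v$ lies at distance at most $L$ from $\partial[-r,r]^d$, and it is connected to $0$ inside the box within $\eta r^2$ steps. The number of such $v$ is $O(r^{d-1})$, and each has $\langle v\rangle\asymp r$. The monotone coupling argument with $p_c-1/\ell$ still applies, and I would replace the full-space two-point bound by the box-boundary bound \eqref{eq:box_boundary} evaluated at $p_c-1/\ell$.

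The main obstacle is getting the exponential factor to multiply an $O(1)$ boundary sum. I expect the cleanest route is the following. Under the coupling, the event in case (ii) still has probability comparable to its $p=p_c-1/(\eta r^2)$ counterpart. At that $p$, the sum $\sum_{e\in\partial[-r,r]^d}\P_p(0\leftrightarrow e^-\text{ in }[-r,r]^d)$ should decay like $e^{-c r/\sqrt{\eta r^2}}=e^{-c/\sqrt\eta}$. I would try to justify this decay by the subcritical two-point estimate of \cite{hutchcroft2023high} combined with the finite-set boundary bound of \cite{panis2026reversing}. An additional factor $r^{-2}$ also has to be recovered. I would obtain it by requiring $|K|\geq$ some mass, or else by bounding the probability of this case by $\eps r^{-2}$ via the one-arm bound $\asymp r^{-2}$ from \cite{MR2748397} applied at the slightly subcritical parameter, for which the one-arm probability should be $O(r^{-2}e^{-c/\sqrt\eta})$. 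Choosing $\eta$ small and then $\delta$ small completes the proof.
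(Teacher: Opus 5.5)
Your decomposition matches the paper's. The chemical bound is \eqref{eq:skinny} with $M=\delta\ell^2$, and the extrinsic bound is split at chemical radius $\eta r^2$. The large-radius case reduces to the chemical bound, and your $C\eta^{-1}r^{-2}e^{-c\eta^2/\delta}$ is correct. The small-radius case is moved to $p=p_c-(\eta r^2)^{-1}$ by the monotone coupling.

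The gap is in case (ii). After the coupling, everything rests on
\[
\P_{p_c-(\eta r^2)^{-1}}\bigl(0\leftrightarrow \Z^d\setminus[-r,r]^d\bigr)\le C r^{-2}e^{-c\eta^{-1/2}},
\]
which you only assert (``should be''). None of your suggested justifications produces it:
\begin{itemize}
\item The first-exit union bound is a first-moment count of exit edges. Even granting decay of the boundary sum, it yields $O(e^{-c/\sqrt\eta})$ with no factor $r^{-2}$, as you yourself observe.
\item ``Requiring $|K|\geq$ some mass'' is not available, since this case only carries the upper bound $|K|\leq\delta r^4$.
\item The Kozma--Nachmias bound \cite{MR2748397} is a statement at $p_c$. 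By monotonicity it transfers to $p<p_c$ only as $O(r^{-2})$, and $Cr^{-2}$ is not $\leq\frac{\eps}{2}r^{-2}$. The factor $e^{-c\eta^{-1/2}}$ is exactly what makes case (ii) small, and it does not come for free.
\end{itemize}

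The missing ingredient is the slightly subcritical one-arm estimate
\[
\P_{p_c-\epsilon}\bigl(0\leftrightarrow \Z^d\setminus[-r,r]^d\bigr)\le Cr^{-2}e^{-c\epsilon^{1/2}r}
\]
of \cite{hutchcroft2023high,chatterjee2021subcritical}, which is what the paper invokes at this point. Taking $\epsilon=(\eta r^2)^{-1}$ gives the first display. Your argument then closes as in the paper: fix $\eta$ so that case (ii) is at most $\frac{\eps}{2}r^{-2}$, then choose $\delta$ small enough for case (i).
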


\begin{proof}[Proof of \cref{lem:skinny_clusters}]
The second estimate is an immediate consequence of \cite[Proposition 3.6 and Remark 3.7]{hutchcroft2020slightly} as stated in \eqref{eq:skinny}. For the first, note that if the origin is connected to the complement of $[-r,r]^d$ by a path of length at most $\eta r^2$ in critical percolation, then it has a constant probability to remain connected to the same complement in $p_c-(\eta r^2)^{-1}$ percolation in the standard monotone coupling. It follows from this observation and the slightly subcritical one-arm estimate \cite{hutchcroft2023high,chatterjee2021subcritical} that there exist positive constants $C_1,C_2,$ and $c_1$ such that
\[ \P_{p_c}(0 \leftrightarrow \Z^d \setminus [-r,r]^d, \partial B_\mathrm{chem}(0,\eta r^2) = \emptyset) \leq C_1 \P_{p_c-(\eta r^2)^{-1}}(0 \leftrightarrow \Z^d \setminus [-r,r]^d)
\leq \frac{C_2}{r^2} \exp\left[-c_1\eta^{-1/2} \right]  \]
for every $0<\eta\leq 1$ and $r\geq 1$. The claim regarding $\P(0 \leftrightarrow \Z^d \setminus [-r,r]^d \text{ and } |K| \leq \delta r^4)$ follows easily from this together with the corresponding estimate for $\P(\partial B_\mathrm{chem}(0,\ell) \neq \emptyset \text{ and } |K| \leq \delta \ell^2)$.
\end{proof}

\begin{proof}[Proof of \cref{cor:first_order_arm}]
As usual, we prove the theorem under the weaker hypothesis that $d>6$ and \eqref{eq:two_point_assumption} and \eqref{eq:two_blob_assumption} both hold.
It follows from \cref{thm:scaling_limit_intro,thm:scaling_limit_GHPf} that
\begin{multline*}
  \P(0 \leftrightarrow \Z^d \setminus [- r, r]^d \text{ and } |K|\geq \lambda C_\mathrm{vol} r^4) \sim \frac{\N( \mu(\R^d \setminus [-1,1]^d) \neq 0 \mid \nu(\sT) \geq \lambda )}{C_\mathrm{prob}\lambda^{1/2}} \cdot r^{-2}
  \\=\frac{\N( \mu(\R^d \setminus [-1,1]^d) \neq 0,\, \nu(\sT) \geq \lambda )}{C_\mathrm{prob} \N(\nu(\sT) \geq 1)} \cdot r^{-2} = \frac{\N( \mu(\R^d \setminus [-1,1]^d) \neq 0,\, \nu(\sT) \geq \lambda )}{AV} \cdot r^{-2}
\end{multline*}
as $r\to \infty$ for each fixed $\lambda>0$, where we used the scaling law $\N(\nu(\sT)\geq \lambda) = \lambda^{-1/2}\N(\nu(\sT)\geq 1)$ in the first equality on the second line and the  equality $C_\mathrm{prob}=AV/\N(\nu(\sT)\geq 1)$ in the second. (Here we use that the ``laws'' of $\nu(\sT)$ and $\sup\{r: \mu(\R^d\setminus [-r,r]^d)\neq 0\}$ are nonatomic, which in turn follows from the scale-invariance of the canonical measure, to apply the portmanteau theorem and deduce these asymptotic formulae from weak convergence.) A similar argument yields that
  \[
  \P(\partial B_\mathrm{chem}(0,C_\mathrm{chem} r^2) \neq \emptyset \text{ and } |K|\geq \lambda C_\mathrm{vol} r^4) \sim \frac{\N( \operatorname{rad}(\sT,o) \geq 1, \nu(\sT) \geq \lambda )}{AV} r^{-2}
\]
as $r\to \infty$. On the other hand, it follows from \cref{lem:skinny_clusters} and the facts that $\P(0 \leftrightarrow \Z^d \setminus [-r,r]^d)\asymp r^{-2}$ and $ \P(\partial B_\mathrm{chem}(0,\ell) \neq \emptyset) \asymp \ell^{-1}$  that for each $\eps>0$ there exists $\delta>0$ such that
\begin{multline*}
  \P(0 \leftrightarrow \Z^d \setminus [- r, r]^d \text{ and } |K|\geq \delta C_\mathrm{vol} r^4) \leq 
  \P(0 \leftrightarrow \Z^d \setminus [- r, r]^d )
  \\\leq (1+\eps)\P(0 \leftrightarrow \Z^d \setminus [- r, r]^d \text{ and } |K|\geq \delta C_\mathrm{vol} r^4)
\end{multline*}
and
\begin{multline*}
  \P(\partial B_\mathrm{chem}(0,C_\mathrm{chem} r^2) \neq \emptyset \text{ and } |K|\geq \delta C_\mathrm{vol} r^4) \leq 
  \P(\partial B_\mathrm{chem}(0,C_\mathrm{chem} r^2) \neq \emptyset )
  \\\leq (1+\eps)\P(\partial B_\mathrm{chem}(0,C_\mathrm{chem} r^2) \neq \emptyset \text{ and } |K|\geq \delta C_\mathrm{vol} r^4).
\end{multline*}
Using that $C_\mathrm{chem}=AG_\mathrm{chem}$, these estimates together imply that 
\[
  \P(0 \leftrightarrow \Z^d \setminus [- r, r]^d ) \sim \frac{\N(\mu(\R^d \setminus [-1,1]^d) \neq 0)}{AVr^2}
  \; \text{ and }\;
  \P(\partial B_\mathrm{chem}(0, r^2) \neq \emptyset ) \sim \frac{G_\mathrm{chem}\N(\operatorname{rad}(\sT,o)\geq 1)}{V r^2}
\]
as $r\to \infty$ as claimed.
\end{proof}

\section*{Acknowledgements}
 TH was supported by NSF grant DMS-1928930 and a Packard Fellowship for Science and Engineering. TH's work on the project was initiated during the workshop ``Random Interacting Systems, Scaling Limits, and Universality'' at the Institute for Mathematical Sciences at the National University of Singapore in December 2023, and he thanks Takashi Hara, Gordon Slade, and Xin Sun for stimulating discussions during that meeting. We also thank Gordon Slade for helpful comments on an earlier draft of the manuscript.

\appendix 

\section{Positivity of local factors}
\label{sec:positivity_of_local_factors}

In this section we gather the statements and proofs of the finite-energy lemmas needed to deduce positivity of the various limiting constants appearing in our analysis under minimal topological assumptions.

\medskip

\noindent
\textbf{Positivity of constants in \cref{sec:lace_expansion}.} 

\begin{lemma}
\label{lem:IIC_positivity}
Let $d>6$ and $L\geq 1$ be such that critical percolation on $\mathbb{Z}^d_L$ satisfies the estimate \eqref{eq:two_point_assumption}, and suppose that the IIC measure $\P_\mathrm{IIC}=\lim_{x\to \infty}\P_{p_c}( \,\cdot\, \mid 0 \leftrightarrow x)$ is well-defined.  If $W$ is a finite set of edges then $\P_\mathrm{IIC}(W \text{ \emph{closed}})=0$ if and only if every path from $0$ to $\infty$ passes through $W$.
\end{lemma}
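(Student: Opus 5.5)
The trivial direction comes first. Suppose every path from $0$ to $\infty$ passes through $W$. Then, since $W$ is finite, the set of vertices reachable from $0$ without using edges of $W$ is finite; call it $F$. On the event that $W$ is closed, the cluster of $0$ is contained in $F$. Hence $\P_{p_c}(0\leftrightarrow x,\, W\text{ closed})=0$ for every $x\notin F$, and consequently $\P_\mathrm{IIC}(W\text{ closed})=\lim_{x\to\infty}\P_{p_c}(W\text{ closed}\mid 0\leftrightarrow x)=0$.

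For the converse, suppose some path from $0$ to $\infty$ avoids $W$. The plan is a finite-energy argument giving $\P_{p_c}(0\leftrightarrow x,\, W\text{ closed})\geq c\,\P_{p_c}(0\leftrightarrow x)$ for all large $x$.

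1. Choose $R$ large, so that $W$ and $0$ lie well inside $B_R:=B_R(0)$.
2. Fix a finite self-avoiding path $\gamma$ from $0$ to some vertex $z\in\partial B_R$ that avoids $W$. This is possible by hypothesis, since a path to infinity avoiding $W$ must exit $B_R$.
3. Let $\omega$ be any configuration with $0\leftrightarrow x$ and $x\notin B_{2R}$. A path from $0$ to $x$ exits $B_R$, so some vertex $u\in\partial B_R$ is connected to $x$ by an open path lying outside $B_R$ after its first step. Passing to such a $u$ is the one real subtlety here.
4. Define the modified configuration $\omega'$ as follows:
   (a) close every edge with both endpoints in $B_R$, except the edges of $\gamma$, which are opened;
   (b) additionally open a fixed path of edges near $\partial B_R$ that joins $z$ to $u$ while avoiding $W$.

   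To make step (b) clean, it is easier to take $R$ so large that $W\subset B_{R/2}$. Then connect $\partial B_R$-vertices through the annulus $B_R\setminus B_{R/2}$, which is connected in $\Z^d_L$ for $d\geq 2$ and $R$ large, and route $\gamma$ to reach $\partial B_{R/2}$ first and then travel through the annulus.
5. In $\omega'$, the set $W$ is closed and $0\leftrightarrow x$ holds. The map $\omega\mapsto\omega'$ changes only edges in $B_R$, which form a finite set of size depending on $R$. Therefore
\[
\P(0\leftrightarrow x,\, W\text{ closed})\ \geq\ \min(p_c,1-p_c)^{|B_R^{\mathbf e}|}\, |\partial B_R|^{-1}\, \P(0\leftrightarrow x).
\]
   The factor $|\partial B_R|^{-1}$ accounts for the choice of $u$: we take a union bound over the finitely many possible $u$, selected measurably, and observe that the event after modification depends on $\omega$ only outside $B_R$.

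Dividing the final inequality by $\P(0\leftrightarrow x)$ and letting $x\to\infty$ gives $\P_\mathrm{IIC}(W\text{ closed})\geq c_R>0$. The two-point assumption \eqref{eq:two_point_assumption} is used only to guarantee that $\P(0\leftrightarrow x)>0$ and that the IIC limit exists.

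The main obstacle is the careful construction in steps 3–4: making the rerouting inside $B_R$ avoid $W$ while still joining up with whatever exterior arm the configuration provides. Taking the annulus $B_R\setminus B_{R/2}$ to be connected and free of $W$ handles this.
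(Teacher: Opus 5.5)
Your argument is correct, but it takes a different route from the paper's. The paper also starts from a path $\gamma$ from $0$ to $\infty$ avoiding $W$, but it does no surgery on a ball. First, Harris--FKG for the measure conditioned on $W$ being closed gives
\[
\P(0\leftrightarrow x,\,W\text{ closed})\geq\P(\text{first $n$ edges of $\gamma$ open},\,W\text{ closed})\cdot\P(\gamma(n)\leftrightarrow x\text{ off }W).
\]
Next, BK bounds the ``only via $W$'' part by $\sum_{w\in V(W)}T_{p_c}(\gamma(n),w)T_{p_c}(w,x)$. Finally, \eqref{eq:two_point_assumption} (decay of $T_{p_c}$ and $T_{p_c}(\gamma(n),x)\sim T_{p_c}(0,x)$) is used to pick $n$ so that this sum is asymptotically at most half of $T_{p_c}(0,x)$. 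That route needs only the single path $\gamma$ and no geometry of $\Z^d_L$, but it relies on correlation inequalities and the two-point asymptotics.

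Your finite-energy surgery needs neither. It uses nothing from \eqref{eq:two_point_assumption} beyond the assumed existence of $\P_\mathrm{IIC}$, works for any $p\in(0,1)$, and gives $\P(0\leftrightarrow x,\,W\text{ closed})\geq c_R\,\P(0\leftrightarrow x)$ uniformly in $x\notin B_{2R}$. The price is a topological input: every exit vertex $u\in\partial B_R$ must be joinable inside $B_R$ to the exit point of $\gamma$ without using $W$. You correctly obtain this from the connectivity of the annulus $B_R\setminus B_{R/2}$.

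Two minor points of presentation. In the trivial direction, finiteness of the component of $0$ in $\Z^d_L\setminus W$ follows from local finiteness and K\"onig's lemma, not from finiteness of $W$. Also, no measurable selection of $u$ is needed. Let $A_u$ be the event that $u$ is joined to $x$ by an open path using no edge of $B_R^{\mathbf e}$. Then $\{0\leftrightarrow x\}\subseteq\bigcup_{u\in\partial B_R}A_u$ (take $u$ to be the last vertex of the path in $B_R$), and each $A_u$ is independent of the edges of $B_R^{\mathbf e}$. You can then simply take the $u$ maximizing $\P(A_u)$.
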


\begin{proof}[Proof of \cref{lem:IIC_positivity}]
The fact that $\P_\mathrm{IIC}(W \text{ closed})=0$ whenever every path from $0$ to $\infty$ passes through $W$ is trivial; we focus on the converse. Fix a finite, non-empty set of edges $W$ for which there exists a path $\gamma$ from $0$ to $\infty$ in $\Z^d_L$ disjoint from $W$. For each $n\geq 1$ let $\gamma[n]$ and $\gamma(n)$ be the set of edges traversed by the first $n$ steps of $\gamma$ and the vertex visited at the $n$th step of $\gamma$ respectively. For each $x$ and $n\geq 1$ we have that
\begin{multline*}
  \P(0\leftrightarrow x,\, W \text{ closed}) \geq \P(\gamma[n] \text{ open}, W \text{ closed}, \gamma(n) \leftrightarrow x) \\\geq \P(\gamma[n] \text{ open, $W$  closed})\P(\gamma(n)\leftrightarrow x \text{ off $W$}),
\end{multline*}
where we applied the Harris-FKG inequality to the conditional measure $\P(\,\cdot \mid W $ closed$)$ in the second inequality. If $\gamma(n)$ is connected to $x$ only via $W$, there must exist an open simple path from $\gamma(n)$ to $x$ passing through $V(W)$, and applying the BK inequality we obtain that
\begin{multline*}
\P(\gamma(n)\leftrightarrow x \text{ off $W$}) = \P(\gamma(n)\leftrightarrow x) - \P(\gamma(n)\leftrightarrow x \text{ only via $W$})
\\\geq T_{p_c}(\gamma(n), x) - \sum_{w\in V(W)} T_{p_c}(\gamma(n),w)T_{p_c}(w,x).
\end{multline*}
When $n$ is fixed and $x\to \infty$, it follows from \eqref{eq:two_point_assumption} that the right-hand side of this inequality is asymptotic to $(1-\sum_{w\in V(W)} T_{p_c}(\gamma(n),w))T_{p_c}(0,x)$, and in particular that for each $n\geq 1$ there exists $R(n)<\infty$ such that $\P(\gamma(n)\leftrightarrow x \text{ off $W$}) \geq \frac{1}{2}(1-\sum_{w\in V(W)} T_{p_c}(\gamma(n),w))T_{p_c}(0,x)$ whenever $\|x\|\geq R(n)$. On the other hand, $\sum_{w\in V(W)} T_{p_c}(\gamma(n),w)$ is asymptotic to $|V(W)| T_{p_c}(0,\gamma(n))$ as $n\to \infty$, so that there exists $n<\infty$ such that $\sum_{w\in V(W)} T_{p_c}(\gamma(n),w)\leq 1/2$. Fixing this value of $n$ we obtain that
\[
  \P(0\leftrightarrow x,\, W \text{ closed}) \geq \frac{1}{4}\P(\gamma[n] \text{ open, $W$ closed}) T_{p_c}(0,x)
\]
for every $x\in \Z^d$ with $\|x\|\geq R(n)$. This is easily seen to imply the claim.
\end{proof}

\begin{lemma}
\label{lem:only_via_positivity}
Let $d>6$ and $L\geq 1$ be such that critical percolation on $\mathbb{Z}^d_L$ satisfies the estimate \eqref{eq:two_point_assumption}.
For each finite, non-empty set of edges and vertices $W$ there exists positive constants $c_W$ and $R_W$ such that
\[
  \P_{p_c}(0\leftrightarrow x \text{ only via $W$}) \geq c_W \P_{p_c}(0\leftrightarrow x)
\]
for every $x \in \Z^d$ with $\|x\|\geq R_W$.
\end{lemma}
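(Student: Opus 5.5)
The plan is a finite-energy construction: force the connection from $0$ to $x$ to pass through $W$ by opening a short path into $W$, while closing enough nearby edges that any path avoiding $W$ is cut off.

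Concretely, fix an element $w\in W$ and a vertex $v\in V(W)$ (an endpoint of $w$ if $w$ is an edge, otherwise $v=w$). Choose $R$ large so that $B_R(0)$ contains $0$, $V(W)$, and all edges of $W$. Consider the event $F$, which requires the following.
\begin{enumerate}
  \item All edges with both endpoints in $B_{R}(0)$ are closed, except for a fixed simple path $\gamma_1$ from $0$ to $v$ and a fixed simple path $\gamma_2$ from $v$ to some vertex $u\in\partial B_R(0)$. If $w$ is an edge, $\gamma_1$ uses $w$ as its final step.
  \item $\gamma_1,\gamma_2$ are open.
  \item $u$ is connected to $x$ by an open path off $B_{R-1}(0)$.
\end{enumerate}
On $F$, every open path out of $0$ must follow $\gamma_1$ and hence use $w$ (or visit $v=w$), since $\gamma_1\cup\gamma_2$ is the only open structure inside the ball. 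So $F\subseteq\{0\leftrightarrow x \text{ only via } W\}$.

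Conditioning on the status of the finitely many edges in $B_R(0)$ and applying independence gives
\[
\P(F)\ge c(R)\,\P\bigl(u\leftrightarrow x \text{ off } B_{R-1}(0)\bigr).
\]
The main step is to show that $\P(u\leftrightarrow x \text{ off } B_{R-1}(0))\ge c\,T_{p_c}(0,x)$ for $\|x\|$ large. This is the same only-via subtraction used in the proof of \cref{lem:IIC_positivity}. We have
\[
\P(u\leftrightarrow x \text{ off } B)\ \ge\ T(u,x)-\sum_{b\in B}T(u,b)T(b,x),
\]
which is asymptotic to $\bigl(1-\sum_{b}T(u,b)\bigr)T(0,x)$, and this may fail to be positive when $u$ is adjacent to $B$.

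To repair this, replace $u$ by the endpoint $u'$ of a long path $\gamma_3$ leaving $B_R(0)$. The path $\gamma_3$ is fixed in advance, goes to distance $n$, and its edges are also declared open. Its neighbourhood is handled as follows: only require $u'\leftrightarrow x$ off $B_R(0)$. By \eqref{eq:two_point_assumption}, $\sum_{b\in B_R}T(u',b)\le C R^d n^{-d+2}<1/2$ for $n$ large. Harris--FKG then combines the increasing event ``$\gamma_2,\gamma_3$ open, $u'\leftrightarrow x$ off $B_R$'' with the closed-edge constraints. Those constraints are decreasing, but they live on edges inside $B_R$, which are disjoint from the off-$B_R$ connection, so this step uses independence rather than FKG.

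This yields $\P(0\leftrightarrow x \text{ only via } W)\ge c(R,n)\cdot\tfrac12\cdot\tfrac12\, T(0,x)$ for all $\|x\|\ge R_W(n)$, using $T(u',x)\sim T(0,x)$ as $x\to\infty$. Choosing $n$ first and then $R_W$ gives the claim.

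I expect the main care point to be bookkeeping in the forcing argument: the closed edges in $B_R$ must be chosen so that $\gamma_3$ exits $B_R$ only through edges we control. It suffices to close all edges inside $B_R$ that are not on $\gamma_1\cup\gamma_2\cup(\gamma_3\cap B_R)$, and to let the connection $u'\leftrightarrow x$ avoid $B_R$ entirely.
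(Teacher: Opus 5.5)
Your repaired construction is correct and is essentially the paper's argument. In both, you force an open path through $W$ with the nearby edges closed, connect a point $u'$ far along this path to $x$ while avoiding the closed region, and then use the BK subtraction $T(u',x)-\sum_{b}T(u',b)T(b,x)\geq(\tfrac12-o(1))T(0,x)$, with $u'$ far enough that $\sum_b T(u',b)<\tfrac12$. The differences are cosmetic: the paper closes only the edges incident to the initial segment of the path before it reaches $W$ and applies Harris--FKG to the conditional law given those edges are closed, rather than closing a whole ball and using independence. In your version, fix $\gamma_1$ before choosing $R$, so that $\gamma_1$ stays at distance more than $L$ from $\Z^d\setminus B_R(0)$; your event leaves the edges crossing the boundary of the ball unconstrained, and these would otherwise give a bypass around $W$.
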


\begin{proof}[Proof of \cref{lem:only_via_positivity}]
Fix a finite, non-empty set of edges and vertices $W$ and an infinite simple path $\gamma$ from $0$ to $\infty$ in $\Z^d_L$ that passes through $W$ (meaning that it either includes a vertex of $W\cap V$ or an edge of $W\cap E$), and let $F$ denote the set of edges that are incident to the part of $\gamma$ up until it first passes through $W$ but not contained in $\gamma$. If for each $n\geq 1$ we let 
$\gamma[n]$ and $\gamma(n)$ denote the set of edges traversed by the first $n$ steps of $\gamma$ and the location of the $n$th step of $\gamma$ respectively, then we can bound
\begin{multline*}
  \P(0\leftrightarrow x \text{ only via $W$}) \geq \P(\text{$\gamma[n]$ open, $F$ closed, $\gamma(n)\leftrightarrow x$})
  \\ 
  \geq \P(\text{$\gamma[n]$ open, $F$ closed}) \P(\gamma(n)\leftrightarrow x \text{ off $F$}),
\end{multline*}
where we again applied the Harris-FKG inequality to the conditional law of the percolation configuration given that $F$ is closed in the second inequality.
From here the proof can be concluded by the same argument as in the proof of \cref{lem:IIC_positivity}.
%
\end{proof}

\noindent
\textbf{Positivity of blob factors.}
We now prove the part of \cref{thm:scheme_function} concerning the positivity of the blob factor. 

\begin{lemma}
\label{lem:blob_factor_positivity}
Let $d>6$ and $L\geq 1$ be such that critical percolation on $\mathbb{Z}^d_L$ satisfies the estimates \eqref{eq:two_point_assumption} and \eqref{eq:two_blob_assumption}. Given a blob $B=(W,b_1,\ldots,b_k)$, the blob factor $\mathbf{W}(B)$ is positive  if and only if there exists a collection of vertex-disjoint paths connecting $b_1,\ldots,b_k$ to infinity in $\Z^d_L$ that are disjoint from $W$.
\end{lemma}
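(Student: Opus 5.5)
The forward direction is trivial. Suppose every collection of paths from $b_1,\ldots,b_k$ to infinity avoiding $W$ fails to be vertex-disjoint. The scheme event $\mathscr{E}(S_B;x_1,\ldots,x_{k+1})$ for the star scheme $S_B$ requires that $x_{k+1}+W$ is closed and that the points $x_{k+1}+b_j$ lie in distinct clusters, each connected to the distant point $x_j$. Such connections, restricted to $\Z^d_L\setminus(x_{k+1}+W)$, would give vertex-disjoint paths, since distinct clusters are vertex-disjoint. Hence the scheme function vanishes identically once the $x_j$ are far away, and $\mathbf{W}(B)=\Pi(S_B)=0$ by \cref{lem:weaker_scheme_function}.

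For the converse, fix vertex-disjoint paths $\gamma_1,\ldots,\gamma_k$ from $b_1,\ldots,b_k$ to infinity avoiding $W$. We lower bound $S_B(x_1,\ldots,x_{k+1})$ by $c\prod_j G(x_j,x_{k+1})$ with $c>0$, translating so that $x_{k+1}=0$. Fix $n$ and let $\gamma_j[n]$ denote the first $n$ steps of $\gamma_j$, with endpoint $\gamma_j(n)$. Let $F$ be the finite set of edges that are either in $W$, or join two distinct paths $\gamma_j[n]$, or are incident to some $\gamma_j[n]$ but not contained in it. The last vertices $\gamma_j(n)$ are the exceptions: we leave their outgoing edges free, apart from those joining to other paths. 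We then require three things:
\begin{itemize}
\item every edge of every $\gamma_j[n]$ is open;
\item every edge of $F$ is closed;
\item the endpoints $\gamma_j(n)$ are connected to $x_j$ off $V_n:=\bigcup_j\gamma_j[n]$, in distinct clusters of $\Z^d_L\setminus V_n$.
\end{itemize}
On this event $\mathscr{E}(S_B)$ holds. The last requirement is itself a scheme-type event, but on the graph with $V_n$ deleted.

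To bound it, I would apply \cref{thm:scheme_function} in $\Z^d_L$ to a scheme with $k$ separate edges. Its blob at the origin is $(F',\gamma_1(n),\ldots,\gamma_k(n))$, where $F'$ closes all edges touching $V_n$ other than those leaving the endpoints $\gamma_j(n)$. This is the main obstacle: the argument is circular, since it needs positivity of another blob factor. To break the circularity, I would instead choose $n$ large, so that the endpoints $\gamma_j(n)$ are mutually far apart and far from the origin, rather than using blobs.

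The plan is to bound the probability of the independent-edge event
\[
\{\gamma_j(n)\leftrightarrow x_j \text{ off } F\cup V_n\ \forall j\}\cap\{\text{these clusters distinct}\}.
\]
First, Harris-FKG under the conditional measure given $F$ closed gives a lower bound of $\prod_j\P(\gamma_j(n)\leftrightarrow x_j \text{ off } F)$. Each factor is at least $\tfrac12 T(\gamma_j(n),x_j)$ by the BK argument of \cref{lem:IIC_positivity}, once $n$ is large, since $\sum_{w\in V(F)}T(\gamma_j(n),w)$ is small. Here $F$ grows with $n$, so one has to check that $\sum_{w} T(\gamma_j(n),w)$ over the far parts of the paths stays small. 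This holds if we choose the $\gamma_j$ to escape roughly linearly, so the sum is controlled by the finite triangle-type bound $\sum_m\|m\|^{-d+2}$ along a ray, which is small at distance $n$ when $d>6$.

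Second, we subtract the probability that two of the clusters touch. By BK this is at most
\[
\sum_{j\neq j'}\sum_{u} T(\gamma_j(n),u)T(u,x_j)\,T(\gamma_{j'}(n),\cdot)\cdots,
\]
and this is $o(1)\cdot\prod_j T(\gamma_j(n),x_j)$ by the same convolution estimates as in \cref{prop:BlobOffApprox}, with $n$ large making the interaction near the origin small. Finally, the probability of the cylinder event on $\gamma_j[n]$ and $F$ is a positive constant. Combining these gives $\mathbf{W}(B)>0$.
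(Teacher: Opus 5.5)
Your forward direction is fine, apart from the routine compactness step that turns vertex-disjoint connections to arbitrarily distant points into vertex-disjoint paths to infinity. The converse has a genuine gap. It fails at the step where you subtract the probability that two clusters touch and claim this is $o(1)\cdot\prod_j T_{p_c}(\gamma_j(n),x_j)$.

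In a single configuration, given $\gamma_j(n)\leftrightarrow x_j$ and $\gamma_{j'}(n)\leftrightarrow x_{j'}$, the clusters typically \emph{do} coincide. The dominant scenario is that $\gamma_j(n)$ and $\gamma_{j'}(n)$ join near the origin, and a single arm then runs out and branches to $x_j$ and $x_{j'}$. This is the tree topology pairing $\gamma_j(n)$ with $\gamma_{j'}(n)$. Your displayed BK sum (two separate arms linked by a third) omits this topology, and it contributes a term of order $T_{p_c}(0,x_j,x_{j'})$.

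This is not an artifact of the bound. Harris--FKG and \cref{lem:three_point_lower} give $\P(\gamma_j(n)\leftrightarrow\gamma_{j'}(n),\,\gamma_j(n)\leftrightarrow x_j,\,\gamma_j(n)\leftrightarrow x_{j'})\geq c\,T_{p_c}(\gamma_j(n),\gamma_{j'}(n))\,G(\gamma_j(n),x_j,x_{j'})$. With all separations of order $R$, this is of order $R^{-2d+6}$, whereas $T_{p_c}(\gamma_j(n),x_j)T_{p_c}(\gamma_{j'}(n),x_{j'})\asymp R^{-2d+4}$. The blob factor is a limit as $R\to\infty$ with $n$ fixed. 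So the quantity you subtract exceeds your Harris--FKG lower bound by a factor of order $R^2$, and the argument gives no lower bound at all.

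The missing idea, which the paper uses, is to compare with \emph{independent} clusters $\tilde K_1,\ldots,\tilde K_k$ of $\gamma_1(n),\ldots,\gamma_k(n)$, coupled to one configuration by sequential exploration as in \eqref{eq:InductiveAizenmanOff}. This has three consequences:
\begin{enumerate}
\item Distinctness of the coupled clusters becomes automatic.
\item The probability that each $\tilde K_i$ reaches its target while avoiding $F_i=V(W)\cup\{\gamma_j(\ell):j\neq i,\ell\leq n\}$ is an exact product.
\item The only failure is that a connecting path in $\tilde K_{i_0}$ runs into an earlier, independent $\tilde K_{j_0}$. This costs at most $\sum_{a,b}T_{p_c}(\gamma_{j_0}(n),a)T_{p_c}(a,x_{j_0})T_{p_c}(a,b)T_{p_c}(\gamma_{i_0}(n),b)T_{p_c}(b,x_{i_0})$ times the remaining factors. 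Since $d>6$, this is a small multiple of the product once $n$ is large.
\end{enumerate}

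Two further steps also need repair.
\begin{enumerate}
\item $\sum_{w\in V(F)}T_{p_c}(\gamma_j(n),w)$ does not become small as $n\to\infty$. $V(F)$ contains $\gamma_j(n-1)$ and other vertices within bounded distance of $\gamma_j(n)$, so the factor $\tfrac12$ cannot be obtained this way. The paper forbids only $F_i$ and lets the cluster of $\gamma_i(n)$ meet its own initial segment. Afterwards it closes only those edges incident to $\gamma_i[n]$ that lie neither on the path nor in the cluster of $\gamma_i(n)$.
\item Arbitrary vertex-disjoint escape paths may run parallel at bounded distance forever, so the separation of the points $\gamma_j(n)$ must be engineered. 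This is \cref{lem:affine_escape_vertex}, proved via Menger's theorem using $d\geq 3$.
\end{enumerate}
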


This proof is more topologically delicate than the previous proofs in this section since the different paths from the vertices $b_1,\ldots,b_k$ may be entangled with each other in a very complicated way. We begin by proving the following lemma which handles these topological difficulties by replacing the paths with a very well-behaved collection.

\begin{lemma}[Affine escape: vertex version]
\label{lem:affine_escape_vertex}
Let $d\geq 3$ and $L\geq 1$, and suppose $(W,b_1,\ldots,b_k)$ is a blob for which there exist vertex-disjoint paths $\gamma_1,\ldots,\gamma_k$ with $\gamma_i$ connecting $b_i$ to $\infty$ in $\Z^d_L \setminus W$. The paths $\gamma_i$ can always be chosen to be eventually affine with distinct slopes in the sense that there exist constant $a_i,q_i \in \N$ and $z_i\in \Z^d$ with $z_i/q_i \neq z_j/q_j$ for $i\neq j$ such that $\gamma_i(a_i+m q_i)=\gamma_i(a_i)+mz_i$ for every $1\leq i \leq k$ and $m\geq 1$.
\end{lemma}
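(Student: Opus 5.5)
The plan is to discard most of the given paths: keep only bounded initial segments of them, and attach new affine rays in $k$ distinct directions far away. The finiteness of $W$ and the fact that $d\geq 3$ give enough room to reroute.

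\textbf{Step 1: the target rays.} Choose distinct directions $z_1,\ldots,z_k\in\Z^d$ that are pairwise non-parallel, for instance $z_i=(1,i,0,\ldots,0)$, and take all $q_i=1$. Since the $z_i/q_i=z_i$ are distinct, the slope condition holds. Fix $R$ larger than the diameter of $W\cup\{b_1,\ldots,b_k\}$ plus $L$.

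\textbf{Step 2: cut the original paths.} Each $\gamma_i$ is infinite and simple, so it eventually leaves $B_R(0)$ forever. Stop each $\gamma_i$ at its first exit $u_i$ from a large ball $B_{R'}(0)$ with $R'\gg R$. The stopped paths are still vertex-disjoint and avoid $W$. Everything after this point happens outside $B_{R}(0)$, so $W$ no longer constrains us.

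\textbf{Step 3: connect the exit points to the rays.} We must join $u_1,\ldots,u_k$ by vertex-disjoint paths in the region $\{\|x\|\geq R'\}$ (apart from the endpoints) to the starting points $v_i+\N z_i$ of disjoint eventually-affine rays. Using $d\geq 3$, the complement of a ball, say the shell between radii $R'$ and $R''$, is "highly connected". The plan is to do this by induction on $i$:
\begin{itemize}
\item route $u_i$ radially outward a little;
\item then route it along the sphere to a point in a thin cone around direction $z_i$, avoiding the finitely many previously built paths;
\item finally follow the ray $m\mapsto v_i+mz_i$.
\end{itemize}
Distinct directions guarantee that the rays are eventually far apart. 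Choosing the base points $v_i$ at large distinct radii guarantees that the rays are disjoint from one another and from the other connectors. The same freedom also lets us avoid the stopped initial segments.

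\textbf{Main obstacle.} The hardest part is Step 3: making the disjoint routing rigorous. The stopped initial segments, and the earlier connectors, may wind around within $B_{R'}$ and near its boundary. A new connector must avoid all of them.

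I would first try using that, in $\Z^d_L$ with $d\geq 3$, removing a finite set $F$ leaves an infinite component containing all of $\Z^d\setminus B_\rho$ for $\rho$ large depending on $F$. The catch is that $u_i$ lies near the other stopped paths, so it is not obviously in that component. To avoid this, I would choose the cut points differently: pick times $t_i$ with $\gamma_i(t_i)$ at pairwise distances $\gg L$ and each $\gamma_i(t_i)$ far from $\bigcup_{j\neq i}\gamma_j[0,t_j]$. This is possible because each $\gamma_j$ escapes to infinity, so we can take $t_1\ll t_2\ll\cdots$ with $\|\gamma_i(t_i)\|$ growing rapidly.

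With this choice, $\gamma_i(t_i)$ lies outside the ball containing all earlier truncated paths. Then $\gamma_i(t_i)$ and a far ray start can be joined in the infinite component of the complement of the finite set already used. That component is connected outside a ball, since $d\geq 3$ and the graph $\Z^d_L$ has connected sphere-complements. The ray itself can be chosen to start beyond all previously used vertices, which keeps it disjoint from them.

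Iterating this over $i=1,\ldots,k$ gives the required eventually affine vertex-disjoint paths, avoiding $W$ since all rerouting takes place outside $B_R$.
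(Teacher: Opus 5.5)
Your Step 3 has a genuine gap, and the repair in your last paragraph does not close it.

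First, the cut times you ask for need not exist. Take $\gamma_1(m)=me_1$ and $\gamma_2(m)=e_2+me_1$. For every choice of $(t_1,t_2)$, one cut point is adjacent to the other truncated path, so the condition ``$\gamma_i(t_i)$ far from $\bigcup_{j\neq i}\gamma_j[0,t_j]$'' cannot be met. Rapid growth of $\|\gamma_i(t_i)\|$ only separates $\gamma_i(t_i)$ from the segments with $j<i$. The segments $\gamma_j[0,t_j]$ with $j>i$ reach beyond $\gamma_i(t_i)$ and are obstacles too.

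Second, and more seriously, the greedy induction gives no control over blocking. The stage-$i$ connector is an arbitrary path in the component you route through, and it must pass through the region containing the later segments and cut points. Nothing stops it from sealing off some later $\gamma_{i'}(t_{i'})$ from infinity. For instance, the truncated segments can wrap a narrow tube around the tail of $\gamma_{i'}$, and the connector can then occupy the tube's exit.

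Third, once rays have been built the used set is infinite. The one-endedness of complements of finite sets therefore no longer applies as you state it. Finding $k$ disjoint routes at once is a linkage problem and needs a global argument, not a sequential one.

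A smaller point: with $q_i=1$ and $z_i=(1,i,0,\ldots,0)$, consecutive vertices are not adjacent in $\Z^d_L$ once $1+i>L$. You need a periodic staircase with period $q_i\geq \|z_i\|_1/L$.

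The paper supplies exactly this global input, in two steps.

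\emph{Step A (disjoint straight rays).} It truncates each path at its first exit $x_i$ from the $\ell^\infty$ box $[-R,R]^d$, so the retained segments lie inside the box. The $x_i$ are distinct with $\|x_i\|_\infty\in[R+1,R+L]$. Hence the straight rays $\{x_i+L\ell e_i:\ell\geq 0\}$, taken along the coordinate realizing $\|x_i\|_\infty$, are automatically pairwise disjoint and avoid the retained segments. This gives eventually affine paths, possibly with equal slopes.

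\emph{Step B (distinct slopes via Menger).}
\begin{itemize}
\item Choose affine rays $\tilde\gamma_1,\ldots,\tilde\gamma_k$ with distinct slopes. They should be pairwise non-adjacent and disjoint from $W$ and the current paths, and $\Z^d_L\setminus\bigcup_i\tilde\gamma_i$ should be connected and one-ended. This is where $d\geq 3$ is used.
\item Contract each $\tilde\gamma_i$ to a vertex, join a source to the $b_i$, and join a sink to the contracted vertices.
\item Any separating set $C$ of fewer than $k$ vertices misses some $\gamma_j$ and some contracted vertex $[\tilde\gamma_u]$.
\item That $\gamma_j$ reaches the unique infinite component of the complement of $W$, $C$ and the rays. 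This component touches every $\tilde\gamma_u$, so $C$ does not separate, and Menger gives $k$ disjoint routes.
\end{itemize}

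Your first-exit truncation in Step 2 was the right start. It is the sequential routing afterwards that must be replaced by an argument of this kind.
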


\begin{proof}[Proof of \cref{lem:affine_escape_vertex}]
We first prove that the paths can be chosen to be eventually affine with possibly equal slopes before showing that these slopes can be taken to be distinct. Fix $(W,b_1,\ldots,b_k)$, let the vertex-disjoint paths $\gamma_1,\ldots,\gamma_k$ be as in the statement of the lemma, and let the integer $R$ be sufficiently large that every point $b_1,\ldots,b_k$ and every endpoint of an edge in $W$ belongs to $[-R,R]^d$. For each $1\leq i \leq k$, let $x_i$ be the first point of $\Z^d\setminus [-R,R]^d$ visited by $\gamma_i$, and let $e_i$ be the unit vector in $\Z^d$ pointing in the same direction as $x_i$, meaning that $e_i$ has one coordinate in $\{-1,1\}$, every other coordinate equal to zero, and satisfies $\|x_i+\ell e_i\|_\infty=\|x_i\|_\infty+\ell$ for every $\ell \geq 0$. We observe that the rays $\{x_i+L\ell e_i : \ell\geq 0\}$ in $\Z^d_L$ must be disjoint since $\|x_i\|_\infty \in [R+1,R+L]$ for every $1\leq i \leq k$ and the points $x_1,\ldots,x_k$ are all distinct. Replacing each path $\gamma_i$ with the path in $\Z^d_L$ that follows $\gamma_i$ until the point $x_i$ and then follows the ray $\{x_i+L\ell e_i : \ell\geq 0\}$ shows that the paths $\gamma_1,\ldots,\gamma_k$ can always be taken to be eventually affine, albeit possibly with identical slopes.

We now pass from eventually-affine rays to eventually-affine rays with distinct slopes.
We will prove more generally that if $(W,b_1,\ldots,b_k)$ is a blob for which there exist vertex-disjoint paths $\gamma_1,\ldots,\gamma_k$ connecting $b_i$ to $\infty$ in $\Z^d_L \setminus W$ and $\tilde \gamma_1,\ldots,\tilde \gamma_k$ is \emph{any} collection of vertex-disjoint half-infinite simple paths in $\Z^d_L \setminus (W \cup \bigcup \gamma_i)$ with the property that $\Z^d_L \setminus (\bigcup \tilde \gamma_i)$ is connected and one-ended (meaning that the complement of any finite set of vertices has a unique infinite connected component) and that no two of the rays $\tilde \gamma_i$ are adjacent then there exists a bijection $\pi:\{1,\ldots,k\}\to \{1,\ldots,k\}$ and a collection of vertex-disjoint paths $\gamma_1',\ldots,\gamma_k'$ connecting $b_i$ to $\infty$ in $\Z^d_L \setminus W$ such that $\gamma'_i$ eventually coincides with (a time-shift of) $\tilde \gamma_{\pi(i)}$ for each $1\leq i \leq k$. 
The conclusion of the lemma will follow from this claim since the rays $\tilde \gamma_1,\ldots,\tilde \gamma_k$ can obviously be taken to be affine with distinct slopes, disjoint from our collection of eventually-affine paths $\gamma_1,\ldots,\gamma_k$ constructed in the previous paragraph, not adjacent to one another, and such that $\Z^d_L \setminus (\bigcup \tilde \gamma_i)$ is connected and one-ended. (Here we are using that $d\geq 3$.)

Our proof will use the vertex form of Menger's theorem \cite[Theorem 3.3.1]{diestel2025graphtheory}, which states that the maximum size of a set of paths between two vertices $a$ and $z$ in a graph that are vertex-disjoint except at their endpoints is equal to the minimum size of a vertex cut, i.e., the minimum size of a set of vertices $C$ disjoint from $\{a,z\}$ such that any path between $a$ and $z$ intersects $C$. To apply this theorem in our context, we consider the graph formed from $\Z^d_L \setminus W$ by contracting each path $\tilde \gamma_i$ into a single vertex $[\tilde \gamma_i]$, adding two new vertices $a$ and $z$, connecting $a$ to each of the vertices $b_1,\ldots,b_k$ by an edge, and connecting $z$ to each of the vertices $[\tilde \gamma_1],\ldots,[\tilde \gamma_k]$ by an edge. A collection of $k$ paths between $a$ and $z$ in this graph that are vertex-disjoint other than at their endpoints can easily be turned into a collection of paths $\gamma_1',\ldots,\gamma_k'$ as required by the claim, and by Menger's theorem it suffices to prove that there does not exist a set $C$ of size $|C|<k$ disjoint from $\{a,z\}$ in this modified graph such that every path from $\{b_1,\ldots,b_k\}$ to $\bigcup \tilde\gamma_i$ intersects $C$. 
Let $C$ be a set of vertices of the modified graph with $|C|<k$, and let $C' \subseteq C$ be the vertices of $C$ corresponding to individual vertices of $\Z^d$ (rather than to one of the rays $\tilde \gamma_i$). 
 Since $\Z^d \setminus (\bigcup \tilde \gamma_i)$ is connected and one-ended, $\Z^d_L \setminus (W \cup C' \cup \bigcup \tilde \gamma_i)$ has a unique infinite connected component. Since the rays $\tilde \gamma_1,\ldots,\tilde \gamma_k$ are not adjacent, there are infinitely many vertices of $\Z^d_L \setminus (W \cup C' \cup \bigcup \tilde \gamma_i)$ adjacent to each of the rays $\tilde \gamma_i$, so that the unique infinite connected component of $\Z^d_L \setminus (W \cup C' \cup \bigcup \tilde \gamma_i)$ must contain infinitely many vertices adjacent to each of the rays $\tilde \gamma_1,\ldots,\tilde \gamma_k$.
Since $|C'| \leq |C|<k$ and the rays $\gamma_1,\ldots,\gamma_k$ are disjoint from the rays $\tilde \gamma_1,\ldots,\tilde \gamma_k$, at least one of the paths $\gamma_1,\ldots,\gamma_k$ must be disjoint from $C'$ and therefore connect the set $\{b_1,\ldots,b_k\}$ to $\infty$ in $\Z^d_L \setminus (W \cup C' \cup \bigcup \tilde \gamma_i)$. This path must connect $\{b_1,\ldots,b_k\}$ to the unique infinite connected component of $\Z^d_L \setminus (W \cup C' \cup \bigcup \tilde \gamma_i)$, so that every ray $\tilde \gamma_i$ is connected to $\{b_1,\ldots,b_k\}$ by a path in $\Z^d_L \setminus (W \cup C' \cup \bigcup \tilde \gamma_i)$ that is disjoint from $\bigcup \tilde \gamma_i$ except at its endpoint. Since $|C|<k$, it does not contain at least one of the vertices $[\tilde \gamma_u]$ and hence is not a vertex cut between $a$ and $z$. This shows that any vertex cut between $a$ and $z$ in the modified graph must have at least $k$ elements, concluding the proof.
\end{proof}

\begin{proof}[Proof of \cref{lem:blob_factor_positivity}]
Finally we prove the claim regarding the positive values of $\mathbf{W}$. We will be brief as the details are somewhat tedious. If $B=(W,b_1,\ldots,b_k)$ is a blob for which there do \emph{not} exist vertex-disjoint paths from $b_1,\ldots,b_k$ to $\infty$ in $\Z^d_L\setminus W$ then we have trivially that $\mathbf{W}(B)=0$. (Indeed, in this case it follows by a simple compactness argument that there exists $R$ such that there do not exist vertex-disjoint paths from $b_1,\ldots,b_k$ to the complement of the ball of radius $R$ in $\Z^d_L$, and hence that scheme functions involving the blob $B$ are identically zero when the input variables are sufficiently well-separated.) Suppose conversely that such paths do exist, let the eventually-affine paths $\gamma_1,\ldots,\gamma_k$ with distinct slopes be as in \cref{lem:affine_escape_vertex}, and consider the scheme $S_B$ whose plan is a $k$-star, with the vertex of degree $k$ assigned to the blob $B$ and leaves all assigned to the blob $(\emptyset,0)$, so that
\[
  \mathbf{W}(B) = \lim_{\min_{i\neq j} \|x_i-x_j\| \to \infty} \frac{S_B(x_1,\ldots,x_{k+1})}{\prod_{i=1}^k T_{p_c}(x_1,x_{i+1})}.
\]
To prove that $\mathbf{W}(B)$ is positive, it suffices to prove that there exist finite constants $N$ and $\lambda$ (which may depend on the blob and the choice of $\gamma_1,\ldots,\gamma_k$) such that $\|\gamma_i(m)\|\geq 3\|\gamma_i(n)\|$ and
\begin{multline}
  \P(\gamma_i(n) \leftrightarrow \gamma_i(m) \text{ and } \gamma_i(n) \nleftrightarrow (V(W) \cup \{\gamma_j(\ell): j\neq i, \ell\leq n\}) \text{ for every $1\leq i \leq k$}) \\\geq \frac{1}{2} \prod_{i=1}^k T_{p_c}(\gamma_i(n),\gamma_i(m))
  \label{eq:affine_surgery}
\end{multline}
for every $n\geq N$ and $m\geq \lambda n$, the right side of this inequality having the same order as the product $\prod_{i=1}^k T_{p_c}(0,\gamma_i(m))$ by \eqref{eq:two_point_assumption} and the assumption that $\|\gamma_i(m)\|\geq 3\|\gamma_i(n)\|$ for every $1\leq i \leq k$. Indeed, if we take a configuration in which the event whose probability is estimated in \eqref{eq:affine_surgery} holds, force $W$ to be closed, force the first $n$ steps in each of the paths $(\gamma_i)_i$ to be open, and force any edge that is incident to the first $n$ steps of one of the paths $\gamma_i$ but not contained in this path or in the cluster of the corresponding vertex $\gamma_i(n)$ in the original configuration to be closed then we obtain a configuration in which $W$ is closed and each $b_i$ is connected to $\gamma_i(m)$ in a distinct cluster for every $1\leq i \leq k$.

We now prove \eqref{eq:affine_surgery}. Since our paths are eventually affine, there exist finite constants $N_0$ and $\lambda$ such that if $n\geq N_0$ and $m\geq \lambda n$ then $\|\gamma_i(m)\| \geq 3 \|\gamma_i(n)\|$ and $\|\gamma_i(m)\| \geq 3 \|x\|$ for every $1\leq i \leq k$ and $x\in F_i = V(W) \cup \{\gamma_j(\ell):j\neq i, \ell\leq n\}$.  By \eqref{eq:two_point_assumption} and the tree-graph inequality, the probability that three points are connected is bounded by
\begin{multline*}
  T_{p_c}(x,y,z) \preceq \sum_{w \in \Z^d} \langle x-w\rangle^{-d+2}\langle w-y\rangle^{-d+2}\langle w-z\rangle^{-d+2} \\\asymp (d_\mathrm{max}(x,y,z)+1)^{-d+2}(d_\mathrm{min}(x,y,z)+1)^{-d+4},
\end{multline*}
where we write $d_\mathrm{min}(x,y,z)$ and $d_\mathrm{max}(x,y,z)$ for the minimal and maximal pairwise distances between the points $x,y$, and $z$ and where the convolution estimate in the second equality is proven in \cite[Remark III.5.3]{LRPpaper3}. As such, for each $1\leq i \leq k$ we have that
\begin{multline*}
  \P(\gamma_i(n) \leftrightarrow \gamma_i(m) \text{ and } \gamma_i(n)\leftrightarrow F_i) \preceq \langle \gamma_i(m) \rangle^{-d+2} \sum_{x\in F_i} \langle \gamma_i(n)-x\rangle^{-d+4} \\\asymp 
  T_{p_c}(\gamma_i(n),\gamma_i(m)) \sum_{x\in F_i} \langle \gamma_i(n)-x\rangle^{-d+4}.
\end{multline*}
Since $W$ is fixed, the paths $\gamma_i$ are affine, and $d>6$, we can easily verify that $\sum_{x\in F_i} \langle \gamma_i(n)-x\rangle^{-d+4}\to 0$ as $n\to \infty$. As such, there exists a constant $N_1\geq N_0$ such that if $n\geq N_1$ and $m\geq \lambda n$ then
\begin{equation}
\label{eq:connection_off_the_lines}
  \P(\gamma_i(n) \leftrightarrow \gamma_i(m) \text{ and } \gamma_i(n) \nleftrightarrow F_i) \geq 2^{-\frac{1}{10k}} T_{p_c}(\gamma_i(n),\gamma_i(m)).
\end{equation}
Now consider the usual coupling of the clusters of $\gamma_1(n),\ldots,\gamma_k(n)$ with a sequence of \emph{independent} clusters $\tilde K_1,\ldots,\tilde K_k$ (with $\tilde K_i$ distributed as the cluster of $\gamma_i(n)$), in which the status of an edge is determined by the first cluster that explores it (and by an independent coin toss if it is not explored by any of the clusters $\tilde K_1,\ldots,\tilde K_k$). By \eqref{eq:connection_off_the_lines}, we have that
\begin{equation}
\label{eq:connection_off_the_lines2}
  \P(\tilde K_i \cap F_i = \emptyset \text{ and } \gamma_i(m)\in \tilde K_i  \text{ for every $1\leq i \leq k$}) \geq \frac{1}{\sqrt{2}}\prod_{i=1}^k T_{p_c}(\gamma_i(n),\gamma_i(m))
\end{equation}
whenever $n\geq N_1$ and $m\geq \lambda n$. Now suppose that $\tilde K_i \cap F_i = \emptyset$ and $\gamma_i(m)\in \tilde K_i$ for every $1\leq i \leq k$ but that there exists $1\leq i_0\leq k$ such that $\gamma_{i_0}(n)$ is not connected to $\gamma_{i_0}(m)$ in the coupled configuration. Taking $2\leq i_0 \leq k$ to be minimal such that this occurs, the cluster $\tilde K_{i_0}$ must contain a path from $\gamma_{i_0}(n)$ to $\gamma_{i_0}(m)$ that visits one of the clusters $\tilde K_{j_0}$ with $j_0<i_0$. Letting $\mathscr{E}_{i_0,j_0}$ denote the event that that $\tilde K_i \cap F_i = \emptyset$ and $\gamma_i(m)\in \tilde K_i$ for every $1\leq i \leq k$ and that there is a simple path from $\gamma_{i_0}(n)$ to $\gamma_{i_0}(m)$ in $\tilde K_{i_0}$ that visits $\tilde K_{j_0}$, we have by a union bound that
\begin{multline}
  \P(\gamma_i(n) \leftrightarrow \gamma_i(m) \text{ and } \gamma_i(n) \nleftrightarrow F_i \text{ for every $1\leq i \leq k$}) \\\geq \P(\tilde K_i \cap F_i = \emptyset \text{ and } \gamma_i(m)\in \tilde K_i  \text{ for every $1\leq i \leq k$}) - \sum_{1\leq j_0<i_0\leq k} \P(\sE_{i_0,j_0}).
  \label{eq:affine_surgery2}
\end{multline}
Now, for each $1\leq j_0<i_0\leq k$, we can use the tree-graph inequality  to bound
\begin{align*}
  &\P(\sE_{i_0,j_0}) 
  \\&\leq \left[\prod_{i\neq i_0,j_0} T_{p_c}(\gamma_i(n),\gamma_i(m))\right] \sum_{a,b\in \Z^d} T_{p_c}(\gamma_{j_0}(n),a)T_{p_c}(a,\gamma_{j_0}(m)) T_{p_c}(a,b)T_{p_c}(\gamma_{i_0}(n),b)T_{p_c}(b,\gamma_{i_0}(m)).
\end{align*}
Using \eqref{eq:two_point_assumption} and the assumption that $d>6$, standard convolution estimates imply that $\P(\sE_{i_0,j_0})$ is much smaller than the product $\prod_{i=1}^kT_{p_c}(\gamma_i(n),\gamma_i(m))$ when $n$ is large and $m\geq \lambda n$, which concludes the proof together with \eqref{eq:connection_off_the_lines2} and \eqref{eq:affine_surgery2}. \qedhere

\end{proof}

\noindent
\textbf{Positivity of constants in \cref{sec:k_point_proof}.} We now prove a sharp topological characterisation of the positivity of the constant $\mathbf{C}(m,W)$ appearing in \cref{thm:scheme_function_variantW}.

\begin{lemma}
\label{lem:variant_positivity}
Let $d>6$ and $L\geq 1$ be such that critical percolation on $\mathbb{Z}^d_L$ satisfies the estimates \eqref{eq:two_point_assumption} and \eqref{eq:two_blob_assumption}. For each $m\geq 1$ and finite set of edges $W$, the constant $\mathbf{C}(m,W)$ appearing in \cref{thm:scheme_function_variantW} is positive if and only if there exist $m$ edge-disjoint paths from $0$ to $\infty$ in $\Z^d_L\setminus W$.
\end{lemma}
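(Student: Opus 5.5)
The necessity direction is a compactness argument. Suppose there do not exist $m$ edge-disjoint paths from $0$ to $\infty$ in $\Z^d_L\setminus W$. By compactness (König's lemma applied to edge-disjoint path systems in growing balls) there is a radius $R$ such that there are not even $m$ edge-disjoint paths from $0$ to $\Z^d\setminus B_R(0)$ avoiding $W$. In that case the event $\Psi_{\cT,W}(x_1,\ldots,x_k)$, with $x_i$ a vertex of degree $m$ and $W_i=W$, is empty once $\min_{i\neq j}\|x_i-x_j\|>2R$. Hence $\mathbf{C}(m,W)=0$.

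For sufficiency, I plan to mirror the proof of \cref{lem:blob_factor_positivity}, replacing vertex-disjoint by edge-disjoint and distinct clusters by edge-disjoint connections. I would use the star tree $\cT$ with centre $m+1$ of degree $m$ and leaves $1,\ldots,m$. Then
\[
\mathbf{C}(m,W)=\lim \frac{\P(\Psi_{\cT,W})}{\prod_i AG(x_{m+1},x_i)},
\]
so it suffices to exhibit a lower bound of order $\prod_i T_{p_c}(0,x_i)$ along some sequence of well-separated points.

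The first step is an \emph{edge version of affine escape}. Starting from $m$ edge-disjoint paths from $0$ to $\infty$ in $\Z^d_L\setminus W$, I would split them into ones that eventually become vertex-disjoint rays with distinct slopes. I would do this by running the Menger argument of \cref{lem:affine_escape_vertex} with the edge form of Menger's theorem, on the graph obtained by contracting chosen affine rays $\tilde\gamma_1,\ldots,\tilde\gamma_m$ that are pairwise non-adjacent and far from $0$ and $W$. One-endedness of the complement (this uses $d\geq 3$) again forces any edge cut to have size at least $m$. The outcome is edge-disjoint paths $\gamma_1,\ldots,\gamma_m$ from $0$ that, beyond some step $n_0$, are vertex-disjoint affine rays with distinct slopes.

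Next comes surgery. I take $x_{m+1}=0$ (translation invariance) and $x_i=\gamma_i(M)$ with $M\geq\lambda n$. I then prove the analogue of \eqref{eq:affine_surgery}: with probability at least $\frac12\prod_i T_{p_c}(\gamma_i(n),\gamma_i(M))$, each $\gamma_i(n)$ is connected to $\gamma_i(M)$ while avoiding $F_i=V(W)\cup\bigcup_{j\neq i}\gamma_j[n]$, and the clusters are distinct. This goes through verbatim with the three-point estimate, the independent-cluster coupling and the tree-graph bound. On that event I force the first $n$ steps of every $\gamma_i$ open and force $W$ closed, as in \cref{lem:blob_factor_positivity}. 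This is possible as long as no edge of $W$ lies on some $\gamma_j[n]$, which holds by construction. Forcing a bounded number of edges changes the probability by a constant factor, and the resulting configuration lies in $\Psi_{\cT,W}$. The edge-disjoint witnesses are each $\gamma_i[n]$ concatenated with the open path inside the cluster of $\gamma_i(n)$; these clusters are distinct and avoid the other initial segments, so the witnesses are edge-disjoint.

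One subtlety is that opening $\gamma_j[n]$ may merge clusters. This is harmless here, since edge-disjointness rather than distinct clusters is what is needed, and the clusters we used avoid the other segments. Finally, $T_{p_c}(\gamma_i(n),\gamma_i(M))\asymp T_{p_c}(0,\gamma_i(M))$, and the points $\gamma_i(M)$ become well separated because the slopes are distinct. This gives $\mathbf{C}(m,W)>0$.

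The main obstacle is the edge version of affine escape. The given paths may share vertices, including $0$ itself and many others, so they cannot simply be rerouted onto vertex-disjoint rays. The cut argument has to be done carefully in the edge setting, with the contracted ray vertices, while retaining vertex-disjointness far out, which the surgery needs. If the direct Menger route is awkward, I would first apply the edge version of Menger to obtain edge-disjoint paths ending on the rays, and then truncate each path at its last visit to $B_{R}$.
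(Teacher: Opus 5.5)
Your overall plan is the paper's. The paper proves this lemma by rerunning the proof of \cref{lem:blob_factor_positivity} (star scheme, eventually affine arms, the surgery estimate \eqref{eq:affine_surgery}, finite energy) with the edge affine-escape lemma \cref{lem:affine_escape_edge} in place of the vertex one. Your necessity argument and your surgery are fine. The witness construction is also right: no closing of edges around the initial segments is needed, since only edge-disjointness is required. Demanding that the clusters of the $\gamma_i(n)$ be distinct is more than necessary. The paper remarks that for disjoint connections the open bubble, rather than the open triangle, is the relevant input. Still, your event is a valid sub-event when $d>6$. Do choose the auxiliary rays with distinct \emph{directions}, not merely distinct velocities, so that $\gamma_i(n)$ stays at distance of order $n$ from $\bigcup_{j\neq i}\gamma_j[n]$.

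The step that does not go through as written is the one you flagged, the edge affine escape. Edge-Menger on the graph with contracted rays does give $m$ edge-disjoint paths from $0$ to $z$, and your cut bound is correct. However, it does not give paths ending on \emph{distinct} rays. Unlike vertex-disjoint paths, edge-disjoint paths may pass \emph{through} a contracted vertex $[\tilde\gamma_j]$, entering and leaving the ray at different points, possibly several times. Lifting such a path back to $\Z^d_L$ then needs edges of $\tilde\gamma_j$ that the path ending on $\tilde\gamma_j$ also uses. Truncating at the last visit to $B_R$ does not address this.

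One fix is to orient every lattice edge from a non-ray vertex to $\tilde\gamma_j$ towards $[\tilde\gamma_j]$, keep all other edges bidirected, and apply the directed edge form of Menger (max-flow/min-cut, cancelling opposite unit flows on the same edge). The paths are then absorbed at their first contact with a ray. The rays reached are distinct, since each $[\tilde\gamma_j]$ has a single arc to $z$. Each lift is a path to its first ray contact followed by that ray's tail, so the tails are vertex-disjoint as your surgery needs.

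In this directed graph the cut bound needs a surviving path to reach the infinite component of $\Z^d_L\setminus(W\cup C'\cup\bigcup_j\tilde\gamma_j)$ without touching any ray. This is immediate if the auxiliary rays avoid the given paths. The preliminary step in \cref{lem:affine_escape_edge} lets you arrange that. Follow $\gamma_i$ to its first exit point $x_i$ from $[-R,R]^d$, then continue along $x_i+\ell e_i$, where $e_i$ is the step entering $x_i$. These rays are edge-disjoint, so the given paths become eventually affine and the auxiliary rays can be chosen to miss them.
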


The proof of this lemma will require the following edge version of \cref{lem:affine_escape_vertex}.

\begin{lemma}[Affine escape: edge version]
\label{lem:affine_escape_edge}
Let $d\geq 3$ and $L \geq 1$, and suppose $(W,b_1,\ldots,b_k)$ is a blob for which there exist edge-disjoint paths $\gamma_1,\ldots,\gamma_k$ with $\gamma_i$ connecting $b_i$ to $\infty$ in $\Z^d_L \setminus W$. The paths $\gamma_i$ can always be chosen to be eventually affine with distinct slopes in the sense that there exist constant $a_i,q_i \in \N$ and $z_i\in \Z^d$ with $z_i/q_i \neq z_j/q_j$ for $i\neq j$ such that $\gamma_i(a_i+m q_i)=\gamma_i(a_i)+mz_i$ for every $1\leq i \leq k$ and $m\geq 1$.
\end{lemma}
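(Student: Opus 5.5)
We will follow the proof of \cref{lem:affine_escape_vertex} step by step. The only substantive change is that Menger's theorem is used in its edge form, and the rays are made well-separated so that contracting them causes no trouble.

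\textbf{Step 1 (eventually affine, slopes possibly equal).} Take $R$ so large that $[-R,R]^d$ contains $b_1,\ldots,b_k$ and every endpoint of an edge of $W$. Let $x_i$ be the first vertex of $\gamma_i$ outside $[-R,R]^d$, and let $e_i$ be the outward coordinate unit vector at $x_i$. Replace the tail of $\gamma_i$ after $x_i$ by the ray $\{x_i+L\ell e_i:\ell\geq 0\}$. This ray uses only edges between points with $\|\cdot\|_\infty>R$, so it avoids $W$. The catch is that the $x_i$ need not be distinct, since the paths are only edge-disjoint. Two rays starting at the same point with the same direction would then share edges, which is forbidden.

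We therefore do not insist on edge-disjointness at this stage. Instead we keep the original $\gamma_i$ as the witness collection and use Step 2 directly; Step 2 only needs the $\gamma_i$ to be edge-disjoint and to escape to infinity.

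\textbf{Step 2 (rerouting onto prescribed affine rays).} Choose $k$ vertex-disjoint affine rays $\tilde\gamma_1,\ldots,\tilde\gamma_k$ with distinct rational slopes, chosen so that:
\begin{itemize}
\item they start far outside $[-R,R]^d$;
\item they are pairwise non-adjacent;
\item $\Z^d_L\setminus\bigcup\tilde\gamma_i$ is connected and one-ended, which is where $d\geq 3$ is used.
\end{itemize}
Since the $\gamma_i$ are arbitrary, we may not be able to make the $\tilde\gamma_i$ avoid them. This is harmless, because edge-disjointness only concerns edges, and we will never need the $\gamma_i$ to avoid the $\tilde\gamma_i$ as sets.

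Build an auxiliary multigraph from $\Z^d_L\setminus W$ as follows:
\begin{itemize}
\item contract each ray $\tilde\gamma_i$ to a vertex $[\tilde\gamma_i]$, deleting the edges internal to the ray;
\item add a source $a$ joined to each $b_i$ by one edge, with multiplicity if some $b_i$ coincide, as the edge version permits;
\item add a sink $z$ joined to each $[\tilde\gamma_i]$ by a single edge.
\end{itemize}
Suppose we find $k$ edge-disjoint $a$–$z$ paths. Each uses exactly one $z$-edge, so the rays are used bijectively. Un-contracting, each path enters its ray at some vertex and then follows the ray to infinity. Distinct rays are vertex-disjoint, the segments before entering the rays are edge-disjoint, and the ray tails use only ray edges, which no pre-entry segment uses since those edges were deleted. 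Trimming loops then gives edge-simple paths $\gamma_i'$ from $b_i$, eventually coinciding with $\tilde\gamma_{\pi(i)}$, which is the claim.

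\textbf{Step 3 (no small edge cut; the main step).} By the edge form of Menger's theorem it suffices to rule out an edge cut $C$ with $|C|<k$. Let $C'$ be the edges of $C$ lying in $\Z^d_L$. By one-endedness, $\Z^d_L\setminus(W\cup C'\cup\bigcup\tilde\gamma_i)$ has a unique infinite component, since removing finitely many edges preserves this. Because the rays are non-adjacent and $C'$ is finite, this component is adjacent to every ray through infinitely many edges. Hence every uncut ray vertex $[\tilde\gamma_j]$ whose $z$-edge is not in $C$ is reachable from the infinite component.

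Among $\gamma_1,\ldots,\gamma_k$, which are edge-disjoint, at least one avoids $C'$, and its $a$-edge is also not in $C$, by a counting argument splitting $C$ into $a$-edges, $z$-edges and lattice edges. The point of the counting is that the number of $\gamma_i$ whose $a$-edge or lattice edges are hit is at most the number of non-$z$ edges of $C$. If $s$ edges of $C$ are $z$-edges, then at most $k-1-s$ of the $\gamma_i$ are blocked, and at least $k-s\geq 1$ rays retain their $z$-edge. So some unblocked $\gamma_i$ exists, along with some ray with intact $z$-edge.

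The delicate point is that the unblocked $\gamma_i$ reaches infinity while possibly passing through ray vertices. Either it touches some ray, and that ray's contracted vertex gives a path toward $z$ if its $z$-edge is intact; or, if its $z$-edge is cut, the path continues to infinity. In the latter case we need the path to still reach the infinite component after deleting the ray. We will handle this by following $\gamma_i$ until its first hit of a ray or exit from a large box; in both cases we reach either an intact ray or the infinite component, which connects to an intact ray. This gives an $a$–$z$ path avoiding $C$, a contradiction.

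I expect this last case analysis, showing that passing through rays whose $z$-edge is cut does not trap the path, to be the main obstacle. I would handle it by treating $[\tilde\gamma_j]$ with cut $z$-edge as an ordinary vertex from which the infinite component remains reachable.
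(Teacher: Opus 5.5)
There is a genuine gap where Steps 2 and 3 meet. Your un-contraction assumes that each of the $k$ edge-disjoint $a$--$z$ paths meets exactly one contracted vertex, namely the one just before $z$. In the edge setting nothing forces this. A path may enter $[\tilde\gamma_j]$ by one lattice edge, leave by another, and terminate at a different ray. Such a passage is a jump between two typically non-adjacent vertices of $\tilde\gamma_j$, and it can only be realised using edges of $\tilde\gamma_j$. Several passages, together with the tail of the path assigned to $\tilde\gamma_j$, may then need the same ray edges. In the vertex version this is excluded by counting: a path visiting two of the $k$ ray vertices leaves too few for the others. There is no edge analogue of that count.

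Worse, Step 3 rules out small cuts precisely by using such passages: your way around the ``main obstacle'' is to walk through $[\tilde\gamma_j]$ when its $z$-edge is cut. So Steps 2 and 3 are about different graphs. Suppose instead you forbid passages, so each contracted ray may carry at most one path (e.g.\ via a unit-capacity arc in the usual directed model). Then a cut may contain that capacity element, and a single such element blocks every $\gamma_i$ that touches $\tilde\gamma_j$. Your count `each cut element blocks at most one $\gamma_i$' fails as soon as the $\gamma_i$ may meet the target rays. So your claim that Step 2 needs only that the $\gamma_i$ be edge-disjoint and escape to infinity is exactly what is false.

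The missing idea is the Step 1 you abandoned, which is the part the paper proves in detail before appealing to the vertex-case argument. Instead of an outward coordinate direction, extend $\gamma_i$ from its first exit point $x_i$ of $[-R,R]^d$ in the direction $e_i$ of the step by which $\gamma_i$ \emph{enters} $x_i$, so that $x_i-e_i\in[-R,R]^d$.

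\begin{itemize}
\item The coordinate witnessing $x_i\notin[-R,R]^d$ grows in absolute value along $\{x_i+\ell e_i:\ell\geq 0\}$. So this ray lies outside the box and is edge-disjoint from $W$ and from every initial segment.
\item If two such rays share an edge, then $e_i=\pm e_j$ and the rays overlap. Either one of $x_i-e_i$, $x_j-e_j$ then lies on the other ray, which is impossible since it is in the box. Or $x_i=x_j$ and $e_i=e_j$, in which case $\gamma_i$ and $\gamma_j$ both traverse $\{x_i-e_i,x_i\}$, contradicting edge-disjointness. Hence coinciding exit points are harmless.
\end{itemize}

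With the $\gamma_i$ eventually affine (finitely many tail directions), the target rays $\tilde\gamma_i$ can be chosen disjoint from them. The Menger argument with capacitated ray vertices then runs as in the vertex case. A capacity or $z$-element of the cut blocks no $\gamma_i$ and disables one ray, and every other element blocks at most one $\gamma_i$. So some unblocked $\gamma_i$ reaches the unique infinite component, and from there a ray whose capacity and $z$-elements are intact. No passages through rays are needed.
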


\begin{proof}[Proof of \cref{lem:affine_escape_edge}]
The proof is very similar to that of \cref{lem:affine_escape_vertex}, with the main differences being in the initial reduction to eventually-affine paths of possibly equal slopes.
Fix $(W,b_1,\ldots,b_k)$, let the simple, edge-disjoint paths $\gamma_1,\ldots,\gamma_k$ be as in the statement of the lemma, and let the integer $R$ be sufficiently large that every point $b_1,\ldots,b_k$ and every endpoint of an edge in $W$ belongs to $[-R,R]^d$. For each $1\leq i \leq k$, let $x_i$ be the first point of $\Z^d\setminus [-R,R]^d$ visited by $\gamma_i$, and let $e_i$ be the vector traversed by $\gamma_i$ as it enters $x_i$, so that $x_i-e_i \in [-R,R]^d$. 
We claim that the rays $(\{x_i+\ell e_i : \ell \geq 0\})_{i=1}^k$ are edge-disjoint when considered as paths in $\Z^d_L$.  Once this is proven, replacing each path $\gamma_i$ by the path that follows $\gamma_i$ until the point $x_i$ and then follows the ray $\{x_i+\ell e_i:\ell\geq 0\}$ shows that we can always take the edge-disjoint paths $\gamma_i$ to be eventually affine, albeit with possibly equal slopes. The fact that the slopes can be taken to be distinct follows by an almost identical argument to the vertex case (using the edge version of Menger's theorem \cite[Theorem 3.3.6]{diestel2025graphtheory}) and is omitted. 

We now prove the claim that the rays $(\{x_i+\ell e_i : \ell \geq 0\})_{i=1}^k$ are edge-disjoint when considered as paths in $\Z^d_L$.
Note that $x_i$ must have a coordinate with absolute value strictly larger than $R$, the coordinate of $e_i$ in this direction must be non-zero and of the same sign as that of $x_i$, and the entire ray $\{x_i +\ell e_i : \ell \geq 0\}$ must be contained both in the half-space in which this coordinate has the same sign as that of $e_i$ and in the complement of the box $[-R,R]^d$. If two of the rays $\{x_i+\ell e_i : \ell \geq 0\}$ and $\{x_j+\ell e_j : \ell \geq 0\}$ are \emph{not} edge disjoint then they must have that either $e_i=e_j$ or $e_i=-e_j$ and that $x_i+\ell_1 e_i=x_j+\ell_2 e_j$ for some integers $\ell_1,\ell_2 \geq 0$. If $e_i=-e_j$, then $x_j-e_j=x_i+(\ell_1+\ell_2+1)e_i$ lies on the ray from $x_i$ and hence outside $[-R,R]^d$, contradicting $x_j-e_j\in[-R,R]^d$. Suppose instead that $e_i=e_j$ and that $x_i +\ell_1 e_i = x_j+\ell_2 e_i$ for some integers $\ell_1,\ell_2 \geq 0$. We may suppose without loss of generality that $\ell_1 \geq \ell_2$, in which case $x_j=x_i+(\ell_1 -\ell_2)e_i$. If $\ell_1>\ell_2$ then it follows that $x_j-e_j=x_j-e_i$ is contained in the ray $\{x_i+\ell e_i:\ell \geq 0\}$ and hence contained in the complement of the box $[-R,R]^d$, contradicting the definition of $x_j$ and $e_j$. We deduce that in fact $\ell_2=\ell_1$ so that $x_i=x_j$ and $e_i=e_j$, which is only possible if $i=j$ since the paths $\gamma_i$ were assumed to be edge-disjoint.
\end{proof}

\begin{proof}[Proof of \cref{lem:variant_positivity}]
This proof is very similar to that of \cref{lem:blob_factor_positivity} except that we use \cref{lem:affine_escape_edge} instead of \cref{lem:affine_escape_vertex}, and we omit the details. (Let us however note an interesting point that since we now need our connections to be disjoint, rather than belong to different clusters, it is now the fact that the \emph{open bubble} $T_{p_c}*T_{p_c}(0,x)$ converges to zero as $x\to \infty$ that is relevant for the proof, rather than the open triangle. See \cite[Remark 5.4]{hutchcroft20192} and \cite{HutchcroftReeves} for further discussion of related phenomena and their consequences for percolation \emph{at and below} the upper critical dimension.)
\end{proof}

 \setstretch{1}
 \footnotesize{
  \bibliographystyle{abbrv}
  \bibliography{unimodularthesis.bib}

@article{holmes2007weak,
  title={WEAK CONVERGENCE OF MEASURE-VALUED PROCESSES AND r-POINT FUNCTIONS},
  author={Holmes, Mark and Perkins, Edwin},
  journal={The Annals of Probability},
  volume={35},
  number={5},
  pages={1769--1782},
  year={2007}
}

@article{van2017criterion,
  title={A CRITERION FOR CONVERGENCE TO SUPER-{B}ROWNIAN MOTION ON PATH SPACE},
  author={\Hofstad, Remco and Holmes, Mark and Perkins, Edwin A},
  journal={The Annals of Probability},
  pages={278--376},
  year={2017},
  publisher={JSTOR}
}

@article{petersen1982relation,
  title={On the relation between the multidimensional moment problem and the one-dimensional moment problem},
  author={Petersen, LC690537},
  journal={Mathematica Scandinavica},
  pages={361--366},
  year={1982},
  publisher={JSTOR}
}

@article{athreya2016gap,
  title={The gap between {G}romov-vague and {G}romov--{H}ausdorff-vague topology},
  author={Athreya, Siva and L{\"o}hr, Wolfgang and Winter, Anita},
  journal={Stochastic Processes and their Applications},
  volume={126},
  number={9},
  pages={2527--2553},
  year={2016},
  publisher={Elsevier}
}

@article{van2014cycle,
  title={Cycle structure of percolation on high-dimensional tori},
  author={\Hofstad, Remco and Sapozhnikov, Art{\"e}m},
  journal={Annales de l'IHP Probabilit{\'e}s et statistiques},
  volume={50},
  number={3},
  pages={999--1027},
  year={2014}
}

@article{liu2026crossover,
  title={Crossover from subcritical to critical decay: random walk, self-avoiding walk, percolation},
  author={Liu, Yucheng and Slade, Gordon},
  journal={arXiv preprint arXiv:2605.15545},
  year={2026}
}

@article{panis2025sharp,
  title={Sharp bounds on the half-space two-point function for high-dimensional {B}ernoulli percolation},
  author={Panis, Romain and Schapira, Bruno},
  journal={arXiv preprint arXiv:2512.13624},
  year={2025}
}

@article{carpenter2025loops,
  title={On Loops in critical high-dimensional percolation},
  author={Carpenter, Amelia and Werner, Wendelin},
  journal={arXiv preprint arXiv:2507.02676},
  year={2025}
}

@article{blanc2024scaling,
  title={The scaling limit of critical hypercube percolation},
  author={Blanc-Renaudie, Arthur and Broutin, Nicolas and Nachmias, Asaf},
  journal={arXiv preprint arXiv:2401.16365},
  year={2024}
}

@article{depperschmidt2011marked,
  title={Marked metric measure spaces},
  author={Depperschmidt, Andrej and Greven, Andreas and Pfaffelhuber, Peter},
  journal={Electronic Communications in Probability},
  volume={16},
  year={2011},
  publisher={Institute of Mathematical Statistics}
}

@article{sheffield2006exploration,
  title={Exploration trees and conformal loop ensembles},
  author={Sheffield, Scott},
  journal={Duke Mathematical Journal},
  volume={147},
  number={1},
  pages={79--129},
  doi={10.1215/00127094-2009-007},
  year={2009}
}

@article{miller2025existence_BM,
  title={Existence and uniqueness of the canonical {B}rownian motion in non-simple conformal loop ensemble gaskets},
  author={Miller, Jason and Yuan, Yizheng},
  journal={arXiv preprint arXiv:2512.04807},
  year={2025}
}

@article{miller2025existence,
  title={Existence and uniqueness of the conformally covariant geodesic metric on simple conformal loop ensemble carpets},
  author={Miller, Jason and Tian, Yi},
  journal={arXiv preprint arXiv:2511.16208},
  year={2025}
}

@article{duminil2026random,
  title={A random walk approach to high-dimensional critical phenomena},
  author={Duminil-Copin, Hugo and Markar, Aman and Panis, Romain and Slade, Gordon},
  journal={arXiv preprint arXiv:2605.21438},
  year={2026}
}

@article{panis2026reversing,
  title={On reversing the {S}imon-{L}ieb inequality in high-dimensional percolation},
  author={Panis, Romain and Schapira, Bruno},
  journal={arXiv preprint arXiv:2605.30299},
  year={2026}
}

@article{chatterjee2025robust,
  title={Robust construction of the incipient infinite cluster in high dimensional critical percolation},
  author={Chatterjee, Shirshendu and Chinmay, Pranav and Hanson, Jack and Sosoe, Philippe},
  journal={arXiv preprint arXiv:2502.10882},
  year={2025}
}

@article{chatterjee2025limiting,
  title={Limiting distribution of the chemical distance in high dimensional critical percolation},
  author={Chatterjee, Shirshendu and Chinmay, Pranav and Hanson, Jack and Sosoe, Philippe},
  journal={arXiv preprint arXiv:2509.06236},
  year={2025}
}

@article{chatterjee2026convergence,
  title={Convergence of $k$-point functions in high dimensional percolation},
  author={Chatterjee, Shirshendu and Chinmay, Pranav and Hanson, Jack and Sosoe, Philippe},
  journal={arXiv preprint arXiv:2604.08462},
  year={2026}
}

@article{croydon2018scaling,
  title={Scaling limits of stochastic processes associated with resistance forms},
  author={Croydon, David},
  journal={Annales de l'Institut Henri Poincar{\'e}, Probabilit{\'e}s et Statistiques},
  volume={54},
  number={4},
  pages={1939--1968},
  doi={10.1214/17-AIHP861},
  year={2018}
}

@article{croydon2008convergence,
  title={Convergence of simple random walks on random discrete trees to Brownian motion on the continuum random tree},
  author={Croydon, David},
  journal={Annales de l'IHP Probabilit{\'e}s et statistiques},
  volume={44},
  number={6},
  pages={987--1019},
  doi={10.1214/07-AIHP153},
  year={2008}
}

@article{archer2024ghp,
  title={The {GHP} scaling limit of uniform spanning trees in high dimensions},
  author={Archer, Eleanor and Nachmias, Asaf and Shalev, Matan},
  journal={Communications in Mathematical Physics},
  volume={405},
  number={3},
  pages={73},
  year={2024},
  publisher={Springer}
}

@article{ambrosio2025tightness,
  title={Tightness of approximations to metrics on non-simple conformal loop ensemble gaskets},
  author={Ambrosio, Valeria and Miller, Jason and Yuan, Yizheng},
  journal={arXiv preprint arXiv:2507.15589},
  year={2025}
}

@article{djankovic2026scaling,
  title={The scaling limit of random walk and the intrinsic metric on planar critical percolation},
  author={Dankovi{\'c}, Irina and Markering, Maarten and Miller, Jason and Yuan, Yizheng},
  journal={arXiv preprint arXiv:2604.14122},
  year={2026}
}

@article{stenull2003logarithmic,
  title={Logarithmic corrections to scaling in critical percolation and random resistor networks},
  author={Stenull, Olaf and Janssen, Hans-Karl},
  journal={Physical Review E},
  volume={68},
  number={3},
  pages={036129},
  year={2003},
  publisher={APS}
}

@unpublished{HutchcroftReeves,
author={Hutchcroft, Tom and Reeves, Lily},
title={The intrinsic geometry of critical long-range percolation clusters at and above the upper critical dimension},
note={In preparation}
}

@book{diestel2025graphtheory,
  author    = {Diestel, Reinhard},
  title     = {Graph Theory},
  edition   = {6},
  series    = {Graduate Texts in Mathematics},
  publisher = {Springer Berlin Heidelberg},
  year      = {2025},
  isbn      = {978-3-662-70106-5},
  ean       = {9783662701065},
  doi       = {10.1007/978-3-662-70107-2},
  url       = {https://doi.org/10.1007/978-3-662-70107-2},
  pages     = {455},
  issn      = {0072-5285},
  eissn     = {2197-5612}
}

@article{angel2021tail,
  title={On the tail of the branching random walk local time},
  author={Angel, Omer and Hutchcroft, Tom and J{\'a}rai, Antal},
  journal={Probability Theory and Related Fields},
  volume={180},
  number={1},
  pages={467--494},
  year={2021},
  publisher={Springer}
}

@article{blanc2025critical,
  title={Critical percolation on the discrete torus in high dimensions},
  author={Blanc-Renaudie, Arthur and Nachmias, Asaf},
  journal={arXiv preprint arXiv:2512.19672},
  year={2025}
}

@article{cabezas2025bi,
  title={Bi-infinite incipient cluster in high dimensions},
  author={Cabezas, Manuel and Fribergh, Alexander and Heydenreich, Markus and J{\'a}rai, Antal A},
  journal={arXiv preprint arXiv:2506.06559},
  year={2025}
}

@article{van2010convergence,
  title={Convergence of the critical finite-range contact process to super-{B}rownian motion above the upper critical dimension: the higher-point functions},
  author={\Hofstad, Remco and Sakai, Akira},
  journal={Electronic Journal of Probability},
  volume={15},
  number={27},
  pages={801--894},
  doi={10.1214/EJP.v15-783},
  year={2010}
}

@article{camia2024conformal,
  title={Conformal covariance of connection probabilities and fields in {2D} critical percolation},
  author={Camia, Federico},
  journal={Communications on Pure and Applied Mathematics},
  volume={77},
  number={3},
  pages={2138--2176},
  year={2024},
  publisher={Wiley Online Library}
}

@book{le1999spatial,
  title={Spatial branching processes, random snakes and partial differential equations},
  author={Le Gall, Jean-Fran{\c{c}}ois},
  year={1999},
  publisher={Springer Science \& Business Media}
}

@book{dynkin1994introduction,
  title={An introduction to branching measure-valued processes},
  author={Dynkin, Eugene Borisovich},
  series={CRM Monograph Series},
  number={6},
  year={1994},
  publisher={American Mathematical Soc.}
}

@article{perkins2002part,
  title={Dawson-{W}atanabe superprocesses and measure-valued diffusions},
  author={Perkins, Edwin},
  journal={Lectures on probability theory and statistics},
  pages={125--329},
  year={2002},
  publisher={Springer}
}

@article{slade2002scaling,
  title={Scaling limits and super-{B}rownian motion},
  author={Slade, Gordon},
  journal={Notices AMS},
  volume={49},
  number={9},
  pages={1056--1067},
  year={2002}
}

@article{aldous1991continuum,
  title={The continuum random tree. {II}. An overview},
  author={Aldous, David},
  journal={Stochastic analysis (Durham, 1990)},
  volume={167},
  pages={23--70},
  year={1991},
  publisher={Citeseer}
}

@article{holmes2020range,
  title={On the range of lattice models in high dimensions},
  author={Holmes, Mark and Perkins, Edwin},
  journal={Probability Theory and Related Fields},
  volume={176},
  number={3},
  pages={941--1009},
  year={2020},
  publisher={Springer}
}

@article{holmes2008convergence,
  title={Convergence of lattice trees to super-{B}rownian motion above the critical dimension},
  author={Holmes, Mark P.},
  journal={Electronic Journal of Probability},
  volume={13},
  number={23},
  pages={671--755},
  doi={10.1214/EJP.v13-499},
  year={2008}
}

@article{borgs1999mean,
  title={Mean-field lattice trees},
  author={Borgs, Christian and Chayes, Jennifer and \Hofstad, Remco and Slade, Gordon},
  journal={Annals of Combinatorics},
  volume={3},
  pages={205--221},
  year={1999},
  publisher={Springer}
}

@article{cox1999rescaled,
  title={Rescaled particle systems converging to super-Brownian motion},
  author={Cox, Ted and Durrett, Richard and Perkins, Edwin A},
  journal={Perplexing Problems in Probability: Festschrift in Honor of Harry Kesten},
  pages={269--284},
  year={1999},
  publisher={Springer}
}

@article{derbez1998scaling,
  title={The scaling limit of lattice trees in high dimensions},
  author={Derbez, Eric and Slade, Gordon},
  journal={Communications in mathematical physics},
  volume={193},
  number={1},
  pages={69--104},
  year={1998},
  publisher={Springer}
}

@article{derbez1997lattice,
  title={Lattice trees and super-Brownian motion},
  author={Derbez, Eric and Slade, Gordon},
  journal={Canadian Mathematical Bulletin},
  volume={40},
  number={1},
  pages={19--38},
  year={1997},
  publisher={Cambridge University Press}
}

@article{van2013survival,
  title={The survival probability and r-point functions in high dimensions},
  author={\Hofstad, Remco and Holmes, Mark},
  journal={Annals of mathematics},
  pages={665--685},
  year={2013},
  publisher={JSTOR}
}

@article{hara1990mean,
  title={Mean-field critical behaviour for correlation length for percolation in high dimensions},
  author={Hara, Takashi},
  journal={Probability theory and related fields},
  volume={86},
  number={3},
  pages={337--385},
  year={1990},
  publisher={Springer}
}

@article{asselah2025capacity,
  title={Capacity in high dimensional percolation},
  author={Asselah, Amine and Schapira, Bruno and Sousi, Perla},
  journal={arXiv preprint arXiv:2509.21253},
  year={2025}
}

@article{van2025one,
  title={One-arm exponents of high-dimensional percolation revisited},
  author={van Engelenburg, Diederik and Garban, Christophe and Panis, Romain and Severo, Franco},
  journal={arXiv preprint arXiv:2510.21595},
  year={2025}
}

@article{hutchcroft2025dimension,
  title={Dimension dependence of critical phenomena in long-range percolation},
  author={Hutchcroft, Tom},
  journal={arXiv preprint arXiv:2510.03951},
  year={2025}
}

@article{durrett1999rescaled,
  title={Rescaled contact processes converge to super-{B}rownian motion in two or more dimensions},
  author={Durrett, Richard and Perkins, Edwin A},
  journal={Probability Theory and Related Fields},
  volume={114},
  number={3},
  pages={309--399},
  year={1999},
  publisher={Springer}
}

@article{van2006infinite,
  title={Infinite canonical super-{B}rownian motion and scaling limits},
  author={\Hofstad, Remco},
  journal={Communications in mathematical physics},
  volume={265},
  number={3},
  pages={547--583},
  year={2006},
  publisher={Springer}
}

@article{aldous1993continuum,
  title={The continuum random tree {III}},
  author={Aldous, David},
  journal={The Annals of Probability},
  pages={248--289},
  year={1993},
  publisher={JSTOR}
}

@article{watanabe1968limit,
  title={A limit theorem of branching processes and continuous state branching processes},
  author={Watanabe, Shinzo},
  journal={Journal of Mathematics of Kyoto University},
  volume={8},
  number={1},
  pages={141--167},
  year={1968},
  publisher={Duke University Press}
}

@article{hara1998incipient,
  title={The incipient infinite cluster in high-dimensional percolation},
  author={Hara, Takashi and Slade, Gordon},
  journal={Electronic Research Announcements of the American Mathematical Society},
  volume={4},
  number={8},
  pages={48--55},
  year={1998}
}

@article{van2003convergence,
  title={Convergence of critical oriented percolation to super-{B}rownian motion above $4+ 1$ dimensions},
  author={Van der Hofstad, Remco and Slade, Gordon},
  journal={Annales de l'IHP Probabilit{\'e}s et statistiques},
  volume={39},
  number={3},
  pages={413--485},
  doi={10.1016/S0246-0203(03)00008-6},
  year={2003}
}

@article{gladkov2024percolation,
  title={Percolation Inequalities and Decision Trees},
  author={Gladkov, Nikita},
  journal={arXiv preprint arXiv:2408.08457},
  year={2024}
}

@article{LRPpaper1,
  title={Critical long-range percolation {I}: High effective dimension},
  author={Hutchcroft, Tom},
  journal={arXiv preprint arXiv:2508.18807},
  year={2025}
}

@article{LRPpaper2,
  title={Critical long-range percolation {II}: Low effective dimension},
  author={Hutchcroft, Tom},
  journal={arXiv preprint arXiv:2508.18808},
  year={2025}
}

@article{LRPpaper3,
  title={Critical long-range percolation {III}: The upper critical dimension},
  author={Hutchcroft, Tom},
  journal={arXiv preprint arXiv:2508.18809},
  year={2025}
}

@article{ang2021integrability,
  title={Integrability of Conformal Loop Ensemble: {I}maginary {DOZZ} Formula and Beyond},
  author={Ang, Morris and Cai, Gefei and Sun, Xin and Wu, Baojun},
  journal={arXiv preprint arXiv:2107.01788},
  year={2021}
}

@article{duminil2024alternativeSAW,
  title={An alternative approach for the mean-field behaviour of weakly self-avoiding walks in dimensions $ d> 4$},
  author={Duminil-Copin, Hugo and Panis, Romain},
  journal={arXiv preprint arXiv:2410.03649},
  year={2024}
}

@article{duminil2024alternative,
  title={An alternative approach for the mean-field behaviour of spread-out {B}ernoulli percolation in dimensions $ d> 6$},
  author={Duminil-Copin, Hugo and Panis, Romain},
  journal={arXiv preprint arXiv:2410.03647},
  year={2024}
}

@article{heydenreich2014high,
  title={High-dimensional incipient infinite clusters revisited},
  author={Heydenreich, Markus and \Hofstad, Remco and Hulshof, Tim},
  journal={Journal of Statistical Physics},
  volume={155},
  pages={966--1025},
  year={2014},
  publisher={Springer}
}

@article{van2004incipient,
  title={The incipient infinite cluster for high-dimensional unoriented percolation},
  author={Van der Hofstad, Remco and J{\'a}rai, Antal A},
  journal={Journal of statistical physics},
  volume={114},
  number={3},
  pages={625--663},
  year={2004},
  publisher={Springer}
}

@article{chatterjee2020restricted,
  title={Restricted percolation critical exponents in high dimensions},
  author={Chatterjee, Shirshendu and Hanson, Jack},
  journal={Communications on Pure and Applied Mathematics},
  volume={73},
  number={11},
  pages={2370--2429},
  doi={10.1002/cpa.21938},
  year={2020},
  publisher={Wiley Online Library}
}

@article{hutchcroft2023high,
  title={High-dimensional near-critical percolation and the torus plateau},
  author={Hutchcroft, Tom and Michta, Emmanuel and Slade, Gordon},
  journal={The Annals of Probability},
  volume={51},
  number={2},
  pages={580--625},
  year={2023},
  publisher={Institute of Mathematical Statistics}
}

@article {MR1434128,
    AUTHOR = {Aldous, David},
     TITLE = {Brownian excursions, critical random graphs and the
              multiplicative coalescent},
   JOURNAL = {Ann. Probab.},
  FJOURNAL = {The Annals of Probability},
    VOLUME = {25},
      YEAR = {1997},
    NUMBER = {2},
     PAGES = {812--854},
      ISSN = {0091-1798},
   MRCLASS = {60C05 (60J50)},
  MRNUMBER = {1434128},
MRREVIEWER = {Endre Cs\'{a}ki},
       DOI = {10.1214/aop/1024404421},
       URL = {https://doi.org/10.1214/aop/1024404421},
}

@article{chatterjee2021subcritical,
  title={Subcritical Connectivity and Some Exact Tail Exponents in High Dimensional Percolation},
  author={Chatterjee, Shirshendu and Hanson, Jack and Sosoe, Philippe},
  journal={Communications in Mathematical Physics},
  volume={403},
  pages={83--153},
  doi={10.1007/s00220-023-04759-w},
  year={2023}
}

@article {MR2165583,
    AUTHOR = {Borgs, Christian and Chayes, Jennifer T. and van der Hofstad,
              Remco and Slade, Gordon and Spencer, Joel},
     TITLE = {Random subgraphs of finite graphs. {II}. {T}he lace expansion
              and the triangle condition},
   JOURNAL = {Ann. Probab.},
  FJOURNAL = {The Annals of Probability},
    VOLUME = {33},
      YEAR = {2005},
    NUMBER = {5},
     PAGES = {1886--1944},
      ISSN = {0091-1798},
   MRCLASS = {60C05 (05C80 60K35 82B43)},
  MRNUMBER = {2165583},
MRREVIEWER = {Sotiris E. Nikoletseas},
       DOI = {10.1214/009117905000000260},
       URL = {https://doi.org/10.1214/009117905000000260},
}

@article {MR2155704,
    AUTHOR = {Borgs, Christian and Chayes, Jennifer T. and van der Hofstad,
              Remco and Slade, Gordon and Spencer, Joel},
     TITLE = {Random subgraphs of finite graphs. {I}. {T}he scaling window
              under the triangle condition},
   JOURNAL = {Random Structures Algorithms},
  FJOURNAL = {Random Structures \& Algorithms},
    VOLUME = {27},
      YEAR = {2005},
    NUMBER = {2},
     PAGES = {137--184},
      ISSN = {1042-9832},
   MRCLASS = {05C80 (60C05 60D05 60K35)},
  MRNUMBER = {2155704},
MRREVIEWER = {Sotiris E. Nikoletseas},
       DOI = {10.1002/rsa.20051},
       URL = {https://doi.org/10.1002/rsa.20051},
}

@article {MR2276449,
    AUTHOR = {Heydenreich, Markus and \Hofstad, Remco},
     TITLE = {Random graph asymptotics on high-dimensional tori},
   JOURNAL = {Comm. Math. Phys.},
  FJOURNAL = {Communications in Mathematical Physics},
    VOLUME = {270},
      YEAR = {2007},
    NUMBER = {2},
     PAGES = {335--358},
      ISSN = {0010-3616},
   MRCLASS = {60K35 (05C80 82B43)},
  MRNUMBER = {2276449},
MRREVIEWER = {Antal A. J\'{a}rai},
       DOI = {10.1007/s00220-006-0152-8},
       URL = {https://doi.org/10.1007/s00220-006-0152-8},
}

@article {MR2653185,
    AUTHOR = {Nachmias, Asaf and Peres, Yuval},
     TITLE = {The critical random graph, with martingales},
   JOURNAL = {Israel J. Math.},
  FJOURNAL = {Israel Journal of Mathematics},
    VOLUME = {176},
      YEAR = {2010},
     PAGES = {29--41},
      ISSN = {0021-2172},
   MRCLASS = {05C80 (60C05)},
  MRNUMBER = {2653185},
MRREVIEWER = {David B. Penman},
       DOI = {10.1007/s11856-010-0019-8},
       URL = {https://doi.org/10.1007/s11856-010-0019-8},
}

@article {MR1431856,
    AUTHOR = {Aizenman, Michael},
     TITLE = {On the number of incipient spanning clusters},
   JOURNAL = {Nuclear Phys. B},
  FJOURNAL = {Nuclear Physics. B. Theoretical, Phenomenological, and
              Experimental High Energy Physics. Quantum Field Theory and
              Statistical Systems},
    VOLUME = {485},
      YEAR = {1997},
    NUMBER = {3},
     PAGES = {551--582},
      ISSN = {0550-3213},
   MRCLASS = {82B43},
  MRNUMBER = {1431856},
       DOI = {10.1016/S0550-3213(96)00626-8},
       URL = {https://doi.org/10.1016/S0550-3213(96)00626-8},
}

@article {MR1773141,
    AUTHOR = {Hara, Takashi and Slade, Gordon},
     TITLE = {The scaling limit of the incipient infinite cluster in
              high-dimensional percolation. {I}. {C}ritical exponents},
   JOURNAL = {J. Statist. Phys.},
  FJOURNAL = {Journal of Statistical Physics},
    VOLUME = {99},
      YEAR = {2000},
    NUMBER = {5-6},
     PAGES = {1075--1168},
      ISSN = {0022-4715},
   MRCLASS = {82B43 (60K35)},
  MRNUMBER = {1773141},
MRREVIEWER = {Remco van der Hofstad},
       DOI = {10.1023/A:1018628503898},
       URL = {https://doi.org/10.1023/A:1018628503898},
}

@article {MR1757958,
    AUTHOR = {Hara, Takashi and Slade, Gordon},
     TITLE = {The scaling limit of the incipient infinite cluster in
              high-dimensional percolation. {II}. {I}ntegrated
              super-{B}rownian excursion},
      NOTE = {Probabilistic techniques in equilibrium and nonequilibrium
              statistical physics},
   JOURNAL = {J. Math. Phys.},
  FJOURNAL = {Journal of Mathematical Physics},
    VOLUME = {41},
      YEAR = {2000},
    NUMBER = {3},
     PAGES = {1244--1293},
      ISSN = {0022-2488},
   MRCLASS = {82B43 (60K35)},
  MRNUMBER = {1757958},
MRREVIEWER = {Remco van der Hofstad},
       DOI = {10.1063/1.533186},
       URL = {https://doi.org/10.1063/1.533186},
}

@article{essam1978percolation,
  title={Percolation theory at the critical dimension},
  author={Essam, IW and Gaunt, DS and Guttmann, AJ},
  journal={Journal of Physics A: Mathematical and General},
  volume={11},
  number={10},
  pages={1983},
  year={1978},
  publisher={IOP Publishing}
}

@article{HutchcroftTriangle,
  title={On the derivation of mean-field percolation critical exponents from the triangle condition},
  author={Hutchcroft, Tom},
  journal={Journal of Statistical Physics},
  volume={189},
  number={1},
  pages={6},
  year={2022},
  publisher={Springer}
}

@article {MR782962,
    AUTHOR = {Brydges, David and Spencer, Thomas},
     TITLE = {Self-avoiding walk in {$5$} or more dimensions},
   JOURNAL = {Comm. Math. Phys.},
  FJOURNAL = {Communications in Mathematical Physics},
    VOLUME = {97},
      YEAR = {1985},
    NUMBER = {1-2},
     PAGES = {125--148},
      ISSN = {0010-3616},
   MRCLASS = {82A67 (60J15 82A25 82A43)},
  MRNUMBER = {782962},
MRREVIEWER = {Gregory F. Lawler},
       URL = {http://projecteuclid.org/euclid.cmp/1103941982},
}

@article{smirnov2001critical2,
  title={Critical percolation in the plane: conformal invariance, {C}ardy's formula, scaling limits},
  author={Smirnov, Stanislav},
  journal={Comptes Rendus de l'Acad{\'e}mie des Sciences-Series I-Mathematics},
  volume={333},
  number={3},
  pages={239--244},
  year={2001},
  publisher={Elsevier}
}

@article{hutchcroft2020slightly,
  title={Slightly supercritical percolation on non-amenable graphs {I}: {T}he distribution of finite clusters},
  author={Hutchcroft, Tom},
  journal={Proceedings of the London Mathematical Society},
  volume={125},
  number={4},
  pages={968--1013},
  doi={10.1112/plms.12474},
  year={2022}
}

@article{sakai2007lace,
  title={Lace expansion for the {I}sing model},
  author={Sakai, Akira},
  journal={Communications in mathematical physics},
  volume={272},
  number={2},
  pages={283--344},
  year={2007},
  publisher={Springer}
}

@article{hutchcroft20192,
  title={The {$L^2$} boundedness condition in nonamenable percolation},
  author={Hutchcroft, Tom},
  journal={Electronic Journal of Probability},
  volume={25},
  number={127},
  pages={1--27},
  doi={10.1214/20-EJP525},
  year={2020}
}

@article {MR1905854,
    AUTHOR = {Campanino, Massimo and Ioffe, Dmitry},
     TITLE = {Ornstein-{Z}ernike theory for the {B}ernoulli bond percolation
              on {$\Bbb Z^d$}},
   JOURNAL = {Ann. Probab.},
  FJOURNAL = {The Annals of Probability},
    VOLUME = {30},
      YEAR = {2002},
    NUMBER = {2},
     PAGES = {652--682},
      ISSN = {0091-1798},
   MRCLASS = {60K35 (60F15 60K15 82B43)},
  MRNUMBER = {1905854},
MRREVIEWER = {Yvan Velenik},
       DOI = {10.1214/aop/1023481005},
       URL = {https://doi.org/10.1214/aop/1023481005},
}

@article {MR1174248,
    AUTHOR = {Hara, Takashi and Slade, Gordon},
     TITLE = {The lace expansion for self-avoiding walk in five or more
              dimensions},
   JOURNAL = {Rev. Math. Phys.},
  FJOURNAL = {Reviews in Mathematical Physics. A Journal for Both Review and
              Original Research Papers in the Field of Mathematical Physics},
    VOLUME = {4},
      YEAR = {1992},
    NUMBER = {2},
     PAGES = {235--327},
      ISSN = {0129-055X},
   MRCLASS = {82B41 (60J15)},
  MRNUMBER = {1174248},
MRREVIEWER = {Neal Madras},
       DOI = {10.1142/S0129055X9200008X},
}

@article {MR1959796,
    AUTHOR = {Hara, Takashi and \Hofstad, Remco and Slade, Gordon},
     TITLE = {Critical two-point functions and the lace expansion for
              spread-out high-dimensional percolation and related models},
   JOURNAL = {Ann. Probab.},
  FJOURNAL = {The Annals of Probability},
    VOLUME = {31},
      YEAR = {2003},
    NUMBER = {1},
     PAGES = {349--408},
      ISSN = {0091-1798},
   MRCLASS = {60K35 (60K37 82B41 82B43)},
  MRNUMBER = {1959796},
MRREVIEWER = {Ilie A. Grigorescu},
       DOI = {10.1214/aop/1046294314},
       URL = {http://dx.doi.org.ezproxy.library.ubc.ca/10.1214/aop/1046294314},
}

@article {MR2393990,
    AUTHOR = {Hara, Takashi},
     TITLE = {Decay of correlations in nearest-neighbor self-avoiding walk,
              percolation, lattice trees and animals},
   JOURNAL = {Ann. Probab.},
  FJOURNAL = {The Annals of Probability},
    VOLUME = {36},
      YEAR = {2008},
    NUMBER = {2},
     PAGES = {530--593},
      ISSN = {0091-1798},
   MRCLASS = {82B27 (60K35 82B41 82B43 82C41)},
  MRNUMBER = {2393990},
MRREVIEWER = {Akira Sakai},
       DOI = {10.1214/009117907000000231},
       URL = {http://dx.doi.org.ezproxy.library.ubc.ca/10.1214/009117907000000231},
}

@book {MR2239599,
    AUTHOR = {Slade, G.},
     TITLE = {The lace expansion and its applications},
    SERIES = {Lecture Notes in Mathematics},
    VOLUME = {1879},
      NOTE = {Lectures from the 34th Summer School on Probability Theory
              held in Saint-Flour, July 6--24, 2004,
              Edited and with a foreword by Jean Picard},
 PUBLISHER = {Springer-Verlag, Berlin},
      YEAR = {2006},
     PAGES = {xiv+228},
      ISBN = {978-3-540-31189-8; 3-540-31189-0},
   MRCLASS = {60K35 (82B27)},
  MRNUMBER = {2239599},
MRREVIEWER = {Akira Sakai},
}

@book{heydenreich2015progress,
    AUTHOR = {Heydenreich, Markus and \Hofstad, Remco},
     TITLE = {Progress in high-dimensional percolation and random graphs},
    SERIES = {CRM Short Courses},
 PUBLISHER = {Springer, Cham; Centre de Recherches Math\'{e}matiques, Montreal,
              QC},
      YEAR = {2017},
     PAGES = {xii+285},
      ISBN = {978-3-319-62472-3; 978-3-319-62473-0},
   MRCLASS = {60K35 (05C80 82B41 82B43)},
  MRNUMBER = {3729454},
MRREVIEWER = {Christian M\"{o}nch},
}

@article {MR1043524,
    AUTHOR = {Hara, Takashi and Slade, Gordon},
     TITLE = {Mean-field critical behaviour for percolation in high
              dimensions},
   JOURNAL = {Comm. Math. Phys.},
  FJOURNAL = {Communications in Mathematical Physics},
    VOLUME = {128},
      YEAR = {1990},
    NUMBER = {2},
     PAGES = {333--391},
      ISSN = {0010-3616},
   MRCLASS = {82B43 (60K35)},
  MRNUMBER = {1043524},
MRREVIEWER = {R. T. Smythe},
}

@article {MR2748397,
    AUTHOR = {Kozma, Gady and Nachmias, Asaf},
     TITLE = {Arm exponents in high dimensional percolation},
   JOURNAL = {J. Amer. Math. Soc.},
  FJOURNAL = {Journal of the American Mathematical Society},
    VOLUME = {24},
      YEAR = {2011},
    NUMBER = {2},
     PAGES = {375--409},
      ISSN = {0894-0347},
   MRCLASS = {60K35},
  MRNUMBER = {2748397},
MRREVIEWER = {Antal A. J\~A!`rai},
       DOI = {10.1090/S0894-0347-2010-00684-4},
}

@article {MR2551766,
    AUTHOR = {Kozma, Gady and Nachmias, Asaf},
     TITLE = {The {A}lexander-{O}rbach conjecture holds in high dimensions},
   JOURNAL = {Invent. Math.},
  FJOURNAL = {Inventiones Mathematicae},
    VOLUME = {178},
      YEAR = {2009},
    NUMBER = {3},
     PAGES = {635--654},
      ISSN = {0020-9910},
   MRCLASS = {60K35},
  MRNUMBER = {2551766},
MRREVIEWER = {Thomas Polaski},
       DOI = {10.1007/s00222-009-0208-4},
}

@article {MR1127713,
    AUTHOR = {Barsky, D. J. and Aizenman, M.},
     TITLE = {Percolation critical exponents under the triangle condition},
   JOURNAL = {Ann. Probab.},
  FJOURNAL = {The Annals of Probability},
    VOLUME = {19},
      YEAR = {1991},
    NUMBER = {4},
     PAGES = {1520--1536},
      ISSN = {0091-1798},
   MRCLASS = {60K35 (82B43)},
  MRNUMBER = {1127713},
MRREVIEWER = {John C. Wierman},
}

@article {MR762034,
    AUTHOR = {Aizenman, Michael and Newman, Charles M.},
     TITLE = {Tree graph inequalities and critical behavior in percolation
              models},
   JOURNAL = {J. Statist. Phys.},
  FJOURNAL = {Journal of Statistical Physics},
    VOLUME = {36},
      YEAR = {1984},
    NUMBER = {1-2},
     PAGES = {107--143},
      ISSN = {0022-4715},
   MRCLASS = {82A43 (60K35 82A25)},
  MRNUMBER = {762034},
MRREVIEWER = {R. T. Smythe},
       DOI = {10.1007/BF01015729},
       URL = {http://dx.doi.org/10.1007/BF01015729},
}

@article {Schramm00,
    AUTHOR = {Schramm, Oded},
     TITLE = {Scaling limits of loop-erased random walks and uniform
              spanning trees},
   JOURNAL = {Israel J. Math.},
  FJOURNAL = {Israel Journal of Mathematics},
    VOLUME = {118},
      YEAR = {2000},
     PAGES = {221--288},
      ISSN = {0021-2172},
     CODEN = {ISJMAP},
   MRCLASS = {60K35 (30C85 60H15 82B27 82B44)},
  MRNUMBER = {1776084 (2001m:60227)},
MRREVIEWER = {Almut Burchard},
       DOI = {10.1007/BF02803524},
       URL = {http://dx.doi.org.ezproxy.library.ubc.ca/10.1007/BF02803524},
}

@article {Kesten86,
    AUTHOR = {Kesten, Harry},
     TITLE = {The incipient infinite cluster in two-dimensional percolation},
   JOURNAL = {Probab. Theory Related Fields},
  FJOURNAL = {Probability Theory and Related Fields},
    VOLUME = {73},
      YEAR = {1986},
    NUMBER = {3},
     PAGES = {369--394},
      ISSN = {0178-8051},
     CODEN = {PTRFEU},
   MRCLASS = {60K35},
  MRNUMBER = {859839 (88c:60196)},
MRREVIEWER = {R. T. Smythe},
       DOI = {10.1007/BF00776239},
       URL = {http://dx.doi.org.ezproxy.library.ubc.ca/10.1007/BF00776239},
}

@article {Jar03,
    AUTHOR = {J{\'a}rai, Antal A.},
     TITLE = {Incipient infinite percolation clusters in 2{D}},
   JOURNAL = {Ann. Probab.},
  FJOURNAL = {The Annals of Probability},
    VOLUME = {31},
      YEAR = {2003},
    NUMBER = {1},
     PAGES = {444--485},
      ISSN = {0091-1798},
     CODEN = {APBYAE},
   MRCLASS = {60K35 (82B43)},
  MRNUMBER = {1959799 (2004a:60159)},
MRREVIEWER = {Oded Schramm},
       DOI = {10.1214/aop/1046294317},
       URL = {http://dx.doi.org.ezproxy.library.ubc.ca/10.1214/aop/1046294317},
}

@article{le2005random,
  Author = {Le Gall, Jean-Fran{\c{c}}ois},
  Doi = {10.1214/154957805100000140},
  Fjournal = {Probability Surveys},
  Issn = {1549-5787},
  Journal = {Probab. Surv.},
  Mrclass = {60J80 (05C05 05C80 35J65 60C05 60J65)},
  Mrnumber = {2203728 (2007h:60078)},
  Mrreviewer = {Endre Cs{\'a}ki},
  Pages = {245--311},
  Title = {Random trees and applications},
  Url = {http://dx.doi.org/10.1214/154957805100000140},
  Volume = {2},
  Year = {2005}}

@article{CRT1,
  Author = {Aldous, David},
  Coden = {APBYAE},
  Fjournal = {The Annals of Probability},
  Issn = {0091-1798},
  Journal = {Ann. Probab.},
  Mrclass = {60C05 (05C80)},
  Mrnumber = {1085326 (91i:60024)},
  Mrreviewer = {Robin Pemantle},
  Number = {1},
  Pages = {1--28},
  Title = {The continuum random tree. {I}},
  Url = {http://links.jstor.org/sici?sici=0091-1798(199101)19:1<1:TCRTI>2.0.CO;2-B&origin=MSN},
  Volume = {19},
  Year = {1991}}

@article{addario2012,
  Author = {Addario-Berry, L. and Broutin, N. and Goldschmidt, C.},
  Coden = {PTRFEU},
  Doi = {10.1007/s00440-010-0325-4},
  Fjournal = {Probability Theory and Related Fields},
  Issn = {0178-8051},
  Journal = {Probab. Theory Related Fields},
  Mrclass = {05C80 (05C12 60C05 60F17)},
  Mrnumber = {2892951},
  Mrreviewer = {Jonathan Cutler},
  Number = {3-4},
  Pages = {367--406},
  Title = {The continuum limit of critical random graphs},
  Url = {http://dx.doi.org/10.1007/s00440-010-0325-4},
  Volume = {152},
  Year = {2012}}

@article{fitzner2015nearest,
    AUTHOR = {Fitzner, Robert and \Hofstad, Remco},
     TITLE = {Mean-field behavior for nearest-neighbor percolation in
              {$d>10$}},
   JOURNAL = {Electron. J. Probab.},
  FJOURNAL = {Electronic Journal of Probability},
    VOLUME = {22},
      YEAR = {2017},
     PAGES = {Paper No. 43, 65},
   MRCLASS = {60K35 (82B27 82B43)},
  MRNUMBER = {3646069},
       DOI = {10.1214/17-EJP56},
       URL = {https://doi.org/10.1214/17-EJP56},
}

@book{LP:book,
    AUTHOR = {Lyons, Russell and Peres, Yuval},
     TITLE = {Probability on Trees and Networks},
    SERIES = {Cambridge Series in Statistical and Probabilistic Mathematics},
    VOLUME = {42},
 PUBLISHER = {Cambridge University Press, New York},
      YEAR = {2016},
     PAGES = {xv+699},
      ISBN = {978-1-107-16015-6},
   MRCLASS = {60C05 (05C05 05C81 28A80 60J50 60J80 60K35 82B41)},
  MRNUMBER = {3616205},
       DOI = {10.1017/9781316672815},
       URL = {http://dx.doi.org/10.1017/9781316672815},
note = {Available at \url{http://pages.iu.edu/~rdlyons/}},
}
  }

\end{document}